\documentclass[12pt]{article}
\usepackage{amsmath}
\usepackage{graphicx}
\usepackage{enumerate}
\usepackage{natbib}
\usepackage{titlesec}
\usepackage{url} 

\usepackage{xr}
\usepackage{amsfonts}
\usepackage{amsmath}
\usepackage{amsthm}
\usepackage{import}
\usepackage{enumerate}
\usepackage[dvipsnames]{xcolor}
\usepackage{authblk, bm, bbm, soul}
\usepackage{amssymb}
\usepackage{enumitem}
\usepackage{algorithm}
\usepackage{algpseudocode}

\usepackage{multirow}
\usepackage{capt-of}
\usepackage{booktabs}
\usepackage{hyperref}
\hypersetup{colorlinks=true, citecolor=blue, linkcolor = blue, urlcolor = blue}
\usepackage{cleveref}
\usepackage{stackrel}
\usepackage{makecell}

\usepackage{setspace}

\DeclareMathOperator*{\cov}{Cov}
\DeclareMathOperator*{\op}{op}
\DeclareMathOperator*{\tr}{Tr}

\DeclareMathOperator*{\argmin}{arg\,min}
\DeclareMathOperator*{\argmax}{arg\,max}

\newtheorem{theorem}{Theorem}[]
\newtheorem{definition}{Definition}[]

\newtheorem{assumption}{Assumption}
\newtheorem{proposition}[theorem]{Proposition}
\newtheorem{corollary}{Corollary}
\newtheorem{lemma}[theorem]{Lemma}
\theoremstyle{remark}
\newtheorem{remark}{Remark}

\allowdisplaybreaks




\newcommand{\blind}{1}

\addtolength{\oddsidemargin}{-.5in}%
\addtolength{\evensidemargin}{-1in}%
\addtolength{\textwidth}{1in}%
\addtolength{\textheight}{1.7in}%
\addtolength{\topmargin}{-1in}%

\begin{document}

\def\spacingset#1{\renewcommand{\baselinestretch}%
{#1}\small\normalsize} \spacingset{1}


\if1\blind
{
  \title{\bf Optimal estimation in private distributed functional data analysis}
  \author{Gengyu Xue\\
    Department of Statistics, University of Warwick, Coventry, UK\\
    Zhenhua Lin\\
    Department of Statistics, National University of Singapore, Singapore\\
    and\\
    Yi Yu \\
    Department of Statistics, University of Warwick, Coventry, UK}
  \maketitle
} \fi

\if0\blind
{
  \bigskip
  \bigskip
  \bigskip
  \begin{center}
    {\LARGE\bf Optimal estimation in private distributed functional data analysis}
\end{center}
  \medskip
} \fi

\bigskip
\begin{abstract}
We systematically investigate the preservation of differential privacy in functional data analysis, beginning with functional mean estimation and extending to varying coefficient model estimation. Our work introduces a distributed learning framework involving multiple servers, each responsible for collecting several sparsely observed functions. This hierarchical setup introduces a mixed notion of privacy.  Within each function, user-level differential privacy is applied to $m$ discrete observations. At the server level, central differential privacy is deployed to account for the centralized nature of data collection.  Across servers, only private information is exchanged, adhering to federated differential privacy constraints.  To address this complex hierarchy, we employ minimax theory to reveal several fundamental phenomena: from sparse to dense FDA, from user-level to central and federated differential privacy costs, and the intricate interplay between different regimes of FDA and privacy preservation. To the best of our knowledge, this is the first study to rigorously examine functional data estimation under multiple privacy constraints. Our theoretical findings are complemented by efficient private algorithms and extensive numerical evidence, providing a comprehensive exploration of this challenging problem.
\end{abstract}

\noindent%
{\it Keywords:} Differential privacy; Federated learning; Functional data analysis; Functional mean estimation; Varying coefficient model.
\vfill

\newpage
\spacingset{1.9} 
\section{Introduction}
Recent advancements in modern technology have increased the prevalence of functional data in practical applications. Modeling data as samples of random functions has offered invaluable insights in various applications including neuroscience \citep[e.g.][]{dai2019age}, climatology \citep[e.g.][]{fraiman2014detecting}, medicine \citep[e.g.][]{crawford2020predicting} and others. We refer readers to \citet{wang2016functional} for a comprehensive review.

In tandem with the benefit, the increasing availability of data brings challenges in safeguarding individual privacy and maintaining public trust. A concrete route to handle these challenges is through differential privacy \citep[DP,][]{dwork2006calibrating} ---a mathematical notion which enjoys various applications, such as Google \citep[e.g.][]{song2021evading} and the US Census Bureau \citep[e.g.][]{uscensus}.

Preserving privacy is, in layman's words, achieved by injecting noise into raw data at the cost of statistical accuracy. The other side of this coin is the inevitable loss of statistical accuracy. It is therefore essential to understand the fundamental costs of privacy, consequently offering advice to policymakers when deciding which privacy notion should be implemented. A line of work is dedicated to quantifying costs of privacy in different problems, e.g.~mean and variance estimation \citep[e.g.][]{brown2023fast}, linear regression \citep[e.g.][]{cai2021cost} and non-parametric regression \citep[e.g.][]{cai2024optimal}. In this paper, we start with functional mean estimation - an indisputable core task in functional data analysis (FDA) - and extend to varying coefficient model (VCM) estimation, in a distributed learning setup with multiple~servers.

In terms of the privacy framework, the distributed data scenario, along with the intrinsic challenges of 
discretely observed FDA calls for a mixed notion of DP. Each server centrally collects multiple sparsely-observed functions, subject to central DP (CDP, \Cref{def_dp}). In view of the discretely observed nature of each function, user-level DP (\Cref{remark_user_level}) is imposed at the function level.  Across servers only private information is transmitted, naturally summoning local DP (LDP, \Cref{def_ldp}). We formulate the above framework as Federated DP (FDP, \Cref{def_fdp}) and
our goal is to understand the fundamental costs of this complex privacy notion in FDA via minimax risks, accompanied by open-source algorithms achieving optimal rates.  New phase transition phenomena in problems involving sparsely-observed functional data are revealed as a consequence. For the sake of space, a comprehensive literature review is deferred to \Cref{section_appendix_literature} in the online supplementary material.

\subsection{List of contributions}

We propose a novel framework to study distributed functional data under a hierarchical differential privacy constraint, using two canonical problems (functional mean and VCM estimation) as examples, and the non-distributed CDP counterpart as an intermediate result - of independent interest. This framework builds a rigorous platform to study wide-ranging applications with timely needs.  For instance, in response to HIPAA \citep{HHS_HIPAA_Privacy_Rule_2025}, collecting sensitive healthcare data should be subject to privacy constraints.  Our framework is ideal for scenarios where data are collected from patients across different hospitals \citep[e.g.][]{li2020multi}, or for joint projects across different nations \citep[e.g.][]{azizi2023comparison}.

Within this framework, we propose a novel variant of the Gaussian mechanism (\Cref{section_mean_cdp_gaussian}), namely the anisotropic Gaussian mechanism, tailored for the inherent feature of functional data. Compared to the standard Gaussian mechanism, we demonstrate that our mechanism achieves the same privacy guarantees with substantially less loss of accuracy for functional data.  Built upon this mechanism, we propose mini-batch gradient descents based algorithms adhering to DP constraints, supported by both theoretical (Sections \ref{section_mean_cdp}, \ref{section_mean_fdp} and \ref{section_vcm_fdp}) and numerical (\Cref{section_numerical}) guarantees.

We further justify the minimax optimality of our proposed methods, by deriving matching lower bounds.  The upper and lower bounds together, quantify the fundamental costs of different privacy notions, summarised in \Cref{table_summary_minimax}. This opens the door for decision makers to make informed decisions on which privacy notions to implement in practice. 

At a high level, all the results collected in \Cref{table_summary_minimax} can be regarded as
\vspace{-2em}
\[
    \mbox{non-private sparse rate} \vee \mbox{private sparse rate} \vee \mbox{non-private dense rate} \vee \mbox{private dense rate}, \vspace{-1.5em}
\]
where the sparse and dense regimes are determined by $m$, the sampling frequency.  This general structure echoes both the existing literature in FDA \citep[e.g.][]{cai2011optimal,zhang2016from} and DP \citep[e.g.][]{cai2021cost,cai2023score,cai2024optimal}, where the sparse-dense and the private-non-private transitions are mostly studied separately. The presence of both transitions incurs additional interplay of FDA and DP, leading to a more complex transition phenomenon with four different transition boundaries; see \Cref{fig_phase_transition}.  

\begin{table}[!h]
\caption{A summary of main results (homogenous design in FDP), where $n$ is the number of functions, $m$ is the number of observations per function, $S$ is the number of servers, $d$ is the dimensionality of VCM, $\alpha$ is the smooth parameter, and $\epsilon, \delta$ are privacy parameters.}
\begin{center}
\begin{tabular}{lll} \hline
Problem   & Privacy & Minimax rates \\ \hline
\multirow{2}{*}{\makecell[c]{Func.~mean\\[-0.2em]
(Secs.~\ref{section_mean_cdp} \& \ref{section_mean_fdp})}} & CDP     & $ (nm)^{-\frac{2\alpha}{2\alpha+1}} + (n^2m\epsilon^2)^{-\frac{\alpha}{\alpha+1}}+n^{-1} + (n^2\epsilon^2)^{-1}$                                                         \\
                                            & FDP     & $(Snm)^{-\frac{2\alpha}{2\alpha+1}}  + (Sn^2m\epsilon^2)^{-\frac{\alpha}{\alpha+1}}+(Sn)^{-1} + (Sn^2\epsilon^2)^{-1}$                                \\
\multirow{2}{*}{\makecell[c]{VCM\\[-0.2em]
(Sec.~\ref{section_vcm_fdp})}}  & CDP     & $d(nm)^{-\frac{2\alpha}{2\alpha+1}} + d^{\frac{2\alpha+1}{\alpha+1}}(n^2m\epsilon^2)^{-\frac{\alpha}{\alpha+1}} +dn^{-1} +d^2(n^2\epsilon^2)^{-1}$           \\
                                            & FDP     & $d(Snm)^{-\frac{2\alpha}{2\alpha+1}} + d^{\frac{2\alpha+1}{\alpha+1}}(Sn^2m\epsilon^2)^{-\frac{\alpha}{\alpha+1}} + d(Sn)^{-1} + d^2(Sn^2\epsilon^2)^{-1}$\\ \hline
\end{tabular}

\label{table_summary_minimax}
\end{center}
\end{table}

\vspace{-3em}

\subsection{Notation} \label{section_notation}
Throughout the paper, we adopt the following notation. For a positive integer $a$, denote $[a] = \{1, \ldots, a\}$.  Let $\lceil a \rceil$ be the smallest integer greater than or equal to $a$ and $\lfloor a \rfloor$ be the greatest integer less than or equal to $a$. For $a,b \in \mathbb{R}$, let $a \vee b = \max\{a,b\}$ and $a\wedge b = \min\{a,b\}$. For any set $S$, let $|S|$ denote its cardinality. For any sets $S_1$ and $S_2$, denote $S_1\sqcup S_2$ the disjoint union. For $v \in \mathbb{R}^p$, let $\|v\|_1$, $\|v\|_2$ and $\|v\|_{\infty}$ be $\ell_1$-, $\ell_2$- and $\ell_{\infty}$-norms. Given a sequence of positive truncation radii $R= \{R_{\ell}\}_{\ell=1}^p$, define the entrywise projection $\Pi^{\mathrm{entry}}_{R}[v] \in~\mathbb{R}^p$ as $(\Pi^{\mathrm{entry}}_{R}[\nu])_\ell = \nu_\ell\min\{1, R_\ell/|\nu_\ell|\},$ for all~$\ell \in [p]$. For a function $f:[0,1] \rightarrow \mathbb{R}$, denote $\|f\|_{\infty} = \sup_{s\in [0,1]}|f(s)|$ and $\|f\|_{L^2} = \{\int_{0}^1 f^2(s)\,\mathrm{d}s\}^{1/2}$. Let $L^2([0,1])$ denote the space of square-integrable functions on $[0,1]$. For $f,g \in L^2([0,1])$, denote the inner product by $\langle f, g \rangle_{L^2} = \int_{0}^1 f(s)g(s)\;\mathrm{d}s$. For a sequence of positive numbers $\{a_n\}$ and a sequence of random variables $\{X_n\}$, denote $X_n = O_p(a_n)$ if $\lim_{M \rightarrow \infty}\lim\sup_n \mathbb{P}(|X_n|\geq M a_n)=0$. For two sequences of positive numbers $\{a_n\}$ and $\{b_n\}$, denote $a_n \lesssim b_n$, $a_n \gtrsim b_n$, and $a_n \asymp b_n$, if there exists some constants $c,C > 0$ such that $a_n/b_n \leq C$, $b_n/a_n \leq C$ and $c \leq a_n/b_n \leq C$. Write $a_n\lesssim_{\log} b_n$, $a_n \asymp_{\log} b_n$ and $a_n =_{\log} b_n$, if $a_n\lesssim b_n$, $a_n \asymp b_n$ and $a_n = b_n$ up to poly-logarithmic factors. For an $\mathbb{R}$-valued random variable $X$ and $k \in \{1,2\}$, let $\|X\|_{\psi_k}$ denotes the Orlicz-$\psi_k$ norm, i.e.~$\|X\|_{\psi_k} = \inf \{t > 0: \mathbb{E}[\exp(\{|X|/t\}^k)]\leq 2\}$.

\section{Differential privacy notions and minimax risks} \label{section_DP_notation}
We start with the central DP \citep[e.g.][]{dwork2006calibrating}, where a trusted server exists. 

Formally speaking, given a dataset $D$, a privacy mechanism $Q(\cdot|D)$ is a conditional distribution of the private information given data $D$. Let $Z \in \mathcal{Z}$ be the privatized data and $\sigma(\mathcal{Z})$ be the sigma-algebra on $\mathcal{Z}$. We require the privacy mechanism to satisfy the following. 
\begin{definition}[Central differential privacy, CDP] \label{def_dp}
    For $\epsilon > 0$ and $\delta \geq 0$, the privacy mechanism $Q$ is said to satisfy $(\epsilon,\delta)$-CDP if $ Q(Z \in A|D) \leq \exp(\epsilon)Q(Z \in A|D')+ \delta$,
    for all $A \in \sigma(\mathcal{Z})$ and $D$ and $D'$ that differ by at most one data entry, denoted by $D\sim D'$.
\end{definition}
In the above definition, the parameter $\epsilon$ controls the strength of the privacy constraint - the smaller~$\epsilon$, the stronger the constraint. For our discussion on minimax optimality, we focus on the case when $\epsilon \in (0,1)$, while our algorithms and upper bounds on the estimation errors of their outputs are applicable for any $\epsilon >0$. The parameter $\delta \geq 0 $ controls the level of privacy leakage. A small $\delta$ is desirable \citep[e.g.~Section 2.3 in][]{dwork2014algorithmic}.

Without a trusted central data curator, a more stringent DP notion is introduced, namely local DP. Let $\{X_i\}_{i=1}^n \subset \mathcal{X}^n$ be raw data and $\{Z_i\}_{i=1}^n \subset \mathcal{Z}^n$ be privatized output generated from data~$\{X_i\}_{i=1}^n$ using some privacy mechanism $Q=\{Q_i\}_{i=1}^n$. 
\begin{definition}[Local differential privacy, LDP] \label{def_ldp}
    For $\epsilon > 0$ and $\delta \geq 0$, the privacy mechanism $Q=~\{Q_i\}_{i=1}^n$ is said to satisfy $(\epsilon,\delta)$-LDP if $ Q_i(Z_i \in A|X_i = x_i) \leq \exp(\epsilon) Q_i(Z_i \in A|X_i= x'_i) + \delta$,
    for all $A\in \sigma(\mathcal{Z})$ and all $x_i, x_i' \in \mathcal{X}$.
\end{definition}

In a distributed setting, FDP is introduced to preserve privacy in distributed data. In this paper, we consider a setting where $N = \sum_{s=1}^S n_s$ samples of data are distributed across $S$ servers with the $s$th server holding $n_s$ samples. Formally speaking, given datasets $\{D_s\}_{s=1}^S = \{\{d_{s,i}\}_{i=1}^{n_s}\}_{s=1}^S$ distributed across $S$ servers, we consider a $T$-round privacy mechanism $(\bm{\epsilon},\bm{\delta},T)$-FDP. The same FDP framework has also been considered in \citet{li2024federated}.

For $T, S \in \mathbb{Z}_+$ and any $s\in [S]$, let $\{D_{s}^t\}_{t=1}^T$ form a partition of $D_s$, i.e.~$D_s = D_{s}^1 \sqcup \cdots \sqcup D_{s}^T$.  For $t \in [T]$, let $\mathcal{I}_s^t$ be the index set of $D_s^t$ with $|\mathcal{I}_s^t| = b_s^t$.  In the $t$th round, private information $Z_s^t$ is produced based on raw data $D_s^t$ and private information gained in the previous rounds.  The private  $Z_s^t$ is then transmitted to the central server and processed private information is sent back to servers for next round updates. In this protocol, no raw information is exchanged among servers and only privatized information aggregated by the central server is shared across local ones. Let $M^{(t)} = \{\{Z_s^t\}_{s=1}^S, M^{(t-1)}\}$ be the private information cumulated in the first $t$ rounds and $M^0 =\emptyset$. The collection of privacy mechanisms $Q = \{Q_s^t\}_{s=1,t=1}^{S,T}$ is required to satisfy the following $(\bm{\epsilon},\bm{\delta},T)$-FDP constraint.

\begin{definition}[Federated differential privacy, FDP] \label{def_fdp}
    For $S, T \in \mathbb{Z}_+$, denote $(\bm{\epsilon}, \bm{\delta}) = \{(\epsilon_s, \delta_s)\}_{s=1}^S$ the collection of privacy parameters where $\epsilon_s >0$ and $\delta_s \geq 0$, $s\in [S]$. A privacy mechanism $Q$ satisfies $(\bm{\epsilon}, \bm{\delta},T)$-FDP, if for any $s\in [S]$ and $t\in [T]$, the transcript $Z_s^t \in \mathcal{Z}$ shared across servers satisfies an $(\epsilon_s,\delta_s)$-CDP constraint, i.e.~$Q_s^t(Z_{s}^t\in A|M^{(t-1)},D^t_{s}) \leq \exp(\epsilon_s)Q_{s}^t(Z_{s}^t\in A|M^{(t-1)}, (D^t_{s})') + \delta_s $
    for all $D^t_{s}$ and $(D^t_{s})'$ that differ in at most one data entry, i.e.~$\sum_{i \in \mathcal{I}_{s}^t} \mathbbm{1}\{d_{s, i} \neq d'_{s, i}\} \leq 1$, and for all $A \in \sigma(\mathcal{Z})$.
\end{definition}

To evaluate the cost of privacy, we utilize the minimax framework \citep[e.g.][]{cai2021cost}.  Let $\mathcal{P}$ be the family of distributions generating the observation and $\mathcal{Q}$ the set of mechanisms satisfying the DP constraints in one of Definitions \ref{def_dp}, \ref{def_ldp} and \ref{def_fdp}. The minimax risk is defined as $\inf_{Q \in \mathcal{Q}}\inf_{\widetilde{\theta}}\sup_{P \in \mathcal{P}} \mathbb{E}_{P, Q}\|\theta(P) - \widetilde{\theta}\|_{L^2}^2$,
where $\theta(P)$ is the parameter of interest, the inner infimum is over all privatized estimators, the outer infimum is over all privacy mechanisms and the risk is the expectation with respect to both $P$ and $Q$.

\vspace{-1em}
\section{Functional mean estimation with CDP} \label{section_mean_cdp}

In this section, we consider the problem of functional mean estimation under the CDP constraint (\Cref{def_dp}).  This serves as the foundation for the estimation problems under more involving FDP constraint (see \Cref{def_fdp}) studied in \Cref{section_mean_fdp}.

\vspace{-1em}
\subsection{Problem setup} \label{section_mean_cdp_setup}

Let the raw data $\{(X_j^{(i)}, Y_j^{(i)})\}_{i=1, j =1}^{n,m}$ be generated from the model 
\vspace{-1em}
\begin{equation} \label{mean_model_obs}
    Y_j^{(i)} = \mu^*(X_j^{(i)}) + U^{(i)}(X_j^{(i)}) + \xi_{ij},
    \vspace{-1em}
\end{equation}
where $\{X_j^{(i)}\}_{i=1, j =1}^{n,m} \subset [0, 1]$ denote the discrete observation grids, $\mu^*(\cdot): [0,1] \rightarrow \mathbb{R}$ denotes the deterministic mean function, $\{U^{(i)}(\cdot): [0,1] \rightarrow \mathbb{R}\}_{i=1}^{n}$ denote a collection of random functions, with $\mathbb{E}\{U^{(i)}(s)\} = 0$ and covariance function $\Sigma^*(s,t) = \mathbb{E}\{U^{(i)}(s)U^{(i)}(t)\}$ for all $s, t \in [0,1]$ and~$i \in [n]$, and $\{\xi_{ij}\}_{i=1, j=1}^{n,m}$ are a sequence of mean-zero measurement errors.

Given $n$ discretely observed functions, our goal is to output a private functional mean estimator under the CDP constraint in \Cref{def_dp}. Since each function comprises $m$ discretely-observed univariate observations, changing one function potentially affects $m$ observations. Namely, for two data sets $\{(x_j^{(i)}, y_j^{(i)})\}_{i=1, j =1}^{n,m}$ and $\{((x')_j^{(i)}, (y')_j^{(i)})\}_{i=1, j =1}^{n,m}$, we say that they differ by one entry if $\sum_{i=1}^n \mathbbm{1}\{\{(x_j^{(i)}, y_j^{(i)})\}_{j =1}^{m} \neq \{((x')_j^{(i)}, (y')_j^{(i)})\}_{j =1}^{m}\} =1$.

\begin{remark} [User-level DP] \label{remark_user_level}
This DP notation above is in fact the \emph{user-level} DP coined in the literature \citep[e.g.][]{dwork2010differential}.  For user-level DP, each ``user'' (each function in our context) possesses multiple repeated measurements.  The DP is measured by changing a whole collection of possessions of a user, but the interest of the statistical inference lies in the distribution of each of these repeated measurements.  This is in contrast with the so-called \emph{event-level} DP, which is measured by changing one observation, rather than a whole collection of a user. We refer readers to \citet{levy2021learning} and \citet{kent2024rate} for more detailed discussions on user-level CDP and LDP respectively. 
\end{remark}

For functions involved in the modeling, we assume that they lie in a Sobolev space. 
\begin{definition}[Sobolev space] \label{def_sobolev}
    Let $\alpha > 1$ and $C_\alpha >0$ be absolute constants and $\{\phi_\ell\}_{\ell\in \mathbb{N}+}$ be a collection of orthonormal basis functions of $L^2([0,1])$. For a given sequence of non-negative constants $\{\tau_\ell\}_{\ell \in \mathbb{N}_+}$ associated with $\{\phi_\ell\}_{\ell\in \mathbb{N}_+}$, the Sobolev space $\mathcal{W}(\alpha, C_\alpha)$ is  
    \begin{align*}
        \mathcal{W}(\alpha, C_\alpha) = \bigg\{f:[0,1]\rightarrow\mathbb{R}: f = \sum_{\ell=1}^\infty \phi_\ell\langle f, \phi_\ell \rangle_{L^2},\; \sum_{\ell=1}^\infty (\tau_\ell)^{2\alpha}\langle f, \phi_\ell \rangle_{L^2}^2 \leq C_\alpha^2 < \infty \bigg\}.
    \end{align*}
\end{definition}
\vspace{-0.5cm}
In this work, we consider specifically the Fourier basis $\Phi = \{\phi_\ell\}_{\ell \in \mathbb{N}_+}$, i.e.~for $\ell \in \mathbb{Z}_+$,
\vspace{-0.5cm}
\begin{align} \label{eq_fourier_basis}
    \phi_1(t) =1, \; \phi_{2\ell}(t) = \sqrt{2}\cos(2\ell \pi t)\; \text{ and } \; \phi_{2\ell+1}(t) =\sqrt{2}\sin(2\ell\pi t),
    \vspace{-1.5cm}
\end{align}
\vspace{-0.5cm}
and the coefficient sequence $\tau_\ell = 2 \lfloor \ell/2 \rfloor$. The space $\mathcal{W}(\alpha, C_\alpha)$ can then be characterized~by
\begin{align}\label{eq_sobolev_eq1}
    \sum_{\ell=1}^\infty \tau_\ell^2\langle f, \phi_\ell \rangle_{L^2}^2  \leq \sum_{\ell=1}^\infty \ell^{2\alpha}\langle f, \phi_\ell \rangle_{L^2}^2\leq C_\alpha^2/\pi^{2\alpha} < \infty,
    \vspace{-2em}
\end{align}
where $C_\alpha >0$ is an absolute constant controlling the radius of the ellipsoid. Condition~\eqref{eq_sobolev_eq1} can be rewritten as the Sobolev class of functions concerning functions having $\alpha$th order squared-integrable weak derivatives \citep[e.g.~Proposition 1.14 in][]{tsybakov2008introduction}. More discussions on \Cref{def_sobolev} can be found in \Cref{section_appendix_discussion_sobolev} in the online supplementary material. In our paper, the boundedness property of the Fourier basis further allows us to work within a compact space, making it a convenient choice for DP.

\subsection{Anisotropic Gaussian mechanism} \label{section_mean_cdp_gaussian}

Gaussian mechanism \citep[e.g.][]{dwork2014algorithmic} is, arguably, the most popular privacy mechanism considered to achieve $(\epsilon,\delta)$-CDP, when $\delta > 0$.  To be specific, for $r \in \mathbb{Z}_+$ and an algorithm $f: \mathcal{\mathcal{D}}\rightarrow \mathbb{R}^r$ mapping a dataset taking values in $\mathcal{D}$ to $\mathbb{R}^r$, define the $\ell_2$-sensitivity of $f$ as $\Delta_2(f) = \sup_{D \sim D'} \|f(D)-f(D')\|_2$, where $D \sim D'$ is defined in \Cref{def_dp}. For an algorithm $f$ such that $\Delta_2(f) <~\infty$, the standard Gaussian mechanism outputs $M(D)=f(D)+Z$, where $Z \sim N_r(0,\sigma_0^2I)$ with $\sigma_0^2 = 2\log(1.25/\delta)\{\Delta_2(f)\}^2/\epsilon^2$. Although this mechanism is conceptually straightforward, substantial evidence indicates that it can achieve optimal performance in a range of classical statistical problems \citep[e.g.][]{cai2021cost,cai2023score}.  By adding the same level of noise to each entry, we, however, inherently overlook the unique behavior of individual entries---particularly in the FDA, where the output of interest often exhibits an entry-wise decaying rate. 

With this insight, we consider a new variant of the Gaussian mechanism, by adding anisotropic Gaussian noise tailored to the original data distribution.  With the same notation above, denote the sensitivity vector $\Delta f \in \mathbb{R}^r$ with the $\ell$th entry $\Delta f_\ell= \sup_{D\sim D'} |f_\ell(D)-f_\ell(D')|$, $\ell \in [r]$.  We have the following guarantee. 

\begin{lemma}[Anisotropic Gaussian mechanism] \label{l_gaussian_mechanism}
    Let $f: \mathcal{\mathcal{D}}\rightarrow \mathbb{R}^r$ be an algorithm such that $\|\Delta f\|_{\infty} < \infty$. The mechanism $M(D) = f(D) + Z$ achieves $(\epsilon,\delta)$-CDP for any $\epsilon, \delta > 0$ satisfying that $4\log(2/\delta) \geq \epsilon$, where $Z \in \mathbb{R}^r$ follows from a multivariate Gaussian distribution $N_r(0,\Sigma)$ with $\Sigma = \mathrm{diag}(\sigma_1^2, \ldots,\sigma_r^2)$ and $\sigma_\ell^2 = 4\log(2/\delta)\Delta f_\ell \|\Delta f\|_1/\epsilon^2$, $\ell \in [r]$.
\end{lemma}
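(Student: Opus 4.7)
The plan is to reduce the anisotropic mechanism to the standard isotropic Gaussian mechanism through a linear change of coordinates, and then invoke the privacy-loss-random-variable analysis.

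First, I would observe that the mechanism can be rewritten as $M(D) = \Sigma^{1/2}(\tilde f(D) + W)$, where $\tilde f(D) = \Sigma^{-1/2} f(D)$ and $W \sim N_r(0, I_r)$. Because $\Sigma^{1/2}$ is a fixed invertible linear map that does not depend on the data, the post-processing property of differential privacy implies that $M$ satisfies $(\epsilon,\delta)$-CDP if and only if the transformed mechanism $\tilde M(D) = \tilde f(D) + W$ does. So it suffices to analyse a standard Gaussian mechanism applied to $\tilde f$.

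Second, I would compute the $\ell_2$-sensitivity of $\tilde f$: for any neighbouring datasets $D \sim D'$,
\[
    \|\tilde f(D) - \tilde f(D')\|_2^2 \;=\; \sum_{\ell=1}^r \frac{(f_\ell(D) - f_\ell(D'))^2}{\sigma_\ell^2} \;\le\; \sum_{\ell=1}^r \frac{(\Delta f_\ell)^2}{\sigma_\ell^2}.
\]
The key algebraic step is to substitute the proposed $\sigma_\ell^2 = 4\log(2/\delta)\,\Delta f_\ell\,\|\Delta f\|_1/\epsilon^2$. This causes the sum to telescope:
\[
    \sum_{\ell=1}^r \frac{(\Delta f_\ell)^2}{\sigma_\ell^2} \;=\; \frac{\epsilon^2}{4\log(2/\delta)\,\|\Delta f\|_1}\sum_{\ell=1}^r \Delta f_\ell \;=\; \frac{\epsilon^2}{4\log(2/\delta)}.
\]
It is precisely the choice of weighting the noise variance proportionally to the entry-wise sensitivity $\Delta f_\ell$ that collapses the $\ell_2$-sensitivity of $\tilde f$ to a bound controlled by the $\ell_1$-norm $\|\Delta f\|_1$, which is the conceptual novelty underlying the anisotropic construction.

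Finally, I would analyse the privacy loss random variable $L = \log(\mathrm{d}P_{\tilde M(D)}/\mathrm{d}P_{\tilde M(D')})$ associated with $\tilde M$. A routine computation for Gaussian shifts shows that, conditionally on $\tilde M(D)$, the law of $L$ is $N(s^2/2,\, s^2)$, where $s := \|\tilde f(D) - \tilde f(D')\|_2 \le \epsilon/(2\sqrt{\log(2/\delta)})$. I would then verify the sufficient condition $\mathbb{P}[L > \epsilon] \le \delta$ for $(\epsilon,\delta)$-CDP by means of a Gaussian tail/Mills-ratio inequality applied to $\mathbb{P}[L > \epsilon] = \Phi^c(\epsilon/s - s/2)$. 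The constraint $4\log(2/\delta) \ge \epsilon$ is used exactly to guarantee $s/2 \le \sqrt{\log(2/\delta)}$, which combined with $\epsilon/s \ge 2\sqrt{\log(2/\delta)}$ produces a margin $\epsilon/s - s/2 \ge \sqrt{\log(2/\delta)}$ sufficient to bound the tail by $\delta$.

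The main obstacle will be the last step: tracking constants carefully so that the tail bound delivers $\delta$ on the right-hand side (as opposed to $\sqrt{\delta}$, which a naive Chernoff bound would yield). This likely requires a slightly refined analysis of the Gaussian mechanism — either a tight evaluation of $\Phi^c(\epsilon/s - s/2) - e^\epsilon\Phi^c(\epsilon/s + s/2)$ in the spirit of the analytical Gaussian mechanism, or a careful use of the Mills ratio together with the explicit lower bound on $\epsilon/s - s/2$ furnished by the condition $4\log(2/\delta) \ge \epsilon$. The change-of-variables and the sensitivity-weighted noise design are the conceptually novel ingredients; the rest of the argument is standard once the reduction is in place.
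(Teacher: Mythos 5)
Your proposal is correct and takes essentially the same route as the paper: the whitening by $\Sigma^{-1/2}$ is only a repackaging of the paper's direct computation of the Gaussian privacy-loss random variable, and your key identity $\sum_{\ell=1}^r(\Delta f_\ell)^2/\sigma_\ell^2=\epsilon^2/\{4\log(2/\delta)\}$ together with the margin $\epsilon/s-s/2\geq\sqrt{\log(2/\delta)}$ obtained from $4\log(2/\delta)\geq\epsilon$ is exactly the paper's tail argument. The only differences are cosmetic: the paper derives the allocation $\sigma_\ell^2\propto\Delta f_\ell\|\Delta f\|_1$ by minimising $\sum_\ell\sigma_\ell^2$ via a Lagrange multiplier subject to that constraint (you simply verify the given allocation), and the constant-tracking issue you flag at the end is equally present in the paper's own proof, whose threshold $\sqrt{\log(2/\delta)}$ also yields only a $\sqrt{\delta}$-type bound under the naive Gaussian tail inequality.
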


To achieve $(\epsilon,\delta)$-CDP, by the Hanson--Wright inequality \citep[e.g.~Theorem 6.2.1 in][]{vershynin2018high}, we can show that the standard Gaussian mechanism requires the added noise in squared $\ell_2$-norm to be of order $r\sigma_0^2$, up to poly-logarithmic factors.  As for the anisotropic Gaussian mechanism detailed in \Cref{l_gaussian_mechanism}, the added noise is of order $\sum_{\ell=1}^r \sigma_r^2$, up to logarithmic factors.  It follows from the Cauchy--Schwarz inequality~that
\begin{align*}
    \sum_{\ell=1}^r \sigma_r^2 \asymp_{\log} \Big(\sum_{\ell=1}^r \Delta f_\ell \Big)\|\Delta f\|_1/\epsilon^2 = \|\Delta f\|_1^2/\epsilon^2 \leq r\|\Delta f\|_2^2/\epsilon^2 \asymp_{\log} r\sigma_0^2,
\end{align*}
where the last identity holds provided that $\|\Delta f\|_2 \asymp \Delta_2(f)$. We, hence, see that under mild conditions, the anisotropic Gaussian mechanism comes with a smaller amount of variance under the same privacy guarantees, especially for a heterogeneous sensitivity~vector~$\Delta f$.

\subsection{Centrally private functional mean estimation}\label{section_mean_cdp_up}

In traditional FDA, without privacy concerns, it is a common practice to estimate mean functions via basis expansion \citep[e.g.][]{yao2006penalized, lin2021basis}.  It starts with solving the optimization problem $\widehat{a} = \argmin_{a \in \mathbb{R}^r} \; [(nm)^{-1}\sum_{i=1}^n\sum_{j=1}^m \big\{Y_{j}^{(i)} - a^{\top}\Phi_r(X_{j}^{(i)})\big\}^2]$,
where $r \in \mathbb{N}_+$ is the number of selected basis, $a \in \mathbb{R}^r$ and $\Phi_r(\cdot) = \big(\phi_1(\cdot), \ldots, \phi_r(\cdot)\big)^{\top}$ is the function formed by the leading $r$ bases. The final non-private estimator is then $\widehat{\mu}(\cdot) = \Phi_r^\top(\cdot)\widehat{a}$. 

To adhere to the CDP constraint, we propose a differentially private $\widetilde{a}$ via a noisy mini-batch gradient descent method detailed in \Cref{algorithm_mean} and consequently a differentially private functional mean estimator $\widetilde{\mu}(\cdot) = \Phi_r^\top (\cdot)\widetilde{a}$.

\begin{algorithm}
    \caption{Differentially private mean function estimation} \label{algorithm_mean}
    \begin{algorithmic}[1]
        \Require Data $\{(X_j^{(i)}, Y_j^{(i)})\}_{i=1, j =1}^{n,m}$, number of basis $r$, step size $\rho$, number of iterations~$T$, constant $C_R$, privacy parameter $\epsilon$, $\delta$, initialization $a^0$, failure probability~$\eta$. 
        \State Set $b = \lfloor n/T\rfloor$ and $R_\ell = C_R\big\{\sqrt{m^{-1}\log^2(n/\eta)}+\ell^{-\alpha}\big\}$, $\ell \in [r]$.
        \For{$t = 0 ,\ldots, T-1$}
        \State Set $\tau_t = bt$. Generate $w_t \sim N(0, \Sigma)$, where $\Sigma =\mathrm{diag}(\sigma_1^2, \ldots, \sigma_r^2)$ with $\sigma_\ell^2 = 16\log(2/\delta)R_\ell \sum_{k=1}^r R_k/(b^2\epsilon^2)$, $\ell \in [r]$.
        \State \hspace{-0.25cm} $a^{t+1} = \Pi^{*}_{\mathcal{A}}\Big\{ a^{t} -  \Big[\frac{\rho}{b}\sum_{i=1}^b\Pi^{\mathrm{entry}}_{R}\Big[\frac{1}{m}\sum_{j=1}^m \Phi_r(X^{(\tau_t+i)}_j)\big\{\Phi^{\top}_r(X^{(\tau_t+i)}_j)a^{t}-Y^{(\tau_t+i)}_j \big\}\Big] + w_t\Big]\Big\}$.
        \EndFor
        \Ensure $\widetilde{a} = a^T$ and $\widetilde{\mu} = \Phi_r^\top a^T$.
    \end{algorithmic}
\end{algorithm}

\begin{remark}[The projection $\Pi^*_{\mathcal{A}}$]\label{remark_projection_mean} With $\Pi_R^{\text{entry}}$ defined in \Cref{section_notation},
for any vector $a \in \mathbb{R}^r$, we further denote $\Pi_{\mathcal{A}}^*(a)$ the projection mapping $a$ to the closest point (in the $\ell_2$-norm sense) in $\mathcal{A} = \{\Bar{a} \in \mathbb{R}^r:\Phi_r^\top \Bar{a} \in \mathcal{W}(\alpha, C_{\alpha})\}$. The projection operator $\Pi^*_{\mathcal{A}}$ is used to ensure that in each iteration $t$, the estimator $a^t$ resides inside the parameter space of interest. For any $v \in \mathbb{R}^r$, $\Pi^*_{\mathcal{A}}(v)$ is equivalent to solving the convex optimization problem: $\argmin_{a\in \mathbb{R}^r} \|a - v\|_2^2 $ such that $\sum_{\ell=1}^r \ell^{2\alpha} a_{\ell}^2 \leq C_{\alpha}^2/\pi^{2\alpha}$.
\end{remark}

\begin{remark}[Varying sampling frequency $m$]
We focus on the case when the sampling frequency $m$ is the same across each function.  \Cref{algorithm_mean} can accommodate subject-varying sampling frequencies straightforwardly by adjusting the weights $(bm)^{-1}$ and privacy-related quantities correspondingly. For the minimax optimality, we remark that even for the non-private FDA this is yet studied thoroughly \citep[e.g.][]{zhang2016from, zhang2018optimal}.
\end{remark}

\begin{assumption}[Sampling mechanism]\label{a_sample}
    The observation grids $\{X_{j}^{(i)}\}_{i=1,j=1}^{n,m}$ are independently sampled from a common density function $f_X:[0,1]\rightarrow \mathbb{R}_+$.  For an absolute constant $L > 1$, assume that $f_X\in \mathcal{W}(\alpha,C_\alpha)$ and $0 < 1/L \leq f_X(x) < L < \infty$ for any $x\in [0,1]$.
\end{assumption}

\begin{assumption} \label{a_model} The observations $\{(X_j^{(i)}, Y_j^{(i)})\}_{i=1, j =1}^{n,m}$ are from Model \eqref{mean_model_obs} and in addition, we assume the following holds. \textbf{(a)}. Mean function. The mean function $\mu^* \in \mathcal{W}(\alpha,C_\alpha)$. \textbf{(b)}. Functional noise. The sequence of functional noise $\{U^{(i)}(\cdot)\}_{i=1}^n$ are independent and uniformly sub-Gaussian, i.e.~for any $i \in [n]$, $x \in [0,1]$, it holds that $\|U^{(i)}(x)\|_{\psi_2} \leq C_{U}$, where $C_U >0$ is an absolute constant. We assume that $U^{(i)} \in \mathcal{W}(\alpha,C_\alpha)$ almost surely and $\{U^{(i)}\}_{i=1}^n$ are independent of $\{X_j^{(i)}\}_{i=1,j=1}^{n,m}$. \textbf{(c)}. Measurement error.  The sequence of measurement error $\{\xi_{ij}\}_{i=1,j=1}^{n,m}$are independently distributed sub-Gaussian variables, i.e.~for any $i \in [n]$, $j\in [m]$, it holds that $\|\xi_{ij}\|_{\psi_2} \leq C_{\xi}$, where $C_\xi >0$ is an absolute constant. We further assume that $\{\xi_{ij}\}_{i=1,j=1}^{n,m}$ are independent of $\{U^{(i)}\}_{i=1}^n$ and $\{X_{j}^{(i)}\}_{i=1,j=1}^{n,m}$.
\end{assumption}

Assumption \ref{a_sample} provides details of the sampling mechanism in Model \eqref{mean_model_obs}. We require that the sampling density belongs to a Sobolev class and is bounded away from zero and infinity. This type of assumption is commonly used in both FDA \citep[e.g.][]{cai2011optimal, zhang2016from} and nonparametric statistics \citep[e.g.][]{tsybakov2008introduction} literature to ensure that the observation grids  sufficiently spread out over the interval $[0,1]$. 

Assumption \ref{a_model}(a)~characterizes the smoothness of the true mean function. In Assumptions \ref{a_model}(b)~and \ref{a_model}(c), the tail behavior of both the functional noise and measurement errors are regulated. Similar sub-Gaussian assumptions are adopted in the FDA literature \citep[e.g.][]{cai2024transfer}. In Assumption \ref{a_model}(b), we further regulate the smoothness of functional noise, which is essential to control the sensitivity of the gradient in Algorithm \ref{algorithm_mean}, hence determining the level of noise used in the anisotropic Gaussian mechanism to preserve privacy. This assumption, at a high level, can be achieved by imposing smoothness conditions on the covariance function $\Sigma^*$ \citep[e.g.][]{steinwart2019convergence,henderson2024sobolev}. Examples include random process with the Mat\'ern covariance function of order $\alpha+\zeta$ for some small $\zeta>0$. More discussions on Assumption \ref{a_model} are in \Cref{section_appendix_discussion_assumption_2} in the online supplementary material.

\begin{theorem} \label{thm_mean_upper}
    Let $\{(X_j^{(i)}, Y_j^{(i)})\}_{i=1, j =1}^{n,m}$ be from \eqref{mean_model_obs} satisfying Assumptions \ref{a_sample} and \ref{a_model}.
    \begin{enumerate}[leftmargin=*]
        \item \label{thm_mean_upper_rate-123} With $\epsilon, \delta > 0$ satisfying $4\log(2/\delta) \geq \epsilon$, Algorithm \ref{algorithm_mean} is $(\epsilon,\delta)$-CDP as in \Cref{def_dp}.

        \item \label{thm_mean_upper_rate-1} Initialize the algorithm with $a^0 = 0$, step size $\rho \in (0, L^{-1})$ being an absolute constant with $L$ in \Cref{a_sample} and $r>0$. Suppose that $nm \gtrsim L^2\{r+ \log(T/\eta)\}\log(n)$ and $T = \lceil C_1\log(n)\rceil$ for an absolute constant $C_1 >0$.  For any $\eta < 1/10$, it holds with probability at least $1-10\eta$ that
        \begin{align*}
            & \|\widetilde{\mu} - \mu^*\|_{L^2}^2 \lesssim \eta^{-1}\big\{n^{-1}\log(n)+r(nm)^{-1}\log(n)+ r^{-2\alpha}\big\} \nonumber \\
            & \hspace{1cm} +\big\{(n^2\epsilon^2)^{-1}+r^2(n^2m\epsilon^2)^{-1}\log^2(n/\eta)\big\}\log^2(n)\log(\log(n)/\eta)\log(1/\delta).
        \end{align*}        
        \item \label{thm_mean_upper_rate} Moreover, if $r \asymp_{\log}  (nm)^{1/(2\alpha+1)} \wedge (n^2m\epsilon^2)^{1/(2\alpha+2)} \wedge n^{1/(2\alpha)} \wedge (n^2\epsilon^2)^{1/(2\alpha)}$,
        then 
        \begin{align} \label{eq_minimax_mean_cdp}
             \|\widetilde{\mu} - \mu^*\|_{L^2}^2 =_{\log} O_p\Big\{(nm)^{-\frac{2\alpha}{2\alpha+1}} +(n^2m\epsilon^2)^{-\frac{\alpha}{\alpha+1}} + n^{-1} + (n^2\epsilon^2)^{-1}\Big\}.
        \end{align}
    \end{enumerate}
\end{theorem}

\Cref{thm_mean_upper}.\ref{thm_mean_upper_rate-123} ensures the privacy constraints provided that that $\delta$ is sufficiently small---in favour of us considering that $\delta$ indicates the privacy leakage probability.  \Cref{thm_mean_upper}.\ref{thm_mean_upper_rate-1} shows that, up to poly-logarithmic factors, the upper bound is of the form
    \begin{align} \label{eq_match_rate_22} 
        \|\widetilde{\mu} - \mu^*\|_{L^2}^2 & =_{\log} O_p\Big(\frac{r}{nm} +\frac{r^2}{n^2m\epsilon^2} + \frac{1}{n} + \frac{1}{n^2\epsilon^2} + r^{-2\alpha}\Big) = O_p\{(i)+(ii)+(iii)+(iv)+(v)\}, 
    \end{align}
where the five terms in \eqref{eq_match_rate_22} correspond to the (i) variance in the non-private sparse regime, (ii) variance in the private sparse regime, (iii) variance in the non-private dense regime, (iv) variance in the private dense regime and (v) squared bias due to approximation. 

This echoes common patterns in FDA and DP literature separately. In FDA literature, especially those based on basis expansion type methods \citep[e.g.][]{lin2021basis}, the estimation error is the summation of squared bias due to only using finite $r$ basis to approximate, i.e.~term (v), and variance of estimating an $r$-dimensional coefficient vector, i.e.~terms (i)-(iv).

In DP literature, added noise for privacy preservation (i.e.~terms (ii) and (iv)) and intrinsic data randomness (i.e.~terms (i) and (iii)) come in tandem into estimation error.

With the general results in \Cref{thm_mean_upper}.\ref{thm_mean_upper_rate-1}, in \Cref{thm_mean_upper}.\ref{thm_mean_upper_rate}, we show  that
\begin{align} 
    & \|\widetilde{\mu} - \mu^*\|_{L^2}^2 =_{\log}  O_p\Big\{\inf_r\Big(\frac{r}{nm} \vee  r^{-2\alpha}\Big)+\inf_r\Big(\frac{r^2}{n^2m\epsilon^2} \vee r^{-2\alpha}\Big) + \inf_r\Big(\frac{1}{n}  \vee r^{-2\alpha}\Big) \nonumber \\
    & \hspace{4cm} +  \inf_r\Big(\frac{1}{n^2\epsilon^2} \vee r^{-2\alpha}\Big)\Big\} = (I) + (II) + (III) + (IV). \nonumber 
\end{align}
Letting $r_{\mathrm{I}}$, $r_{\mathrm{II}}$, $r_{\mathrm{III}}$ and $r_{\mathrm{IV}}$ be the values of $r$ minimizing each term separately, an upper bound is achieved by choosing $r = \min\{r_{\mathrm{I}}, r_{\mathrm{II}}, r_{\mathrm{III}}, r_{\mathrm{IV}}\}$, i.e.~as per in \Cref{thm_mean_upper}.\ref{thm_mean_upper_rate}.

We conclude this subsection with a few remarks on Algorithm \ref{algorithm_mean} and Theorem \ref{thm_mean_upper}.

\begin{remark}[Sample-splitting] \label{remark_sample_splitting}
    In \Cref{algorithm_mean}, we operate with different batches of data in different iterations.  Sample splitting enforces the independence of data when analyzing the sensitivity of gradients of interest.  This is also crucial in constructing algorithms in the FDP framework discussed in Sections \ref{section_mean_fdp} and \ref{section_vcm_fdp}.  Despite the reasonable concern of accuracy loss due to the sample splitting, we show in \Cref{section_mean_cdp_low} that \Cref{algorithm_mean} is optimal. Further discussions can be found in \Cref{section_appendix_discussion_sample_splitting} in the online supplementary material.
\end{remark} 

\begin{remark}[Privacy guarantee]
    To show that Algorithm \ref{algorithm_mean} satisfies the CDP constraint, it suffices to show that in each iteration, an $(\epsilon, \delta)$-CDP is guaranteed. Given that fresh data batches are utilized in each iteration, the overall privacy guarantee for Algorithm \ref{algorithm_mean} follows from the parallel composition property \citep[e.g.][]{smith2021making}. The privacy guarantee for~$\widetilde{\mu}$ then follows by the post-processing property \citep[e.g.][]{dwork2014algorithmic}.
\end{remark}

\begin{remark}[Gaussian mechanism]
    We use anisotropic Gaussian mechanism in Algorithm \ref{algorithm_mean}. The desirable term $(n^2\epsilon^2)^{-1}$ in \eqref{eq_minimax_mean_cdp}, arising from controlling the privacy-preservation noise, consolidates its significance. In our analysis, as a consequence of smoothness properties of sampling density $f_X$, mean function $\mu^*$ and random noise function $U$, we can control entry-wise truncation levels $R_{\ell}$ by its estimation variance plus the coefficients decay rates,  $\ell^{-\alpha}$, from projecting $\alpha$-Sobolev functions to the $\ell$th Fourier basis in \eqref{eq_fourier_basis}. The dimension-independent rate of $(n^2\epsilon^2)^{-1}$ (i.e.~independent of $r$) is a direct consequence of summable property of $\ell^{-\alpha}$, and is not achievable with the standard Gaussian~mechanism.

\end{remark}

\subsection{Optimality and minimax lower bounds}\label{section_mean_cdp_low}
To understand the optimality of \Cref{thm_mean_upper}, we present a minimax lower bound below. Additional discussions on minimax risks and more phase transition phenomena can be found in Sections \ref{section_appendix_disscussion_mean_cdp} and \ref{section_appendix_phase} in the online supplementary material. 

\begin{theorem} \label{thm_mean_lower}
    Denote $\mathcal{P}_X$ the class of sampling distributions satisfying Assumption \ref{a_sample} and~$\mathcal{P}_Y$ the class of distributions for noisy observations satisfying Assumptions \ref{a_model}.  Suppose that $\delta \sqrt{\log(1/\delta)}\lesssim n^{-(2\alpha+3)/(2\alpha+2)}m^{-(8\alpha+9)/(4\alpha+4)}\epsilon^{-1/(2\alpha+2)}$.
    It then holds that 
    \begin{align} \label{eq_minimax_original}
        \underset{Q \in \mathcal{Q}_{\epsilon, \delta}}{\inf} \underset{\widetilde{\mu}}{\inf} \underset{P_X \in \mathcal{P}_X, P_Y \in \mathcal{P}_Y}{\sup} \mathbb{E}_{P_X, P_Y, Q}\|\widetilde{\mu}- \mu^*\|_{L^2}^2 \gtrsim  (nm)^{-\frac{2\alpha}{2\alpha+1}} \vee (n^2m\epsilon^2)^{-\frac{\alpha}{\alpha+1}} \vee n^{-1} \vee (n\epsilon)^{-2},
    \end{align}
    where $\mathcal{Q}_{\epsilon, \delta}$ is any mechanism satisfying $(\epsilon,\delta)$-CDP defined in Definition \ref{def_dp} with $\epsilon \in (0,1)$.
\end{theorem}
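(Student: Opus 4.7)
My plan is to prove the four summands on the right-hand side of \eqref{eq_minimax_original} separately, each by exhibiting a sub-family of distributions in $\mathcal{P}_X\times\mathcal{P}_Y$ under which the named rate is already a lower bound; taking the maximum then recovers the $\vee$-bound. The two non-private rates $(nm)^{-2\alpha/(2\alpha+1)}$ and $n^{-1}$ follow after simply discarding the privacy constraint, since any lower bound on $\inf_{\widetilde\mu}\sup\mathbb{E}\|\widetilde\mu-\mu^*\|_{L^2}^2$ is \emph{a fortiori} a lower bound on the infimum restricted to $\mathcal{Q}_{\epsilon,\delta}$. The two private rates $(n^2m\epsilon^2)^{-\alpha/(\alpha+1)}$ and $(n\epsilon)^{-2}$ require a DP-aware Van Trees argument that builds on \citet{cai2024optimal}, treating each of the $n$ discretely observed functions as a single user-level record in the sense of Remark~\ref{remark_user_level}.

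For the non-private sparse rate I would carry out an Assouad construction: partition $[0,1]$ into $r$ equal bins, place a bump $g_\ell$ of $L^2$-mass $\asymp r^{-(2\alpha+1)/2}$ on bin $\ell$, and form $\mu_\omega=\sum_{\ell=1}^r\omega_\ell g_\ell$ for $\omega\in\{0,1\}^r$. A direct Sobolev computation places each $\mu_\omega$ in $\mathcal{W}(\alpha,C_\alpha)$, and under sub-Gaussian noise the KL between the joint laws of $\{(X_j^{(i)},Y_j^{(i)})\}$ at two $\omega$'s differing in one coordinate is of order $nm\,r^{-(2\alpha+1)}$; Fano's inequality with $r\asymp(nm)^{1/(2\alpha+1)}$ closes the bound. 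For the non-private dense rate I would take $U^{(i)}(x)=Z^{(i)}$ with $Z^{(i)}\sim\mathcal{N}(0,\sigma_U^2)$, which injects an irreducible per-function variance, and apply Le~Cam to $\mu^*\equiv\pm c\,n^{-1/2}$: no matter how large $m$ is, the minimal sufficient statistic for the constant mean is a scalar per function, so the effective sample size is $n$.

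For the two private rates I would parametrise $\mu_\theta=\sum_{\ell=1}^r\theta_\ell\phi_\ell$ and place independent smooth priors $\pi_\ell$ supported on $[-c_0\ell^{-\alpha},c_0\ell^{-\alpha}]$, so that $\mu_\theta\in\mathcal{W}(\alpha,C_\alpha)$ almost surely and $I(\pi_\ell)\asymp\ell^{2\alpha}$. The Van Trees inequality then gives
\begin{equation*}
\inf_{Q\in\mathcal{Q}_{\epsilon,\delta}}\inf_{\widetilde\mu}\sup_{P}\mathbb{E}\|\widetilde\mu-\mu_\theta\|_{L^2}^2\;\gtrsim\;\sum_{\ell=1}^{r}\frac{1}{I_Z(\theta_\ell)+I(\pi_\ell)},
\end{equation*}
where $I_Z(\theta_\ell)$ is the Fisher information in the full privatised transcript. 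A group-privacy/coupling argument, extending the relevant lemma of \citet{cai2024optimal} from record-level to user-level CDP, produces $I_Z(\theta_\ell)\lesssim n^2\epsilon^2\,I_{\mathrm{func}}(\theta_\ell)+o(1)$, with $I_{\mathrm{func}}$ the per-function Fisher information and the $o(1)$ absorbing the $\delta$-correction under the stated hypothesis on $\delta$. In the \emph{sparse} construction, taking $\sigma_U^2=0$ so that observations within a function are conditionally independent yields $I_{\mathrm{func}}(\theta_\ell)\asymp m$; the Van Trees sum becomes $\sum_\ell(n^2m\epsilon^2+\ell^{2\alpha})^{-1}$, and optimising $r\asymp(n^2m\epsilon^2)^{1/(2\alpha+2)}$ produces $(n^2m\epsilon^2)^{-\alpha/(\alpha+1)}$. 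In the \emph{dense} construction, taking $\sigma_U^2>0$ constant in $x$ gives $I_{\mathrm{func}}(\theta_1)=O(1)$, and the choice $r=1$ yields $(n\epsilon)^{-2}$.

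The main technical hurdle is establishing the user-level CDP Fisher-information inequality with the correct $n^2\epsilon^2$ scaling while keeping $I_{\mathrm{func}}$ sharp. A naive application of group privacy at the observation level would treat one function as $m$ observations and lose a factor of $m^2$; the remedy is to count each function as a single record in the sense of Definition~\ref{def_dp} and Remark~\ref{remark_user_level}, and then apply the coupling bound $\mathrm{KL}(M(P_\theta^{\otimes n})\,\|\,M(P_{\theta'}^{\otimes n}))\lesssim n^2\epsilon^2\,\mathrm{TV}(P_\theta,P_{\theta'})^2$ directly on the per-function laws $P_\theta$. A secondary difficulty is verifying Assumptions~\ref{a_sample}--\ref{a_model} uniformly across each perturbation family—in particular the sub-Gaussian tails and Sobolev membership of $U^{(i)}$—which is handled by taking $U^{(i)}$ either identically zero (sparse) or Gaussian with a constant spatial direction (dense), both of which lie trivially in $\mathcal{W}(\alpha,C_\alpha)$.
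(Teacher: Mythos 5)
Your overall decomposition (four separate constructions, maximum of the four bounds) matches the paper in spirit: the paper likewise gets $n^{-1}\vee(n\epsilon)^{-2}$ by collapsing $\mu^*+U$ to an unknown constant and invoking private univariate mean estimation \citep{cai2021cost}, and gets $(nm)^{-2\alpha/(2\alpha+1)}$ from the non-private FDA literature, so those parts of your plan are fine. For the private sparse term, however, the paper uses the score-attack (tracing) argument of \citet{cai2023score} rather than Van Trees, and it is exactly at the point where your Van Trees route diverges that there is a genuine gap. Your key lemma is a \emph{per-coordinate} contraction $I_Z(\theta_\ell)\lesssim n^2\epsilon^2 I_{\mathrm{func}}(\theta_\ell)\asymp n^2m\epsilon^2$, obtained from a KL--TV coupling bound applied to one-coordinate perturbations. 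This is too weak to give the target exponent: it only controls each diagonal entry of the transcript Fisher information, so the trace can still be as large as $r\,n^2m\epsilon^2$, and privacy's curse of dimensionality is lost. Even taking your bound at face value, your displayed sum $\sum_{\ell\le r}(n^2m\epsilon^2+\ell^{2\alpha})^{-1}$ with $r\asymp(n^2m\epsilon^2)^{1/(2\alpha+2)}$ evaluates to $r/(n^2m\epsilon^2)\asymp(n^2m\epsilon^2)^{-(2\alpha+1)/(2\alpha+2)}$, not $(n^2m\epsilon^2)^{-\alpha/(\alpha+1)}$; and optimising the prior radius/truncation level instead only yields $(n^2m\epsilon^2)^{-2\alpha/(2\alpha+1)}$. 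Both are strictly weaker than the claimed rate, so the matching lower bound is not established by your argument as written.

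What is actually needed is a \emph{dimension-free} bound on the aggregate information in the transcript: either the trace bound $\tr\{I_Z(a)\}\lesssim n^2m\epsilon^2$ (independent of $r$), which combined with the multivariate Van Trees inequality — numerator $r^2$, prior Fisher information $J(\pi)\asymp r^{2\alpha+2}$ from a product prior at radius $\asymp r^{-(2\alpha+1)/2}$ — gives $r^2/(n^2m\epsilon^2+r^{2\alpha+2})$ and hence $(n^2m\epsilon^2)^{-\alpha/(\alpha+1)}$ (this is precisely how the paper argues in the FDP case, Proposition~\ref{prop_mean_fdp}); or, as in the paper's CDP proof, the score-attack argument in which soundness under $(\epsilon,\delta)$-DP bounds $\sum_{i}\mathbb{E}A_i\lesssim n\epsilon\sqrt{m\,\mathbb{E}\|M-a\|_2^2}+\delta n m^2 r^{3/2}\sqrt{\log(1/\delta)}$ while completeness forces $\sum_i\mathbb{E}A_i\gtrsim r$, yielding $\mathbb{E}\|M-a\|_2^2\gtrsim r^2/(n^2m\epsilon^2)$. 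Establishing that dimension-free control (and not the coordinate-wise one) is the technical heart of the result, so your plan is missing its central ingredient. Relatedly, the $\delta$-correction is not an "$o(1)$ to be absorbed": in the paper it appears as the explicit term $\delta n m^2 r^{3/2}\sqrt{\log(1/\delta)}$, and requiring it to be dominated by $r$ at the optimal $r\asymp(n^2m\epsilon^2)^{1/(2\alpha+2)}$ is exactly what produces the stated hypothesis on $\delta$; your argument would need to track an analogous term to justify any condition on $\delta$ at all. The dense private piece ($r=1$) is unaffected by this issue, and your non-private constructions are standard and fine.
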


Up to poly-logarithmic factors, the lower bound in \Cref{thm_mean_lower} matches the upper bound in \Cref{thm_mean_upper}.\ref{thm_mean_upper_rate}.  At a high level, the lower bound is obtained by analysing two distinct setups: one where the random function is a constant random function and another where the mean function can be represented by a linear combination of finite basis functions. In the former case, the problem is essentially reduced to a univariate mean estimation problem and yields a lower bound $n^{-1}+(n^2\epsilon^2)^{-1}$ \citep[e.g.][]{cai2021cost}. In the latter case, applying the score attack technique introduced in \citet{cai2023score}, we obtain the remaining terms.  

Note that \citet{mirshani2019formal} and \cite{lin2023differentially} also study functional mean estimation under CDP constraint, but only handles fully observed functional data.

We exploit the minimax rate to unveil a number of new phase transition phenomena. An analysis of all possible regimes is depicted in \Cref{fig_phase_transition}. More details are deferred to \Cref{section_appendix_phase} in the online supplementary material.

The first regime $0 < m \lesssim n^{1/(2\alpha)}$ corresponds to the sparse regime in traditional FDA when there are no privacy constraints \citep[e.g.][]{cai2011optimal}. In this regime, the non-private rate only consists of the sparse term.
Once $m \gtrsim n^{1/(2\alpha)}$, as seen in the middle and right panels, non-private rates are only in the dense regime, appearing in the term $n^{-1}$.  It is interesting to see that the rightmost panel implies that $m \asymp n^{1/\alpha}$ serves as a private sparse-dense transition boundary.

From a DP-oriented standpoint, the dense and sparse rates together determine the dense-sparse phase transition boundaries for non-private and private cases, $m \asymp n^{1/(2\alpha)}$ and $m \asymp (n^2\epsilon^2)^{1/\alpha}$, respectively. In the high privacy regime $0 < \epsilon \lesssim n^{-1/2}$, the rate is dominated by private rates only, with a phase transition from private sparse to private dense regimes at $m\asymp (n^2\epsilon^2)^{1/\alpha}$. Once $\epsilon~\gtrsim~n^{-1/2}$, only the non-private phase transition can be observed when the sampling frequency $m$ exceeds $n^{1/\alpha}$, while the private phase transition is obscured by the fact that $n^{1/\alpha} \lesssim (n^2\epsilon^2)^{1/\alpha}$.

In addition to phase transition phenomena, one key motivation for studying the minimax rates is to quantify the cost of privacy.

As for the dense rates, $n^{-1}$ and $(n^2\epsilon^2)^{-1}$ exhibit the same non-private-vs.-CDP parametric rate pattern in the literature \citep[e.g.][]{cai2021cost}.  
    
The sparse rates, as shown in \eqref{eq_minimax_mean_cdp}, are $(nm)^{-2\alpha/(2\alpha+1)}$ and $(n^2m\epsilon^2)^{-\alpha/(\alpha+1)}$. The cost of preserving privacy is reflected in a reduction of the effective sample size from $nm$ to $n^2m\epsilon^2$ and a loss in the exponent. The effect on $n$ changes from $n$ to $n^2\epsilon^2$ is consistent with the CDP property, while the change of exponent from $2\alpha/(2\alpha+1)$ to $\alpha/(\alpha+1)$ aligns with the CDP literature on non-parametric statistics \citep[e.g.][]{cai2023score}. A unique feature in private sparse FDA is that one function consists of $m$ observations, suggesting the user-level DP (see \Cref{remark_user_level} in \Cref{section_mean_cdp_setup}). This explains the effect on $m$ changes from $m$ to $m\epsilon^2$, which is of the same pattern in the user-level CDP literature \citep[e.g.][]{levy2021learning}.  

\begin{figure}[!htbp]
    \centering
    \includegraphics[width=0.85\linewidth]{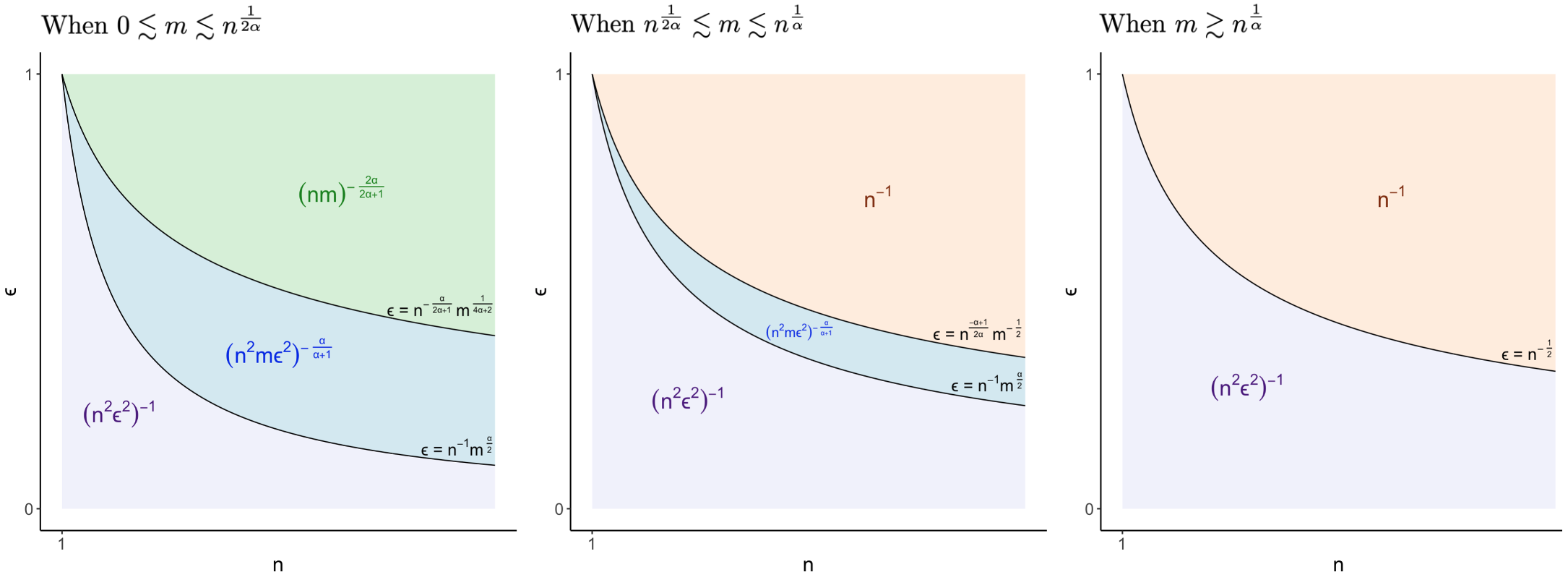}
    \caption{An illustration of phase transition phenomena for functional mean estimation under CDP constraint. All the rates are up to poly-logarithmic factors.}
    \label{fig_phase_transition}
\end{figure}

\vspace{-2em}

\section{Functional mean estimation with FDP}\label{section_mean_fdp}
In this section, we consider the case when $\{(X_j^{(i)}, Y_j^{(i)})\}_{i=1, j =1}^{N,m}$ from Model \eqref{mean_model_obs} are observed across $S$ servers. For any $s\in [S], i\in [n_s]$ and $j\in [m]$, we rewrite the model~as
\vspace{-1em}
\begin{equation}\label{fdp_mean_obs}
     Y_j^{(s,i)} = \mu^*(X_j^{(s,i)}) + U^{(s,i)}(X_j^{(s,i)}) + \xi_{s,ij},
     \vspace{-1em}
\end{equation}
with the additional index $s$ indicating the server and $N = \sum_{s \in [S]}n_s$. 

\vspace{-2em}
\subsection{Federated private functional mean estimation} \label{section_mean_fdp_up}
With the building block \Cref{algorithm_mean}, a functional mean estimation algorithm adhering to the $(\bm{\epsilon},\bm{\delta},T)$-FDP constraint is presented in \Cref{algorithm_fdp_mean}. It gathers private gradients from every single server obtained from \Cref{algorithm_mean}, with a total of $T$-round interactions.  Theoretical guarantees are collected in \Cref{fdp_thm_mean_up}.

\begin{algorithm}[!htbp]
    \caption{Federate differentially private mean function estimation} \label{algorithm_fdp_mean}
    \begin{algorithmic}[1]
        \Require Data $\{\{(X^{(s,i)}_j, Y^{(s,i)}_j)\}_{i=1,j=1}^{n_s,m}\}_{s=1}^S$, number of basis $r$, step size $\rho$, number of iterations $T$, weights $\{\nu_s\}_{s=1}^S$, constant $C_R$, privacy parameters $\epsilon$, $\delta$, initialization $a^0$, failure probability $\eta$.
        \State Set $b_s = \lfloor n_s/T \rfloor$, $s
        \in [S]$, $N = \sum_{s=1}^S n_s$ and $R_\ell = C_R\{\sqrt{m^{-1}\log^2(N/\eta)}+\ell^{-\alpha}\}$, $\ell \in [r]$.
        \For{$t = 0 ,\ldots, T-1$}
        \For{$s = 1, \ldots, S$}
        \State Set $\tau_{s,t} = tb_s$. Generate $w_{s,t} \in \mathbb{R}^{r}$ with $w_{s,t} \sim N(0, \Sigma_s)$, where $\Sigma_s =\mathrm{diag}(\sigma_{s,1}^2, \ldots, \sigma_{s,r}^2)$ and for any $\ell \in [r], \sigma_{s,\ell}^2 = 16\log(2/\delta_s)R_\ell(\sum_{k=1}^r R_k)/(b_s^2\epsilon_s^2)$;
        \State $M^{t}_s =   \frac{1}{b_s}\sum_{i=1}^{b_s}\Pi^{\mathrm{entry}}_{R}\Big[\frac{1}{m}\sum_{j=1}^m \Phi_r(X^{(s,\tau_{s,t}+i)}_j)\big\{\Phi^{\top}_r(X^{(s,\tau_{s,t}+i)}_j)a^{t}-Y^{(s,\tau_{s,t}+i)}_j \big\}\Big] + w_{s,t}$.
        \EndFor
        \State $a^{t+1} = \Pi^{*}_{\mathcal{A}}\{a^t - \rho\sum_{s=1}^S \nu_s M_s^t\}$.
        \EndFor
        \Ensure $\widetilde{a} = a^T$ and $\widetilde{\mu} = \Phi_r^\top \widetilde{a}$.
    \end{algorithmic}
\end{algorithm}

\begin{theorem} \label{fdp_thm_mean_up} 
    Let $\{(X_j^{(s,i)}, Y_j^{(s,i)})\}_{i, j, s = 1}^{n_s,m, S}$ be from Model \eqref{fdp_mean_obs} satisfying Assumptions~\ref{a_sample}~and~\ref{a_model}.
    \begin{enumerate}[leftmargin=*]
        \item With input $\epsilon_s, \delta_s > 0$ such that $4\log(2/\delta_s) \geq \epsilon_s$ for any $s\in [S]$, Algorithm \ref{algorithm_fdp_mean} satisfies the $(\bm{\epsilon}, \bm{\delta},T)$-FDP defined in Definition \ref{def_fdp}.

        \item Initialize \Cref{algorithm_fdp_mean} with $a^0 = 0$, step size $\rho \in (0, L^{-1})$ being an absolute constant with $L$ in \Cref{a_sample}, and $r >0$. Let the number of iterations $T\asymp \log(\sum_{s=1}^S n_s)$, and for each $s\in [S]$, let the weight $\nu_s \asymp_{\log} u_s(\sum_{s=1}^S u_s)^{-1}$, with $u_s = \{r(n_sm)^{-1} \vee r^2(n_s^2m\epsilon_s^2)^{-1} \vee n_s^{-2}\vee (n_s\epsilon)^{-2}\}^{-1}$.
        Suppose that
        \begin{align} \label{fdp_thm_mean_up_r}
            r\log^2(Tr/\eta) \lesssim \bigg(\sum_{s=1}^S \frac{T\nu_s^2}{n_sm}\bigg)^{-1}  \quad \text{and} \quad r\log(Tr/\eta) \lesssim \Big(\sup_{s\in[S]} \frac{T\nu_s}{n_sm}\Big)^{-1}.
        \end{align}
        It then holds that
        \begin{align} \label{fdp_thm_mean_up_hete}
            \|\widetilde{\mu} - \mu^*\|_{L^2}^2 =_{\log} O_P\Big[r^2\Big\{\sum_{s=1}^S (rn_sm \wedge n_s^2m\epsilon_s^2 \wedge r^2n_s \wedge r^2n_s^2\epsilon_s^2)\Big\}^{-1} + r^{-2\alpha}\Big].
        \end{align}

        \item If we additionally assume that in a homogeneous setting,  where for any $s\in [S]$, the number of observations~$n_s$ and the privacy budgets $(\epsilon_s, \delta_s)$ are of the same order, i.e.~$n_s \asymp n$, $\epsilon_s \asymp \epsilon$ and $\delta_s \asymp \delta$, then  by selecting the number of basis $r$ as $r \asymp_{\log}  (Snm)^{1/(2\alpha+1)} \wedge (Sn^2m\epsilon^2)^{1/(2\alpha+2)} \wedge (Sn)^{1/(2\alpha)} \wedge (Sn^2\epsilon^2)^{1/(2\alpha)}$,
        it holds that
        \begin{align} \label{fdp_thm_mean_up_homo}
            \|\widetilde{\mu} - \mu^*\|_{L^2}^2 =_{\log} O_P\Big\{(Snm)^{-\frac{2\alpha}{2\alpha+1}} + (Sn^2m\epsilon^2)^{-\frac{\alpha}{\alpha+1}}  + (Sn)^{-1} + (Sn^2\epsilon^2)^{-1}\Big\}.
        \end{align}
    \end{enumerate}
\end{theorem}

\begin{remark}[Assumptions on $r$] \label{remark_mean_fdp_r}
    The assumptions on $r$ in \eqref{fdp_thm_mean_up_r} both reduce to $r \lesssim_{\log} Snm$ in a homogeneous setting, automatically satisfied by construction. In the heterogeneous setting, it requires a mild lower bound on $r$ but not contradicting its optimal choice, provided that $\epsilon$ is not too large. See \Cref{remark_fdp_mean_r} in the online supplementary material for details.
\end{remark}

Theorem \ref{fdp_thm_mean_up} presents the privacy guarantee and an upper bound on the estimation error for Algorithm \ref{algorithm_fdp_mean}. In each iteration of Algorithm \ref{algorithm_fdp_mean}, a fresh batch of data is used from every server, and the overall procedure fits into the FDP mechanism defined in Definition~\ref{def_fdp}. Note when $S=1$, i.e.~the single-server scenario, the upper bound in \eqref{fdp_thm_mean_up_homo} recovers the one in \eqref{eq_minimax_mean_cdp}. 

It can be seen from \Cref{fdp_prop_mean_up} in the online supplementary material and the single-server result in \eqref{eq_match_rate_22}, that with a large probability, the upper bound \eqref{fdp_thm_mean_up_hete} is of the form
\begin{align*}
    & \|\widetilde{\mu} - \mu^*\|_{L^2}^2 \lesssim_{\log} r^{-2\alpha} + \sum_{s = 1}^S \nu_s^2\Big\{r(n_sm)^{-1} + r^2(n_s^2 m \epsilon^2_s)^{-1} + n_s^{-1} + (n_s \epsilon_s)^{-1} \Big\}  \\ 
    \asymp \;& \mbox{ squared bias}+ \sum_{s = 1}^S (\mbox{weight of server }s)^2\times (\mbox{variance of est. }r\mbox{-dim. object in server }s).
\end{align*}
A Lagrange multiplier argument leads to the choice of weights $\nu_s$ 
which is the contribution of the variance yielded from the server $s$ to the harmonic mean of variances. As a consequence, the overall estimation error is of the form
\vspace{-1em}
\begin{align*}
    (S^{-1}\times \mbox{ harmonic mean of var from all local servers}) \, \vee \, \mbox{squared bias from approximation}.
\end{align*}
\vskip -.5cm
This echos findings in federated nonparametric estimation problems in \cite{cai2024optimal}.

\subsection{Optimality and minimax lower bounds} \label{section_mean_fdp_low}
\begin{theorem} \label{fdp_thm_mean_low} 
    Denote $\mathcal{P}_X$ the class of sampling distributions satisfying Assumption \ref{a_sample} and $\mathcal{P}_Y$ the class of distributions for noisy observations satisfying Assumptions \ref{a_model}.
    \begin{enumerate}[leftmargin=*]
        \item (Heterogeneous design.)\label{fdp_thm_mean_low_hete} Suppose that $\epsilon_s \in (0,1)$ and $\delta_s\log(1/\delta_s)  \lesssim r_0^{-1}m^{-3}\epsilon_s^2$ for any $s \in [S]$ with $r_0 \in \mathbb{Z}_+$ satisfying 
        $r_0^{2\alpha + 2} \asymp \sum_{s=1}^S  \{r_0n_sm \wedge T^{-1}n_s^2m\epsilon_s^2\} $.  It then holds that 
        \begin{align}  \notag
            & \underset{Q \in \mathcal{Q}^T_{\bm{\epsilon}, \bm{\delta}}}{\inf} \underset{\widetilde{\mu}}{\inf} \underset{P_X \in \mathcal{P}_X, P_Y \in \mathcal{P}_Y}{\sup} \mathbb{E}_{P_X, P_Y, Q}\|\widetilde{\mu}- \mu^*\|_{L^2}^2 \\ \label{fdp_thm_mean_low_eq1}
            \gtrsim\; &  \sup_{r \in \mathbb{Z}_+} \Big[r^2\Big\{\sum_{s=1}^S  (rn_sm \wedge T^{-1}n_s^2m\epsilon_s^2) + r^{2\alpha + 2}\Big\}^{-1}\Big] \vee \Big\{\sum_{s=1}^S (n_s \wedge T^{-1}n_s^2\epsilon_s^2)\Big\}^{-1},
        \end{align}
        where $\mathcal{Q}^T_{\bm{\epsilon}, \bm{\delta}}$ is any mechanisms satisfying $(\bm{\epsilon},\bm{\delta},T)$-FDP defined in Definition \ref{def_fdp}.

        \item (Homogeneous design.)\label{fdp_thm_mean_low_homo} Suppose additionally, for any $s\in [S]$, the number of observations~$n_s$ and the privacy budgets $(\epsilon_s, \delta_s)$ are of the same order, i.e.~$n_s \asymp n$, $\epsilon_s \asymp \epsilon$ and $\delta_s \asymp \delta$. When $\delta\log(1/\delta)\lesssim \{S^{-1/(2\alpha+1)}n^{-1/(2\alpha+1)}m^{-(6\alpha+4)/(2\alpha+1)}\epsilon^2\} \wedge \{S^{-1/(2\alpha+2)}n^{-1/(\alpha+1)}$ $\times m^{-(6\alpha+7)/(2\alpha+2)}\epsilon^{(2\alpha+1)/(\alpha+1)}\}$, 
        it  holds for any $T \in [\min_{s\in[S]} n_s]$~that
        \begin{align} \notag
            & \underset{Q \in \mathcal{Q}^T_{\bm{\epsilon}, \bm{\delta}}}{\inf} \underset{\widetilde{\mu}}{\inf} \underset{P_X \in \mathcal{P}_X, P_Y \in \mathcal{P}_Y}{\sup} \mathbb{E}_{P_X, P_Y, Q}\|\widetilde{\mu}- \mu^*\|_{L^2}^2 \\ \label{fdp_thm_mean_low_eq2}
            & \hspace{3cm} \gtrsim  (Snm)^{-\frac{2\alpha}{2\alpha+1}} \vee (Sn^2m\epsilon^2)^{-\frac{\alpha}{\alpha+1}} \vee (Sn)^{-1} \vee (Sn^2\epsilon^2)^{-1}.
        \end{align}
        \end{enumerate}
\end{theorem}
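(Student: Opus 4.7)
My plan is to establish the heterogeneous lower bound \eqref{fdp_thm_mean_low_eq1} by two separate reductions---one to a constant-mean-function family (yielding the scalar $1/\sum_s(n_s\wedge T^{-1}n_s^2\epsilon_s^2)$ term) and one to an $r$-parameter Fourier-subspace family (yielding the $r^2/[\cdots]$ supremum term)---and take the maximum of the two. The homogeneous bound \eqref{fdp_thm_mean_low_eq2} will then follow by substituting $n_s=n,\epsilon_s=\epsilon$ and optimising $r$ in the supremum. Throughout, I would use the distributed Van Trees inequality of \citet{cai2024optimal}, extended to the interactive multi-round setting where the central server may feed aggregated transcripts back to local silos, by chaining the DP-Fisher information inequality round by round.

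\textbf{Univariate reduction.} For the univariate term, restrict to $\mu^{*}(\cdot)\equiv\theta$ scalar, $U^{(s,i)}(\cdot)\equiv c^{(s,i)}$ with $c^{(s,i)}$ i.i.d.\ bounded centred random variables of constant variance $\sigma_U^2$, and negligible measurement error. Since every observation of user $(s,i)$ equals $\theta + c^{(s,i)}$, the per-user sufficient statistic for $\theta$ is a single bounded random variable with constant user-level Fisher information (the within-user factor of $m$ is killed by the dominant between-user variance). Placing a smooth Gaussian prior on $\theta$ and invoking the one-dimensional federated Van Trees inequality, I would bound the total Fisher information of the transcripts $M^{(T)}$ via the chain rule $J_\theta(M^{(T)}) \le \sum_{s,t} J_\theta(Z_s^t \mid M^{(t-1)})$, controlling each summand by the minimum of its non-private value $b_s^t$ and the user-level CDP-Fisher bound $(b_s^t)^2\epsilon_s^2$. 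Summing over rounds with $\sum_t b_s^t = n_s$ gives $\min(n_s, n_s^2\epsilon_s^2/T)$ per server, and Van Trees then yields the announced bound.

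\textbf{Finite-dimensional reduction.} For the first term of \eqref{fdp_thm_mean_low_eq1}, fix $r\in\mathbb{Z}_+$ and restrict to $\mu^{*}=\sum_{\ell=1}^r\theta_\ell\phi_\ell$ with $\theta \in \Theta_r:=\{\theta:\sum_\ell\ell^{2\alpha}\theta_\ell^2\le C_\alpha^2/\pi^{2\alpha}\}$, $U^{(s,i)}\equiv 0$, $\xi_{s,ij}$ i.i.d.\ Gaussian, and $f_X$ uniform. Equip $\theta$ with a product prior $\pi$ supported on $\Theta_r$ whose per-coordinate variance scales as $\ell^{-2\alpha-1}$, so that $\mathrm{tr}(J_\pi)\asymp r^{2\alpha+2}$. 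Applying the multivariate Van Trees inequality gives
\begin{align*}
\mathbb{E}\|\widetilde{\mu}-\mu^{*}\|_{L^2}^2 \;\gtrsim\; \frac{r^2}{\mathrm{tr}(J_\theta(M^{(T)})) + \mathrm{tr}(J_\pi)},
\end{align*}
and the trace of the transcript Fisher information is controlled by $\mathrm{tr}(J_\theta(M^{(T)}))\le\sum_{s,t}\mathrm{tr}(J_\theta(Z_s^t\mid M^{(t-1)}))$. For each round-server term, I would take the minimum of (i) its non-private counterpart, of order $r\,b_s^t\, m$, with the extra $r$ arising from summing per-coordinate Fisher information $b_s^t m$ across $r$ coordinates, and (ii) the user-level DP-Fisher bound, of order $(b_s^t)^2 m\epsilon_s^2$, obtained from the CDP-Fisher information inequality of \citet{cai2024optimal} together with the fact that the per-user score vector has $\ell_\infty$-norm $O(\sqrt m)$ after truncation to a sub-Gaussian high-probability set. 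Summing over rounds with $\sum_t b_s^t = n_s$ produces the claimed trace bound $\sum_s(rn_sm\wedge n_s^2m\epsilon_s^2/T)$; combining with $\mathrm{tr}(J_\pi)$ and taking the supremum over $r$ yields the first term of \eqref{fdp_thm_mean_low_eq1}.

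\textbf{Main obstacles.} The principal difficulty is the interactive multi-round protocol: because the central server may relay aggregated messages back to local silos, the $(\epsilon_s,\delta_s)$-CDP guarantee on $Z_s^t$ holds only \emph{conditionally} on the past transcripts $M^{(t-1)}$. Chaining the DP-Fisher information inequality across the $T$ rounds therefore requires a careful conditional argument combined with the parallel-composition property across the disjoint batches $\{D_s^t\}_{t=1}^T$. A related subtlety in the finite-dimensional reduction is that obtaining the sharp factor of $m$ in the private trace bound hinges on controlling the $\ell_\infty$-sensitivity of the per-user score by $O(\sqrt m)$ rather than $O(m)$, which requires truncation to a sub-Gaussian high-probability set whose failure mass must be absorbed into the $\delta$ budget---exactly the reason for the delicate $\delta\log(1/\delta)$ smallness conditions in the statement. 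Once these Fisher information bounds are established, \eqref{fdp_thm_mean_low_eq2} follows by substituting $n_s=n,\epsilon_s=\epsilon$ into \eqref{fdp_thm_mean_low_eq1} and letting the four optimal values of $r$ produce the four terms of the stated minimax rate, with the factors of $T$ absorbed into poly-logarithmic slack via $T\asymp\log n$.
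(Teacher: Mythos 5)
Your proposal follows essentially the same route as the paper's proof: the same two-construction reduction (a constant-random-function family giving the univariate term $1/\sum_s\{n_s\wedge T^{-1}n_s^2\epsilon_s^2\}$, and an $r$-dimensional Fourier family with a prior whose Fisher information is $\asymp r^{2\alpha+2}$ giving the sparse term), the same multivariate Van Trees inequality, the same conditional chain-rule decomposition $\sum_{s,t}I_{Z_s^t\mid M^{(t-1)}}$ made possible by the fresh batches, and the same per-round Fisher-information bounds $\min\{r b_s^t m,\,(b_s^t)^2 m\epsilon_s^2\}$ (respectively $\min\{b_s^t,(b_s^t)^2\epsilon_s^2\}$), with the $\delta\log(1/\delta)$ smallness conditions arising from the score-attack tail term exactly as in the paper (where the key factor $m$ comes from the operator norm of the per-user score covariance rather than an $\ell_\infty$ bound on the score, but this is the same idea). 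The one genuine slip is your final remark that the factors of $T$ are ``absorbed into poly-logarithmic slack via $T\asymp\log n$'': in the lower bound $T$ is a parameter of the mechanism class, allowed to be as large as $\min_s n_s$, so it cannot be treated as a logarithmic quantity; the correct and simpler argument, which is the paper's, is that the heterogeneous bound \eqref{fdp_thm_mean_low_eq1} is increasing in $T$, so its value at $T=1$ (which is exactly the homogeneous rate after optimising $r$) is a valid lower bound for every admissible $T$ — your own displayed bound with the $T^{-1}$ factors already implies \eqref{fdp_thm_mean_low_eq2} for all $T\in[\min_s n_s]$ with no bookkeeping needed.
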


\vskip -.5cm
The core of the proof is an application of the Van-Trees inequality \citep[e.g.][]{gill1995applications}. To the best of our knowledge, \citet{cai2024optimal} is the first to adopt the Van-Trees method in FDP minimax lower bound proofs. In our paper, rather than considering a non-interactive (i.e.~$T = 1$) mechanism as in \citet{cai2024optimal}, our FDP definition covers a more general class allowing interaction (i.e.~$T \in \mathbb{Z}_+$). 

In detail, let $Z^{(s)} = \{Z_s^t\}_{t=1}^{T}$ represent the collection of privatized transcripts released from server~$s$ to the central server over $T$ iterations and let $I_{Z^{(1)}, \ldots, Z^{(S)}}$ denote its Fisher information. To control $I_{Z^{(1)}, \ldots, Z^{(S)}}$, which is the key quantity of interest in the Van-Trees inequality, \citet{cai2024optimal} relies on the fact that $\{Z^{(s)}\}_{s=1}^S$ are mutually independent, allowing the Fisher information to be decomposed as $\sum_{s=1}^S I_{Z^{(s)}}$. In our setting, this independence no longer holds due to the interaction.  However, the use of a fresh batch of data at each iteration $t$ ensures conditional independence, leading to the decomposition of $\sum_{s=1}^S \sum_{t=1}^T I_{Z_t^{(s)}|M^{(t-1)}}$. The rest of the proof will then follow by considering two individual
setups as the ones used in the proof of Theorem~\ref{thm_mean_lower}. 

In the homogeneous setting when $n_s, \epsilon_s$ and $\delta_s$ across all servers are of the same order, \eqref{fdp_thm_mean_up_homo} and \eqref{fdp_thm_mean_low_eq2} show that \Cref{algorithm_fdp_mean} achieves the optimal convergence rate up to poly-logarithmic factors. In the heterogeneous setting, the lower bound \eqref{fdp_thm_mean_low_eq1} comprises two terms representing minimax rates for (i) estimating an $r$-dimensional object (sparse rates) and (ii) estimating a univariate object (dense rates). Due to this artifact, the final rate is chosen by only matching $r^{-2\alpha}$ with the related term corresponding to the sparse rates. 

This contrasts with the upper bound \eqref{fdp_thm_mean_up_hete}, where the variance contributed by each server accounts for all four scenarios: non-private sparse, private sparse, non-private dense and private dense.  These terms guide the choice of the number of basis functions $r$ to balance the squared bias term $r^{-2\alpha}$ with the harmonic mean of the variances from all servers. This results in a gap between the lower and upper bounds in the heterogeneous case.

Regarding the gap, we have the following conjectures in connection to the existing literature.  
In \cite{cai2024optimal}, a federated nonparametric problem is studied, where they also approximate an infinite-dimensional space by a finite-dimensional one.  Although they also consider heterogeneity across servers, we have the additional discretization of functions, i.e.~the parameter $m$ with respect to $n_s$, which introduces further heterogeneity.  The minimax rate in \cite{cai2024optimal} is obtained by matching the squared approximation bias with the overall harmonic mean of variances, as we have in the upper bound.  This suggests that our upper bound might be tight but not the lower bound.

An attractive feature of FDP is that it serves as an intermediate privacy model between CDP and LDP.  In the CDP setting, there is a central server which accesses $Sn$ samples of non-private data, while in LDP, discrete observations from every function are privatized before sending to a central server. The FDP framework bridges the gap between these approaches by allowing $S$ local trusted servers to access the raw data. Notably, the FDP framework encompasses the CDP framework when $S=1$ and the LDP framework when $n=1$. Comparing the CDP rate in \eqref{eq_minimax_original} with the sample size $Sn$  and FDP rate in \eqref{fdp_thm_mean_low_eq2} further confirms this behavior. In particular, the effective sample size in the private term under FDP is reduced by a factor of $S$ compared to that in CDP, while it is larger by a factor of $n$ compared with that under LDP.

More discussions on the involvement of $T$ in the lower bound and further justifications of the cost of privacy are deferred to \Cref{section_appendix_disscussion_mean_fdp} in the online supplementary material.
\vskip -.5cm

\section{Varying coefficient model estimation with FDP} \label{section_vcm_fdp}
In this section, we further consider estimating the functional regression coefficients in varying coefficient models (VCM), building upon the results in Sections~\ref{section_mean_cdp} and \ref{section_mean_fdp}. In the exposition, we will omit repetitive details and focus solely on the differences.

Suppose the data $\{(X^{(i)}_j, \bm{G}^{(i)}, Y^{(i)}_j\}_{i=1,j=1}^{N,m}$ are generated from a VCM, specifically, 
\vspace{-2em}
\begin{align} \label{simple_vcm_model_obs}
    Y^{(i)}_j = \bm{G}^{(i)\top}\bm{\beta}^*(X^{(i)}_j) + \xi_{ij},
    \vspace{-2em}
\end{align}
\vspace{-0.2em}
where $\{Y^{(i)}_j\}_{i=1,j=1}^{N,m}$ represent the real-valued response, $\{X^{(i)}_j\}_{i=1,j=1}^{N,m}$ denote the discrete observation grids, $\bm{G}^{(i)}=\{G_0^{(i)}, G_1^{(i)}, \ldots, G_d^{(i)}\}^\top \in \mathbb{R}^{d+1}$ denote the random design vector, $\bm{\beta}^*(\cdot) = \{\beta^*_0(\cdot), \beta^*_1(\cdot), \ldots, \beta^*_d(\cdot)\}^{\top} \in \mathbb{R}^{d+1}$ denote the $(d+1)$-dimensional coefficient function, and $\{\xi_{ij}\}_{i=1,j=1}^{N,m}$ is a sequence of centered measurement errors with $\mathbb{E}(\xi_{ij}) = 0$, $i \in [N]$ and $j \in [m]$. 
Consider a distributed setting where $N$ observations are distributed across $S$ servers, with $N = \sum_{s=1}^S n_s$.  For $s\in[S], i\in [n_s]$ and $j\in[m]$, we rewrite the model as
\vskip -.5cm
\begin{equation}\label{fdp_vcm_obs}
     Y^{(s,i)}_j = \bm{G}^{(s,i)\top}\bm{\beta}^*(X^{(s,i)}_j) + \xi_{s,ij}.
\end{equation}
\vskip -.5cm
Recall that the functional mean estimation under CDP in \Cref{section_mean_cdp} is a building block for that under FDP in \Cref{section_mean_fdp}.  The narrative is the same for VCM, but for presentation, we defer the VCM estimation under CDP to \Cref{section_appendix_vcm_cdp} in the online supplementary material, and focus on the FDP setting.

\vspace{-1.5em}

\subsection{Federated private varying coefficient model estimation} \label{section_vcm_fdp_up}
Basis expansion type methods are also widely used in VCM estimation \citep[e.g.][]{huang2002varying, huang2004polynomial,wang2008variable}. Given observations $\{(X^{(i)}_j, \bm{G}^{(i)}, Y^{(i)}_j\}_{i=1,j=1}^{N,m}$ generated from Model \eqref{simple_vcm_model_obs}, for any $k \in \{0\} \cup [d]$, a non-private estimator of $\beta_k$ is given by $\widehat{\beta}_k = \Phi^{\top}_r\widehat{b}_k$, where~$r \in \mathbb{Z}_+$ is a pre-specified tuning parameter, 
\begin{align*}
\vspace{-2em}
     (\widehat{b}_0^\top,\ldots,\widehat{b}_d^\top)^{\top}= \widehat{B}=\underset{B \in \mathbb{R}^{r(d+1)}}{\argmin}\frac{1}{nm}\sum_{i=1}^N\sum_{j=1}^m\Big\{Y_{j}^{(i)} - \bm{G}^{(i)\top}\widetilde{\Phi}_r^\top(X_{j}^{(i)})B\Big\}^2,
\end{align*}
\vspace{-0.1em}
and $\widetilde{\Phi}_r \in \mathbb{R}^{r(d+1)\times (d+1)}$ is the matrix-valued function with diagonal block $\Phi_r$. To adhere the FDP constraint, we propose a noisy gradient descent algorithm to obtain a differentially private $\widetilde{B}$ and the final private estimator $\widetilde{\beta}_k$ is constructed by post-processing: $\widetilde{\beta}_k(\cdot) = \Phi_r^\top(\cdot)\widetilde{b}_k$. The Algorithm shares much of the same spirit as \Cref{algorithm_fdp_mean}, 
and we defer it to \Cref{algorithm_fdp_vcm} in \Cref{section_appendix_discussion_vcm_fdp} in the online supplementary material.

\begin{assumption} \label{simple_vcm_a_model} The observations $\{(X^{(i)}_j, \bm{G}^{(i)}, Y^{(i)}_j)\}_{i=1,j=1}^{N,m}$ are generated from Model~\eqref{simple_vcm_model_obs} and assume the following holds. \textbf{(a)}. Model design. Assume that the sequence of predictors $\{\bm{G}^{(i)}\}_{i=1}^N$ are independent and identically distributed.  Denote $\{\lambda_l\}_{l = 0}^d$ eigenvalues of $\mathbb{E}[\bm{G}^{(1)}(\bm{G}^{(1)})^\top]$. Assume there exists an absolute constant $C_\lambda>1$ such that $0 < 1/C_\lambda \leq \lambda_0 \leq \cdots \leq \lambda_d \leq C_\lambda <~\infty$. \textbf{(b)}. Random predictor. Assume that $\bm{G}^{(1)}$ have bounded entries, i.e.~there exists an absolute constant $C_g > 0$ such that $\|\bm{G}^{(1)}\|_\infty \leq C_g$ almost surely. Assume that $\{\bm{G}^{(i)}\}_{i=1}^N$ are independent of $\{X_j^{(i)}\}_{i=1,j=1}^{N,m}$. \textbf{(c)}. Coefficient function. Assume that $\beta_k(\cdot) \in \mathcal{W}(\alpha,C_\alpha)$ and $\bm{G}^{(i)\top}\bm{\beta}^*(\cdot) \in \mathcal{W}(\alpha,C_\alpha)$ almost surely for any $k \in \{0\} \cup[d]$ and $i \in [N]$.  Assume that there exists an absolute constant $C_b >0$ such that $\|\bm{G}^{(i)\top}\bm{\beta}^*\|_{\infty} \leq C_{b}$ almost surely, $i \in [N]$. \textbf{(d)}. Measurement error. Assume that $\{\xi_{ij}\}_{i=1,j=1}^{N,m}$ is a collection of independent random variables with  $\|\xi_{ij}\|_{\psi_2} \leq C_{\xi}$, where $C_\xi >0$ is an absolute constant. We further assume that $\{\xi_{ij}\}_{i=1,j=1}^{N,m}$ are independent of $\{\bm{G}^{(i)}\}_{i=1}^n$ and $\{X_{j}^{(i)}\}_{i=1,j=1}^{N,m}.$

\end{assumption}

Assumptions \ref{simple_vcm_a_model}(a)~and \ref{simple_vcm_a_model}(b)~are common assumptions on the random design matrix in VCM estimation; see for example \citet{huang2002varying,huang2004polynomial}, \citet{wang2008variable} and \citet{he2018dimensionality}, to name but a few.  Assumption \ref{simple_vcm_a_model}(c)~consists of a set of technical assumptions on the behavior of~$d+1$ functional coefficients. We regulate the smoothness of each by assuming Sobolev.  The tail behaviors of measurement errors are characterized in Assumption~\ref{simple_vcm_a_model}(d). 

Denote $\|\widetilde{\bm{\beta}}- \bm{\beta}^*\|_{L^2}^2 = \sum_{k=0}^d \|\widetilde{\beta}_k- \beta_k^*\|_{L^2}^2$. We present theoretical properties in Theorem~\ref{fdp_thm_vcm_up}.
\begin{theorem} \label{fdp_thm_vcm_up}
    Let $\{\{(X^{(s,i)}_j, \bm{G}^{(s,i)}, Y^{(s,i)}_j)\}_{i=1,j=1}^{n_s,m}\}_{s=1}^S$ be from Model \eqref{fdp_vcm_obs} satisfying Assumptions \ref{a_sample} and \ref{simple_vcm_a_model}.
    \begin{enumerate}[leftmargin=*]
        \item With input $\epsilon_s, \delta_s > 0$ such that $4\log(2/\delta_s) \geq \epsilon_s$ for any $s\in [S]$, Algorithm \ref{algorithm_fdp_vcm} satisfies $(\bm{\epsilon}, \bm{\delta},T)$-FDP defined in Definition \ref{def_fdp}.

        \item  \label{fdp_thm_vcm_up_2} Initialize \Cref{algorithm_fdp_vcm} with $B^0 = 0$, step size $\rho \in (0, (C_{\lambda}L)^{-1})$ being an absolute constant with $L$ and $C_{\lambda}$ defined in Assumptions \ref{a_sample} and \ref{simple_vcm_a_model}(a), and $r >0$.   Let the number of iterations $T\asymp \log(\sum_{s=1}^S n_s)$, and for each $s \in [S]$ let the weight $ \nu_s = u_s/(\sum_{s=1}^S u_s)$ with $u_s\asymp_{\log}\{d n_s^{-1} \vee dr(n_sm)^{-1} \vee d^2(n_s^2 \epsilon_s^2)^{-1} \vee d^2r^2(n_s^2 m \epsilon_s^2)^{-1}\}^{-1}$.
        Suppose that
        \begin{align*} 
            r\log^2(Tr/\eta)\lesssim \Big(\sum_{s=1}^S \frac{T\nu_s^2d}{n_sm}\Big)^{-1},\, \log^2(Tr/\eta)\sum_{s=1}^S \frac{T\nu_s^2d}{n_s} \lesssim 1, \, r\log(Tr/\eta)\lesssim \Big(\sup_{s\in [S] }\frac{T\nu_s d}{n_s}\Big)^{-1}.
        \end{align*}
        It then holds that $ \|\widetilde{\bm{\beta}}- \bm{\beta}^*\|_{L^2}^2 =_{\log} O_p[d^2r^2\{\sum_{s=1}^S (drn_sm)\wedge (n_s^2m\epsilon_s^2) \wedge (dr^2n_s) \wedge (r^2n_s^2\epsilon_s^2)\}^{-1}\\ + dr^{-2\alpha}].$

        \item If we additionally assume that in a homogeneous setting, where for any $s\in [S]$, the number of observations~$n_s$ and the privacy budgets $(\epsilon_s, \delta_s)$ are of the same order, i.e.~$n_s \asymp n$, $\epsilon_s \asymp \epsilon$ and $\delta_s \asymp \delta$. By selecting the number of basis $r$ as $r \asymp_{\log}  (Snm)^{1/(2\alpha+1)} \wedge (d^{-1}Sn^2m\epsilon^2)^{1/(2\alpha+2)} \wedge  (Sn)^{1/(2\alpha)} \wedge (d^{-1}Sn^2\epsilon^2)^{1/(2\alpha)}$,
        it holds that
        \begin{align} \label{fdp_thm_vcm_up_homo}
            \|\widetilde{\bm{\beta}}- \bm{\beta}^*\|_{L^2}^2 =_{\log} O_p\Big\{ d(Snm)^{-\frac{2\alpha}{2\alpha+1}}+d^{\frac{2\alpha+1}{\alpha+1}}(Sn^2m\epsilon^2)^{-\frac{\alpha}{\alpha+1}}+ d(Sn)^{-1}  +d^2(Sn^2\epsilon^2)^{-1}\Big\}.
        \end{align}
    \end{enumerate}
\end{theorem}

Theorem \ref{fdp_thm_vcm_up} shows the privacy guarantee and estimation error for Algorithm \ref{algorithm_fdp_vcm}. Similar to the discussion in \Cref{remark_mean_fdp_r}, the condition required for $r$ in \Cref{fdp_thm_vcm_up}.\ref{fdp_thm_vcm_up_2} is mild. See  \Cref{remark_vcm_fdp_r} in the online supplementary material for details.

\subsection{Optimality and minimax lower bound}\label{section_vcm_fdp_low}
We present a minimax lower bound in \Cref{fdp_thm_vcm_low}.
\begin{theorem} \label{fdp_thm_vcm_low}
    Denote $\mathcal{P}_X$ the class of sampling distributions satisfying Assumption \ref{a_sample}, and $\mathcal{P}_Y$, $\mathcal{P}_G$ the classes of distributions for observations satisfying Assumption \ref{simple_vcm_a_model}.
    \begin{enumerate}[leftmargin=*]
        \item (Heterogeneous design.) \label{fdp_thm_vcm_low_hete} Suppose $\epsilon_s \in (0,1)$ and $\delta_s\log(1/\delta_s) \lesssim r_0^{-1}d^{-1}m^{-1}\epsilon_s^2$ for all $s\in [S]$ with $r_0 \in \mathbb{Z}_+$ satisfying $dr_0^{2\alpha+2} \asymp \sum_{s=1}^S \{r_0dn_sm \wedge T^{-1}n_s^2m\epsilon_s^2\}$. It holds that
        \begin{align} \notag
            &\underset{Q \in \mathcal{Q}^T_{\bm{\epsilon}, \bm{\delta}}}{\inf}  \underset{\widetilde{\bm{\beta}}}{\inf} \underset{\substack{P_X \in \mathcal{P}_X, P_Y \in \mathcal{P}_Y\\P_G \in \mathcal{P}_G}}{\sup} \mathbb{E}_{P_X, P_Y, P_G, Q}\|\widetilde{\bm{\beta}}- \bm{\beta}^*\|_{L^2}^2 \\ \label{fdp_thm_vcm_low_eq1}
           \gtrsim \;& \sup_{r \in \mathbb{Z}_+} \Big[ r^2d^2\Big\{\sum_{s=1}^S (rdn_sm \wedge T^{-1}n_s^2m\epsilon_s^2) + dr^{2\alpha+2}\Big\}^{-1}\Big]\vee d^2\Big\{\sum_{s=1}^S (dn_s\wedge T^{-1}n_s^2\epsilon_s^2)\Big\}^{-1},
            \vspace{-2em}
        \end{align}
        where $\mathcal{Q}^T_{\bm{\epsilon}, \bm{\delta}}$ is any mechanisms satisfying $(\bm{\epsilon},\bm{\delta},T)$-FDP defined in Definition \ref{def_fdp}.

        \item (Homogeneous design.) Suppose for any $s\in [S]$, the number of observations $n_s$ and the privacy budgets $\epsilon_s$ and $\delta_s$ are of the same order, i.e.~$n_s \asymp n$, $\epsilon_s \asymp \epsilon$ and $\delta_s \asymp \delta$. Suppose that $ \delta\log(\delta) \lesssim d^{-1}(Sn)^{-1/(2\alpha+1)}m^{-(2\alpha+2)/(2\alpha+1)}\epsilon^2 \wedge d^{-(2\alpha+1)/(2\alpha+2)}S^{-1/(2\alpha+2)}n^{-1/(\alpha+1)}$\\ $\times m^{-(2\alpha+3)/(2\alpha+2)}\epsilon^{(2\alpha+1)/(\alpha+1)}$.
        It holds for any $T \in [\min_{s\in [S]}n_s]$ that 
        \begin{align} \notag
            \underset{Q \in \mathcal{Q}^T_{\bm{\epsilon}, \bm{\delta}}}{\inf} &\underset{\widetilde{\bm{\beta}}}{\inf} \underset{\substack{P_X \in \mathcal{P}_X, P_Y \in \mathcal{P}_Y\\P_G \in \mathcal{P}_G}}{\sup} \mathbb{E}_{P_X, P_Y, P_G, Q}\|\widetilde{\bm{\beta}}- \bm{\beta}^*\|_{L^2}^2\\ \label{fdp_thm_vcm_low_homo}
            &\gtrsim  d(Snm)^{-\frac{2\alpha}{2\alpha+1}}\vee  d^{\frac{2\alpha+1}{\alpha+1}}(Sn^2m\epsilon^2)^{-\frac{\alpha}{\alpha+1}} \vee d(Sn)^{-1}  \vee d^2(Sn^2\epsilon^2)^{-1}.
        \end{align}        
    \end{enumerate}
\end{theorem}

The core idea behind Theorem \ref{fdp_thm_vcm_low} is an application of the Van-Trees inequality \citep[e.g.][]{gill1995applications} to two classes of distributions: one that each entry of $\bm{\beta}$ is a linear combination of finite basis functions, and the other that they are constant functions. These two result in the first and second terms in \eqref{fdp_thm_vcm_low_eq1}, respectively.

The upper bound \eqref{fdp_thm_vcm_up_homo} and the lower bound \eqref{fdp_thm_vcm_low_homo} together show that Algorithm \ref{algorithm_fdp_vcm} achieves optimal rate of convergence, up to poly-logarithmic factors, under a homogeneous setting. We acknowledge that there is a gap between the minimax lower and upper bounds under the heterogeneous setting, as in the discussion in Section \ref{section_mean_fdp_low}. We conjecture that the lower bound in \Cref{fdp_thm_vcm_low}.\ref{fdp_thm_vcm_low_hete} is not tight.  

Most of the discussions can be carried from those in Sections~\ref{section_mean_cdp} and \ref{section_mean_fdp}, and we focus on the effect of dimension $d$. We first note that even when $d = 1$, the VCM problem differs from the functional mean estimation in \eqref{mean_model_obs} due to the absence of random functional noise in the VCM, but, notably, \Cref{fdp_thm_vcm_low} recovers the rate in \Cref{fdp_thm_mean_low} when $d \asymp 1$.  

The first term in \eqref{fdp_thm_vcm_low_eq1} reflects the rate of estimating an $rd$-dimensional object and the second for a $d$-dimensional one. The privacy preservation imposes a severe curse of dimensionality compared to the non-private case, resulting in inflation by squaring the dimensions.  This is consistent with the observations in the DP literature \citep[e.g.][]{cai2021cost, li2024federated} and the discussions in Sections~\ref{section_mean_cdp_low} and \ref{section_mean_fdp_low}. To the best of our knowledge, this is the first study of estimation in VCM under the DP constraint.

\section{Numerical experiments}\label{section_numerical}
We conduct numerical experiments to demonstrate the feasibility of \Cref{algorithm_mean} and support our theoretical findings in \Cref{section_mean_cdp}. Detailed setups and more numerical results are deferred to \Cref{section_appendix_numerical} in the online supplementary material.

\subsection{Simulated data analysis} \label{section_numerical_simulated}
We simulated data from Model \eqref{mean_model_obs} with $\{X_{j}^{(i)}\}_{i=1,j=1}^{n,m} \stackrel{\text{i.i.d.}}{\sim} \text{Uniform}[0,1]$ and $U$ being a mean-zero Gaussian process with Mat\'{e}rn covariance function. The measurement errors are sampled as $\{\xi_{ij}\}_{i=1,j=1}^{n,m} \stackrel{\text{i.i.d.}}{\sim} N(0, 0.25)$. We aim to estimate the mean function, which is constructed as $ \mu^*(x) = 4/5 + 3/5\cos(2\pi x)+ 2/3\sin(2 \pi x)$. We vary the privacy budget $\epsilon \in \{0.5,0.6,0.7,0.8,0.9,1\}\cup \{3,4,5,6,7,8\}$
and fix the privacy leakage parameter $\delta = 10^{-3}$. 

Two separate functional mean estimation problems are considered and for each true mean function, two studies are carried out to investigate the effects of $m$ and $n$ individually. We carry out $100$ Monte Carlo experiments for each setting and report the mean and standard error of $\|\widetilde{\mu}-\mu^*\|_{L^2}^2$, where $\widetilde{\mu}$ is the privatized estimator obtained by Algorithm \ref{algorithm_mean}.

The simulation results are collected in Figures \ref{plot_simulated_mean_1}, with more deferred to \Cref{section_appendix_numerical_simulated} in the online supplementary material. The estimation error decreases in all plots as $\epsilon$ increases. Focusing on each curve in plots~(a) and (b) in the top panel, the estimation error is improved by increasing sampling frequency $m$. However, once $m$ reaches approximately $12$, additional increases in $m$ result in only minor improvements. This observation is consistent with our theoretical findings in \Cref{thm_mean_upper}. As $m$ reaches above~$12$, a phase transition occurs and the convergence rate becomes mostly independent of $m$. Focusing on the plots in (c) and (d) on the bottom panel, we can see that the estimators perform better as $n$ increases. 

\begin{figure}[!htbp]
    \centering
    \includegraphics[width=0.78\linewidth]{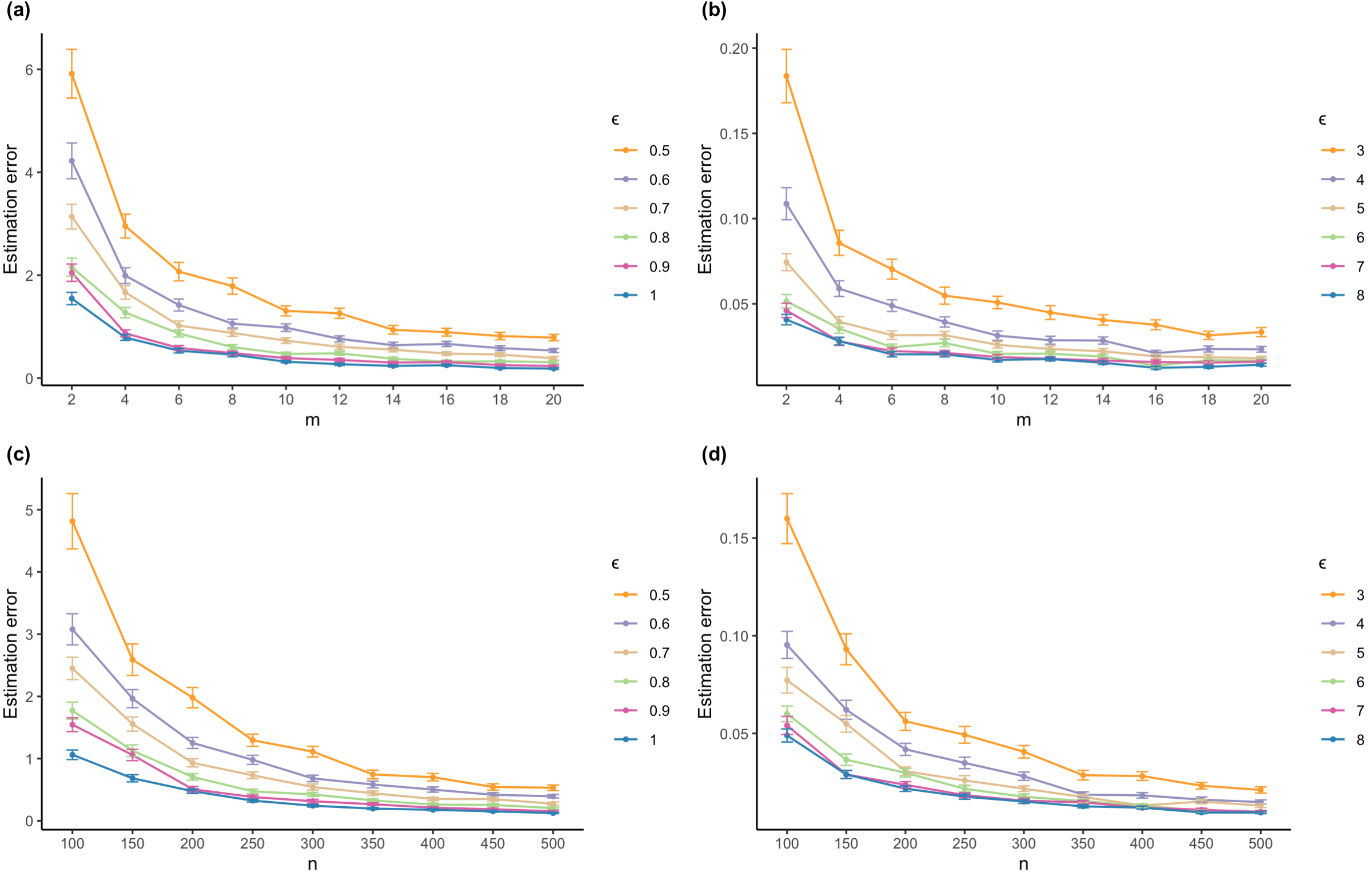}
    \caption{Simulation results for functional mean estimation. (a) and (b): Results as $m$ varies; (c) and (d): Results as $n$ varies.}
    \label{plot_simulated_mean_1}
\end{figure}

\subsubsection{Phase transition phenomenon}
To better understand the phase transition between private and non-private regimes, we present \Cref{fig_phase_large_m} to demonstrate the phase transition phenomenon from the private dense regime to the non-private dense regime. For each sample size $n \in \{200, 400, \ldots, 3600\}$, under the same setting, we run experiments with $m = n^{1/\alpha}$ and $\epsilon \in \{0.1, 0.2, \ldots, 8\}$. In \Cref{fig_phase_large_m}(a), for illustration purposes, we only report the cases when $\epsilon \in \{1,3,4,8\}$. The $x$-axis in \Cref{fig_phase_large_m}(a) is given by $\log(n)$, while the $y$-axis is given by $\log(\text{mean error})$ over $200$ iterations. We further include \Cref{fig_phase_large_m}(b), which is the right-most panel of \Cref{fig_phase_transition} and illustrates the theoretical phase transition in the corresponding dense~regime.

In \Cref{fig_phase_large_m}(a), for each value of $\epsilon$, we plot the best linear fit for $\log(\text{error})$ versus $\log(n)$. For $\epsilon=1$ and $\epsilon=8$, the curves exhibit a single dominant linear trend over the displayed range of $n$, and therefore only one line is fitted in each case. In contrast, for $\epsilon=3$ and $\epsilon=4$, the curves display two distinct linear trends with noticeably different gradients, and we thus fit two separate lines to reflect the two regimes. To facilitate comparison across regimes, we use different colours and line types to group fitted lines with similar gradients.

Ignoring all terms of the order $\log\{\log(n)\}$, \Cref{fig_phase_large_m}(b) suggests that, theoretically, the following holds: in private dense regime, we have $\log(\text{error}) = - 2\log(n) - 2\log(\epsilon) + \log(C_1)$, while in non-private dense regime $\log(\text{error}) = - \log(n) + \log(C_2)$, where $C_1, C_2 >0$ are unknown absolute constants. Therefore, a phase transition from the private to the non-private regime is reflected by a change of gradient from $-2$ to $-1$. 

In our numerical experiments, the fitted gradients coincide with the theoretical results. In particular, the average fitted gradients for the private dense regime is $-1.81$, while the average fitted gradient in the non-private dense regime is $-1.06$. Moreover, as $\epsilon$ increases, the phase transition occurs at smaller values of $n$, echoing the behaviour suggested by \Cref{fig_phase_large_m}(b) by comparing the horizontal dotted lines for $\epsilon =3$ and $\epsilon =4$.

\begin{figure}[!htbp]
  \centering
  \includegraphics[width=0.8\linewidth]{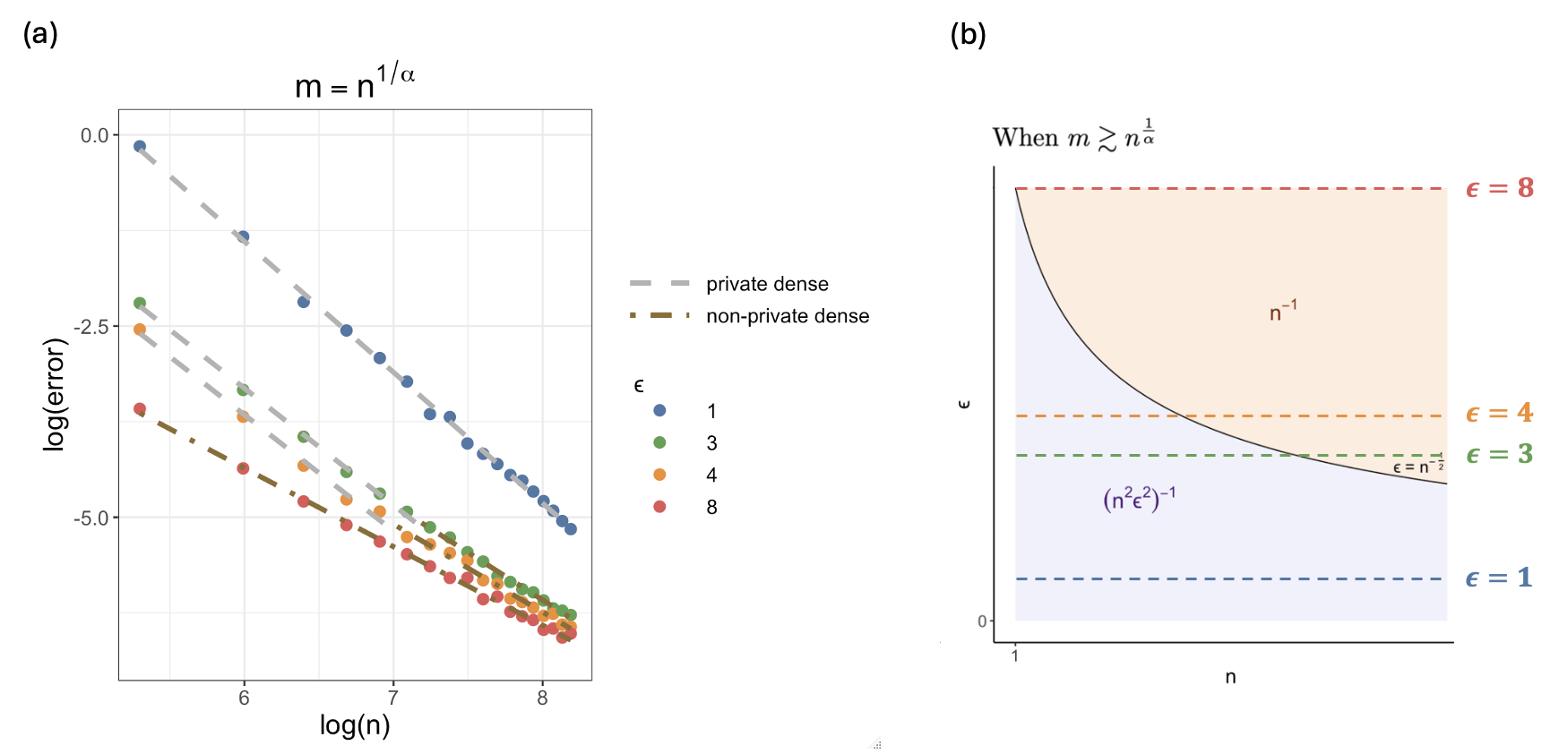}
  \caption{Plots for phase transition in the dense regime when $n \in [200,3600]$ and $m = n^{1/\alpha}$.}
  \label{fig_phase_large_m}
\end{figure}

\subsection{Real data example} \label{section_real_data}
We apply private functional mean estimation on the data from the Study of Women's Health Across the Nation (SWAN) Series \footnote{Accessible at: \url{https://www.icpsr.umich.edu/web/ICPSR/series/253}.}. This dataset has also been previously studied in the FDA literature \citep[e.g.][]{zhang2022nonparametric}.

We apply \Cref{algorithm_mean} to analyze the trend of the average level of estradiol $\{Y^{(i)}_j\}_{i=1,j=1}^{n,m}$ over age $\{X^{(i)}_j\}_{i=1,j=1}^{n,m}$ for middle-aged women with data from Visits 5 to 10, while adhering to privacy constraints. Estradiol is a major female hormone and plays a vital role in regulating reproductive cycles. Studying changes in estradiol levels in middle-aged women is essential, as its significant decline contributes to perimenopause and menopause. 

After excluding individuals with missing data points and abnormal estradiol levels which are defined as those with an average estradiol level exceeding 500 pg/ml, we model the data according to \eqref{mean_model_obs}. To be specific, individuals' estradiol curves over age are treated as realizations of a random function. The age range is $[46,63]$, the average estradiol range in pg/mL is $[3.8,484.4]$ and the data dimension is given by $n=678$ and $m=6$.  

We normalize values of age and average estradiol level by setting the minimum and maximum values to zero and one respectively. To handle non-periodicity, we use the Fourier extension technique as recommended by \cite{lin2021basis} due to its computational~stability.

To compare the effect of privacy budget $\epsilon$ on the estimation error, we randomly sample $1/3$ of data as training data in each iteration with $n_{\text{train}}= 226$, and leave the rest $2/3$ of data as testing data with $n_{\text{test}}= 452$. Non-private estimation with the ordinary gradient descent algorithm is firstly performed on the training data to get a non-private estimator $\widehat{\mu}_{\text{train}}$. Private functional mean estimation \Cref{algorithm_mean} is then summoned to the testing data to obtain a private estimator $\widetilde{\mu}_{\text{test}}$. We report the mean and standard error of  $\|\widetilde{\mu}_{\text{test}} - \widehat{\mu}_{\text{train}}\|_{L^2}$ over $100$ iterations in Figures \ref{plot_real_function}(a) and \ref{plot_real_function}(b). We also plot the means and $90 \%$ bands over 100 runs of the privacy mechanism with various $\epsilon$ over the full data in \Cref{plot_real_function}(c) to capture the randomness in the mechanism resulting from the random noise~added.

We see that in Figures \ref{plot_real_function}(a) and \ref{plot_real_function}(b), the estimation error decreases as $\epsilon$ increases, indicating the larger $\epsilon$, the smaller the noise induced by privacy preservation. This behaviour is also observed in \Cref{plot_real_function}(c). The non-private mean function reveals that, on average, estradiol levels in middle-aged women decrease as women pass the age of $50$, which aligns with the general findings in the existing literature \citep[e.g.][]{lephart2018review}. However, while the mean of the private estimator closely approximates the non-private estimator, individual estimators are less precise in capturing this trend, as illustrated by the shaded region. As expected, we observe that less deviances from the non-private estimator occur for a larger privacy budget, echoing theoretical results discussed in \Cref{section_mean_cdp}.

\begin{figure}[!htbp]
    \centering
    \includegraphics[width = 1\linewidth]{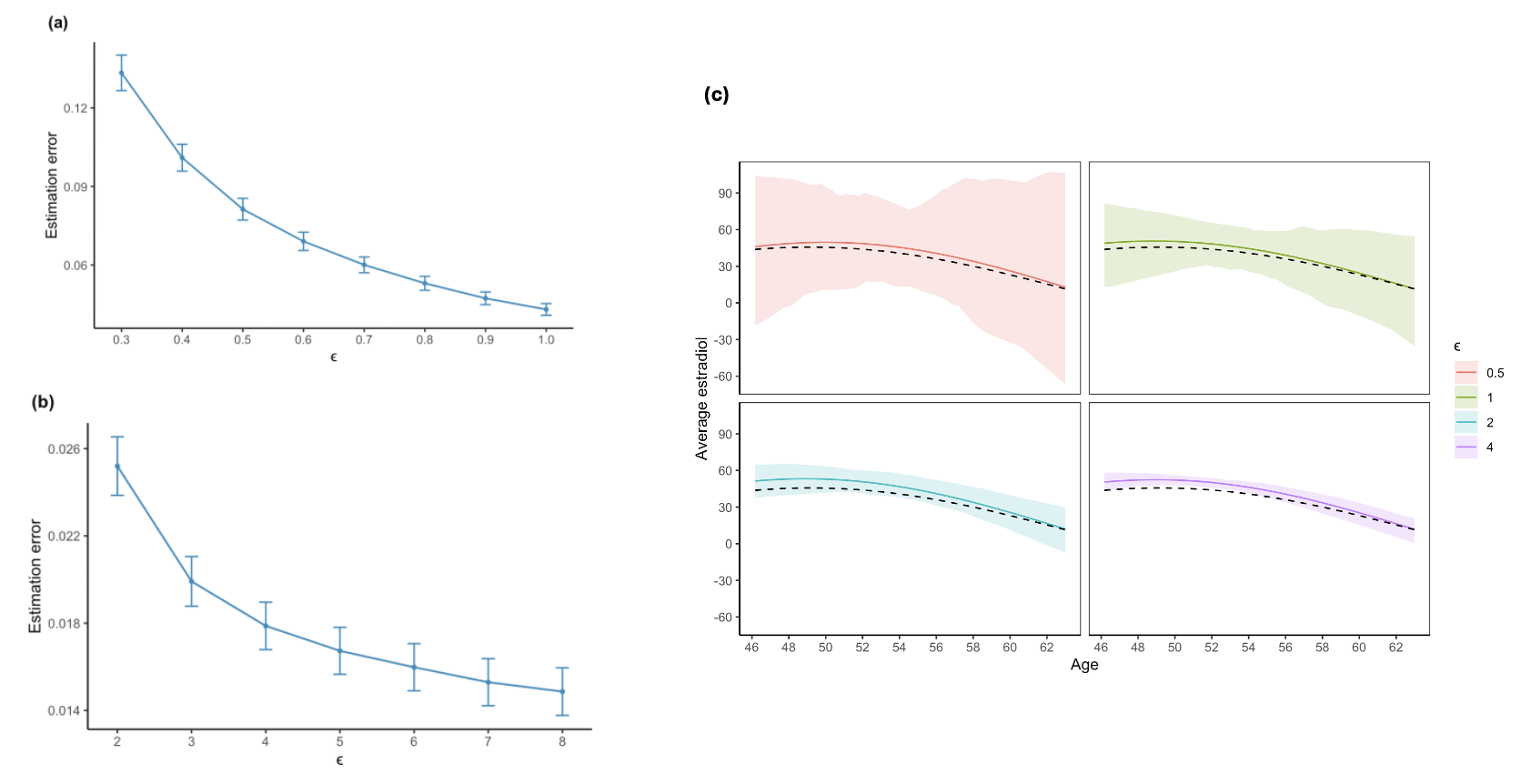}
    \caption{(a) and (b): Estimation results in the study of the average level of estradiol over age, with $\epsilon$ being the privacy budget. (c): Means and $90 \%$ bands of functional mean estimators over 100 repetitions of the privacy mechanism with various values of $\epsilon$. The dotted line represents the non-private estimator obtained from the ordinary gradient descent algorithm.}
    \label{plot_real_function}
\end{figure}

\section{Conclusions} \label{section_conclusion}
In this paper, we study optimal estimations in functional mean estimation and VCM coefficient function estimation under various DP frameworks. 
By deriving matching upper and lower bounds (up to poly-logarithmic factors) on the minimax rates, we systematically examine the inevitable costs under CDP and FDP constraints and reveal new phase transition phenomena as both the sampling frequency and privacy budget vary.

We envisage several potential extensions. Even though we allow the existence of heterogeneity across servers, we claim optimality only under the homogeneous design in Sections~\ref{section_mean_fdp} and \ref{section_vcm_fdp}. We conjecture some further technical improvements in the construction of distribution classes during the application of the Van-Trees inequality may close the gap. 

In some scenarios, data from different servers may differ in distribution.  Transfer learning, as a research area, is dedicated to leveraging distinct but similar information across servers \citep[e.g.][]{cai2024transfer}. There has been some recent work attempting to study differentially private transfer learning, see for example \citet{li2024federated} and \citet{auddy2024minimax}. It would be interesting to further investigate optimality in distributed FDA with distributional shifts under the FDP framework.

Another potential extension of our work is to explore the data adaptivity in the privacy mechanism. In the design of our noisy gradient descent, independent noise calibrated to the global sensitivity based on the worst-case scenario is added across coordinates. However, in many real-life problems, it is possible to adapt to a given dataset and add noise based on data-dependent local sensitivity, for example, the Propose-Test-Release framework in \citet{dwork2009differential} and its application to mean and covariance estimation in \citet{brown2023fast}. In the problem of linear regression, with the intuition that the parameter output in each iteration is expected to get closer to the true value as the algorithm progresses, the data-adaptive gradient descent has been explored in works such as \citet{varshney2022nearly} and \citet{liu2023near}. Deriving practical data-adaptive algorithms and theoretical properties under our settings remains an intriguing area for further investigation. 

\section*{Funding}
This work was supported by EPSRC under Grant EP/Z531327/1; the Philip Leverhulme Prize; and MoE and NUS AcRF Tier 1 grant under Grant A-8002518-00-00.

\section*{Disclosure Statement}
The authors report there are no competing interests to declare.

\bibliographystyle{apalike}
\bibliography{ref}

\newpage
\appendix
\section*{Appendices}
In this supplementary material, we collect all technical details. Connections with relevant literature is systematically discussed in \Cref{section_appendix_literature}. More discussions of definitions, model assumptions and theoretical results are included in \Cref{section_appendix_additional_discussion}. Additional algorithms and theoretical guarantees for VCM estimation under CDP and FDP are presented in \Cref{section_appendix_vcm}. Phase transition phenomena for functional mean and VCM estimations under CDP and FDP are presented in \Cref{section_appendix_phase}. Detailed setups for numerical experiments in Section~\ref{section_numerical} are collected in \Cref{section_appendix_numerical}. The proofs of results in \Cref{section_mean_cdp} to \Cref{section_vcm_fdp} are provided in \Cref{section_appendix_mean_cdp} to \Cref{section_appendix_vcm_fdp} respectively. Some technical lemmas are listed in \Cref{section_appendix_technical}.

Throughout the supplementary material, unless specifically stated otherwise, let $C_1, C_2,\\ \ldots > 0$ denote absolute constants whose values may vary from place to place. For any vector $v \in \mathbb{R}^{p}$, let $\|v\|_{\op}$ denote the operator norm and for $\ell \in [p]$, let $v_\ell$ denotes its $\ell$th entry. For a matrix $A \in \mathbb{R}^{p \times p}$, let $\|A\|_{\op}$ denote the operator norm and $(A)_{ij}$ denote the $(i,j)$th entry.

\section{Connections with relevant literature} \label{section_appendix_literature}
There are three key ingredients in our analysis: Functional mean estimation, VCM and DP. We discuss the relevant literature separately in this subsection.

Functional mean estimation is a fundamental problem in FDA, interesting \emph{per se} and serving as the foundation for other statistical tasks in FDA, e.g.~functional principal component analysis.  Depending on the observation scheme, it can be categorized into (a) fully-observed functions \citep[e.g.][]{hall2006properties}, (b) fragmented-observed functions \citep[e.g.][]{delaigle2021estimating, lin2021basis} and (c) sparsely-observed functions \citep[e.g.][]{cai2011optimal, zhang2016from}. In this paper, we focus on the sparsely-observed case when the observational grids are independently sampled.  For this case, two estimation methods are commonly used in the literature: pre-smoothing and pooling. The former involves smoothing each curve separately and then averages, achieving $\sqrt{n}$ convergence rate ($n$ is the number of functions) only with sufficiently dense data \citep[e.g.][]{zhang2007statistical}.  The latter, pooling, can be adapted to both sparsely and densely observed functional data by utilizing all available information across subjects.

Varying coefficient models are useful extensions of the classical linear regression model due to their flexibility in capturing the heterogeneity of regression coefficients. 
Since the debut in \citet{hastie1993varying}, a wide range of VCMs have been extensively studied. For example, \citet{zhu2012multivariate} propose a multivariate VCM and systematically study the uniform convergence rate and asymptotic distribution of their proposed estimators.  \citet{zhang2015varying} study a varying coefficient additive model for functional data to capture the non-linear relationship between the response and covariates.  Sparse high-dimensional VCMs are studied in \citet{wei2011variable}, \citet{klopp2015sparse} and \citet{chen2018inference}.  Many methods have been developed for coefficient function estimation; see for example the basis expansion and/or spline methods \citep[e.g.][]{huang2004polynomial, wang2008variable} and methods based on local linear smoother \citep[e.g.][]{fan2003adaptive}.

Differential privacy (DP) is, arguably, the most prevailing privacy framework used in both academia and industry \citep{dwork2006calibrating}. In this paper, we focus on four different variants of DP, namely central differential privacy (CDP), local differential privacy (LDP), federated differential privacy (FDP) and user-level DP. CDP assumes that there exists a trusted central server that can securely store and process raw data from customers. In the recent literature, the trade-off between CDP and statistical accuracy has been studied in numerous settings, for example, low-dimensional regression \citep[e.g.][]{varshney2022nearly}, high-dimensional regression \citep[e.g.][]{cai2021cost} and nonparametric regression \citep[e.g.][]{cai2023score}.  The presence of a central server, however, may still raise privacy concerns if the server becomes compromised. To address this issue, a local version of privacy constraint, LDP, where data are privatized first before being sent to any processors, is proposed and systematically studied in the literature \citep[see e.g.][]{duchi2018minimax}.

Inspired by federated learning (FL), in the setting where data are distributed over multiple servers, the framework of FDP is proposed to preserve overall privacy. 
FDP has been extensively studied and appeared under different names in the literature. For example, \citet{geyer2017differentially} and \citet{zhang2024differentially} study the client-sided DP and differentially private FL in a setup where there exists a trusted central server aggregating non-private information from each server before sending back the privatized transcript. When there is no trusted central server, this problem is addressed under the framework of silo-level LDP in \citet{zhou2023differentially}, inter-Silo Record-Level DP in \citet{lowy2021private}, FDP in \citet{li2024federated}, and distributed DP in \citet{li2024federatedpca}, to name but a few. A setting where no information is sent from untrusted central server back to local servers for updates is discussed in \citet{cai2024optimal} and \citet{auddy2024minimax}. 

User-level DP is a variant on another dimension, discussing a stricter way to privatize when each user possesses multiple observations, while the statistical learning interests lie in the distribution of each observation.  
Though being a relatively new concept, user-level DP has been studied under the CDP in \cite{levy2021learning}, LDP in \cite{kent2024rate} and a federated
bandits framework in \cite{huang2023federated}, to name but a few. 

Private functional data analysis, especially when the private outputs are functions, has also received substantial attention, primarily within the CDP framework. \citet{hall2013differential}
propose a Gaussian mechanism for releasing a finite number of point-wise evaluations of functions. Building on this, \citet{mirshani2019formal} extend the previous Gaussian mechanism and enable the release of a broad class of functional estimators. Expanding on these ideas, \citet{reimherr2019elliptical} and \citet{lin2023differentially} generalize the Gaussian mechanism to privacy mechanisms using centred elliptical processes and Laplace-like processes. 
Additionally, \citet{awan2019benefits} employ the exponential mechanism to address privacy in functional principal component analysis. In the case when a finite sum of basis can well approximate infinite-dimensional functional space, methods leveraging the post-processing properties of DP have also been proposed, as seen in works like \cite{butucealocal2020} and \citet{cai2023score} for the study on local differentially private density estimation and central differentially private non-parametric regression respectively. In our paper, we build upon the latter idea, leveraging the post-processing property to tackle a more involved problem in differentially private FDA.

\section{Additional discussions} \label{section_appendix_additional_discussion}

\subsection{Additional discussions for Definition \ref{def_sobolev}} \label{section_appendix_discussion_sobolev}
To better understand \Cref{def_sobolev} in the main paper, we present another definition of Sobolev spaces.  
\begin{definition} \label{def-sob-standard}
    \begin{itemize}
        \item [(i)] For $\alpha \in \mathbb{N}$, the Sobolev space $\mathcal{H}^{\alpha}$ is 
        \[
            \mathcal{H}^{\alpha} = \left\{f: [0, 1] \to \mathbb{R}:\, f^{(\beta)} \in L^2([0, 1]), \ \forall |\beta| \leq \alpha \right\}.
        \]
        \item [(ii)] For non-integer $\alpha > 0$, the Sobolev space $\mathcal{H}^{\alpha}$ is        
        \[
            \mathcal{H}^{\alpha} = \left\{f: [0, 1] \to \mathbb{R}:\, \int_{[0, 1]} \big(1 + |x|^2\big)^{\alpha} \big|\hat{f}(x)\big|^2\,\mathrm{d}x < \infty\right\},
        \]
        where $\hat{f}(\cdot)$ is the Fourier transform of $f(\cdot)$.
    \end{itemize}
\end{definition}

For convenience, we have also copied our \Cref{def_sobolev} here.
\begin{definition} \label{def-sob-ours}
    Let $\alpha > 1$ and $C_\alpha >0$ be absolute constants and $\{\phi_\ell\}_{\ell\in \mathbb{N}+}$ be a collection of orthonormal basis functions of $L^2([0,1])$. For a given sequence of non-negative constants $\{\tau_\ell\}_{\ell \in \mathbb{N}_+}$ associated with $\{\phi_\ell\}_{\ell\in \mathbb{N}_+}$, the Sobolev space $\mathcal{W}(\alpha, C_\alpha)$ is  
    \begin{align*}
        \mathcal{W}(\alpha, C_\alpha) = \bigg\{f:[0,1]\rightarrow\mathbb{R}: f = \sum_{\ell=1}^\infty \phi_\ell\langle f, \phi_\ell \rangle_{L^2},\; \sum_{\ell=1}^\infty (\tau_\ell)^{2\alpha}\langle f, \phi_\ell \rangle_{L^2}^2 \leq C_\alpha^2 < \infty \bigg\}.
    \end{align*}
\end{definition}

\medskip
\noindent \textbf{Equivalence between the two definitions.}  The constraints in \Cref{def-sob-ours} imply that
\[
    \big|\langle f, \phi_\ell \rangle_{L^2}\big| \lesssim \tau_{\ell}^{-\alpha},
\]
which means that $\alpha$ controls how strongly large $\ell$ coefficients are penalised.  The larger $\alpha$ is, the faster decay is required for $\langle f, \phi_\ell \rangle_{L^2}$.  With our choices of the Fourier basis and the sequence $\{\tau_{\ell}\}$, we are now to show that 
\[
    \sum_{\ell = 1}^{\infty} \ell^{2\alpha} \big| \hat{f}(\ell)\big|^2 < \infty \Leftrightarrow f \in \mathcal{H}^{\alpha}.
\]

Considering only the Fourier basis, \Cref{def-sob-standard}(ii) can be written as
\begin{equation}\label{eq-sob-def-alt}
    \mathcal{H}^{\alpha} = \left\{f: [0, 1] \to \mathbb{R}: \, \sum_{\ell \in \mathbb{Z}} \big(1 + |\ell|^2\big)^{\alpha} \big|\hat{f}(\ell)\big|^2 < \infty\right\}.
\end{equation}
Note that for large $\ell$, it holds $(1 + \ell^2)^{\alpha} \asymp \ell^{2\alpha}$.  The difference between the constraints in \eqref{eq-sob-def-alt} and \Cref{def-sob-ours} only occurs finitely many low frequencies, so these two classes are equivalent.

\medskip
\noindent \textbf{How $\alpha$ in our definition regulates the derivatives.}  We explain using the case when $\alpha$ is an integer only.  For the non-integer case, the same arguments holds with the definition of fractional derivatives.

When existing, the $\beta$th derivative of $f$ is that
\[
    f^{(\beta)}(x) = \sum_{\ell \in \mathbb{Z}} (2\pi \iota \ell)^{\beta} \hat{f}(\ell) e^{2\pi \iota \ell x}.
\]
It follows with Parseval's identity that
\[
    \big\|f^{(\beta)}\big|_{L^2}^2 = \sum_{\ell \in \mathbb{Z}} \big|(2\pi \iota \ell)^\beta\big|^2 \big|\hat{f}(\ell)\big|^2,
\]
which holds due to the fact that
\[
    \widehat{f^{(\beta)}}(\ell) = (2\pi \iota \ell)^{\beta} \hat{f}(\ell).
\]
Since $|\iota| = 1$, we have that
\[
    \big|(2\pi \iota \ell)^\beta\big|^2 = (2\pi)^{2\beta} |\ell|^{2\beta},
\]
and consequently that
\[
    \|f^{(\beta)}\|_{L^2}^2 = (2\pi)^{2\beta} \sum_{\ell \in \mathbb{Z}} |\ell|^{2\beta} |\hat{f}(\ell)|^2,
\]
which concludes our claims.

\subsection{Additional discussions for sample splitting} \label{section_appendix_discussion_sample_splitting}
In Algorithms \ref{algorithm_mean}, \ref{algorithm_fdp_mean} and \ref{algorithm_fdp_vcm}, we performed sample splitting. To supplement \Cref{remark_sample_splitting}, below, we further discuss this in detail from a few angles.

\medskip
\noindent \textbf{Why we conduct sample splitting in the paper.}  In the differentially private stochastic gradient descent (DP-SGD) step, each iteration summons a fresh batch of data.  In each iteration, the independence between the batch of data and the current estimators significantly simplifies the proofs in upper and lower bounds.  This sample splitting does not affect the performance of our algorithm, as demonstrated by its minimax optimality.

\medskip
\noindent \textbf{Upper bounds still hold without sample splitting.} Without splitting data, one can reuse the same samples in different iterations in the DP-SGD.   The upper-bound guarantees for all algorithms can be extended by reanalysing the gradient bounds under data reuse. For example, in the proof of the theoretical guarantee of \Cref{algorithm_mean} (CDP), the key modification happens in the control of  $\mathcal{E}_2$ in \Cref{l_mean_upper_event} in the supplementary material, in which $a_t$ cannot be treated as fixed for rounds after time~$t$. Therefore, we consider the following modified event
    \begin{align*}
        \mathcal{E} = \Big\{\Pi^{\mathrm{entry}}_{R}\Big[&\frac{1}{m}\sum_{j=1}^m \Phi_r(X^{(i)}_j)\big\{\Phi^{\top}_r(X^{(i)}_j)a-Y^{(i)}_j \big\}\Big]\\
        &= \frac{1}{m}\sum_{j=1}^m \Phi_r(X^{(i)}_j)\big\{\Phi^{\top}_r(X^{(i)}_j)a-Y^{(i)}_j \big\}, \; \forall i \in [n],\; \; a \in \mathcal{A}\Big\}.
    \end{align*}
    Compared with the proof of \Cref{l_mean_upper_event}, to control $\mathcal{E}$, the only term involving $a$ that requires a refined entry-wise upper-bound analysis is the term $|\frac{1}{m}\sum_{j=1}^m \phi_\ell(X^{(i)}_j)\Phi^{\top}_r(X^{(i)}_j)a|$, $\ell \in [r]$.
    We provide a sketch of the proof below, where the high level idea is to exploit the Sobolev ellipsoid structure of the parameter space $\mathcal A=\{a\in\mathbb R^r:\sum_{k=1}^r k^{2\alpha}a_k^2\le C\}$ as given in \Cref{def-sob-ours} (i.e.~\Cref{def_sobolev} in the paper).
        
    For any $i \in [n], \ell \in [r]$ and $k\in[r]$, denote $W^{(i)}_{\ell,k,j}
    := \phi_\ell(X^{(i)}_j)\phi_k(X^{(i)}_j)$, $j=1,\ldots,m$,
    and we further write $(Z_{\ell,i})_k
    = \frac{1}{m}\sum_{j=1}^m (W^{(i)}_{\ell,k,j}-\mathbb{E} W^{(i)}_{\ell,k,j})$.  With the above notation, by Cauchy--Schwarz inequality, we have that,
    \begin{align*}
    &\sup_{a\in\mathcal A} \Bigg|\frac{1}{m}\sum_{j=1}^m \Big[\phi_\ell(X^{(i)}_j)\Phi^{\top}_r(X^{(i)}_j)a - \mathbb{E}\Big\{\phi_\ell(X^{(i)}_j)\Phi^{\top}_r(X^{(i)}_j)a\Big\}\Big]\Bigg|\\
    =\;&
    \sup_{a\in\mathcal A}\Big|\sum_{k=1}^r a_k (Z_{\ell,i})_k\Big|
    =
    \sup_{a\in\mathcal A}\Big|\sum_{k=1}^r \{k^\alpha a_k\}\,\{k^{-\alpha} (Z_{\ell,i})_k\}\Big| \\
    \leq \;&
    \sup_{a\in\mathcal A}\Big(\sum_{k=1}^r k^{2\alpha}a_k^2\Big)^{1/2}
    \Big\{\sum_{k=1}^r k^{-2\alpha}(Z_{\ell,i})_k^2\Big\}^{1/2} \leq \sqrt{C}\Big(\sum_{k=1}^r k^{-2\alpha}(Z_{\ell,i})_k^2\Big)^{1/2},
    \end{align*}
    where the last inequality follows from the definition of $\mathcal{A}$. In the rest of the proof, it suffices to present a large probability upper bound on $(\sum_{k=1}^r k^{-2\alpha}(Z_{\ell,i})_k^2)^{1/2}$. Since each summand in $(Z_{\ell,i})_k$ is bounded due to the boundedness property of the Fourier basis, it holds from the Hoeffding inequality and union bound arguments that 
    \begin{equation} \label{eq_upper}
        \mathbb{P}\Big\{\max_{i\in[n]}\max_{\ell\in[r]}\max_{k\in[r]}
    |(Z_{\ell,i})_k| \lesssim \sqrt{\frac{\log(nr^2/\eta)}{m}} \Big\} \geq 1-\eta.
    \end{equation}
    Suppose the event in \eqref{eq_upper} holds, and substitute the upper bound in $\sum_{k=1}^r k^{-2\alpha}(Z_{\ell,i})_k^2$, it then holds with probability at least $1-\eta$ that
    \begin{align*}
       \max_{i\in[n]}\max_{\ell\in[r]}\Big\{\sum_{k=1}^r k^{-2\alpha}(Z_{\ell,i})_k^2\Big\}^{1/2} \leq \Big\{\max_{i\in[n]}\max_{\ell\in[r]}\max_{k\in[r]}
    |(Z_{\ell,i})_k|^2 \cdot \sum_{k=1}^r k^{-2\alpha}\Big\}^{1/2} \lesssim \sqrt{\frac{\log(nr^2/\eta)}{m}},
    \end{align*}
    where the last inequality follows from the fact that $\sum_{k=1}^r k^{-2\alpha} \lesssim 1$ whenever $\alpha >1/2$. Moreover, the expectation term can be controlled by an upper bound of order $\ell^{-\alpha}$ following a similar argument as the one in the proof of \Cref{l_mean_upper_event}. Therefore, the rate of truncation radius $R$ will only inflate by at most a poly-logarithmic factor.

     In addition to the change of rates in $R$, we would also like to remark that since each iteration processes an entire epoch of the dataset, appropriate composition rules for differential privacy must be applied to avoid privacy leakage. This leads to a different noise calibration compared to the sample-splitting setting.
    
    \medskip
    \noindent \textbf{Challenges in the lower bound proof.} In terms of the lower-bound control, while the analysis for algorithms under CDP remains unaffected in this setting, deriving the corresponding minimax lower bounds under the new definition of FDP with data reuse appears substantially more challenging. We conjecture that the same rate would follow but we cannot provide proofs.

\subsection{Additional discussions for Assumption \ref{a_model}} \label{section_appendix_discussion_assumption_2}
Assumption \ref{a_model}(a)~characterizes the smoothness of the true mean function. It offers a suitable framework to ensure the theoretical performance of our algorithm. We remark that this assumption can be extended to other smoothness classes, such as Besov spaces, with minor theoretical adjustments. 

In Assumptions \ref{a_model}(b)~and \ref{a_model}(c), the tail behaviors of both the functional noise and measurement errors are regulated. Similar sub-Gaussian assumptions are adopted in the FDA literature \citep[e.g.][]{cai2024transfer}. Following a similar treatment by using different concentration inequalities, these tail behaviors could be relaxed to capture scenarios with heavier-tailed distributions (e.g.~sub-Weibull distributions).

In Assumption \ref{a_model}(b), we further regulate the smoothness of functional noise. This is essential to control the sensitivity of the gradient in Algorithm \ref{algorithm_mean}, hence determining the suitable level of noise used in the anisotropic Gaussian mechanism to preserve privacy. This assumption, at a high-level, can be achieved by imposing a smoothness condition on the covariance function $\Sigma^*$. As discussed in Corollary 4.5 in \citet{steinwart2019convergence}, ensuring that the sample path resides in $\mathcal{W}(\alpha,C_{\alpha})$ almost surely is equivalent to that the corresponding RKHS generated from $\Sigma^*$ can be continuously embedded into $\mathcal{W}(\alpha+1/2+\zeta, C)$ for some small $\zeta >0$ and absolute constant $C>0$. Examples include random process with the Mat\'ern covariance function of order $\alpha+\zeta$. In the case when $U(\cdot)$ is a Gaussian process, equivalent conditions in terms of spectral structures and Mercer's decomposition can be found in Proposition 4.4 in \citet{henderson2024sobolev}.

\subsection{Additional discussions for Section \ref{section_mean_cdp_low}} \label{section_appendix_disscussion_mean_cdp}
We account for the most general setting in Theorems \ref{thm_mean_upper} and \ref{thm_mean_lower} where we allow all parameters, including $m, \epsilon$ and $\delta$, to vary as functions of $n$.  We consequently unveil phase transition phenomena from sparse to dense regimes in FDA, private to non-private regimes in DP and their interplay.

\Cref{fig_phase_transition} anatomizes the parameter space with an FDA flavor by first investigating sampling frequency ranges.  From a DP-oriented standpoint, we list the parameter space in \Cref{table_transition_boundary_5}.
\begin{table}[!htbp]
\centering
\caption{Cost of privacy for central private functional mean estimation. All results are up to poly-logarithmic factors.}
\begin{tabular}{lll}\hline
\multicolumn{2}{c}{Regimes} & Rates\\ \hline
\multirow{2}{*}{High privacy: $0 < \epsilon \lesssim n^{-\frac{1}{2}}$} & $m \lesssim  (n^2\epsilon^2)^{\frac{1}{\alpha}}$ & $ (n^2m\epsilon^2)^{-\frac{\alpha}{\alpha+1}}$ \\
& $m \gtrsim (n^2\epsilon^2)^{\frac{1}{\alpha}} $ & $(n^2\epsilon^2)^{-1}$  \\ \hline
\multirow{3}{*}{Low privacy: $\epsilon \gtrsim n^{-\frac{1}{2}}$} & $m \lesssim n^{\frac{1}{2\alpha}}$ & $(nm)^{-\frac{2\alpha}{2\alpha + 1}} + (n^2m\epsilon^2)^{-\frac{\alpha}{\alpha+1}}$ \\ 
& $n^{\frac{1}{2\alpha}} \lesssim m \lesssim n^{\frac{1}{\alpha}}$ & $n^{-1} + (n^2m\epsilon^2)^{-\frac{\alpha}{\alpha+1}}$ \\ 
& $m \gtrsim  n^{\frac{1}{\alpha}}$ & $n^{-1} $ \\ \hline
\end{tabular}
\label{table_transition_boundary_5}
\end{table}

\subsection{Additional discussions for Section \ref{section_mean_fdp_low}} \label{section_appendix_disscussion_mean_fdp}
\textbf{The involvement of $T$ in the lower bound.} In the heterogeneous case, the lower bound depends on $T$. While in the upper bound - such as in \Cref{algorithm_fdp_mean} - $T$ plays a role in the estimator determining the number of iterations required for a mini-batch gradient descent algorithm, in the lower bound, $T$ arises from the class of privacy mechanisms considered. Specifically, the FDP definition in \Cref{def_fdp} accounts for a class of $T$-round interactive mechanisms. In the homogeneous case, the lower bound in \eqref{fdp_thm_mean_low_eq1} is an increasing function of $T$. Consequently, the lower bound with $T=1$ serves as a valid lower bound for all $T$, resulting in the $T$-independent lower bound in \eqref{fdp_thm_mean_low_eq2}.   The condition that $T \in [\min_{s \in [S]} n_s]$ is to eliminate the case that there are rounds with empty contributions from a certain server.  While this condition could be relaxed with more complex arguments, we impose it here for simplicity.

\noindent \textbf{Cost of privacy.} Similar to the case of CDP, the cost of privacy is reflected in the change of effective sample size from $Snm$ to $Sn^2m\epsilon^2$ and a loss in the exponent in the sparse regime due to a severe curse of dimensionality; and from $Sn$ to $Sn^2\epsilon^2$ in the dense regime. In our setup, user-level CDP is ensured for the $n$ functions within each server, while LDP is enforced across $S$ servers. The adjustment of the effective sample size to $Sn^2m\epsilon^2$ further confirms this unique user-level behavior if we compare this with the rate in \citet{cai2024optimal} (see \Cref{table_comparision_mean_fdp}), highlighting the distinctive capability of FDP to provide a practical framework to study a mixture of DP notations. Our rate in the dense regime aligns with the minimax rate of univariate mean estimation under FDP constraint in \citet{li2024federated} (see \Cref{table_comparision_mean_fdp}).
\begin{table}[!htbp]
\caption{Comparison of minimax rates for functional mean estimation in the setting of Theorem \ref{fdp_thm_mean_up}, univariate mean estimation with sample size $n$ and non-parametric regression for $\alpha$-Sobolev regression function with sample size $nm$ under item-level FDP with $S$ servers. Rates are presented up to constants and poly-logarithmic factors.}
\centering
\begin{tabular}{lll}
\hline
Setting                    & Minimax rate                                                                                         & Reference                                         \\ \hline
Func mean est & $(Snm)^{-\frac{2\alpha}{2\alpha+1}}+ (Sn^2m\epsilon^2)^{-\frac{\alpha}{\alpha+1}}  + (Sn)^{-1} + (Sn^2\epsilon^2)^{-1}$ & Theorem \ref{fdp_thm_mean_up}                                           \\
Uni mean est & $(Sn)^{-1}+(Sn^2\epsilon^2)^{-1}$                                                                        & \citet{li2024federated}              \\
Non-para reg  & $(Snm)^{-\frac{2\alpha}{2\alpha+1}} +(Sn^2m^2\epsilon^2)^{-\frac{\alpha}{\alpha+1}}$                                     & \citet{cai2024optimal}             \\ \hline
\end{tabular}
\label{table_comparision_mean_fdp}
\end{table}

For a better illustration of the fact that FDP serves in between CDP and LDP, we summarize the rates under different privacy constraints in Table~\ref{table_mean_dp}. From top to bottom, the problems become increasingly challenging, highlighting the fundamental differences among these constraints. In particular, the effective sample size in the privacy term under FDP is reduced by a factor of $S$ compared to that in CDP, while it is larger by a factor of $n$ compared with that under LDP.

\begin{table}[!htbp]
\centering
\begin{tabular}{lll}
\hline
Privacy    & Minimax rate & Reference \\ \hline
Non &    $(Snm)^{-\frac{2\alpha}{2\alpha+1}}+(Sn)^{-1}$          &    \citet{cai2011optimal}       \\
CDP        & $(Snm)^{-\frac{2\alpha}{2\alpha+1}} +(S^2n^2m\epsilon^2)^{-\frac{\alpha}{\alpha+1}} + (Sn)^{-1} + (S^2n^2\epsilon^2)^{-1}$ & Theorem \ref{thm_mean_upper}  \\
FDP       & $(Snm)^{-\frac{2\alpha}{2\alpha+1}}+ (Sn^2m\epsilon^2)^{-\frac{\alpha}{\alpha+1}}  + (Sn)^{-1} + (Sn^2\epsilon^2)^{-1}$ & Theorem \ref{fdp_thm_mean_up}          \\
LDP        &  $(Snm)^{-\frac{2\alpha}{2\alpha+1}}+ (Snm\epsilon^2)^{-\frac{\alpha}{\alpha+1}}  + (Sn)^{-1} + (Sn\epsilon^2)^{-1}$ & Theorem \ref{fdp_thm_mean_up}  \\ \hline
\end{tabular}
\caption{Minimax rates for functional mean estimation (up to poly-logarithmic factors) with $Sn$ numbers of observations and $m$ numbers of grids under various privacy frameworks.}
\label{table_mean_dp}
\end{table}

A new phase transition from sparse to dense and private to non-private regimes, similar to that discussed in Section \ref{section_mean_cdp_low}, can be observed here. We further discuss this in \Cref{section_appendix_phase}.

\section{Varying coefficient model estimation} \label{section_appendix_vcm}
\subsection{Additional details for \texorpdfstring{\Cref{section_vcm_fdp}}{}} \label{section_appendix_discussion_vcm_fdp}
In this subsection, we present the algorithm used in VCM estimation under FDP constraints. For any vector $B \in \mathbb{R}^{r(d+1)}$, we let $\Pi_{\mathcal{B}}^*(B)$ denote the projection of $B$ to the closest point (in $\ell_2$-norm sense) in $\mathcal{B} = \{\Bar{B} \in \mathbb{R}^{r(d+1)}: \widetilde{\Phi}_r^\top\Bar{B}$ satisfies Assumption \ref{simple_vcm_a_model}(c)$\}$. See Remark \ref{remark_projection_vcm} for details. 

\begin{algorithm}[!htbp] 
    \caption{Federate differentially private coefficient function estimation}
    \label{algorithm_fdp_vcm}
    \begin{algorithmic}[1]
        \Require Data $\{\{(X^{(s,i)}_j, \bm{G}^{(s,i)}, Y^{(s,i)}_j)\}_{i=1,j=1}^{n_s,m}\}_{s=1}^S$, Fourier basis $\Phi$, number of basis $r$, step size $\rho$, number of iterations $T$, weight $\{\nu_s\}_{s=1}^S$, constant for truncation $C_R$, privacy parameter $\epsilon$, $\delta$, initialization $B^0$, failure probability $\eta$.
        \State Set batch size $b_s = \lfloor n_s/T\rfloor$ for any $s\in [S]$, $N = \sum_{s=1}^S n_s$ and the entry-wise truncation radius $R_{h}~=C_R[\sqrt{m^{-1}\log(N/\eta)}+\{h-r (\lceil h/r\rceil-1)\}^{-\alpha}]$ for any $h \in [r(d+1)]$.
        \For{$t = 0 ,\ldots, T-1$}
        \For{$s = 1,\ldots,S$}
        \State Set $\tau_{s,t} = tb_s$. Generate $w_{s,t} \in \mathbb{R}^{r(d+1)}$ with $w_{s,t} \sim N(0, \Sigma_s)$, where $\Sigma_s =\mathrm{diag}(\sigma_{s,1}^2, \ldots,  \sigma_{s,r(d+1)}^2)$ and for any $h \in [r(d+1)]$, $ \sigma_{h}^2 = 16\log(2/\delta_s)R_{h}(\sum_{a=1}^{r(d+1)} R_{a})/(b_s^2\epsilon_s^2)$;
        \State $M_s^{t} = \frac{1}{b_s}\sum_{i=1}^{b_s}\Pi^{\mathrm{entry}}_{R}\Big[\frac{1}{m}\sum_{j=1}^m \widetilde{\Phi}_r(X_{j}^{(s,\tau_{s,t}+i)})\bm{G}^{(s,\tau_{s,t}+ i)}$\\
        \hspace{6.5cm}$\Big\{\bm{G}^{(s,\tau_{s,t}+ i)\top}\widetilde{\Phi}_r^\top(X_{j}^{(s,\tau_{s,t}+i)})B^t-Y_{j}^{(s,\tau_{s,t}+i)} \Big\}\Big]+w_{s,t}$.
        \EndFor
        \State $B^{t+1} = \Pi^{*}_{\mathcal{B}}\{B^t - \rho\sum_{s=1}^S \nu_sM_s^{t}\}$.
        \EndFor
        \State Let $\{\widetilde{b}_k\}_{k = 0}^d = \{b^T_k\}_{k = 0}^d$, where $((b^T_0)^\top,\ldots,(b_d^T)^\top)^{\top} = B^{T}$.
        \Ensure $\widetilde{B} = ((\widetilde{b}_0)^\top, \ldots, (\widetilde{b}_d)^\top)^{\top}$ and $\widetilde{\bm{\beta}} = (\widetilde{\beta}_0, \ldots, \widetilde{\beta}_d)^\top$, where $\widetilde{\beta}_k= \Phi_r^\top \widetilde{b}_k$.
    \end{algorithmic}
\end{algorithm}

\begin{remark} [Discussion on $\Pi^*_{\mathcal{B}}$]\label{remark_projection_vcm}
    The projection operator $\Pi^*_{\mathcal{B}}$ in Algorithms \ref{algorithm_fdp_vcm} and \ref{algorithm_vcm} is used to ensure that in each iteration $t\in \{0\}\cup [T-1]$, the estimator $B^{t+1} = \{(b_0^{t+1})^\top, \ldots, (b_d^{t+1})^\top\}^\top$ resides inside the parameter space of interest. Note that in the case when $d = O(1)$, then truncating using $\Pi^*_{\mathcal{B}}$ is equivalent to applying the projection operator $\Pi^*_{\mathcal{A}}$ in Remark \ref{remark_projection_mean} to $b^{t+1}_k$ for all $k \in \{0\}\cup [d]$. In the case when $d$ is a function of $n$, then in each iteration $t$, $\Pi^{*}_{\mathcal{B}}\{B^t - \rho\sum_{s=1}^S \nu_s M_s^{t}\}$ is equivalent to solving the following convex optimization problem:
    \begin{align*}
        &\hspace{-2cm}\argmin_{B = (b_0^\top, \ldots, b_d^\top)^\top \in \mathbb{R}^{r(d+1)}} \Big\|B- \Big(B^t - \rho\sum_{s=1}^S \nu_sM_s^{t}\Big)\Big\|_2^2\\
        \text{s.t.} \quad &\sum_{\ell=1}^r \ell^{2\alpha}(b_{k})^2_{\ell}\leq C_{\alpha}^2/\pi^{2\alpha}, \quad \forall k \in\{0\}\cup [d], \quad \text{and}\\
        &\sum_{\ell=1}^r \ell^{2\alpha}\Big\{\sum_{k=0}^d G_h^{(s, \tau_{s,t}+ i)}(b_k)_\ell\Big\}^2 \leq C_{\alpha}^2/\pi^{2\alpha},\\
        &\Big|\sum_{k=0}^d  G_k^{(s,\tau_{s,t}+ i)} \sum_{\ell=1}^r (b_k)_\ell \phi_{\ell}(X_{j}^{(s, \tau_{s,t}+i)})\Big| \leq C_b,\quad \forall s\in [S], \; i \in [b_s].
    \end{align*}
\end{remark}

Detailed rates (up to poly-logarithmic factors) with phase transition phenomena are presented below. 
\begin{itemize}
    \item \textbf{Regime 1:} When $0 < m \lesssim (Sn)^{\frac{1}{2\alpha}}$, it holds that 
    $$\|\widetilde{\bm{\beta}}- \bm{\beta}^*\|_{L^2}^2 =O_p\Big\{ d(Snm)^{-\frac{2\alpha}{2\alpha+1}}+d^{\frac{2\alpha+1}{\alpha+1}}(Sn^2m\epsilon^2)^{-\frac{\alpha}{\alpha+1}}+d^2(Sn^2\epsilon^2)^{-1}\Big\};$$

    \item \textbf{Regime 2:} When $(Sn)^{\frac{1}{2\alpha}} \lesssim m \lesssim (Sn)^{\frac{1}{\alpha}}$, it holds that 
    $$\|\widetilde{\bm{\beta}}- \bm{\beta}^*\|_{L^2}^2 = O_p\Big\{d^{\frac{2\alpha+1}{\alpha+1}}(Sn^2m\epsilon^2)^{-\frac{\alpha}{\alpha+1}}+ d(Sn)^{-1}  +d^2(Sn^2\epsilon^2)^{-1}\Big\}; \; \text{and}$$

    \item \textbf{Regime 3:} When $m \gtrsim (Sn)^{\frac{1}{\alpha}}$, it holds that 
    $$\|\widetilde{\bm{\beta}}- \bm{\beta}^*\|_{L^2}^2 = O_p\Big\{d(Sn)^{-1}  +d^2(Sn^2\epsilon^2)^{-1}\Big\}.$$
\end{itemize}

\subsection{Centrally private VCM estimation} \label{section_appendix_vcm_cdp}
In this section, we present details for VCM estimation under CDP. The proposed differentially private coefficient function estimator is produced by Algorithm~\ref{algorithm_vcm}. Minimax lower and upper bounds are presented in Corollaries \ref{corollary_vcm_lower} and \ref{corollary_vcm_upper} respectively.


\begin{algorithm}[!htbp] 
    \caption{Differentially private coefficient function estimation}
    \label{algorithm_vcm}
    \begin{algorithmic}[1]
        \Require Data $\{(X^{(i)}_j, \bm{G}^{(i)}, Y^{(i)}_j)\}_{i=1,j=1}^{n,m}$, Fourier basis $\Phi$, number of basis $r$, step size $\rho$, number of iterations $T$, constant for truncation $C_R$, privacy parameter $\epsilon$, $\delta$, initialization $B^0$, failure probability~$\eta$.
        \State Set batch size $b = \lfloor n/T\rfloor$ and set $R_{h}~=C_R[\sqrt{m^{-1}\log(n/\eta)}+\{h-r (\lceil h/r\rceil-1)\}^{-\alpha}]$ as the entry-wise truncation radius for any $h \in [r(d+1)]$.
        \For{$t = 0 ,\ldots, T-1$}
        \State Set $\tau_t = bt$;
        \State Generate $w_t \in \mathbb{R}^{r(d+1)}$ with $w_t \sim N(0, \Sigma)$, where $\Sigma =\mathrm{diag}(\sigma_1^2, \ldots,  \sigma_{r(d+1)}^2)$ and for any $h~\in~[r(d+1)]$, $ \sigma_{h}^2 = 16\log(2/\delta)R_{h}(\sum_{a=1}^{r(d+1)} R_{a})/(b^2\epsilon^2)$;
        \State $B^{t+1} = \Pi^{*}_{\mathcal{B}} \Big\{ B^{t} - \rho \Big[\frac{1}{b}\sum_{i=1}^b\Pi^{\mathrm{entry}}_{R}\Big[\frac{1}{m}\sum_{j=1}^m \widetilde{\Phi}_r(X_{j}^{(\tau_t+i)})\bm{G}^{(\tau_t+ i)}$\\
        \hspace{7cm}$\Big\{\bm{G}^{(\tau_t+ i)\top}\widetilde{\Phi}_r^\top(X_{j}^{(\tau_t+i)})B^t-Y_{j}^{(\tau_t+i)} \Big\}\Big] + w_t\Big]\Big\}$.
        \EndFor
        \Ensure $B^{T} = ((b^T_0)^\top,\ldots,(b_d^T)^\top)^{\top}$ and $\widetilde{\bm{\beta}} = (\widetilde{\beta}_0, \ldots, \widetilde{\beta}_d)^\top$, where $\widetilde{\beta}_k= \Phi_r^\top b^T_k$.
    \end{algorithmic}
\end{algorithm}

\begin{corollary}\label{corollary_vcm_lower}
    Denote $\mathcal{P}_X$ the class of sampling distributions satisfying Assumption \ref{a_sample} and $\mathcal{P}_Y$, $\mathcal{P}_G$ the classes of distributions for observations satisfying Assumption \ref{simple_vcm_a_model}. Suppose that 
    $$\delta \log(1/\delta)\lesssim d^{-1}n^{-\frac{1}{2\alpha+1}}m^{\frac{-2\alpha-2}{2\alpha+1}}\epsilon^2 \wedge d^{\frac{-2\alpha-1}{2\alpha+2}}n^{-\frac{1}{\alpha+1}}m^{\frac{-2\alpha-3}{2\alpha+2}}\epsilon^{\frac{2\alpha+1}{\alpha+1}},$$ 
    then it holds 
    \begin{align*}
        \underset{Q \in \mathcal{Q}_{\epsilon, \delta}}{\inf} \underset{\widetilde{\bm{\beta}}}{\inf} \underset{\substack{P_X \in \mathcal{P}_X, P_Y \in \mathcal{P}_Y\\P_G \in \mathcal{P}_G}}{\sup} &\mathbb{E}_{P_X, P_Y, P_G, Q}\|\widetilde{\bm{\beta}}- \bm{\beta}^*\|_{L^2}^2 \\
        &\gtrsim dn^{-1} \vee d^2(n\epsilon)^{-2} \vee d(nm)^{-\frac{2\alpha}{2\alpha+1}} \vee d^{\frac{2\alpha+1}{\alpha+1}}(n^2m\epsilon^2)^{-\frac{2\alpha}{2\alpha+2}}.
    \end{align*}
    where $\mathcal{Q}_{\epsilon, \delta}$ the collection of all mechanisms satisfying $(\epsilon,\delta)$-CDP defined in Definition \ref{def_dp} with $\epsilon \in (0,1)$.
\end{corollary}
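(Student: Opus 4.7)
The plan is to deduce Corollary \ref{corollary_vcm_lower} as a direct specialisation of Theorem \ref{fdp_thm_vcm_low}. An $(\epsilon,\delta)$-CDP mechanism with a single release is precisely a $(\bm\epsilon,\bm\delta,T)$-FDP mechanism with $S=1$ and $T=1$, so $\mathcal{Q}_{\epsilon,\delta}\subseteq \mathcal{Q}^{1}_{(\epsilon),(\delta)}$ and any lower bound over the latter class is a lower bound over the former. Setting $S=1$, $T=1$, $n_s=n$, $\epsilon_s=\epsilon$, $\delta_s=\delta$ in the homogeneous-design bound \eqref{fdp_thm_vcm_low_homo} yields
\[
d(nm)^{-\frac{2\alpha}{2\alpha+1}}\vee d^{\frac{2\alpha+1}{\alpha+1}}(n^{2}m\epsilon^{2})^{-\frac{\alpha}{\alpha+1}}\vee dn^{-1}\vee d^{2}(n^{2}\epsilon^{2})^{-1},
\]
which coincides with the rate claimed in the corollary (using $\tfrac{2\alpha}{2\alpha+2}=\tfrac{\alpha}{\alpha+1}$ and $(n\epsilon)^{-2}=(n^{2}\epsilon^{2})^{-1}$). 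The assumed condition on $\delta\log(1/\delta)$ in the corollary is exactly the $S=1$ instance of the $\delta$-condition in Theorem \ref{fdp_thm_vcm_low}, so the hypotheses transfer without modification.

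If a direct proof were preferred rather than appealing to the FDP theorem, the strategy would mirror the Van-Trees-inequality-based argument used for Theorem \ref{fdp_thm_vcm_low}, but with the single-server Fisher information bound $I_Z\lesssim n^{2}\epsilon^{2}\cdot(\text{sensitivity})^{2}$ replacing the cross-server split $\sum_{s,t}I_{Z_s^t\mid M^{(t-1)}}$. Concretely, I would construct two prior families over $\bm\beta^{*}$: (i) one where each $\beta_k^{*}$ is a random linear combination of the first $r$ Fourier basis functions, with coefficients drawn from a truncated-Gaussian-type prior supported in the Sobolev ball, yielding, after optimisation in $r$, the two sparse contributions $d(nm)^{-2\alpha/(2\alpha+1)}$ and $d^{(2\alpha+1)/(\alpha+1)}(n^{2}m\epsilon^{2})^{-\alpha/(\alpha+1)}$; and (ii) a second family in which every $\beta_k^{*}$ is constant, reducing the problem to $(d+1)$-dimensional parametric mean estimation of $\mathbb{E}[\bm G\bm G^{\top}]\bm\beta^{*}$, which (via Assumption \ref{simple_vcm_a_model}\ref{simple_vcm_a_model_eigen} and the standard CDP lower bound for bounded mean estimation) gives $dn^{-1}\vee d^{2}(n\epsilon)^{-2}$. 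Taking the maximum of the two contributions gives the stated rate.

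The main obstacle in the direct route is tracking the correct $d$-dependence when bounding the sensitivity of the score in construction (i), since the gradient involves the outer product $\bm G^{(i)}\widetilde\Phi_r(X_j^{(i)})$, and this dependence is exactly what produces the $d^{(2\alpha+1)/(\alpha+1)}$ factor in the private sparse term (rather than a naive $d^{2}$ or $d$). Since this inflation is already handled in the proof of Theorem \ref{fdp_thm_vcm_low}, invoking the theorem at $S=T=1$ is the cleanest path and avoids redoing this bookkeeping.
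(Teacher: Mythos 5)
Your proposal is correct and matches the paper's own argument: the paper proves the corollary precisely by specialising Theorem \ref{fdp_thm_vcm_low} to $S=1$, $T=1$ (working from the heterogeneous display \eqref{fdp_thm_vcm_low_eq1} and taking the supremum over $r$, which is the same content as your use of \eqref{fdp_thm_vcm_low_homo} at $S=1$), with the $\delta$-condition transferring exactly as you note. The sketched direct Van-Trees route is unnecessary here, as you correctly conclude.
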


\begin{proof}[Proof of Corollary \ref{corollary_vcm_lower}]
    In the case when $S=1$ and $T=1$, by Theorem \ref{fdp_thm_vcm_low} we have that 
    \begin{align*}
    &\underset{Q \in \mathcal{Q}_{\epsilon, \delta}}{\inf} \underset{\widetilde{\bm{\beta}}}{\inf} \underset{\substack{P_X \in \mathcal{P}_X, P_Y \in \mathcal{P}_Y\\P_G \in \mathcal{P}_G}}{\sup} \mathbb{E}_{P_X, P_Y, P_G, Q}\|\widetilde{\bm{\beta}}- \bm{\beta}^*\|_{L^2}^2 \\
    \geq & \frac{d^2}{(n^2\epsilon^2) \wedge (dn)} + \frac{r^2d^2}{ (n^2m\epsilon^2 \wedge rdnm) + dr^{2\alpha+2}}\\
    \geq & \frac{d}{n} \vee \frac{d^2}{n^2\epsilon^2} \vee \Big\{\Big(\frac{rd}{nm} \vee \frac{r^2d^2}{n^2m\epsilon^2}\Big)\wedge dr^{-2\alpha} \Big\}\\
    \asymp & dn^{-1} \vee d(nm)^{-\frac{2\alpha}{2\alpha+1}}\vee d^2n^{-2}\epsilon^{-2} \vee d^{\frac{2\alpha+1}{\alpha+1}}(n^2m\epsilon^2)^{-\frac{\alpha}{\alpha+1}}.
\end{align*}
\end{proof}

\begin{corollary}\label{corollary_vcm_upper}
    Let $\{(X^{(i)}_j, \bm{G}^{(i)}, Y^{(i)}_j)\}_{i=1,j=1}^{n,m}$ be generated from Model \eqref{simple_vcm_model_obs} satisfying Assumptions \ref{a_sample} and \ref{simple_vcm_a_model}, then the following holds.
    \begin{enumerate}
        \item Algorithm \ref{algorithm_vcm} satisfies $(\epsilon, \delta)$-CDP defined in Definition \ref{def_dp}.
        
        \item Initialising the algorithm with $B^0 = 0$ with step size $\rho \in (0, (C_{\lambda}L)^{-1})$ being an absolute constant with $L$ and $C_{\lambda}$ defined in Assumptions \ref{a_sample} and \ref{simple_vcm_a_model}(a). Suppose 
        \begin{align*}
            n \gtrsim \log(n)\log^2(Trd/\eta)d , \; nm \gtrsim \log(n)\log^2(Trd/\eta)rd  \; \mathrm{and} \; n \gtrsim \log(n)\log(Trd/\eta)rd ,
        \end{align*}
        and $T = \lceil C_1\log(n)\rceil$ for an absolute constant $C_1 >0$. It then holds with probability at least $1-7\eta$ that
        \begin{align*}
            &\|\widetilde{\bm{\beta}}- \bm{\beta}^*\|_{L^2}^2 = \sum_{k=0}^d \|\widetilde{\beta}_k- \beta_k\|_{L^2}^2 \\
            \lesssim \; &\frac{1}{\eta}\Big(\frac{d\log(n)}{n} + \frac{dr\log(n)}{nm} + dr^{-2\alpha}\Big) \\
            & \hspace{1cm} + \Big\{ \frac{d^2}{n^2\epsilon^2}+\frac{d^2r^2\log(n/\eta)}{n^2m\epsilon^2} \Big\}\log^2(n)\log(\log(n)/\eta)\log(1/\delta),
        \end{align*}
        for a small $\eta \in (0,1/7)$.

        \item In addition, if we pick the number of basis $r$ at
        \begin{align*}
            r \asymp_{\log} n^{\frac{1}{2\alpha}} \wedge (nm)^{\frac{1}{2\alpha+1}} \wedge d^{-\frac{1}{2\alpha}}(n^2\epsilon^2)^{\frac{1}{2\alpha}} \wedge d^{-\frac{1}{2\alpha+2}}(n^2m\epsilon^2)^{\frac{1}{2\alpha+2}},
        \end{align*}
        then we have that
        \begin{align*}
            \|\widetilde{\bm{\beta}}- \bm{\beta}^*\|_{L^2}^2 =_{\log}O_p\Big\{dn^{-1} \vee d^2(n\epsilon)^{-2} \vee d(nm)^{-\frac{2\alpha}{2\alpha+1}} \vee d^{\frac{2\alpha+1}{\alpha+1}}(n^2m\epsilon^2)^{-\frac{2\alpha}{2\alpha+2}}\Big\}.
        \end{align*}
    \end{enumerate}
\end{corollary}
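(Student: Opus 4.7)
\textbf{Proof proposal for Corollary \ref{corollary_vcm_upper}.} The plan is to observe that Algorithm \ref{algorithm_vcm} is exactly Algorithm \ref{algorithm_fdp_vcm} specialized to the single-server case $S=1$ (with trivial weight $\nu_1=1$), so that the three claims can be obtained by specializing Theorem \ref{fdp_thm_vcm_up}. However, since the corollary concerns a stand-alone CDP statement, I would also verify each step directly, because in the CDP setting some conditions (notably those on $r$) simplify.

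\textbf{Step 1: Privacy.} At iteration $t$, the ``gradient'' $g^t = \frac{1}{b}\sum_{i=1}^{b} \Pi^{\mathrm{entry}}_{R}[\cdot]$ is a function of the fresh batch of size $b$, and by the entry-wise truncation the $\ell$-th coordinate has sensitivity at most $2R_\ell/b$ when one observation is changed. Applying the anisotropic Gaussian mechanism in Lemma \ref{l_gaussian_mechanism} coordinate-wise with $\sigma_h^2 = 16\log(2/\delta)R_h(\sum_a R_a)/(b^2\epsilon^2)$ gives $(\epsilon,\delta)$-CDP per iteration. Because the batches across the $T$ iterations are disjoint, parallel composition yields overall $(\epsilon,\delta)$-CDP, and then the projection $\Pi^*_{\mathcal B}$ and the post-processing $\widetilde\beta_k = \Phi_r^\top b_k^T$ preserve it.

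\textbf{Step 2: One-step contraction and error decomposition.} Define $B^*$ as the best $r$-basis approximation of $\bm\beta^*$, and write $B^{t+1}-B^* = (I - \rho H)(B^t - B^*) + \rho \, e^t + \rho \, w_t$, where $H$ is the population Gram matrix of $\widetilde\Phi_r(X)\bm G\bm G^\top \widetilde\Phi_r^\top(X)$ (positive definite with eigenvalues bounded between $(C_\lambda L)^{-1}$ and $C_\lambda L$ by Assumptions \ref{a_sample} and \ref{simple_vcm_a_model}\ref{simple_vcm_a_model_eigen}), and $e^t$ collects (i) the empirical-vs-population gradient deviation, (ii) the bias introduced by entry-wise truncation $\Pi^{\mathrm{entry}}_R$, and (iii) the approximation error of the truncated basis. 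With $\rho\in(0,(C_\lambda L)^{-1})$ this gives a geometric contraction $\|I-\rho H\|_{\op}\le 1-c\rho$, which after $T\asymp \log n$ iterations reduces the initialization error to $O(n^{-c_0})$ and yields, on the good event,
\begin{align*}
\|B^T - B^*\|_2^2 \lesssim \sup_{0\le t<T}\|e^t\|_2^2 + \sum_{t=0}^{T-1}\|w_t\|_2^2.
\end{align*}
The sub-Gaussian concentration of $\|e^t\|_2^2$ plus a union bound over $t$, combined with the choice $R_h = C_R\{\sqrt{m^{-1}\log(n/\eta)}+[h-r(\lceil h/r\rceil-1)]^{-\alpha}\}$, produces the four statistical terms $d/n$, $dr/(nm)$, plus the sub-Gaussian tail of functional noise absent here (which is why the $1/n$ term lacks the factor $r$), while $\sum_t\|w_t\|_2^2$ gives the two private terms $d^2/(n^2\epsilon^2) + d^2r^2\log(n/\eta)/(n^2m\epsilon^2)$; the summability of $h^{-2\alpha}$ across the $d+1$ blocks is what keeps the first private term dimension-light and produces the $d^2$ factor only via the $\|\Delta f\|_1$ bound in Lemma \ref{l_gaussian_mechanism}. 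Adding the squared $L^2$-approximation bias $dr^{-2\alpha}$ coming from the Sobolev assumption on each $\beta_k^*$ (Assumption \ref{simple_vcm_a_model}\ref{simple_vcm_a_model_beta}) gives the bound in part 2.

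\textbf{Step 3: Optimal $r$.} With the upper bound written as $(\mathrm{I})+(\mathrm{II})+(\mathrm{III})+(\mathrm{IV})+dr^{-2\alpha}$ where $(\mathrm I)= d/n$, $(\mathrm{II})=dr/(nm)$, $(\mathrm{III})=d^2/(n^2\epsilon^2)$, $(\mathrm{IV})=d^2r^2/(n^2m\epsilon^2)$, each variance term is balanced against the bias $dr^{-2\alpha}$ by choosing $r$ equal to $n^{1/(2\alpha)}$, $(nm)^{1/(2\alpha+1)}$, $d^{-1/(2\alpha)}(n^2\epsilon^2)^{1/(2\alpha)}$, and $d^{-1/(2\alpha+2)}(n^2m\epsilon^2)^{1/(2\alpha+2)}$, respectively. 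Taking the minimum, as in the proof of Theorem \ref{thm_mean_upper}.\ref{thm_mean_upper_rate}, yields simultaneously the four rates $d n^{-1}$, $d(nm)^{-2\alpha/(2\alpha+1)}$, $d^2(n\epsilon)^{-2}$, $d^{(2\alpha+1)/(\alpha+1)}(n^2m\epsilon^2)^{-\alpha/(\alpha+1)}$.

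\textbf{Main obstacle.} The most delicate piece will be Step 2, specifically controlling the sensitivity and concentration of the $d$-dimensional predictor-weighted gradient: because $\bm G^{(i)}\bm G^{(i)\top}$ couples the $d+1$ blocks of coordinates, the entry-wise truncation $\Pi^{\mathrm{entry}}_R$ must be shown to introduce negligible additional bias uniformly in $t$ on a high-probability event. This requires a sub-Gaussian concentration argument for $\|\widetilde\Phi_r(X)\bm G\bm G^\top \widetilde\Phi_r^\top(X)(B^t-B^*)\|_\infty$ coordinate by coordinate, using boundedness of $\bm G$ (Assumption \ref{simple_vcm_a_model}\ref{simple_vcm_a_model_bound}) and the Sobolev smoothness to bound $|\Phi_r^\top(x) b_k^t|$ on the support; this is where the three lower bounds $n\gtrsim \log(n)\log^2(Trd/\eta)d$, $nm\gtrsim \log(n)\log^2(Trd/\eta)rd$ and $n\gtrsim \log(n)\log(Trd/\eta)rd$ enter, ensuring the truncation radii are not active with high probability and the empirical Gram matrix is close to its expectation in operator norm.
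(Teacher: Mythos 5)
Your proposal is correct and takes essentially the same route as the paper: the paper gives no separate proof of Corollary \ref{corollary_vcm_upper} but treats it as the single-server ($S=1$, $\nu_1=1$) specialisation of Theorem \ref{fdp_thm_vcm_up}/Proposition \ref{fdp_prop_vcm_up}, whose proof is precisely the noisy-gradient-descent analysis you sketch (good event controlling the Gram eigenvalues and the inactivity of the entry-wise truncation, Markov/sub-Gaussian bounds for the statistical terms, $\chi^2$ tails for the injected noise, then balancing $r$ against the bias $dr^{-2\alpha}$). The only cosmetic differences — contracting with the population rather than the batch-empirical Gram matrix, and summing $\|w_t\|_2^2$ over $t$ instead of using the geometric weights — cost at most an extra $\log n$ factor and do not change the argument.
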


\section{Phase transitions boundaries} \label{section_appendix_phase}
In this section, we present new phase transition boundaries for functional mean estimation and VCM estimation under both CDP and FDP.

\begin{table}[!htbp]
\centering
\caption{Phase transition phenomena for central private functional mean estimation. All results are up to poly-logarithmic factors.}
\begin{tabular}{lllll} \hline
& Boundaries & Regimes & Rates & Remarks \\  \hline
\multirow{2}{*}{(A)} & \multirow{2}{*}{$m \asymp (n^2\epsilon^2)^{\frac{1}{\alpha}}$} & Sparse & $(n^2m\epsilon^2)^{-\frac{\alpha}{\alpha+1}}$ & \multirow{2}{*}{Private rates} \\ 
&& Dense & $(n^2\epsilon^2)^{-1}$ & \\ \hline
\multirow{2}{*}{(B)} & \multirow{2}{*}{$m \asymp n^{\frac{1}{2\alpha}}$} & Sparse & $(nm)^{-\frac{2\alpha}{2\alpha+1}}$ & \multirow{2}{*}{Non-private rates} \\ 
& & Dense & $n^{-1}$ &  \\ \hline
\multirow{2}{*}{(C)} & \multirow{2}{*}{$\epsilon \asymp n^{-\frac{\alpha}{2\alpha+1}}m^{\frac{1}{4\alpha+2}}$} & High privacy & $(n^2m\epsilon^2)^{-\frac{\alpha}{\alpha+1}}$ & \multirow{2}{*}{Sparse rates} \\
&& Low privacy & $(nm)^{-\frac{2\alpha}{2\alpha+1}}$ &  \\ \hline
\multirow{2}{*}{(D)} & \multirow{2}{*}{$\epsilon \asymp n^{-\frac{1}{2}}$} & High privacy & $(n^2\epsilon^2)^{-1}$ & \multirow{2}{*}{Dense rates} \\ 
& & Low privacy & $n^{-1}$ & \\ \hline
\multirow{2}{*}{(E)} & \multirow{2}{*}{$\epsilon \asymp n^{\frac{-\alpha+1}{2\alpha}}m^{-\frac{1}{2}}$} & Private \& sparse & $(n^2m\epsilon^2)^{-\frac{\alpha}{\alpha+1}}$ & \\
& & Non-private \& dense & $n^{-1}$  & \\ \hline        
\end{tabular}
\label{table_transition_boundary}
\end{table}

In \Cref{table_transition_boundary}, (A) and (B) characterize the effect of the sampling frequency, in the private and non-private cases respectively.  (C) and (D) show how the fundamental difficulties change as the privacy constraints change.  In (E), we present another phase transition phenomenon, as an interplay of the FDA and DP, from the harder end (private and sparse) to the easier end (non-private and dense)  of the spectrum.

Similarly, we demonstrate phase transition boundaries for functional mean estimation under FDP and VCM estimation under both CDP and FDP in Tables \ref{table_transition_boundary_fdp_mean} to \ref{table_transition_boundary_fdp_vcm} respectively.

\begin{table}[!htbp]
\caption{Phase transition phenomena for federated private functional mean estimation. All results are up to poly-logarithmic factors.}
\centering
\begin{tabular}{llll}
\hline
Boundaries & Regimes & Rates & Remarks \\ \hline
\multirow{2}{*}{$m \asymp (Sn^2\epsilon^2)^{\frac{1}{\alpha}}$} & Sparse & $(Sn^2m\epsilon^2)^{-\frac{\alpha}{\alpha+1}}$ & \multirow{2}{*}{Private rates} \\ & Dense & $(Sn^2\epsilon^2)^{-1}$ & \\ \hline
\multirow{2}{*}{$m \asymp (Sn)^{\frac{1}{2\alpha}}$} & Sparse & $(Snm)^{-\frac{2\alpha}{2\alpha+1}}$ & \multirow{2}{*}{Non-private rates} \\ 
&  Dense & $(Sn)^{-1}$ &  \\ \hline
\multirow{2}{*}{$\epsilon \asymp S^{\frac{1}{4\alpha+2}}n^{-\frac{\alpha}{2\alpha+1}}m^{\frac{1}{4\alpha+2}}$} & High privacy & $(Sn^2m\epsilon^2)^{-\frac{\alpha}{\alpha+1}}$ & \multirow{2}{*}{Sparse rates} \\ 
& Low privacy & $(Snm)^{-\frac{2\alpha}{2\alpha+1}}$ &  \\ \hline
\multirow{2}{*}{$\epsilon \asymp n^{-\frac{1}{2}}$} & High privacy & $(Sn^2\epsilon^2)^{-1}$ & \multirow{2}{*}{Dense rates} \\ 
&  Low privacy & $(Sn)^{-1}$ & \\ \hline
\multirow{2}{*}{$\epsilon \asymp S^{\frac{1}{2\alpha}}n^{\frac{-\alpha+1}{2\alpha}}m^{-\frac{1}{2}}$} & Private \& sparse & $(Sn^2m\epsilon^2)^{-\frac{\alpha}{\alpha+1}}$ & \\ 
& Non-private \& dense & $(Sn)^{-1}$  &  \\  \hline         
\end{tabular}
\label{table_transition_boundary_fdp_mean}
\end{table}

\begin{table}[!htbp]
\caption{Phase transition phenomena for central private VCM estimation. All results are up to poly-logarithmic factors.}
\centering
\begin{tabular}{llll}
\hline
Boundaries & Regimes & Rates & Remarks \\ \hline
\multirow{2}{*}{$m \asymp d^{-\frac{1}{\alpha}}(n^2\epsilon^2)^{\frac{1}{\alpha}}$} & Sparse & $d^{\frac{2\alpha+1}{\alpha+1}}(n^2m\epsilon^2)^{-\frac{\alpha}{\alpha+1}}$ & \multirow{2}{*}{Private rates} \\ 
& Dense & $d^2(n^2\epsilon^2)^{-1}$ & \\ \hline
\multirow{2}{*}{$m \asymp n^{\frac{1}{2\alpha}}$} & Sparse & $d(nm)^{-\frac{2\alpha}{2\alpha+1}}$ & \multirow{2}{*}{Non-private rates} \\ 
& Dense & $dn^{-1}$ &  \\ \hline
\multirow{2}{*}{$\epsilon \asymp d^{\frac{1}{2}}n^{-\frac{\alpha}{2\alpha+1}}m^{\frac{1}{4\alpha+2}}$} & High privacy & $d^{\frac{2\alpha+1}{\alpha+1}}(n^2m\epsilon^2)^{-\frac{\alpha}{\alpha+1}}$ & \multirow{2}{*}{Sparse rates} \\ 
& Low privacy & $d(nm)^{-\frac{2\alpha}{2\alpha+1}}$ &  \\ \hline
\multirow{2}{*}{$\epsilon \asymp d^{\frac{1}{2}}n^{-\frac{1}{2}}$} & High privacy & $d^2(n^2\epsilon^2)^{-1}$ & \multirow{2}{*}{Dense rates} \\ 
&  Low privacy & $dn^{-1}$ & \\ \hline
\multirow{2}{*}{$\epsilon \asymp d^{\frac{1}{2}}n^{\frac{-\alpha+1}{2\alpha}}m^{-\frac{1}{2}}$} & Private \& sparse & $d^{\frac{2\alpha+1}{\alpha+1}}(n^2m\epsilon^2)^{-\frac{\alpha}{\alpha+1}}$ & \\ 
&  Non-private \& dense & $dn^{-1}$  &  \\ \hline        
\end{tabular}

\label{table_transition_boundary_cdp_vcm}
\end{table}

\begin{table}[!htbp]
\caption{Phase transition phenomena for federated private VCM estimation. All results are up to poly-logarithmic factors. }
\centering
\begin{tabular}{llll}
\hline
Boundaries & Regimes & Rates & Remarks \\ \hline
\multirow{2}{*}{$m \asymp d^{-\frac{1}{\alpha}}(Sn^2\epsilon^2)^{\frac{1}{\alpha}}$} & Sparse & $d^{\frac{2\alpha+1}{\alpha+1}}(Sn^2m\epsilon^2)^{-\frac{\alpha}{\alpha+1}}$ & \multirow{2}{*}{Private rates} \\ 
& Dense & $d^2(Sn^2\epsilon^2)^{-1}$ & \\ \hline
\multirow{2}{*}{$m \asymp (Sn)^{\frac{1}{2\alpha}}$} & Sparse & $d(Snm)^{-\frac{2\alpha}{2\alpha+1}}$ & \multirow{2}{*}{Non-private rates} \\ 
& Dense & $d(Sn)^{-1}$ &  \\ \hline
\multirow{2}{*}{$\epsilon \asymp d^{\frac{1}{2}}S^{\frac{1}{4\alpha+2}}n^{-\frac{\alpha}{2\alpha+1}}m^{\frac{1}{4\alpha+2}}$} & High privacy & $d^{\frac{2\alpha+1}{\alpha+1}}(Sn^2m\epsilon^2)^{-\frac{\alpha}{\alpha+1}}$ & \multirow{2}{*}{Sparse rates} \\ 
& Low privacy & $d(Snm)^{-\frac{2\alpha}{2\alpha+1}}$ &  \\ \hline
\multirow{2}{*}{$\epsilon \asymp d^{\frac{1}{2}}n^{-\frac{1}{2}}$} & High privacy & $d^2(Sn^2\epsilon^2)^{-1}$ & \multirow{2}{*}{Dense rates} \\ 
& Low privacy & $d(Sn)^{-1}$ & \\ \hline
\multirow{2}{*}{$\epsilon \asymp d^{\frac{1}{2}}S^{\frac{1}{2\alpha}}n^{\frac{-\alpha+1}{2\alpha}}m^{-\frac{1}{2}}$} & Private \& sparse & $d^{\frac{2\alpha+1}{\alpha+1}}(Sn^2m\epsilon^2)^{-\frac{\alpha}{\alpha+1}}$ & \\ 
& Non-private \& dense & $d(Sn)^{-1}$  &  \\  \hline         
\end{tabular}
\label{table_transition_boundary_fdp_vcm}
\end{table}

\section{Details for numerical experiments in \texorpdfstring{\Cref{section_numerical}}{}} \label{section_appendix_numerical}
\subsection{Additional numerical details for \texorpdfstring{\Cref{section_numerical_simulated}}{}} \label{section_appendix_numerical_simulated}
\noindent \textbf{Simulation setups.} We simulated data from Model \eqref{mean_model_obs} with $\{X_{j}^{(i)}\}_{i=1,j=1}^{n,m} \stackrel{\text{i.i.d.}}{\sim} \text{Uniform}[0,1]$, where~$U$ is a mean-zero Gaussian process with its Mat\'{e}rn covariance function
    \begin{align*}
        \mathbb{E}\{U(x)U(y)\} = (0.5)^2 \frac{2^{-3}}{\Gamma (4)}\Big(\sqrt{8}\frac{|x-y|}{0.8}\Big)^4 \mathcal{B}_{4}\Big(\sqrt{8}\frac{|x-y|}{0.8}\Big), \quad x, y \in [0,1],
    \end{align*}
where $\Gamma(\cdot)$ is the Gamma function and $\mathcal{B}_4$ is the modified Bessel function of the second kind with the parameter of $4$. The measurement errors are sampled as $\{\xi_{ij}\}_{i=1,j=1}^{n,m} \stackrel{\text{i.i.d.}}{\sim} N(0, 0.25)$. The smoothness parameter is set to $\alpha =3$. We remark that though the typical privacy regime of interest is when $\epsilon \in (0,1)$, in real-world applications, values of $\epsilon > 1$ are often chosen \citep[e.g.][]{uscensus, appledp}. Therefore, in our numerical experiments, we vary the privacy budget 
\[\epsilon \in \{0.5,0.6,0.7,0.8,0.9,1\}\cup \{3,4,5,6,7,8\}
\]
and fix the privacy leakage parameter $\delta = 10^{-3}$. 

Two separate functional mean estimation problems are considered and the true mean functions of interest are constructed as 
\begin{align*}
    \mu_1^*(x) = 4/5 + 3/5\cos(2\pi x)+ 2/3\sin(2 \pi x) \quad \text{and} \quad \mu_2^*(x) = 1/7 + 5x^2/7 - 10(1/2-x)^3/7,
\end{align*}
for all $x\in [0,1]$.

For each true mean function, two simulation studies are carried out to investigate the effects of $m$ and $n$ individually, with details below.
\begin{itemize}
    \item Setting 1: effect of $m$. Fix $n =250$, vary $m \in \{2,4,6,8,10,12,14,16,18,20\}$; and
    \item Setting 2: effect of $n$. Fix $m = 10$, vary $n \in \{100,150,200,250,300,350,400,450,500\}$.
\end{itemize}

\medskip 
\noindent \textbf{Evaluation metrics.} We carry out $100$ Monte Carlo experiments for each setting and report the mean and standard error of $\|\widetilde{\mu}-\mu^*\|_{L^2}^2$ across $100$ simulations, where $\widetilde{\mu}$ is the privatized estimator obtained by Algorithm \ref{algorithm_mean}.

\medskip 
\noindent \textbf{Tuning parameters.} There are four tuning parameters involved: $C_R$, the constant in entry-wise truncation rate $\{R_{\ell}\}_{\ell=1}^r$; $C_T$, the constant involved in the number of iterations $T$, such that $T = \lceil C_T\log(n)\rceil$; $C_r$, the constant involved in $r$ the number of selected basis, such that $$r =\Big\lceil C_r \big\{n^{\frac{1}{2\alpha}} \wedge (nm)^{\frac{1}{2\alpha+1}} \wedge (n^2\epsilon^2)^{\frac{1}{2\alpha}} \wedge (n^2m\epsilon^2)^{\frac{1}{2\alpha+2}}\big\} \Big \rceil;$$
and $\rho$, the learning rate. We fix $C_R = 0.75, C_T = 4, C_r = 1.25$ and $\rho = 0.1$. Since $\mu^*_1$ is constructed with the first three Fourier bases, we select the true value and pick $r=3$ in this case. A sensitivity analysis on tuning parameters is collected in Appendix \ref{section_appendix_sensitivity}. Data-driven methods for tuning parameter selection under DP constraint can be found in \citet{chaudhuri2013stability}.

\medskip
\noindent \textbf{Results.} The simulation results for estimating $\mu^*_1$ is presented in \Cref{plot_simulated_mean_1} in \Cref{section_numerical_simulated} while $\mu^*_2$ are collected in Figure \ref{plot_simulated_mean_2}. In addition to the observations discussed in \Cref{section_numerical_simulated}, in Figure \ref{plot_simulated_mean_2}, we further show that even when the true covariance function $\mu^*_2$ is not constructed with Fourier basis, our algorithm could still output satisfactory estimators under privacy constraints.

\begin{figure}[!htbp]
    \centering
    \includegraphics[width = 0.9\linewidth]{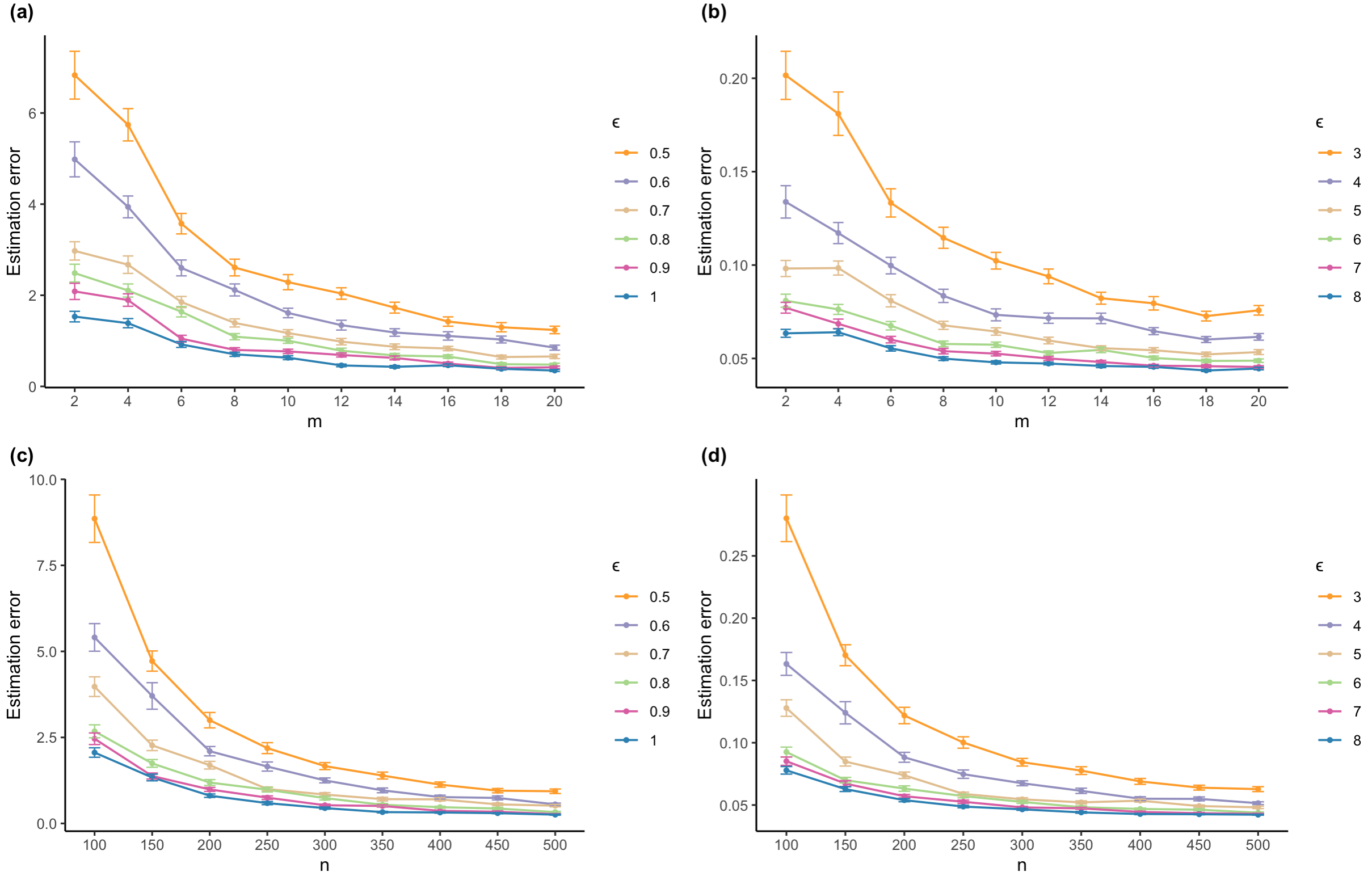}
    \caption{Estimation results for $\mu^*_2$. (a) and (b): Results in Setting 1 as $m$ varies; (c) and (d): Results in Setting 2 as $n$ varies. From left to right: $\epsilon~\in~\{0.5,0.6,0.7,0.8,0.9,1\}$ and $\epsilon~\in~\{3,4,5,6,7,8\}$.}
    \label{plot_simulated_mean_2}
\end{figure}



\subsection{Additional numerical results on phase transition for \texorpdfstring{$\mu^*_1$}{}}
We carried out additional experiments under settings with $\epsilon \in [0.2, 2]$, $n \in [200, 1500]$, and $m \in \{2, 6, 10, 15\}$. We plot in \Cref{fig_simulated_phase} the heatmaps of the log-transformed average estimation error over 200 iterations for each value of $m$. In these heatmaps, the number of functions $n$ is shown on the horizontal axis, while the privacy parameter $\epsilon$ is shown on the vertical axis. 

In all four plots in \Cref{fig_simulated_phase}, we observe that for a fixed value of $m$,  the estimation error decreases as either $n$ or $\epsilon$ increases. Moreover, comparing the plots for $m=2$ and $m=6$, the estimation error decreases noticeably as $m$ increases, particularly in settings with strong privacy level or limited sample size, i.e.~small values of $\epsilon$ and $n$. However, the improvement becomes more modest when we compare the two plots at the bottom, corresponding to cases when $m=10$ and $m=15$. This empirical behaviour is consistent with our theoretical results in \Cref{thm_mean_upper} and can be further supported by \Cref{fig_plot_m}, where we perform simulations analogous to Figures~\ref{plot_simulated_mean_1}(a) and~\ref{plot_simulated_mean_1}(b) in the main paper on a finer grid of $m \in [2,25]$ and $n \in \{300,800\}$. Once $m$ reaches approximately above $10$, a phase transition occurs, and the convergence rate becomes largely independent of~$m$, especially for larger values of $\epsilon$. 

\begin{figure}[!htbp]
  \centering
  \includegraphics[width=0.68\linewidth]{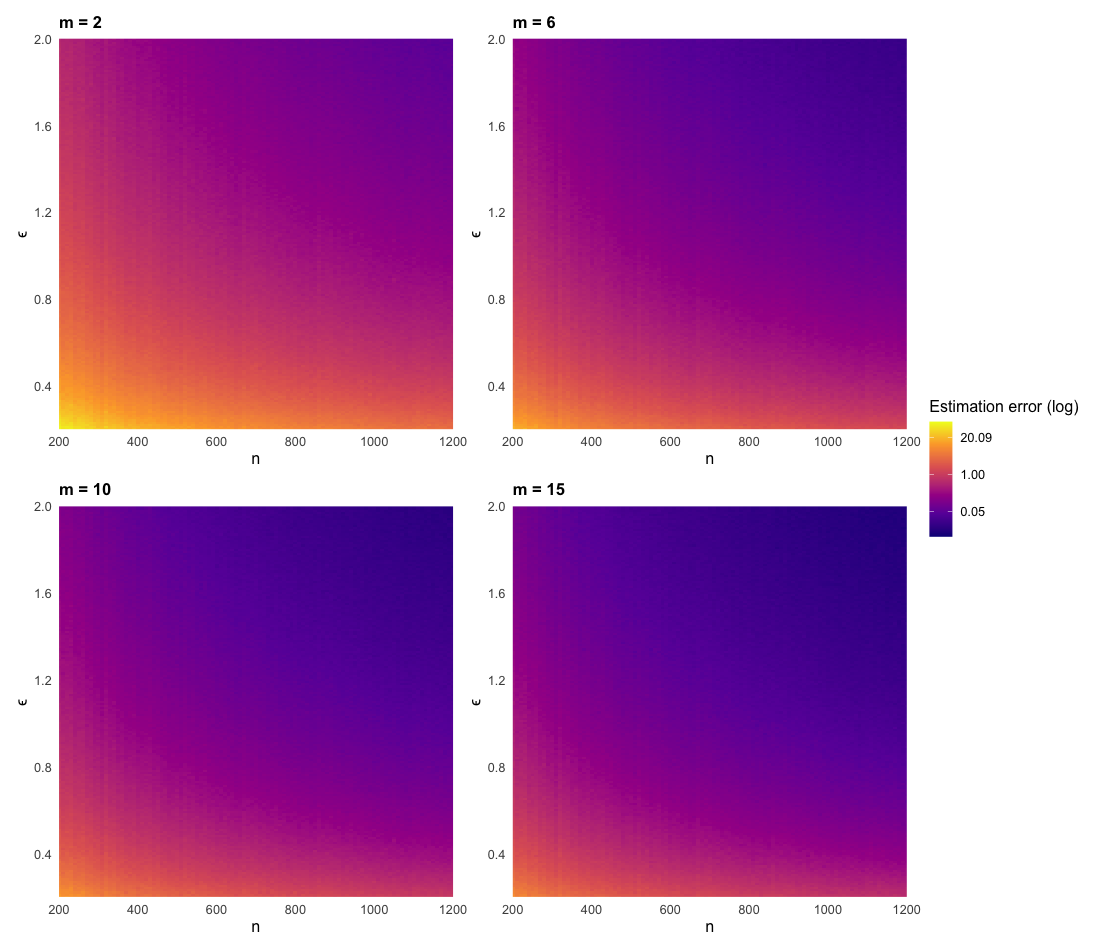}
  \caption{Simulation results over $200$ iterations for functional mean estimation under the same setting described in \Cref{section_numerical_simulated}.}
  \label{fig_simulated_phase}
\end{figure}

\begin{figure}[!htbp]
  \centering
  \includegraphics[width=1\linewidth]{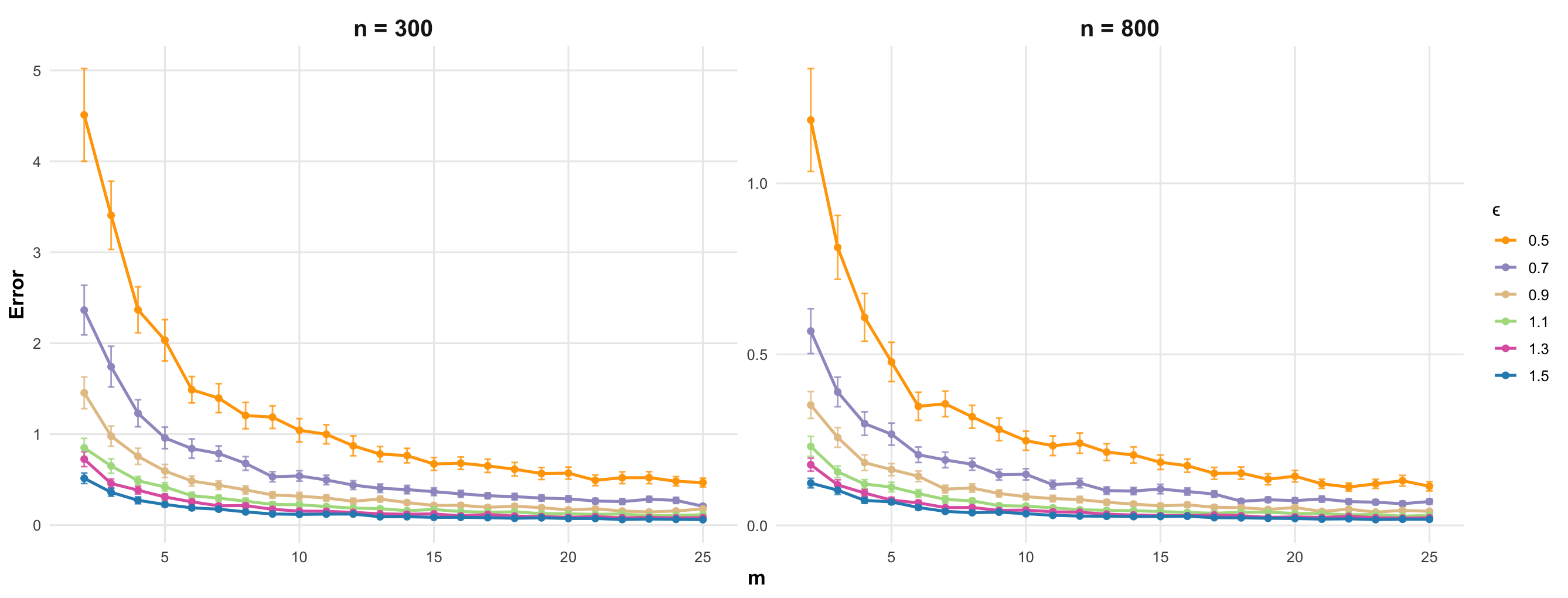}
  \caption{Simulation results over $200$ iterations for functional mean estimation for $m \in [2,25]$.}
  \label{fig_plot_m}
\end{figure}
\subsection{Sensitivity analysis} \label{section_appendix_sensitivity}
In this subsection, we present the result of a sensitivity analysis conducted for estimating $\mu^*_1$ with $n=250$ and $m=10$. 
\begin{figure}[!htbp]
    \centering
    \includegraphics[scale = 0.46]{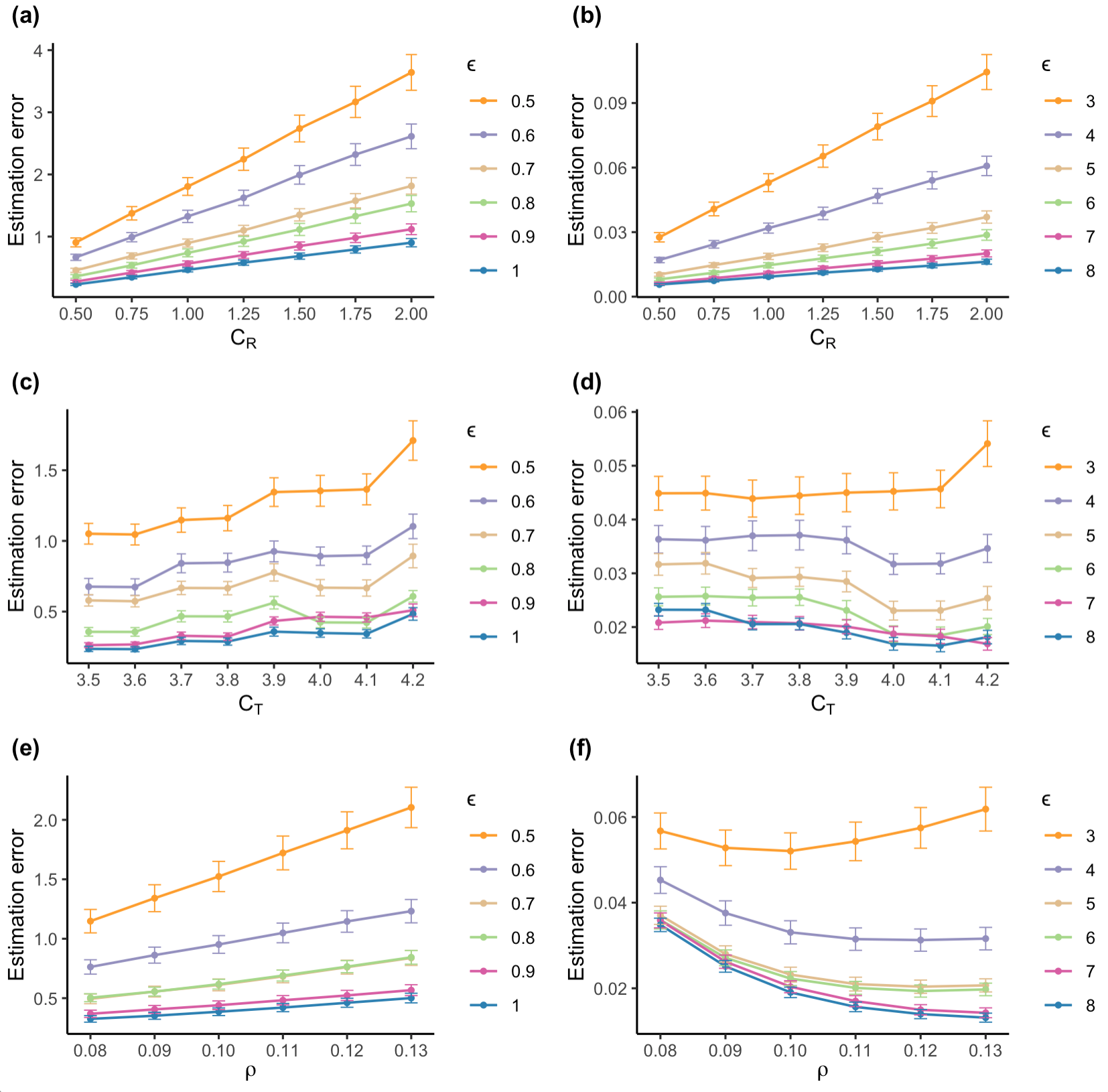}
    \caption{Sensitivity analysis for estimating $\mu^*_1$ with $n=250$ and $m=10$. From top to bottom: sensitivity analysis of constants $C_R$, $C_T$ and learning rate $\rho$. From left to right: different values of $\epsilon$.}
\end{figure}

Firstly, in the top panel, we study the sensitivity associated with the constant $C_R$, which is the constant in the level of entry-wise truncation $\{R_{\ell}\}_{\ell=1}^r$. As expected, on the top panel, the estimation error increases as the value of $C_R$ increases as a higher level of noise is added in each iteration of gradient descent. The effect of $C_R$ diminishes as the value of $\epsilon$ increases. 

In addition, in the middle panel, we examine the performance by considering a collection of $C_T$, which is used to choose the number of iterations. Roughly speaking, the performance of the estimator remains relatively stable across different values of $C_T$.

Finally, in the bottom panel, sensitivity analysis is performed for a range of learning rate values. Though it seems like trends do exist as $\rho$ varies, the variation in estimation error is not significant. In our simulation study, for demonstration purposes, we adopt $\rho =0.1$, which is a common default choice in gradient descent.

\section{Technical details for Section \ref{section_mean_cdp}} \label{section_appendix_mean_cdp}
\subsection{Proof of Lemma \ref{l_gaussian_mechanism}} \label{section_appendix_gaussian_mechanism}
\begin{proof}
    Let $D$ and $D'$ be two given neighboring datasets and consider drawing a $x \sim N_r(0,\Sigma)$, where $\Sigma \in \mathbb{R}^{r\times r}$ is a diagonal matrix with $\mathrm{diag}(\Sigma) = (\sigma^2_1, \ldots, \sigma^2_r)$.

    Without loss of generality, we let $f(D) = f(D')+\widetilde{\Delta} f$. Denote $g$ the probability density function of $M(D)$ and $g'$ the probability density function of $M(D')$, then by the construction of $M$, we have that 
    \begin{align*}
        \log\Big\{\frac{g\{f(D)+x\}}{g'\{f(D)+x\}}\Big\} &= \log\Big\{\frac{\exp(-\frac{1}{2}x^\top\Sigma^{-1}x)}{\exp(-\frac{1}{2}\{x+\widetilde{\Delta} f)^\top\Sigma^{-1}(x+\widetilde{\Delta} f)\}}\Big\}\\
        & = \frac{1}{2}\sum_{\ell=1}^r \sigma_\ell^{-2}\{(\widetilde{\Delta} f_\ell)^2 + 2\widetilde{\Delta} f_\ell x_\ell\},
    \end{align*}
    where $\widetilde{\Delta} f_\ell$ and $x_\ell$ are $\ell$th entries of $\widetilde{\Delta} f$ and $x$ respectively. Note that $x_\ell \sim N(0,\sigma_\ell^2)$ for any $\ell \in [r]$. Following from standard properties of Gaussian random variables, we have that 
    \begin{align*}
        W = \frac{1}{2}\sum_{\ell=1}^r \sigma_\ell^{-2}\{(\widetilde{\Delta} f_\ell)^2 + 2\widetilde{\Delta} f_\ell x_\ell\} \sim N\Big(\frac{1}{2}\sum_{\ell=1}^r\sigma_\ell^{-2}(\widetilde{\Delta} f_\ell)^2, \sum_{\ell=1}^r\sigma_\ell^{-2}(\widetilde{\Delta} f_\ell)^2\Big),
    \end{align*}
    which could also be written as 
    \begin{align*}
        W \stackrel{\mathcal{D}}{=} \frac{1}{2}\sum_{\ell=1}^r\sigma_\ell^{-2}(\widetilde{\Delta} f_\ell)^2 + Z\sqrt{\sum_{\ell=1}^r\sigma_\ell^{-2}(\widetilde{\Delta} f_\ell)^2},
    \end{align*}
    where $Z \sim N(0,1)$. To satisfy DP, we would like to have that
    \begin{align} \label{eq_GM_eq1}
        \mathbb{P}\Bigg\{Z \geq \frac{\epsilon}{\sqrt{\sum_{\ell=1}^r\sigma_\ell^{-2}(\widetilde{\Delta} f_\ell)^2}}- \frac{1}{2}\sqrt{\sum_{\ell=1}^r\sigma_\ell^{-2}(\widetilde{\Delta} f_\ell)^2}\Bigg\} \leq \frac{\delta}{2},
    \end{align}
    which implies that $\mathbb{P}(|W| \geq \epsilon) \leq \delta$.
    The aim now is to satisfy \eqref{eq_GM_eq1} and at the same time minimize the estimation loss incurred by the noise added. To do so, we would like to find the correct level of $\{\sigma_\ell^2\}_{\ell=1}^r$ satisfying the following optimization problem:
    \begin{align}\label{eq_GM_optimisation}
        &\min_{\{\sigma^2_\ell\}_{\ell=1}^r} \quad \sum_{\ell=1}^r \sigma_\ell^2  \quad \text{s.t.} \quad  \sum_{\ell=1}^r \frac{(\widetilde{\Delta} f_\ell)^2}{\sigma_\ell^2} = \frac{\epsilon^2}{4\log(2/\delta)}.
    \end{align}
    If the \eqref{eq_GM_optimisation} is satisfied, then we have that
    \begin{align*}
        \mathbb{P}\Big\{Z \geq 2\sqrt{\log(2/\delta)} - \frac{\epsilon}{4\sqrt{\log(2/\delta)}}\Big\} \leq \mathbb{P}(Z \geq \sqrt{\log(2/\delta)}) \leq \delta/2
    \end{align*}
    whenever $4\log(2/\delta) \geq \epsilon$, hence \eqref{eq_GM_eq1} is satisfied. To solve the optimization problem in \eqref{eq_GM_optimisation}, note that the Lagrangian function is
    \begin{align*}
        \mathcal{L}(\lambda) = \sum_{\ell=1}^r \sigma_\ell^2 + \lambda\Big( \sum_{\ell=1}^r \frac{(\widetilde{\Delta} f_\ell)^2}{\sigma_\ell^2}-\frac{\epsilon^2}{4\log(2/\delta)}\Big).
    \end{align*}
    Hence we have that 
    \begin{equation*}
        \frac{\partial\mathcal{L}}{\partial \sigma_\ell^2} = 1- \frac{\lambda \widetilde{\Delta} f_\ell^2}{(\sigma_\ell^2)^2} = 0 \quad \text{and} \quad \sigma_\ell^2 = \sqrt{\lambda \widetilde{\Delta} f_\ell^2}.
    \end{equation*}
    Substituting the above result into \eqref{eq_GM_optimisation}, we have that
    \begin{equation*}
        \lambda = \frac{16\log^2(2/\delta)\|\widetilde{\Delta} f\|_1^2}{\epsilon^4}, \quad \text{and}\quad \sigma_\ell^2 = \frac{4\log(2/\delta)|\widetilde{\Delta} f_\ell| \|\widetilde{\Delta} f\|_1}{\epsilon^2}.
    \end{equation*}

    To generalize the argument to all neighbouring datasets, it then suffices to find 
    \[\sup_{D \sim D'} \sigma_\ell^2 \leq \frac{4\log(2/\delta)|\Delta f_\ell| \|\Delta f\|_1}{\epsilon^2},\]
    where $\Delta f \in \mathbb{R}^r$ is the vector with the $\ell$-th entry defined as $\Delta f_\ell= \sup_{D\sim D'} |f_\ell(D)-f_\ell(D')|$. The lemma thus follows.
    
\end{proof}

\subsection{Proof of Theorem \ref{thm_mean_upper}} \label{section_appendix_mean_upper}
\begin{proof}[Proof of Theorem \ref{thm_mean_upper}]
To prove the first claim, since in each iteration, we use a disjoint set of independent data, by the parallel composition property \citep[e.g.][]{smith2021making} of the CDP mechanism, it suffices for us to show that $a^t$ in each iteration is guaranteed to be $(\epsilon,\delta)$-CDP. The Gaussian mechanism in Lemma \ref{l_gaussian_mechanism} is used here and we are required to find the sensitivity $\Delta f$ for the gradient function of interest. Note that by the entry-wise projection $\Pi^{\mathrm{entry}}_{R}$ defined in our algorithm, for any two sets of data $(x,y)_{1:m}^{(1:n)}$ and $(x', y')_{1:m}^{(1:n)}$ such that $\sum_{i=1}^n \mathbbm{1}\{(x,y)_{1:m}^{(i)} \neq (x', y')_{1:m}^{(i)}\}=1$, we have that for all $\ell \in [r]$, the $\ell$th entry of the sensitivity vector of gradient satisfies that
    \begin{align*}
        \Big(\big|f\{(x,y)_{1:m}^{(1:n)}\} - f\{(x', y')_{1:m}^{(1:n)}\}\big|\Big)_{\ell} \leq \frac{2R_\ell}{b}.
    \end{align*}
    Hence by choosing $\sigma_\ell^2 = 16\log(2/\delta)R_\ell\sum_{k=1}^r R_k/(b^2\epsilon^2)$, our algorithm is guaranteed to be $(\epsilon,\delta)$-CDP from Lemma \ref{l_gaussian_mechanism} and the post-processing property of DP in Lemma \ref{l_postprocessing}. 

    Denote $\tau_t = bt$ for $t \in \{0\}\cup [T-1]$, and consider the following two events
    \begin{align*}
        \mathcal{E}_1 = \Bigg\{&\Lambda_{\min}\Big\{\frac{1}{bm}\sum_{i=1}^b\sum_{j=1}^m \Phi_r(X^{(\tau_t+i)}_j)\Phi_r^{\top}(X^{(\tau_t+i)}_j)\Big\} \geq 1/(2L)\quad \text{and} \\
        &\Lambda_{\max}\Big\{\frac{1}{bm}\sum_{i=1}^b \sum_{j=1}^m \Phi_r(X^{(\tau_t+i)}_j)\Phi_r^{\top}(X^{(\tau_t+i)}_j)\Big\} \leq 2L, \forall t\in \{0,\ldots,T-1\}\Bigg\}
    \end{align*}
    and
    \begin{align*}
        \mathcal{E}_2 = \Big\{\Pi^{\mathrm{entry}}_{R}&\Big[\frac{1}{m}\sum_{j=1}^m \Phi_r(X^{(\tau_t+i)}_j)\big\{\Phi^{\top}_r(X^{(\tau_t+i)}_j)a^{t}-Y^{(\tau_t+i)}_j \big\}\Big]\\
        &= \frac{1}{m}\sum_{j=1}^m \Phi_r(X^{(\tau_t+i)}_j)\big\{\Phi^{\top}_r(X^{(\tau_t+i)}_j)a^{t}-Y^{(\tau_t+i)}_j \big\}, \; \forall i \in [b],\;  t \in \{0, \ldots, T-1\}\Big\}.
    \end{align*}
    We control the probability of these events happening in Lemma \ref{l_mean_upper_event}. The remainder of the proof is conditional on both of these events happening. 

    For the $t$th iteration, we can rewrite the noisy gradient descent as
    \begin{align*}
        &a^{t} - \rho \Big[\frac{1}{bm}\sum_{i=1}^b\sum_{j=1}^m \Phi_r(X^{(\tau_t+i)}_j)\{\Phi^{\top}_r(X^{(\tau_t+i)}_j)a^{t}-Y^{(\tau_t+i)}_j\} + w_t\Big]\\
        = & a^t - \rho \Big[\frac{1}{bm}\sum_{i=1}^b\sum_{j=1}^m \Phi_r(X^{(\tau_t+i)}_j)\big[\Phi^{\top}_r(X^{(\tau_t+i)}_j)(a^{t}- a^*_r)- \{\mu^*(X^{(\tau_t+i)}_j)- \Phi^{\top}_r(X^{(\tau_t+i)}_j)a^*_r\}\\
        &\hspace{5.8cm} -U^{(\tau_t+i)}(X^{(\tau_t+i)}_j)-\xi_{\tau_t+i,j}\big]+  w_t\Big],
    \end{align*}
    where the equality follows from \eqref{mean_model_obs}. Due to the projection $\Pi^{*}_{\mathcal{A}}$ and the fact that $a_r^* \in \mathcal{A}$, then by Lemma \ref{lemma_projection}, the above equation then implies
    \begin{align} \notag
        \|a^{t+1} - a^*_r\|_2^2 \lesssim &\; \Big\|\Big\{I-\frac{\rho}{bm}\sum_{i=1}^b\sum_{j=1}^m\Phi_r(X^{(\tau_t+i)}_j)\Phi^{\top}_r(X^{(\tau_t+i)}_j)\Big\}(a^t - a_r^*) \Big\|_2^2\\ \notag
        & + \Big\|\frac{\rho}{bm}\sum_{i=1}^b\sum_{j=1}^m \Phi_r(X^{(\tau_t+i)}_j)\Big\{\mu^*(X^{(\tau_t+i)}_j)- \Phi^{\top}_r(X^{(\tau_t+i)}_j)a^*_r\Big\}\Big\|_2^2\\ \notag
        & +\Big\|\frac{\rho}{bm}\sum_{i=1}^b\sum_{j=1}^m \Phi_r(X^{(\tau_t+i)}_j)U^{(\tau_t+i)}(X^{(\tau_t+i)}_j)\Big\|_2^2 \\ \notag
        & +\Big\|\frac{\rho}{bm}\sum_{i=1}^b\sum_{j=1}^m \Phi_r(X^{(\tau_t+i)}_j)\xi_{\tau_t+i,j}\Big\|_2^2 + \|\rho w_t\|_2^2\\ \label{thm_mean_upper_eq1}
        \lesssim & \|(I)\|_2^2+ \|(II)\|_2^2+ \|(III)\|_2^2+ \|(IV)\|^2_2 + \rho^2\|w_t\|^2_2.
    \end{align}
    We will control the estimation error $\|a^{t+1} - a^*_r\|_2^2$ by controlling each term on the right hand side individually.

    In the rest of the proof, we choose $\rho = 4L/(1+4L^2)$ for simplicity, which satisfy the condition $0 < \rho < 1/L$ given in Theorem \ref{thm_mean_upper}. With this choice of $\rho$, to control $(I)$, in the event $\mathcal{E}_1$, it then holds that 
    \begin{equation*}
        \Big\|I-\frac{\rho}{bm}\sum_{i=1}^b\sum_{j=1}^m\Phi_r(X^{(\tau_t+i)}_j)\Phi^{\top}_r(X^{(\tau_t+i)}_j)\Big\|_{\op}\leq \max \Big\{|1-\frac{\rho}{2L}|, |1- 2L\rho|\Big\} = 1 - \frac{2}{1+4L^2}.
    \end{equation*} Therefore, we have that 
    \begin{equation} \label{thm_mean_upper_eq7}
        \|(I)\|_2^2 \leq \Big(1 - \frac{2}{1+4L^2}\Big)^2\|a^t - a_r^*\|_2^2.
    \end{equation}

    For $(II)$, we have that 
    \begin{align*}
        (II) \leq &\; \frac{\rho}{bm}\sum_{i=1}^b\sum_{j=1}^m\Big[\Phi_r(X^{(\tau_t+i)}_j)\Big\{\mu^*(X^{(\tau_t+i)}_j)- \Phi^{\top}_r(X^{(\tau_t+i)}_j)a^*_r\Big\}\\
        &\hspace{2.3cm}- \mathbb{E}\big[\Phi_r(X^{(\tau_t+i)}_j)\big\{\mu^*(X^{(\tau_t+i)}_j)- \Phi^{\top}_r(X^{(\tau_t+i)}_j)a^*_r\big\}\big]\Big]\\
        & +\frac{\rho}{bm}\sum_{i=1}^b\sum_{j=1}^m \mathbb{E}\Big[\Phi_r(X^{(\tau_t+i)}_j)
        \Big\{\mu^*(X^{(\tau_t+i)}_j)- \Phi^{\top}_r(X^{(\tau_t+i)}_j)a^*_r\Big\}\Big]\\
        = &\; (II_1) + (II_2).
    \end{align*}
    To upper bound $\|(II_1)\|_2$, it holds that
    \begin{align} \notag
        \mathbb{E}(\|(II_1)\|^2_2) &= \frac{\rho^2}{b^2m^2}\sum_{\ell=1}^r \mathbb{E}\Bigg[\Big\{\sum_{i=1}^b\sum_{j=1}^m\phi_\ell(X^{(\tau_t+i)}_j)\Big\{\mu^*(X^{(\tau_t+i)}_j)- \Phi^{\top}_r(X^{(\tau_t+i)}_j)a^*_r\Big\} \\ \notag
        &\hspace{2.8cm}-\sum_{i=1}^b\sum_{j=1}^m\mathbb{E}\big[\phi_\ell(X^{(\tau_t+i)}_j)\big\{\mu^*(X^{(\tau_t+i)}_j)- \Phi^{\top}_r(X^{(\tau_t+i)}_j)a^*_r\big\}\big]\Big\}^2\Bigg]\\ \notag
        &= \frac{\rho^2}{bm}\sum_{\ell=1}^r\mathbb{E}\Bigg[\Big\{\phi_\ell(X^{(\tau_t+1)}_1)\Big\{\mu^*(X^{(\tau_t+1)}_1)- \Phi^{\top}_r(X^{(\tau_t+1)}_1)a^*_r\Big\} \\ \notag
        & \hspace{2cm} - \mathbb{E}\big[\phi_\ell(X^{(\tau_t+1)}_1)\big\{\mu^*(X^{(\tau_t+1)}_1)- \Phi^{\top}_r(X^{(\tau_t+1)}_1)a^*_r\big\}\big]\Big\}^2\Bigg]\\ \notag
        & \leq \frac{\rho^2}{bm}\sum_{\ell=1}^r\mathbb{E}\Bigg[\Big\{\phi_\ell(X^{(\tau_t+1)}_1)\Big\{\mu^*(X^{(\tau_t+1)}_1)- \Phi^{\top}_r(X^{(\tau_t+1)}_1)a^*_r\Big\}\Big\}^2\Bigg]\\ \notag
        & \leq \frac{2\rho^2 r }{bm}\mathbb{E}\Big[\Big\{\mu^*(X^{(\tau_t+1)}_1)- \Phi^{\top}_r(X^{(\tau_t+1)}_1)a^*_r\Big\}^2\Big]\\ \label{thm_mean_upper_eq5}
        & \lesssim \frac{r^{1-2\alpha}}{bm} \lesssim r^{-2\alpha}.
    \end{align}
    where the second equality holds since the collection of random variables $\{X^{(\tau_t+i)}_j\}_{i=1, j=1}^{b,m}$ are mutually independent, the second inequality holds from the properties of Fourier basis and the third inequality holds since under Assumptions \ref{a_sample} and \ref{a_model}(a)
    \[\mathbb{E}\Big[\Big\{\mu^*(X^{(\tau_t+1)}_1)- \Phi^{\top}_r(X^{(\tau_t+1)}_1)a^*_r\Big\}^2\Big]\leq L\|\mu^*- \Phi^{\top}_r a^*_r\|_{L^2}^2 \leq Lr^{-2\alpha}.\]
    To upper bound $\|(II)_2\|_2$, it holds that 
    \begin{align} \notag
        \|(II)_2\|_2^2 &= \Big\|\frac{\rho}{bm}\sum_{i=1}^b\sum_{j=1}^m \mathbb{E}\Big[\Phi_r(X^{(\tau_t+i)}_j)
        \Big\{\mu^*(X^{(\tau_t+i)}_j)- \Phi^{\top}_r(X^{(\tau_t+i)}_j)a^*_r\Big\}\Big]\Big\|_2^2 \\ \notag
        & = \rho^2 \Big\|\mathbb{E}\Big[\Phi_r(X^{(\tau_t+1)}_1)
        \Big\{\mu^*(X^{(\tau_t+1)}_1)- \Phi^{\top}_r(X^{(\tau_t+1)}_1)a^*_r\Big\}\Big]\Big\|_2^2\\ \label{thm_mean_upper_eq6}
        & \leq 2\rho^2L^2 \|\mu^*- \Phi^{\top}_r a^*_r\|_{L^2}^2 \leq \rho^2 L^2r^{-2\alpha}
    \end{align}
    where the second equality holds from Assumption \ref{a_sample} and the first inequality follows from Lemma \ref{l_mean_approximation}. Therefore, by Markov's inequality, we have that
    \begin{equation}\label{thm_mean_upper_eq2}
        \mathbb{P}\big\{\|(II)\|^2_2 \lesssim r^{-2\alpha}/\eta_1\big\} \geq 1- \eta_1,
    \end{equation}
    for any $\eta_1 \in (0,16L^2/(4L^2+1)^2)$ as long as $r \lesssim bm$.

To upper bound $\|(III)\|_2^2$, it holds that
    \begin{align*}
        \mathbb{E}(\|(III)\|_2^2) = \;&\frac{\rho^2}{b^2m^2}\sum_{\ell=1}^r \mathbb{E}\Big[\Big\{\sum_{i=1}^b\sum_{j=1}^m \phi_\ell(X^{(\tau_t+i)}_j)U^{(\tau_t+i)}(X^{(\tau_t+i)}_j)\Big\}^2\Big]\\
        =\;& \frac{\rho^2}{bm^2}\sum_{\ell=1}^r \mathbb{E}\Big[\Big\{ \sum_{j=1}^m\phi_\ell(X^{(\tau_t+1)}_j)U^{(\tau_t+1)}(X^{(\tau_t+1)}_j)\Big\}^2\Big]\\
        \lesssim \;&\frac{\rho^2}{bm}\sum_{\ell=1}^r\mathbb{E}\Big[\phi_\ell^2(X^{(\tau_t+1)}_1)\{U^{(\tau_t+1)}(X^{(\tau_t+1)}_1)\}^2\Big]\\
        &+ \frac{\rho^2}{b}\sum_{\ell=1}^r\mathbb{E}\Big[\phi_\ell(X^{(\tau_t+1)}_1)U^{(\tau_t+1)}(X^{(\tau_t+1)}_1)\phi_\ell(X^{(\tau_t+1)}_2)U^{(\tau_t+1)}(X^{(\tau_t+1)}_2)\Big]\\
        \lesssim \;& \frac{r}{bm}+ \frac{1}{b},
    \end{align*}
    where the last inequality follows from Assumption \ref{a_model}(b)~and the fact that
    \begin{align*}
        & \mathbb{E}\Big[\phi_\ell^2(X^{(\tau_t+1)}_1)\{U^{(\tau_t+1)}(X^{(\tau_t+1)}_1)\}^2\Big]
        = \mathbb{E}_U\Big[\int_{0}^1 \big\{U^{(\tau_t+1)}(s)\big\}^2\phi_\ell^2(s)f_X(s) \mathrm{d}s\Big] \\
        \leq & 2L\mathbb{E}_U\Big[\int_{0}^1 U^2(s) \mathrm{d}s\Big] \leq 2LC_U,
    \end{align*}
    and
    \begin{align*}
       &\frac{1}{b}\sum_{\ell=1}^r\mathbb{E}\Big[\phi_\ell(X^{(\tau_t+1)}_1)U^{(\tau_t+1)}(X^{(\tau_t+1)}_1)\phi_\ell(X^{(\tau_t+1)}_2)U^{(\tau_t+1)}(X^{(\tau_t+1)}_2)\Big]\\
       =\;&\frac{1}{b}\sum_{\ell=1}^r \mathbb{E}\Big[\mathbb{E}\Big\{\phi_\ell(X^{(\tau_t+1)}_1)U^{(\tau_t+1)}(X^{(\tau_t+1)}_1)\Big|U\Big\}^2\Big] \\
       =\;& \frac{1}{b}\mathbb{E}\Big[\Big\|\mathbb{E}\Big\{\Phi_\ell(X^{(\tau_t+1)}_1)U^{(\tau_t+1)}(X^{(\tau_t+1)}_1)\Big|U\Big\}\Big\|_2^2\Big]\\
       = \;& \frac{1}{b}\mathbb{E}\Big[\sum_{\ell=1}^r\Big\{\int_{0}^1 \phi_\ell(s)U^{(\tau_t+1)}(s)f_X(s) \; \mathrm{d}s\Big\}^2\Big]\\
       \leq \;& \frac{1}{b}\mathbb{E}\Big[\sum_{\ell=1}^\infty \Big\{\int_{0}^1 \phi_\ell(s)U^{(\tau_t+1)}(s)f_X(s) \; \mathrm{d}s\Big\}^2\Big]\\
       = \;& \frac{1}{b}\mathbb{E}\Big\{\|U^{(\tau_t+1)}f_X\|_{L^2}^2\Big\} \leq \frac{L^2}{b}\mathbb{E}\Big\{\|U^{(\tau_t+1)}\|_{L^2}^2\Big\} \lesssim \frac{1}{b}.
    \end{align*}
    Therefore, the Markov inequality implies that
    \begin{equation}\label{thm_mean_upper_eq3}
        \mathbb{P}\Big\{\|(III)\|^2_2 \lesssim (1/b+r/bm)/\eta_2 \Big\} \geq 1 - \eta_2,
    \end{equation}
    for any $\eta_2 \in (0,16L^3/(4L^2+1)^2)$.
    
    Finally, to find an upper bound on $\|(IV)\|_2^2$, we have that
    \begin{align*}
        \mathbb{E}(\|(IV)\|_2^2)&= \frac{\rho^2}{b^2m^2}\sum_{\ell=1}^r \mathbb{E}\Big[\Big\{\sum_{i=1}^b\sum_{j=1}^m \phi_\ell(X^{(\tau_t+i)}_j)\xi_{\tau_t+i,j}\Big\}^2\Big]  = \frac{\rho^2}{bm}\sum_{\ell=1}^r \mathbb{E}[\phi^2_\ell(X^{(\tau_t+1)}_1)\xi^2_{\tau_t+1,1}]\\
        & = \frac{\rho^2}{bm}\sum_{\ell=1}^r \mathbb{E}_X[\phi^2_\ell(X^{(\tau_t+1)}_1)]\mathbb{E}_\xi[\xi^2_{\tau_t+1,1}] \leq \frac{r \rho^2 L C_{\xi}^2}{bm} \asymp \frac{r}{bm},
    \end{align*}
    where the third equality follows from the fact that $X^{(\tau_t+1)}_1)$ and $\xi_{\tau_t+1,1}$ are independent and the fourth equality follows from Assumption \ref{a_model}(c). Therefore, the Markov inequality implies that 
    \begin{equation}\label{thm_mean_upper_eq4}
        \mathbb{P}\Big\{\|(IV)\|_2^2 \lesssim r/(bm\eta_3)\Big\}\geq 1-\eta_3,
    \end{equation}
    for any $\eta_3 \in (0, 16L^3/(4L^2+1)^2)$.

    For the term $\|w_t\|^2_2$, note that 
    \begin{align*}
        \|w_t\|^2_2 = \sum_{\ell=1}^r\sigma_\ell^2 z_\ell^2 \sim \chi^2_{\sum_{\ell=1}^r\sigma_\ell^2},
    \end{align*}
    where $\{z_\ell\}_{\ell=1}^r$ is a sequence of independent and identically distributed standard Gaussian random variables. Also, note that $\chi^2_{\sum_{\ell=1}^r\sigma_\ell^2}$ is sub-Exponential with parameter $\sum_{\ell=1}^r\sigma_\ell^2$. Also, we have that 
    \begin{align*}
        \mathbb{E}(\|w_t\|^2_2) = \sum_{\ell=1}^r\sigma_\ell^2\mathbb{E}\{ z_\ell^2\} = \sum_{\ell=1}^r\sigma_\ell^2.
    \end{align*}
    Therefore, following from standard properties of sub-Exponential random variables \citep[e.g.~Proposition 2.7.1 in][]{vershynin2018high}, we have that for any $\tau >0$, 
    \begin{align*}
        \mathbb{P}\Big\{\|w_t\|^2_2 \geq \tau - \mathbb{E}(\|w_t\|^2_2)\Big\} \leq \exp\Big\{-\frac{C_1\tau}{\sum_{\ell=1}^r\sigma_\ell^2}\Big\}.
    \end{align*}
    Combining with a union bound on $t \in [T]$, we have with probability at least $1-\eta_4$ that
    \begin{align} \label{thm_mean_upper_eq8}
        & \|w_t\|_{2}^2 \lesssim \log(T/\eta_4)\sum_{\ell=1}^r\sigma_\ell^2 \lesssim \log(T/\eta_4)\log(1/\delta)\frac{(\sum_{\ell=1}^r R_\ell)^2 }{b^2\epsilon^2}\nonumber \\
        \lesssim & \log(T/\eta_4)\log(1/\delta)\Big\{\frac{r^2\log^2(n/\eta)}{b^2m\epsilon^2}+\frac{1}{b^2\epsilon^2}\Big\},
    \end{align}
    where the second inequality follows by construction of $\sigma_\ell^2$ in Algorithm \ref{algorithm_mean} and the last inequality follows from the fact that when $\alpha >1$,
    \begin{align*}
        \sum_{\ell=1}^r R_\ell \lesssim \sum_{\ell=1}^r \sqrt{m^{-1}\log^2(n/\eta)}+\ell^{-\alpha} \lesssim  \sqrt{r^2m^{-1}\log^2(n/\eta)} + 1.
    \end{align*}
    Substituting \eqref{thm_mean_upper_eq7}, \eqref{thm_mean_upper_eq2}, \eqref{thm_mean_upper_eq3}, \eqref{thm_mean_upper_eq4} and \eqref{thm_mean_upper_eq8} into \eqref{thm_mean_upper_eq1}, we have by a union bound argument that with probability at least $1-6\eta- \eta_1- \eta_2 -\eta_3-\eta_4 = 1-10\eta$ that
    \begin{align*}
        \|a^{t+1} - a^*_r\|_2^2 &\lesssim \Big(1 - \frac{2}{1+4L^2}\Big)^2\|a^t - a_r^*\|_2^2+ \frac{1}{\eta}\Big(\frac{1}{b}+\frac{r}{bm}+ r^{-2\alpha}\Big) \\
        & \hspace{2cm}+\log(T/\eta)\log(1/\delta)\Big\{\frac{r^2\log^2(n/\eta)}{b^2m\epsilon^2}+\frac{1}{b^2\epsilon^2}\Big\}\\
        & \lesssim \Big(1 - \frac{2}{1+4L^2}\Big)^{2t+2}\|a_r^*\|_2^2 + \frac{1}{\eta}\Big(\frac{1}{b}+\frac{r}{bm}+ r^{-2\alpha}\Big) \\
        & \hspace{2cm} +\log(T/\eta)\log(1/\delta)\Big\{\frac{r^2\log^2(n/\eta)}{b^2m\epsilon^2}+\frac{1}{b^2\epsilon^2}\Big\}\\
        & \lesssim \exp\Big(-\frac{2(2t+2)}{1+4L^2}\Big)\|a_r^*\|_2^2 + \frac{1}{\eta}\Big(\frac{1}{b}+\frac{r}{bm}+ r^{-2\alpha}\Big) \\
        &\hspace{2cm}+\log(T/\eta)\log(1/\delta)\Big\{\frac{r^2\log^2(n/\eta)}{b^2m\epsilon^2}+\frac{1}{b^2\epsilon^2}\Big\},
    \end{align*}
    where the second inequality follows from an iterative argument. Pick $T \asymp \log(n)$, hence $b \asymp n/\log(n)$ and we have that
    \begin{align*}
        \|a^{T} - a^*_r\|_2^2 &\lesssim \frac{1}{\eta}\Big(\frac{\log(n)}{n}+\frac{r\log(n)}{nm}+ r^{-2\alpha}\Big) \\
        & \hspace{1cm}+\Big(\frac{1}{n^2\epsilon^2}+\frac{r^2\log^2(n/\eta)}{n^2m\epsilon^2}\Big)\log^2(n)\log(\log(n)/\eta)\log(1/\delta)\\
        & \lesssim_{\log} \frac{1}{\eta}\Big(\frac{1}{n}+\frac{r}{nm}+ r^{-2\alpha}\Big) +\frac{1}{n^2\epsilon^2}+\frac{r^2}{n^2m\epsilon^2}.
    \end{align*}
    Hence we have that 
    \begin{align*}
        \|\widetilde{\mu} - \mu^*\|_{L^2}^2 \lesssim \|a^{T} - a^*_r\|_2^2 + \|\Phi_r^\top a^*_r - \mu^*\|_{L^2}^2 \lesssim \|a^{T} - a^*_r\|_2^2 + r^{-2\alpha},
    \end{align*}
    where the last inequality follows from the fact that $\mu^* \in \mathcal{W}(\alpha, C_\alpha)$.

     To prove the third claim, we remark that \Cref{thm_mean_upper}.\ref{thm_mean_upper_rate-1} can be rewritten as
     \begin{align} \label{thm_mean_upper_eq9}
          \|\widetilde{\mu} - \mu^*\|_{L^2}^2 & \lesssim_{\log} O_p\Big(\frac{r}{nm} +\frac{r^2}{n^2m\epsilon^2} + \frac{1}{n} + \frac{1}{n^2\epsilon^2} + r^{-2\alpha}\Big).
     \end{align}
    The third claim follows from selecting $r$ and matching the four variance terms individually with the fifth term (squared bias) in \eqref{thm_mean_upper_eq9}. We provide details of rate-matching below.

    \noindent \textbf{Case 1:} The term $r(nm)^{-1}$ is the largest. In this case, we select $r_{\mathrm{I}} = (nm)^{1/(2\alpha+1)}$ and the final resulting rate is $r_{\mathrm{I}}^{-2\alpha} = (nm)^{-2\alpha/(2\alpha+1)}$. Next, we find conditions when $r_{\mathrm{I}}^{-2\alpha}$ is the dominant term.
    \begin{itemize}
        \item When $(nm)^{-\frac{2\alpha}{2\alpha+1}} \geq n^{-1}$, we require that $m \leq n^{\frac{1}{2\alpha}}$.

        \item  When $(nm)^{-\frac{2\alpha}{2\alpha+1}} \geq (n^2\epsilon^2)^{-1}$, we require that $\epsilon \geq n^{\frac{-1-\alpha}{2\alpha+1}}m^{\frac{\alpha}{2\alpha+1}}$,

        \item  When $(nm)^{-\frac{2\alpha}{2\alpha+1}} \geq r_{\mathrm{I}}^2(n^2m\epsilon^2)^{-1} = n^{-\frac{4\alpha}{2\alpha+1}}m^{\frac{\alpha}{2\alpha+1}}\epsilon^{-2}$, we require that $\epsilon \geq n^{-\frac{\alpha}{2\alpha+1}}m^{\frac{1}{4\alpha+2}}$.
    \end{itemize}

    \noindent \textbf{Case 2:} The term $r^2(n^2m\epsilon^2)^{-1}$ is the largest. In this case, we select $r_{\mathrm{II}} = (n^2m\epsilon^2)^{1/(2\alpha+2)}$ and the final resulting rate is $r_{\mathrm{II}} ^{-2\alpha} = (n^2m\epsilon^2)^{-\alpha/(\alpha+1)}$. Next, we find conditions when $r_{\mathrm{II}} ^{-2\alpha}$ is the dominant term.
    \begin{itemize}
        \item When $(n^2m\epsilon^2)^{-\frac{\alpha}{\alpha+1}} \geq n^{-1}$, we require that $\epsilon \leq n^{\frac{-\alpha+1}{2\alpha}}m^{-\frac{1}{2}}$.

        \item When $(n^2m\epsilon^2)^{-\frac{\alpha}{\alpha+1}} \geq (n^2\epsilon^2)^{-1}$, we require that $\epsilon \geq n^{-1}m^{\frac{\alpha}{2}}$.

        \item When $(n^2m\epsilon^2)^{-\frac{\alpha}{\alpha+1}} \geq r_{\mathrm{II}}(nm)^{-1} = n^{-\frac{\alpha}{\alpha+1}}m^{\frac{-2\alpha-1}{2\alpha+2}}\epsilon^{\frac{1}{\alpha+1}}$, we require that $\epsilon \leq n^{-\frac{\alpha}{2\alpha+1}}m^{\frac{1}{4\alpha+2}}$.
    \end{itemize}

    \noindent \textbf{Case 3:} The term $n^{-1}$ is the largest. In this case, we select $r_{\mathrm{III}} = n^{1/(2\alpha)}$. Next, we find conditions when $n^{-1}$ is the dominant term.
    \begin{itemize}
        \item When $n^{-1} \geq (n^2\epsilon^2)^{-1}$, we require that $\epsilon \geq n^{-\frac{1}{2}}$.

        \item  When $n^{-1} \geq r_{\mathrm{III}}(nm)^{-1} = n^{\frac{-\alpha+1}{\alpha}}m^{\frac{\alpha}{2\alpha+1}}$, we require that $m \geq n^{\frac{1}{2\alpha}}$.

        \item  When $n^{-1} \geq r_{\mathrm{III}}^2(n^2\epsilon m^2)^{-1} = n^{\frac{2-2\alpha}{\alpha}}m^{-1}\epsilon^{\frac{2-2\alpha}{\alpha}}$, we require that $\epsilon \geq n^{\frac{-\alpha+1}{2\alpha}}m^{-\frac{1}{2}}$.
    \end{itemize}

    \noindent \textbf{Case 4:} The term $(n^2\epsilon^2)^{-1}$ is the largest. In this case, we select $r_{\mathrm{IV}} = (n^2\epsilon^2)^{1/(2\alpha)}$. Next, we find conditions when $(n^2\epsilon^2)^{-1}$ is the dominant term.
    \begin{itemize}
        \item When $(n^2\epsilon^2)^{-1} \geq n^{-1}$, we require that $\epsilon \leq n^{-\frac{1}{2}}$.

        \item  When $(n^2\epsilon^2)^{-1} \geq r_{\mathrm{IV}}(nm)^{-1} = n^{\frac{-\alpha+1}{\alpha}}m^{-1}\epsilon^{\frac{1}{\alpha}}$, we require that $\epsilon \leq n^{\frac{-1-\alpha}{2\alpha+1}}m^{\frac{\alpha}{2\alpha+1}}$.

        \item When $(n^2\epsilon^2)^{-1} \geq r_{\mathrm{IV}}(n^2\epsilon m^2)^{-1}= n^{\frac{2-2\alpha}{\alpha}}m^{-1}\epsilon^{\frac{2-2\alpha}{\alpha}}$, we require that $\epsilon \leq n^{-1}m^{\frac{\alpha}{2}}$.
    \end{itemize}
\end{proof}

\subsection{Proof of Theorem \ref{thm_mean_lower}} \label{section_appendix_cdp_mean_low}
\begin{proof}[Proof of Theorem \ref{thm_mean_lower}]
    With the definition of $\mathcal{W}(\alpha, C_\alpha)$ in Definition \ref{def_sobolev}, instead of considering the minimax risk defined in \eqref{eq_minimax_original}, we will reduce the problem to a finite-dimensional subspace. For $r \in \mathbb{N}_+$, denote the $r$-dimensional subspace $\mathcal{W}_r(\alpha,C_\alpha)$ as 
    \begin{align*}
        \mathcal{W}_r(\alpha,C_\alpha) = \Big\{f \in L^2([0,1]):f = \sum_{\ell=1}^r \phi_\ell\langle f,\phi_\ell \rangle_{L^2} = \sum_{\ell=1}^r \phi_\ell a_\ell, \sum_{\ell=1}^r (\tau_\ell)^{2\alpha} a_\ell^2 < C_\alpha^2/\pi^{2\alpha}\Big\},
    \end{align*}
     where $\tau_\ell = \ell$ for even $\ell$ and $\tau_\ell = \ell-1$ for odd $\ell$. Also, denote the Sobolev ellipsoid $\Theta(r, C_\alpha)$ as
    \begin{align}\label{mean_t_lower_eq6}
        \Theta(r, C_\alpha) = \Big\{a \in \mathbb{R}^r: \sum_{\ell=1}^r (\tau_\ell)^{2\alpha} a_\ell^2 < C_\alpha^2/\pi^{2\alpha}\Big\}.
    \end{align}
    Since it holds that $\mathcal{W}_r(\alpha, C_\alpha) \subseteq \mathcal{W}(\alpha, C_\alpha)$ and $\|\widetilde{\mu}- \mu^*\|^2_{L^2}~\geq \sum_{\ell=1}^r (\widetilde{a}_\ell - a^*_\ell)^2$, where $\widetilde{a}_\ell = \langle \widetilde{\mu},\phi_\ell\rangle_{L^2}$ and $a^*_\ell =\langle \mu^* , \phi_\ell\rangle_{L^2}$ for any $\ell \in [r]$. The minimax risk in \eqref{eq_minimax_original} can further be lower bounded by
    \begin{align*}
        \eqref{eq_minimax_original} &\geq \underset{Q \in \mathcal{Q}_{\epsilon, \delta}}{\inf} \underset{\widetilde{\mu}}{\inf} \underset{\mu^* \in \mathcal{W}_r(\alpha,C_\alpha)}{\sup} \mathbb{E}_{P_X, P_Y, Q}\|\widetilde{\mu}- \mu^*\|_{L^2}^2 \\
        & \geq \underset{Q \in \mathcal{Q}_{\epsilon, \delta}}{\inf} \underset{\widetilde{a}}{\inf} \underset{a \in \Theta(r,C_\alpha)}{\sup} \mathbb{E}_{P_X, P_Y, Q}\|\widetilde{a} - a\|_{2}^2.
    \end{align*}
    Thus throughout the rest of the proof, it suffices to consider a minimax lower bound of
    \begin{align*}
        \underset{Q \in \mathcal{Q}_{\epsilon, \delta}}{\inf} \underset{\widetilde{\mu}}{\inf} \underset{a \in \Theta(r,C_\alpha)}{\sup} \mathbb{E}_{P_X, P_Y, Q}\|\widetilde{a} - a\|_{2}^2.
    \end{align*}

    Let $\mathcal{M}_{\epsilon,\delta}$ be the collection of all $(\epsilon, \delta)$-CDP algorithms. To show the above result, it suffices to show that for every estimator $M(\bm{X},\bm{Y}) \in \mathcal{M}_{\epsilon,\delta}$, it holds that 
    \begin{align} \label{mean_t_lower_eq1}
        \underset{M \in \mathcal{M}_{\epsilon,\delta}}{\inf} \underset{a \in \Theta(r,C_\alpha)}{\sup} \mathbb{E}\|M(\bm{X},\bm{Y}) - a\|_{2}^2 \gtrsim r^{-2\alpha} \wedge \frac{r^2}{n^2m\epsilon^2}.
    \end{align}
    Since by considering the random function $\mu^* + U$ as an unknown constant random function, the problem essentially becomes estimating the univariate mean for $n$ independent and identically distributed random variables, and from Theorem 3.1 in \citet{cai2021cost}, the minimax risk of such a problem is $n^{-1} \vee (n\epsilon)^{-2}$ whenever $\delta \lesssim n^{-(1+\omega)}$ for some fixed constant $\omega >0$. The first term in \eqref{mean_t_lower_eq1} follows from the non-private minimax lower bound of mean function estimation \citep[e.g.][]{cai2011optimal}.

    \noindent \textbf{Step 1: Construction of a class of distribution.} To prove \eqref{mean_t_lower_eq1}, we construct the class of distribution as follows. Assume that $\{U^{(i)}\}_{i=1}^n$ is a sequence of mean zero Gaussian processes with covariance function as the Mat\'ern covariance function $K$ with parameter $\alpha+1/2$, $\{X_j^{(i)}\}_{i=1,j=1}^{n,m}$ is a collection of independent uniform random variables distributed over $[0,1]$ and the measurement error $\xi_{ij} \sim N(0, \sigma_0^2)$ with $0 < \sigma_0^2 < \infty$. To construct the prior distribution of $a$, we assume that each coordinate of $a$ follows the uniform distribution between $-B$ and $B$, where $B^2 = \frac{C_1^2}{2\pi^{2\alpha}}\Big(\int_{1}^{r+1}t^{2\alpha}\mathrm{d}t\Big)^{-1}\asymp r^{-(2\alpha+1)}$. To show that the prior distribution is supported in the class we consider, note that
    \begin{align*}
        \sum_{\ell=1}^r \tau_\ell^2a^2_\ell \leq B^2 \sum_{\ell=1}^r\tau_\ell^2\asymp B^2 \sum_{\ell=1}^r \ell^{2\alpha} \leq \frac{C_1^2}{2\pi^{2\alpha}}.
    \end{align*}
    Hence, we have proved that the above construction is valid. 
    
    With the above construction, all of the assumptions in Assumption \ref{a_model} are satisfied. We further have that conditioning on $\{X_{j}^{(i)}\}_{i=1, j=1}^{n,m}$ and $a$, $Y_{i\cdot}=(Y_1^{(i)},\ldots, Y_m^{(i)})^\top$ follows a multivariate Gaussian distribution with mean vector
    \begin{align*}
        \mu_{i\cdot} = \{\mu(X_1^{(i)}), \ldots, \mu(X_m^{(i)})\}^\top = \{a^\top\Phi_r(X_1^{(i)}), 
        \ldots,a^\top\Phi_r(X_m^{(i)})\}^\top,
    \end{align*}
    and covariance matrix $\Sigma_i = C_i + \sigma^2_0I_m$, where $C_i \in \mathbb{R}^{m\times m}$ with $(C_i)_{ab} = K(X_a^{(i)},X_b^{(i)})$.

    \noindent \textbf{Step 2: Construction of Score attack.} For $i \in [n]$, construct the score attack by
    \begin{align*}
        \mathcal{A}(M(\bm{X},\bm{Y}), (X,Y)_{1:m}^{(i)}) = \langle M(\bm{X},\bm{Y})-a, \Phi_{[m],i}\Sigma_i^{-1}(Y_{i\cdot}-\mu_{i\cdot}) \rangle,
    \end{align*}
    where $\Phi_{[m],i} = (\Phi_r(X_1^{(i)}), \ldots, \Phi_r(X_m^{(i)})) \in \mathbb{R}^{r\times m}$.

    Let $(\bm{X}_i',\bm{Y}_i')$ be the dataset obtained by only replacing $(X,Y)^{(i)}_{1:m}$ in $(\bm{X},\bm{Y})$ with an independent copy. Let
    \begin{align} \label{mean_t_lower_eq12}
        A_i =  \langle M(\bm{X},\bm{Y})-a, \Phi_{[m],i}\Sigma_i^{-1}(Y_{i\cdot}-\mu_{i\cdot}) \rangle \quad \text{and} \quad A_i' =  \langle M(\bm{X}_i',\bm{Y}_i')-a, \Phi_{[m],i}\Sigma_i^{-1}(Y_{i\cdot}-\mu_{i\cdot}) \rangle.
    \end{align}
    
   Using a similar idea as the one used in the proof of Proposition 6.1 in \citet{cai2023score}, for $Z\in \mathbb{R}$, let $Z^+ = \max(Z,0)$ and $Z^- = -\min(Z,0)$, then it holds for any $T>0$  that 
    \begin{align} \notag
        \mathbb{E}A_i &= \int_{0}^\infty \mathbb{P}(A_i^+ > t)\; \mathrm{d}t - \int_{0}^\infty \mathbb{P}(A_i^- > t)\; \mathrm{d}t\\ \notag
        & \leq \int_{0}^T \mathbb{P}(A_i^+ > t)\; \mathrm{d}t - \int_{0}^T \mathbb{P}(A_i^- > t)\; \mathrm{d}t + \int_{T}^\infty \mathbb{P}(|A_i| > t)\; \mathrm{d}t\\ \notag
        & \leq \int_{0}^T \{\exp(\epsilon)\mathbb{P}((A'_i)^+ > t) + \delta\}\; \mathrm{d}t - \int_{0}^T \{\exp(-\epsilon)\mathbb{P}((A'_i)^- > t) - \delta\}\; \mathrm{d}t\\\notag
        &\hspace{9cm}+ \int_{T}^\infty \mathbb{P}(|A_i| > t)\; \mathrm{d}t\\ \notag
        & \leq (1+2\epsilon)\int_{0}^T \mathbb{P}((A'_i)^+ > t) \; \mathrm{d}t - (1-2\epsilon)\int_{0}^T \mathbb{P}((A'_i)^- > t) \; \mathrm{d}t + 2\delta T\\ \notag
        & \hspace{9cm}+ \int_{T}^\infty \mathbb{P}(|A_i| > t)\; \mathrm{d}t\\ \label{mean_t_lower_eq2}
        &\leq \mathbb{E}A_i' + 2\epsilon\mathbb{E}|A_i'|+2\delta T + \int_{T}^\infty \mathbb{P}(|A_i|>t)\mathrm{d}t.
    \end{align}
    \noindent \textbf{Step 3: Upper bound on} $\sum_{i\in [n]}\mathbb{E}A_i$. In this step, the following proof is constructed conditioning on $a$ and we will find an upper bound on $\mathbb{E}A_i$ by upper bounding each term in \eqref{mean_t_lower_eq2} individually. For the first term, note that by construction, we have that  
    \begin{align} \notag
        \mathbb{E}_{X,Y|a}[A_i'] &= \mathbb{E}_{X|a}\{\mathbb{E}_{Y|X,a}[A_i'|X]\}\\ \label{mean_t_lower_eq9}
        &= \mathbb{E}_{X|a}\Big\{\langle \mathbb{E}_{Y|X,a}[M(\bm{X}_i',\bm{Y}_i')-a], \mathbb{E}_{Y|X,a}[\Phi_{[m],i}\Sigma_i^{-1}(Y_{i\cdot}-\mu_{i\cdot})] \rangle\Big\} = 0,
    \end{align}
    where the second equality follows from the independence between $M(\bm{X}_i',\bm{Y}_i')$ and $Y_{i\cdot}-\mu_{i\cdot}$ and the last equality follows from the standard property of score function. Conditioning on the discrete grids $\{X_j^{(i)}\}_{i=1,j=1}^{n,m}$, denote the eigenvalues of $C_i$ as $\{\lambda_{i,j}\}_{j=1}^m$, hence the eigenvalue of $\Sigma_i$ are $\lambda_{i,1}+\sigma^2_0, \ldots, \lambda_{i,m}+\sigma^2_0$ respectively by construction. Since $\Sigma_i$ is positive definite, we could further derive that $\Lambda_{\max}(\Sigma_i^{-1}) \leq \sigma_0^{-2}$. Using the above result, it holds that
    \begin{align} \notag
        &(\mathbb{E}_{X,Y|a}|A_i|')^2 \leq \mathbb{E}_{X,Y|a}|A_i'|^2 \\ \notag
        =& \mathbb{E}_{X,Y|a}\Big\{(M(\bm{X}_i',\bm{Y}_i')-a)^\top\Phi_{[m],i}\Sigma_i^{-1}(Y_{i\cdot}-\mu_{i\cdot})(Y_{i\cdot}-\mu_{i\cdot})^\top\Sigma_i^{-1}\Phi_{[m],i}^{\top}(M(\bm{X}_i',\bm{Y}_i')-a)\Big\}\\ \notag
        =& \mathbb{E}_{\bm{X}_i',\bm{Y}_i'|a}\Big[(M(\bm{X}_i',\bm{Y}_i')-a)^\top \\ \notag
        &\hspace{2cm} \mathbb{E}_{X|a}\Big[\Phi_{[m],i}\Sigma_i^{-1} \mathbb{E}_{Y_{i,\cdot}|X,a}\big\{(Y_{i\cdot}-\mu_{i\cdot})(Y_{i\cdot}-\mu_{i\cdot})^\top|X\big\}\Sigma_i^{-1}\Phi^{\top}_{[m],i}\Big](M(\bm{X}_i',\bm{Y}_i')-a)\Big]\\ \notag
        \leq& \mathbb{E}_{\bm{X}_i',\bm{Y}_i'|a}\Big[\|M(\bm{X}_i',\bm{Y}_i')-a\|_2^2\Big]\\ \notag
        & \hspace{2cm}\Big\|\mathbb{E}_{X|a}\Big[\Phi_{[m],i}\Sigma_i^{-1} \mathbb{E}_{Y_{i,\cdot}|X,a}\big\{(Y_{i\cdot}-\mu_{i\cdot})(Y_{i\cdot}-\mu_{i\cdot})^\top|X\big\}\Sigma_i^{-1}\Phi^{\top}_{[m],i}\Big]\Big\|_{\op}\\ \notag
        =& \mathbb{E}_{\bm{X}_i',\bm{Y}_i'|a}\Big[\|M(\bm{X}_i',\bm{Y}_i')-a\|_2^2\Big]\Big\|\mathbb{E}_{X|a}\Big[\Phi_{[m],i}\Sigma_i^{-1}\Phi^{\top}_{[m],i}\Big]\Big\|_{\op}\\ \notag
        \leq& \mathbb{E}_{\bm{X}_i',\bm{Y}_i'|a}\Big[\|M(\bm{X}_i',\bm{Y}_i')-a\|_2^2\Big]\Big\|\mathbb{E}_{X|a}\Big[\|\Sigma_i^{-1}\|_{\op}\Phi_{[m],i}\Phi^{\top}_{[m],i}\Big]\Big\|_{\op}\\ \notag
        \leq& \sigma_0^{-2}\mathbb{E}_{\bm{X}_i',\bm{Y}_i'|a}\Big[\|M(\bm{X}_i',\bm{Y}_i')-a\|_2^2\Big]\Big\|\mathbb{E}_{X|a}\Big[\Phi_{[m],i}\Phi^{\top}_{[m],i}\Big]\Big\|_{\op}\\ \label{mean_t_lower_eq10}
        \leq& m\sigma_0^{-2}\mathbb{E}_{\bm{X}_i',\bm{Y}_i'|a}\Big[\|M(\bm{X}_i',\bm{Y}_i')-a\|_2^2\Big],
        \end{align}
    where the last inequality follows from the fact that
    \begin{align}
        \notag&\Big\|\mathbb{E}_{X|a}\Big[\Phi_{[m],i}\Phi^{\top}_{[m],i}\Big]\Big\|_{\op} = \sup_{v\in \mathbb{R}^r: \|v\|_2 = 1}v^{\top}\mathbb{E}_{X|a}\Big[\Phi_{[m],i}\Phi^{\top}_{[m],i}\Big]v\\ \notag
        =\;& \sup_{v\in \mathbb{R}^r: \|v\|_2 = 1} \sum_{j=1}^m v^{\top}\mathbb{E}_{X|a}\Big[\Phi_{r}(X_j^{(i)})\Phi^{\top}_{r}(X_j^{(i)})\Big]v = \sup_{v\in \mathbb{R}^r: \|v\|_2 = 1} \sum_{j=1}^m \int v^\top \Phi_{r}(s)\Phi^{\top}_{r}(s)v f_X(s)\;\mathrm{d}s\\ \label{mean_t_lower_eq7}
        \leq \;& L\sup_{v\in \mathbb{R}^r: \|v\|_2 = 1}\sum_{j=1}^m \int \Big\{\sum_{\ell=1}^r v_\ell \phi_\ell(s)\Big\}^2 \;\mathrm{d}s \leq Lm \sup_{v\in \mathbb{R}^r: \|v\|_2 = 1} \|v\|_2^2 = Lm.
    \end{align}

    Next to consider the term $\mathbb{P}(|A_i| > t)$, firstly note that
    \begin{align*}
        |A_i| \leq \|M(\bm{X},\bm{Y})-a\|_2\|\Phi_{[m],i}\Sigma_i^{-1}(Y_{i\cdot}-\mu_{i\cdot})\|_2 \leq C_2\|\Phi_{[m],i}\Sigma_i^{-1}(Y_{i\cdot}-\mu_{i\cdot})\|_2,
    \end{align*}
    where the first inequality follows from the Cauchy--Schwarz inequality and the second inequality follows from the fact that $\|a\|_2 \lesssim \sqrt{C_2}$ and assume that $\|M(\bm{X},\bm{Y})\|_2 \lesssim \sqrt{C_2}$ without loss of generality. For the second term $\|\Phi_{[m],i}\Sigma_i^{-1}(Y_{i\cdot}-\mu_{i\cdot})\|_2$, it holds from definition that
    \begin{align*}
        \|\Phi_{[m],i}\Sigma_i^{-1}(Y_{i\cdot}-\mu_{i\cdot})\|_2 \leq \sqrt{r}\underset{k =1, \ldots, r}{\max} \Big|\sum_{h=1}^m \sum_{l=1}^m (\Phi_{[m],i})_{kl}(\Sigma_i^{-1})_{lh}(Y_{i\cdot}-\mu_{i\cdot})_{h}\Big|.
    \end{align*}
    By construction in \textbf{Step 1}, conditioning on the observation grids $X$, we have that for any $h \in [m]$, $(Y_{i\cdot}-\mu_{i\cdot})_{h}$ is sub-Gaussian with parameter $C_U+C_\xi \asymp 1$ under Assumptions \ref{a_model}(b)~and \ref{a_model}(c). Therefore, following from the boundedness property of Fourier basis and the fact that $|(\Sigma_i^{-1})_{lh}| \leq \|\Sigma_i^{-1}\|_{\op} \lesssim 1$ for every $l,h \in [m]$, we have that
    \begin{align} \label{mean_t_lower_eq8}
        \Big\|\sum_{h=1}^m \sum_{l=1}^m (\Phi_{[m],i})_{kl}(\Sigma_i^{-1})_{lh}(Y_{i\cdot}-\mu_{i\cdot})_{h}\Big\|_{\psi_2} \leq  \sum_{h=1}^m \sum_{l=1}^m \Big\|(\Phi_{[m],i})_{kl}(\Sigma_i^{-1})_{lh}(Y_{i\cdot}-\mu_{i\cdot})_{h}\Big\|_{\psi_2} \lesssim m^2.
    \end{align}
    Therefore, it holds from the sub-Gaussian properties \citep[e.g.~Proposition 2.5.2][]{vershynin2018high} that 
    \begin{align*}
        \mathbb{P}\Big\{\Big|\sum_{h=1}^m \sum_{l=1}^m (\Phi_{[m],i})_{kl}(\Sigma_i^{-1})_{lh}(Y_{i\cdot}-\mu_{i\cdot})_{h}\Big| \geq t \Big| X\Big\} \leq \exp(-C_3t^2/m^4).
    \end{align*}
    Taking another expectation over $X$ gives 
    \begin{align*}
        &\mathbb{P}\Big\{\Big|\sum_{h=1}^m \sum_{l=1}^m (\Phi_{[m],i})_{kl}(\Sigma_i^{-1})_{lh}(Y_{i\cdot}-\mu_{i\cdot})_{h}\Big| \geq t \Big\} \\
        =\;& \mathbb{E}_X\Big[\mathbb{P}\Big\{\Big|\sum_{h=1}^m \sum_{l=1}^m (\Phi_{[m],i})_{kl}(\Sigma_i^{-1})_{lh}(Y_{i\cdot}-\mu_{i\cdot})_{h}\Big| \geq t \Big| X\Big\}\Big]\\
        \leq\;& \exp(-C_3t^2/m^4).
    \end{align*}
    Applying a union bound argument, we have that
    \begin{align*}
        \mathbb{P}\Big\{\|\Phi_{[m],i}\Sigma_i^{-1}(Y_{i\cdot}-\mu_{i\cdot})\|_2 > t\Big\} &\leq \mathbb{P}\Big\{\underset{k =1, \ldots, r}{\max} \Big|\sum_{h=1}^m \sum_{l=1}^m (\Phi_{[m],i})_{kl}(\Sigma_i^{-1})_{lh}(Y_{i\cdot}-\mu_{i\cdot})_{h}\Big| > \frac{t}{\sqrt{r}}\Big\}\\
        &\leq \exp\big\{-C_3t^2/(m^4r) + \log(r)\big\}.
    \end{align*}
    Pick $T \asymp m^2\sqrt{r\log(1/\delta)}$, we have that
    \begin{align} \notag
        \int_{T}^\infty \mathbb{P}(|A_i|>t)\mathrm{d}t &\leq \int_{T}^\infty \mathbb{P}\Big\{C_2\|\Phi_{[m],i}\Sigma_i^{-1}(Y_{i\cdot}-\mu_{i\cdot})\|_2 > t\Big\}\;\mathrm{d}t \\ \notag
        &\lesssim \int_{T}^\infty \exp\big\{-C_4t^2/(m^4r) + \log(r)\big\} \; \mathrm{d}t\\ \label{mean_t_lower_eq11}
        & \asymp r^{3/2}m^2\int_{\frac{T}{m^2\sqrt{r}}}^\infty \exp(-u^2) \; \mathrm{d}u \lesssim r^{3/2}m^2\exp\Big(-\frac{T^2}{m^4r}\Big) \lesssim r^{3/2}m^2\delta,
    \end{align}
    where the equality follows from a change of variable from $t$ to $u = t/(m^2\sqrt{r})$.
    Plugging results in \eqref{mean_t_lower_eq9}, \eqref{mean_t_lower_eq10} and \eqref{mean_t_lower_eq11} into \eqref{mean_t_lower_eq2}
    \begin{align*}
        \mathbb{E}_{\bm{X},\bm{Y}|a} A_i &\lesssim \epsilon\sigma_0^{-1}\sqrt{m\mathbb{E}_{\bm{X},\bm{Y}|a}\|M(\bm{X},\bm{Y})-a\|_2^2}+ \delta r^{3/2}m^2 + \delta m^2\sqrt{r\log(1/\delta)}\\
        & \lesssim \epsilon\sigma_0^{-1}\sqrt{m\mathbb{E}_{\bm{X},\bm{Y}|a}\|M(\bm{X},\bm{Y})-a\|_2^2}+ \delta m^2r^{3/2}\sqrt{\log(1/\delta)}.
    \end{align*}
    Summing over $i \in [n]$ and taking another expectation with respect to $a$ guarantees that 
    \begin{align} \label{mean_t_lower_eq3}
        \sum_{i\in [n]} \mathbb{E}_a \mathbb{E}_{\bm{X},\bm{Y}|a} A_i \lesssim n\epsilon\sqrt{m\mathbb{E}_a \mathbb{E}_{\bm{X},\bm{Y}|a}\|M(\bm{X},\bm{Y})-a\|_2^2} + \delta nm^2r^{3/2}\sqrt{\log(1/\delta)}.
    \end{align}

    \noindent \textbf{Step 4: Lower bound on} $\sum_{i\in [n]}\mathbb{E}A_i$.
    To find a lower bound on $\sum_{i\in [n]} \mathbb{E}_a \mathbb{E}_{\bm{X},\bm{Y}|a} A_i$, write $f_{\bm{Y},\bm{X}|a}$ as the joint density of $\bm{Y}$ and $\bm{X}$ conditioning on $a$, we have that
    \begin{align*}
        f_{\bm{X},\bm{Y}|a}(\bm{x},\bm{y}) &=f_{\bm{Y}|\bm{X},a}(\bm{y})f_{\bm{X}|a}(\bm{x}) \\
        &= (2\pi)^{-mn/2} \Big\{\prod_{i=1}^n \text{det}(\Sigma_i)^{-1/2}\Big\}\exp\Big\{-\frac{1}{2}\sum_{i\in [n]} (\bm{y}-\mu_{i\cdot})^{\top}\Sigma_i^{-1}(\bm{y}-\mu_{i\cdot})\Big\},
    \end{align*}
    and
    \begin{align*}
        \frac{\partial f_{\bm{X},\bm{Y}|a}(\bm{x},\bm{y})}{\partial a} = f_{\bm{X},\bm{Y}|a}(\bm{x},\bm{y})\sum_{i\in [n]}\Phi_{[m],i}\Sigma_i^{-1}(\bm{y}-\mu_{i\cdot}).
    \end{align*}
    
    Observe that by construction of $A_i$ in \eqref{mean_t_lower_eq12}, we have 
    \begin{align*}
        \sum_{i\in [n]} \mathbb{E}_{\bm{X},\bm{Y}|a} A_i &= \sum_{\ell=1}^r\mathbb{E}_{\bm{X},\bm{Y}|a}\Big\{(M(\bm{X},\bm{Y}))_\ell\sum_{i\in [n]}\{\Phi_{[m],i}\Sigma_i^{-1}(Y_{i\cdot}-\mu_{i\cdot})\}_\ell\Big\}\\
        & =\sum_{\ell=1}^r\mathbb{E}_{\bm{X},\bm{Y}|a}\Big\{(M(\bm{X},\bm{Y}))_\ell\frac{1}{f_{\bm{X},\bm{Y}|a}(\bm{x},\bm{y})}\frac{\partial f_{\bm{X},\bm{Y}|a}}{\partial a_\ell}\Big\}\\
        & = \sum_{\ell=1}^r \frac{\partial}{\partial a_\ell}\mathbb{E}_{\bm{X},\bm{Y}|a}\Big\{(M(\bm{X},\bm{Y}))_\ell\Big\}.
    \end{align*}
    
    Suppose that $M$ being an estimator of $a$ satisfies that
    \begin{align} \label{mean_t_lower_eq4}
        \underset{a \in \Theta(r,C_\alpha)}{\sup} \mathbb{E}\|M(\bm{X},\bm{Y})-a\|_2^2 \leq \frac{rB^2}{20}.
    \end{align}
    with $B = C_1^2(\int_{1}^{r+1}t^{2\alpha}\mathrm{d}t)^{-1}/(2\pi^{2\alpha})$ defined in \textbf{Step 1}.
    Then by the proof of Proposition 6.2 in \citet{cai2023score}, it holds that 
    \begin{align} \label{mean_t_lower_eq5}
        \sum_{i\in [n]} \mathbb{E}_a \mathbb{E}_{\bm{X},\bm{Y}|a} A_i \geq  \frac{r}{10}.
    \end{align}
    
    Combining results in \eqref{mean_t_lower_eq3} and \eqref{mean_t_lower_eq5}, it holds that 
    \begin{align*}
        r\lesssim \sum_{i\in [n]} \mathbb{E}_a \mathbb{E}_{\bm{X},\bm{Y}|a} A_i \lesssim n\epsilon\sqrt{m\mathbb{E}_a \mathbb{E}_{\bm{X},\bm{Y}|a}\|M(\bm{X},\bm{Y})-a\|_2^2} + \delta nm^2r^{3/2}\sqrt{\log(1/\delta)},
    \end{align*}
    which implies
    \begin{align*}
        \mathbb{E}_a \mathbb{E}_{\bm{X},\bm{Y}|a}\|M(\bm{X},\bm{Y})-a\|_2^2 \geq \frac{r^2}{n^2m\epsilon^2},
    \end{align*}
    if $\delta \sqrt{\log(1/\delta)}\lesssim r^{-1/2}n^{-1}m^{-2}$. For those $M \in \mathcal{M}_{\epsilon,\delta}$ which fail to satisfy \eqref{mean_t_lower_eq4}, we automatically have a lower bound of $rB^2 \asymp r^{-2\alpha}$. As the Bayes risk always lowers the bounds of the supremum risk, the proof is concluded. The fourth term in the result can then be obtained by setting $r \asymp (n^2m\epsilon^2)^{1/(2\alpha+2)}$.
\end{proof}

\subsection{Auxiliary results}
\begin{lemma}\label{l_mean_upper_event}
    Consider the events of interest in the proof of Theorem \ref{thm_mean_upper}:
    \begin{align*}
        \mathcal{E}_1 = \Bigg\{&\Lambda_{\min}\Big\{\frac{1}{bm}\sum_{i=1}^b\sum_{j=1}^m \Phi_r(X^{(\tau_t+i)}_j)\Phi_r^{\top}(X^{(\tau_t+i)}_j)\Big\} \geq 1/(2L)\quad \text{and} \\
        &\Lambda_{\max}\Big\{\frac{1}{bm}\sum_{i=1}^b \sum_{j=1}^m \Phi_r(X^{(\tau_t+i)}_j)\Phi_r^{\top}(X^{(\tau_t+i)}_j)\Big\} \leq 2L, \forall t\in \{0,\ldots,T-1\}\Bigg\}
    \end{align*}
    and
    \begin{align*}
        \mathcal{E}_2 = \Big\{&\Pi^{\mathrm{entry}}_{R}\Big[\frac{1}{m}\sum_{j=1}^m \Phi_r(X^{(\tau_t+i)}_j)\big\{\Phi^{\top}_r(X^{(\tau_t+i)}_j)a^{t}-Y^{(\tau_t+i)}_j \big\}\Big]\\
        &= \frac{1}{m}\sum_{j=1}^m \Phi_r(X^{(\tau_t+i)}_j)\big\{\Phi^{\top}_r(X^{(\tau_t+i)}_j)a^{t}-Y^{(\tau_t+i)}_j \big\}, \; \forall i \in [b],\;  t \in \{0, \ldots, T-1\}\Big\}.
    \end{align*}
    Suppose that $nm \geq C_1 L^2\{r+ \log(T/\eta)\}\log(n)$, we have 
    \begin{align*}
        \mathbb{P}(\mathcal{E}_1 \cap \mathcal{E}_2) \geq 1- 6\eta,
    \end{align*}
    for a small enough $\eta \in (0,1/6)$.
\end{lemma}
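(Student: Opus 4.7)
The plan is to handle the two events independently and combine via a union bound. For $\mathcal{E}_1$, I would apply a matrix Bernstein inequality to the $bm$ independent rank-one summands $M_{ij} := \Phi_r(X_j^{(\tau_t+i)})\Phi_r^\top(X_j^{(\tau_t+i)})$ within a single iteration $t$. Under \Cref{a_sample}, the population mean $\mathbb{E}[\Phi_r(X)\Phi_r^\top(X)] = \int \Phi_r\Phi_r^\top f_X$ has spectrum inside $[1/L, L]$: the lower and upper bounds on $f_X$ combined with the orthonormality of the Fourier basis give $(1/L)\|v\|_2^2 \leq v^\top \mathbb{E}[\Phi_r\Phi_r^\top] v \leq L\|v\|_2^2$. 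Each summand has operator norm at most $\|\Phi_r(X)\|_2^2 \leq 2r$ (since $\|\phi_\ell\|_\infty \leq \sqrt{2}$), and $\|\mathbb{E}[M_{ij}^2]\|_{\op} \leq 2rL$. Matrix Bernstein, together with a union bound over $t \in \{0,\dots,T-1\}$, then gives
\[
\Bigl\|\tfrac{1}{bm}\sum_{i,j}M_{ij} - \mathbb{E}[\Phi_r(X)\Phi_r^\top(X)]\Bigr\|_{\op} \lesssim \sqrt{\tfrac{rL\log(rT/\eta)}{bm}} + \tfrac{r\log(rT/\eta)}{bm},
\]
with probability at least $1-3\eta$. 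Since $b = \lfloor n/T\rfloor$ and $T = \lceil C_1\log(n)\rceil$, the hypothesis $nm \gtrsim L^2\{r + \log(T/\eta)\}\log(n)$ forces this deviation below $1/(2L)$, confining the empirical spectrum to $[1/(2L), 2L]$.

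For $\mathcal{E}_2$, I would fix $t,i,\ell$ and condition on all data from iterations strictly before $t$, so that $a^t$ becomes deterministic and, thanks to the projection $\Pi^*_\mathcal{A}$, $\Phi_r^\top a^t \in \mathcal{W}(\alpha, C_\alpha)$. Writing $h^{(i)} := \Phi_r^\top a^t - \mu^* - U^{(i)}$ and using Model~\eqref{mean_model_obs},
\[
\frac{1}{m}\sum_{j=1}^m \phi_\ell(X_j^{(i)})\bigl\{\Phi_r^\top(X_j^{(i)})a^t - Y_j^{(i)}\bigr\} = \frac{1}{m}\sum_j \phi_\ell(X_j^{(i)}) h^{(i)}(X_j^{(i)}) - \frac{1}{m}\sum_j \phi_\ell(X_j^{(i)})\xi_{ij}.
\]
Assumptions~\ref{a_model}\ref{a_model_mean} and \ref{a_model}\ref{a_model_function_noise}, together with the projection, imply $h^{(i)} \in \mathcal{W}(\alpha, 3C_\alpha)$ almost surely; Sobolev embedding ($\alpha>1/2$) then makes $h^{(i)}$ uniformly bounded. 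Since $\mathcal{W}(\alpha)$ is a Banach algebra for $\alpha>1/2$, $h^{(i)}f_X$ is Sobolev, and its $\ell$-th Fourier coefficient equals $\mathbb{E}[\phi_\ell(X) h^{(i)}(X)] = \langle \phi_\ell, h^{(i)}f_X\rangle_{L^2}$ and is bounded in absolute value by $C\ell^{-\alpha}$. Hoeffding controls the $X$-centered part of the first sum at order $\sqrt{\log/m}$, and, conditional on $\{X_j^{(i)}\}_j$, the noise average is sub-Gaussian with parameter $O(m^{-1/2})$ by \Cref{a_model}\ref{a_model_error}, contributing another $O(\sqrt{\log/m})$ term. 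A union bound over $\ell \in [r]$, $i \in [b]$, and $t \in [T]$ inflates the logarithm to $\log^2(n/\eta)$ inside the square root, yielding
\[
\Bigl|\tfrac{1}{m}\sum_{j=1}^m \phi_\ell(X_j^{(i)})\bigl\{\Phi_r^\top(X_j^{(i)})a^t - Y_j^{(i)}\bigr\}\Bigr| \leq C_R\bigl\{\sqrt{m^{-1}\log^2(n/\eta)} + \ell^{-\alpha}\bigr\} = R_\ell
\]
with probability at least $1-3\eta$ for a suitable $C_R$; combining with the $\mathcal{E}_1$ bound finishes the lemma.

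The main obstacle is the bias term $\ell^{-\alpha}$ in $R_\ell$: matching it requires the Sobolev-algebra estimate for $h^{(i)}f_X$ together with the argument that $h^{(i)}$ lies in a \emph{fixed} Sobolev ball uniformly in $t$. This is precisely what the projection $\Pi^*_\mathcal{A}$ introduced in \Cref{algorithm_mean} secures: without it, $\Phi_r^\top a^t$ could escape the Sobolev ball as the iterations progress, invalidating the conditional concentration step that underlies the control of $\mathcal{E}_2$. The conditional-independence structure (fresh batches at each iteration) is what lets the same argument, via the union bound, cover every $t$ simultaneously.
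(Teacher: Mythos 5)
Your overall architecture matches the paper's proof: bound the population matrix $\mathbb{E}[\Phi_r(X)\Phi_r^{\top}(X)]$ in $[1/L,\,L]$ via Assumption \ref{a_sample} and Fourier orthonormality, apply a matrix concentration inequality plus a union bound over $t$ and Weyl's inequality for $\mathcal{E}_1$; and for $\mathcal{E}_2$, condition so that $a^t$ is fixed, use the projection $\Pi^*_{\mathcal{A}}$ to keep $\Phi_r^\top a^t$ in the Sobolev ball, bound the mean of each summand by $\ell^{-\alpha}$ through the Sobolev product property of $h f_X$ (the paper's Theorem \ref{thm_product_sobolev}), and use Hoeffding-type concentration with a union bound over $\ell,i,t$. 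Your treatment of $\mathcal{E}_2$ differs only cosmetically: the paper splits into the four terms $\Phi_r^\top a^t$, $\mu^*$, $U^{(i)}$, $\xi_{ij}$ and handles $U^{(i)}$ by first conditioning on a sup-norm event of size $\sqrt{\log(b/\eta)}$ from sub-Gaussianity, whereas you lump the three signal terms into $h^{(i)}$ and invoke the a.s.\ Sobolev bound $\|U^{(i)}\|_\infty \lesssim 1$ (valid under Assumption \ref{a_model}\ref{a_model_function_noise} since $\alpha>1/2$); your route is if anything cleaner and lands comfortably inside the radius $R_\ell$ (your remark that the union bound inflates the logarithm to $\log^2(n/\eta)$ is not needed — it only produces a single $\log(rbT/\eta)\lesssim\log(n/\eta)$ factor, which is already within $R_\ell$).

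The one step that does not go through as written is the quantitative claim for $\mathcal{E}_1$. Matrix Bernstein applied to the rank-one summands carries the dimensional factor $\log r$ and the uniform bound $\|\Phi_r(X)\Phi_r^\top(X)\|_{\op}\le 2r$, so your deviation is of order $\sqrt{rL\log(rT/\eta)/(bm)} + r\log(rT/\eta)/(bm)$, and forcing it below $1/(2L)$ requires $bm \gtrsim L^{3}\, r\log(rT/\eta)$. The stated hypothesis $nm \ge C_1L^2\{r+\log(T/\eta)\}\log(n)$ (with $b=\lfloor n/T\rfloor$, $T\asymp\log n$) only yields $bm \gtrsim L^2\{r+\log(T/\eta)\}$, which misses the extra $\log(rT/\eta)$ multiplying $r$ (and a power of $L$); this matters precisely in the regime where $r$ is chosen polynomially in $nm$. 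The paper avoids this by treating the rows $\Phi_r^\top(X_j^{(i)})$ as independent sub-Gaussian vectors and invoking the covariance-estimation bound (Remark 5.40 of Vershynin, 2010), which gives a deviation $C\sqrt{\{r+\log(T/\eta)\}/(bm)}$ with no $\log r$, matching the stated sample-size condition. So either replace matrix Bernstein by that sub-Gaussian-rows bound, or weaken the lemma's hypothesis to $nm\gtrsim L^3\,r\log(rT/\eta)\log(n)$ — the latter would be harmless for the paper's rates, which are stated up to poly-logarithmic factors, but it is not the lemma as stated.
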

\begin{proof}
    In the proof, we control the probability of each event individually.

     To control event $\mathcal{E}_1$, we firstly bound the largest and the smallest eigenvalues of 
    \[\Sigma = \frac{1}{bm}\sum_{i=1}^b\sum_{j=1}^m \mathbb{E}\big[\Phi_r(X^{(\tau_t+i)}_j)\Phi_r^{\top}(X^{(\tau_t+i)}_j)\big] = \mathbb{E}\big[\Phi_r(X^{(\tau_t+1)}_1)\Phi_r^{\top}(X^{(\tau_t+1)}_1)\big].\]
    
    Since $\Sigma$ is symmetric and positive semi-definite, we have that
    \begin{align}\notag
        \Lambda_{\max}(\Sigma) &= \underset{v \in \mathbb{R}^r: \|v\|_2 =1}{\sup} v^{\top}\Sigma v= \underset{v \in \mathbb{R}^r: \|v\|_2 =1}{\sup} v^{\top}\int_0^1 \Phi_r(s)\Phi_r^{\top}(s)f_X(s) \mathrm{d}s\;v\\ \label{l_mean_upper_event_eq2}
        & \leq L \underset{v \in \mathbb{R}^r: \|v\|_2 =1}{\sup} \int_0^1 v^{\top}\Phi_r(s)\Phi_r^{\top}(s)v\mathrm{d}s = L,
    \end{align}
    where the last equality is due to the orthonormality of the Fourier basis. Similarly, we could also lower bound the smaller eigenvalue by 
    \begin{equation} \label{l_mean_upper_event_eq3}
        \Lambda_{\min}(\Sigma) = \underset{v \in \mathbb{R}^r: \|v\|_2 =1}{\inf} v^{\top}\Sigma v \geq \frac{1}{L} \underset{v \in \mathbb{R}^r: \|v\|_2 =1}{\inf} \int_0^1 v^{\top}\Phi_r(s)\Phi_r^{\top}(s)v\mathrm{d}s = \frac{1}{L}.
    \end{equation}
    Denote $A = (\Phi_r(X_{\tau_t+1}^{(1)}),\Phi_r(X_{\tau_t+1}^{(2)}), \ldots, \Phi_r(X_{\tau_t+b}^{(m)}))^{\top} \in \mathbb{R}^{bm\times r}$ the matrix formed by joining the row vectors $\{\Phi_r^{\top}(X_j^{(\tau_t+i)})\}_{i=1, j=1}^{b,m}$. Under Assumption \ref{a_sample}, $A$ is a matrix whose rows are independent sub-Gaussian random vectors in $\mathbb{R}^r$ with second moment matrix $\Sigma$. The matrix Bernstein’s inequality \citep[e.g.~Remark 5.40 in][]{vershynin2010introduction} implies that
    \begin{align*}
        \mathbb{P}\Big\{\big\|(bm)^{-1}A^{\top}A - \Sigma\big\|_{\op} \leq C_1\sqrt{(bm)^{-1}\{r+\log(1/\eta_1)\}}\Big\} \geq 1-\eta_1.
    \end{align*}
    Thus a union bound argument on $t$ and Weyl’s inequality further imply that $\mathbb{P}(\mathcal{E}_1) \geq 1- \eta_1$ as long as 
    \begin{align*}
        bm \geq 4C_1 L^2\{r+ \log(T/\eta_1)\}.
    \end{align*}

    To control the event of $\mathcal{E}_2$, firstly noting that under Model \eqref{mean_model_obs}, we could rewrite the summation of interest as 
    \begin{align*}
        &\frac{1}{m}\Big|\sum_{j=1}^m \phi_\ell(X^{(\tau_t+i)}_j)\big\{\Phi^{\top}_r(X^{(\tau_t+i)}_j)a^{t}-Y^{(\tau_t+i)}_j \big\}\Big|\\
        \lesssim \; & \Big|\frac{1}{m}\sum_{j=1}^m \phi_\ell(X^{(\tau_t+i)}_j)\Phi^{\top}_r(X^{(\tau_t+i)}_j)a^t\Big|+\Big|\frac{1}{m}\sum_{j=1}^m \phi_\ell(X^{(\tau_t+i)}_j)\mu^*(X^{(\tau_t+i)}_j)\Big|\\
        &+ \Big|\frac{1}{m}\sum_{j=1}^m \phi_\ell(X^{(\tau_t+i)}_j)U^{(\tau_t+i)}(X^{(\tau_t+i)}_j)\Big|+ \Big|\frac{1}{m}\sum_{j=1}^m \phi_\ell(X^{(\tau_t+i)}_j)\xi_{\tau_t+i,j}\Big|\\
        = \;& (I) + (II) + (III) +(IV),
    \end{align*}
    and we will consider each term individually. For $(I)$, using the properties of Fourier basis, we have that for any fixed $a^t$ after truncating with $\Pi^{*}_{\mathcal{A}}$, 
    \begin{align*}
        |\phi_\ell(X^{(\tau_t+i)}_j)\Phi^{\top}_r(X^{(\tau_t+i)}_j)a^t| \leq |\phi_\ell(X^{(\tau_t+i)}_j)||\Phi^{\top}_r(X^{(\tau_t+i)}_j)a^t| < C_2,
    \end{align*}
    where the last inequality follows from the fact that $\Phi_r^\top a^t \in \mathcal{W}(\alpha, C_\alpha)$ and 
    \begin{align*}
        \Phi^{\top}_r(X^{(\tau_t+i)}_j)a^t = \sum_{\ell=1}^r a^t_\ell\phi_{\ell}(X^{(\tau_t+i)}_j) \leq \sqrt{2} \sum_{\ell=1}^r |a^t_\ell| \lesssim \sum_{\ell=1}^r \ell^{-\alpha} \lesssim 1,
    \end{align*}
    for any $\alpha > 1$. Moreover, $\{\phi_\ell(X^{(\tau_t+i)}_j)\Phi^{\top}_r(X^{(\tau_t+i)}_j)a^t\}_{j=1}^m$ are mutually independent under Assumption~\ref{a_sample}. Therefore, Hoeffding’s inequality for general bounded random variables \citep[e.g.~Theorem 2.2.6 in][]{vershynin2018high} implies that for any $\tau>0$,
    \begin{align*}
        \mathbb{P}\Big[\Big|\sum_{j=1}^m \phi_\ell(X^{(\tau_t+i)}_j)\Phi^{\top}_r(X^{(\tau_t+i)}_j)a^t -m \mathbb{E}\{\phi_\ell(X^{(\tau_t+i)}_1)\Phi^{\top}_r(X^{(\tau_t+i)}_1)a^t\}\Big| \geq \tau \Big] \leq \exp\Big(\frac{-C_3\tau^2}{m}\Big).
    \end{align*}
    To upper bound the expectation term, note the fact that for any function $h(s)\in \mathcal{W}(\alpha, C_\alpha)$, we have that $\langle h,\phi_{\ell}\rangle_{L^2} \lesssim \ell^{-\alpha}$ as shown in Definition \ref{def_sobolev}. Hence, it holds that
    \begin{align}\label{l_mean_upper_event_eq1}
        \Big|\mathbb{E}\Big\{\phi_\ell(X^{(\tau_t+i)}_1)\Phi^{\top}_r(X^{(\tau_t+i)}_1)a^t\Big\} \Big|= \Big|\int_{0}^1 \phi_\ell(s)\Phi^{\top}_r(s)a^t f_X(s)\;\mathrm{d}s\Big| =|\langle\phi_\ell, \Phi^{\top}_r a^t f_X\rangle_{L^2}| \lesssim \ell^{-\alpha},
    \end{align}
    where the last inequality follows from the fact that $\sum_{h=1}^{r} a^t_h\phi_h(s) \in \mathcal{W}(\alpha, C_\alpha)$ by the projection $\Pi^*_{\mathcal{A}}$ and $f_X(s) \in \mathcal{W}(\alpha, C_\alpha)$ in Assumption \ref{a_sample}, hence Theorem \ref{thm_product_sobolev} implies that
    $$\Phi^{\top}_r(s)a^t f_X(s) = f_X(s)\sum_{h=1}^{r} a^t_h\phi_h(s) \in \mathcal{W}(\alpha, C_\alpha).$$ 
    Therefore, by applying a union bound argument, we have that with probability $1-\eta_2$ that 
    \begin{align*}
        &\max_{t,i}\Big|\frac{1}{m}\sum_{j=1}^m \phi_\ell(X^{(\tau_t+i)}_j)\Phi^{\top}_r(X^{(\tau_t+i)}_j)a^t - \mathbb{E}\{\phi_\ell(X^{(\tau_t+i)}_1)\Phi^{\top}_r(X^{(\tau_t+i)}_1)a^t\}\Big| \\
        \leq\;& \sqrt{\log(n/\eta_2)/m} + \ell^{-\alpha}.
    \end{align*}

    A similar justification could also be applied to $(II)$ and we have that with a probability at least $1-\eta_3$ that 
    \begin{align*}
        \max_{t,i}\Big|\frac{1}{m}\sum_{j=1}^m \phi_\ell(X^{(\tau_t+i)}_j)\mu^*(X^{(\tau_t+i)}_j) \Big| \lesssim \sqrt{\log(n/\eta_3)/m} + \ell^{-\alpha}.
    \end{align*}
    
    To consider $(III)$, we construct the event $\mathcal{E}_3 = \{\sup_{i\in [b]}|U^{(i)}(s)| \leq C_4\sqrt{\log(b/\eta_4)}, \;\forall s\in [0,1]\}$. Note that under Assumption \ref{a_model}(b), using standard properties of sub-Gaussian random variables \citep[e.g.~Proposition 2.5.2 in][]{vershynin2018high} and a union bound argument, we could show that $\mathbb{P}(\mathcal{E}_3) > 1-\eta_4$. The rest of the proof is constructed conditioning on $\mathcal{E}_3$. To remove the dependence resulting from the random process of $U$ in $(III)$, we construct the following proof conditioning on $U$. Note that using standard boundedness properties of Fourier basis, we have that 
    \begin{align*}
        |\phi_\ell(X^{(\tau_t+i)}_j)U^{(\tau_t+i)}(X^{(\tau_t+i)}_j)|\leq \sqrt{2}|U^{(\tau_t+i)}(X^{(\tau_t+i)}_j)| \leq C_5\sqrt{\log(b/\eta_4)}.
    \end{align*}
    Hence, Hoeffding’s inequality for general bounded random variables \citep[e.g.~Theorem 2.2.6 in][]{vershynin2018high} implies that for any $\tau>0$, 
    \begin{align*}
        &\mathbb{P}\Big[\Big|\sum_{j=1}^m \phi_\ell(X^{(\tau_t+i)}_j)U^{(\tau_t+i)}(X^{(\tau_t+i)}_j) - m\mathbb{E}\{\phi_\ell(X^{(\tau_t+i)}_1)U^{(\tau_t+i)}(X^{(\tau_t+i)}_1)|U\}\Big|\geq\tau\Big|U\Big] \\
        & \hspace{8cm} \leq \exp\Big(\frac{-C_6\tau^2}{m\log(b/\eta_4)}\Big).
    \end{align*}
    Taking another expectation with respect to $U$, we have that 
    \begin{align*}
        &\mathbb{P}\Big[\Big|\sum_{j=1}^m \phi_\ell(X^{(\tau_t+i)}_j)U^{(\tau_t+i)}(X^{(\tau_t+i)}_j) - m\mathbb{E}\{\phi_\ell(X^{(\tau_t+i)}_1)U^{(\tau_t+i)}(X^{(\tau_t+i)}_1)|U\}\Big|\geq\tau\Big]\\
        & \hspace{8cm} \leq \exp\Big(\frac{-C_6\tau^2}{m\log(b/\eta_4)}\Big).
    \end{align*}
    To control the expectation, we have that 
    \begin{align*}
        \Big|\mathbb{E}\Big\{\phi_\ell(X^{(\tau_t+i)}_1)U^{(\tau_t+i)}(X^{(\tau_t+i)}_1)\Big|U\Big\} \Big|= \Big|\int_{0}^1 \phi_\ell(s)U^{(\tau_t+i)}(s) f_X(s)\;\mathrm{d}s\Big| = |\langle\phi_\ell, U^{(\tau_t+i)}f_X\rangle_{L^2}| \lesssim \ell^{-\alpha},
    \end{align*}
    where the last inequality follows from a similar argument as the one leads to \eqref{l_mean_upper_event_eq1} and Assumption \ref{a_model}(b) Taking another union bound argument, we have that with probability at least $1-\eta_4- \eta_5$ that
    \begin{align*}
        \max_{t,i}\Big|\frac{1}{m}\sum_{j=1}^m \phi_\ell(X^{(\tau_t+i)}_j)U^{(\tau_t+i)}(X^{(\tau_t+i)}_j)\Big| \lesssim \sqrt{\log(b/\eta_4)\log(n/\eta_5)/m}+\ell^{-\alpha}.
    \end{align*}

    Finally, for $(IV)$, note that under Assumption \ref{a_model}(c), we have that the terms of interest inside the summation are independent mean zero sub-Gaussian random variables with parameter $\sqrt{2}C_{\xi}$. Hence, Hoeffding inequality \citep[e.g.~Theorem 2.6.2 in][]{vershynin2018high} implies that
    \begin{align*}
        \mathbb{P}\Big\{\Big|\sum_{j=1}^m \phi_\ell(X^{(\tau_t+i)}_j)\xi_{\tau_t+i,j}\Big|\geq \tau\Big\} \leq \exp\Big(\frac{-C_7\tau^2}{m}\Big).
    \end{align*}
    Taking another union bound argument, we have that with probability at least $1-\eta_6$ that
    \begin{align*}
         \max_{t,i}\Big|\frac{1}{m}\sum_{j=1}^m \phi_\ell(X^{(\tau_t+i)}_j)\xi_{\tau_t+i,j}\Big| \leq \sqrt{\log(n/\eta_6)/m}.
    \end{align*}
    
    Combine all results together and apply another union bound argument over $\ell \in [k]$, we have that $\mathbb{P}(\mathcal{E}_2)\geq 1 -r(\eta_2+\eta_3+\eta_4+\eta_5+\eta_6)$. Overall, we have that $\mathbb{P}(\mathcal{E}_1 \cap \mathcal{E}_2) \leq 1- \eta_1 -5r\eta_3 = 1-6\eta$ by picking $\eta_1= \eta$ and $\eta_2 = \eta_3 = \eta_4=\eta_5=\eta_6= \eta/r$.
\end{proof}

\section{Technical details for Section \ref{section_mean_fdp}}
\subsection{Proof of Theorem \ref{fdp_thm_mean_up}} \label{section_appendix_fdp_mean_up}
\begin{proof}[Proof of Theorem \ref{fdp_thm_mean_up}]
    The first part of the theorem follows from a similar argument as the one used in the proof of the first part of Theorem \ref{thm_mean_upper}. For every server $s \in [S]$, we ensure the output in each of the $T$ iterations satisfies $(\epsilon_s,\delta_s)$-CDP, thus we have proved that Algorithm \ref{algorithm_fdp_mean} is $(\bm{\epsilon}, \bm{\delta},T)$-FDP. 
    
    The second claim is a consequence of Proposition \ref{fdp_prop_mean_up}. Set the weights 
    \begin{align*}
        \nu_s = \frac{u_s}{\sum_{s=1}^S u_s} \quad \text{where} \quad u_s \asymp_{\log} (r^2n_s) \wedge (r^2n_s^2\epsilon_s^2) \wedge (rn_sm) \wedge (n_s^2m\epsilon_s^2),
    \end{align*}
    and $T\asymp \log(\sum_{s=1}^S n_s)$, it then holds that
    \begin{align*}
        \|a^{T}-a_r^*\|_2^2 =_{\log} O_P\Bigg\{\frac{r^2}{\sum_{s=1}^S (r^2n_s) \wedge (r^2n_s^2\epsilon_s^2) \wedge (rn_sm) \wedge (n_s^2m\epsilon_s^2)} + r^{-2\alpha}\Bigg\}.
    \end{align*}
    Hence, we have 
    \begin{align*}
        \|\widetilde{\mu} - \mu^*\|_{L^2}^2 \lesssim \|a^{T} - a^*_r\|_2^2 + \|\Phi_r^\top a^*_r - \mu^*\|_{L^2}^2 \lesssim \|a^{T} - a^*_r\|_2^2 + r^{-2\alpha},
    \end{align*}
    where the last inequality follows from the fact that $\mu^* \in \mathcal{W}(\alpha, C_\alpha)$.

    The third claim follows from a direct application of the second claim. 
\end{proof}

\begin{proposition} \label{fdp_prop_mean_up}
        Initializing Algorithm \ref{algorithm_fdp_mean} with $a^0 = 0$ and step size $\rho = 4L/(1+4L^2)$. Suppose that
        \begin{align*}
            r\log^2(Tr/\eta) \lesssim \Big(\sum_{s=1}^S \frac{T\nu_s^2}{n_sm}\Big)^{-1},  \quad r\log(Tr/\eta) \lesssim \Big(\sup_{s\in[S]} \frac{T\nu_s}{n_sm}\Big)^{-1},
        \end{align*}
        then under the same condition as the one in Theorem \ref{fdp_thm_mean_up}, it holds with probability at least $1-10\eta$ that 
        \begin{align*}
            \|\widetilde{\mu} - \mu^*\|_{L^2}^2 \lesssim &\exp\Big(-\frac{4T}{1+4L^2}\Big)+ \frac{1}{\eta}\Big(\sum_{s=1}^S\frac{Tr\nu_s^2}{n_sm}+ \sum_{s=1}^S\frac{T\nu_s^2}{n_s}+r^{-2\alpha}\Big)\\
            & \;+ \log(T/\eta)\log(1/\delta_s)\sum_{s=1}^S \Big(\frac{T^2\nu_s^2}{n_s^2\epsilon_s^2} + \frac{T^2r^2\nu_s^2\log^2(N/\eta)}{n_s^2m\epsilon_s^2}\Big),
        \end{align*}
        for some small enough $\eta\in (0, 1/10)$.
\end{proposition}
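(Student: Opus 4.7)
The plan is to extend the analysis of Theorem~\ref{thm_mean_upper} from a single server to the $S$-server federated setting while carefully tracking the weights $\{\nu_s\}_{s=1}^S$. Starting from the update rule in Algorithm~\ref{algorithm_fdp_mean}, and using the fact that $a_r^*\in\mathcal{A}$ together with Lemma~\ref{lemma_projection}, one has
\begin{align*}
\|a^{t+1}-a_r^*\|_2^2 \;\leq\; \Big\|a^t - \rho\sum_{s=1}^S \nu_s M_s^t - a_r^*\Big\|_2^2.
\end{align*}
After substituting Model~\eqref{fdp_mean_obs} into each $M_s^t$, this decomposes as a contraction term plus weighted approximation-bias, functional-noise, measurement-error, and privacy-noise terms, mirroring the decomposition \eqref{thm_mean_upper_eq1}.

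First, I would prove the analogue of Lemma~\ref{l_mean_upper_event} simultaneously across all $(s,t)\in[S]\times\{0,\dots,T-1\}$ via a union bound, so that on an event of probability at least $1-6\eta$: (i) each per-server empirical Gram matrix has eigenvalues in $[1/(2L),2L]$; and (ii) the entry-wise truncation $\Pi^{\mathrm{entry}}_R$ does not bind in any server--iteration pair. The second condition in \eqref{fdp_thm_mean_up_r}, namely $r\log(Tr/\eta)\lesssim(\sup_{s}T\nu_s/(n_sm))^{-1}$, is exactly what is needed for the truncation-non-binding event, since it plays the role of $nm\gtrsim r\log(n)$ at the worst-served server. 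On this event, and with $\rho=4L/(1+4L^2)$, the contraction term satisfies
\begin{align*}
\Big\|\Big(I - \rho\sum_{s=1}^S \nu_s \widehat{\Sigma}_s\Big)(a^t - a_r^*)\Big\|_2^2 \;\leq\; \Big(1 - \tfrac{2}{1+4L^2}\Big)^{2}\|a^t - a_r^*\|_2^2,
\end{align*}
where $\widehat{\Sigma}_s$ is the batch Gram matrix at server $s$, since $\sum_s\nu_s=1$ and the convex combination inherits the spectral bound.

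Next I would bound the three data-dependent stochastic terms. Because a fresh batch of size $b_s=\lfloor n_s/T\rfloor$ is used at each iteration of each server and the servers draw independent data, computing second moments yields variance of order $\sum_{s}\nu_s^2 (r/(b_s m) + 1/b_s)$ for the functional-noise and measurement-error contributions, and $O(r^{-2\alpha})$ for the approximation term, via the same Fourier--Sobolev calculations as in \eqref{thm_mean_upper_eq5}--\eqref{thm_mean_upper_eq6}, with the first condition in \eqref{fdp_thm_mean_up_r} ensuring the Markov step is not vacuous. Markov's inequality then produces the $1/\eta$-factored sparse/dense variance terms $\sum_s T\nu_s^2 r/(n_s m)$ and $\sum_s T\nu_s^2/n_s$. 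For the injected noise, $\sum_s \nu_s w_{s,t}$ is mean-zero Gaussian with diagonal covariance $\sum_s \nu_s^2 \mathrm{diag}(\sigma_{s,\ell}^2)$, so $\|\sum_s \nu_s w_{s,t}\|_2^2$ is sub-exponential with parameter $\sum_s \nu_s^2 \sum_\ell \sigma_{s,\ell}^2$; a union bound over $t\in[T]$ combined with $\sigma_{s,\ell}^2\asymp\log(1/\delta_s)R_\ell\sum_k R_k/(b_s^2\epsilon_s^2)$ and the summability $\sum_\ell R_\ell\lesssim \sqrt{r^2m^{-1}\log^2(N/\eta)}+1$ from $\alpha>1$ produces the privacy term in the statement.

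Finally, iterating the one-step recursion $T$ times collapses the dependence on previous $\|a^t - a_r^*\|_2^2$ into a geometric sum dominated by the decaying initial-error contribution $\exp(-4T/(1+4L^2))$, and adding $\|\Phi_r^\top a_r^*-\mu^*\|_{L^2}^2\lesssim r^{-2\alpha}$ to pass from $\|a^T - a_r^*\|_2^2$ to $\|\widetilde{\mu}-\mu^*\|_{L^2}^2$ yields the proposition. The main obstacle is verifying the good event uniformly across all $S$ servers and $T$ rounds under only the mild lower bounds \eqref{fdp_thm_mean_up_r}; in particular, the $\sup_{s}T\nu_s/(n_sm)$ factor in the second condition on $r$ arises precisely because the truncation-non-binding argument of Lemma~\ref{l_mean_upper_event} must hold at the most heterogeneous server, while the first condition governs the variance concentration for the weighted average.
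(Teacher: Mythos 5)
Your overall architecture (decomposition of the noisy update mirroring the single-server proof, second-moment plus Markov bounds for the bias/functional-noise/measurement-error terms, a sub-exponential tail bound with a union over $t$ for $\|\sum_s\nu_s w_{s,t}\|_2^2$, geometric iteration, then adding the $r^{-2\alpha}$ approximation bias) matches the paper's proof. The genuine gap is in how you establish the spectral contraction. You propose to verify, on a good event, that \emph{each per-server} empirical Gram matrix $\widehat{\Sigma}_s$ has eigenvalues in $[1/(2L),2L]$ and then pass to the convex combination $\sum_s\nu_s\widehat{\Sigma}_s$. The convexity step itself is fine, but the per-server concentration requires $b_sm=\lfloor n_s/T\rfloor m\gtrsim r\log(STr/\eta)$ for \emph{every} server, regardless of its weight $\nu_s$. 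This is strictly stronger than the stated hypotheses \eqref{fdp_thm_mean_up_r}, which only constrain weighted quantities: with $r\asymp_{\log}(Snm)^{1/(2\alpha+1)}$ in the homogeneous case one can have $r\gg nm/T$ when $S$ is very large and individual servers hold few observations, in which case the per-server Gram matrices are rank deficient and your good event has probability zero, yet the proposition still must hold. The paper's Lemma \ref{lemma_fdp_thm_up} avoids this by applying matrix Bernstein directly to the weighted aggregate $\sum_{s,i,j}\frac{\nu_s}{b_sm}\Phi_r\Phi_r^\top$; the two conditions in \eqref{fdp_thm_mean_up_r} then emerge exactly as the variance statistic $\sum_s r\nu_s^2/(b_sm)$ and the uniform summand bound $\sup_s r\nu_s/(b_sm)$ of that single aggregated concentration inequality, so small servers are compensated by small weights.

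A secondary inaccuracy: you attribute the second condition in \eqref{fdp_thm_mean_up_r} to the truncation-non-binding event $\mathcal{E}'_2$. In the paper that event is handled entrywise by Hoeffding-type bounds matched to the radius $R_\ell=C_R\{\sqrt{m^{-1}\log^2(N/\eta)}+\ell^{-\alpha}\}$ and does not generate a constraint linking $r$ to $n_sm$; both conditions on $r$ come from the aggregate matrix Bernstein bound for $\mathcal{E}'_1$ (with the first one reused, without logarithms, to make the $(II_1)$ and $(III)$ second-moment bounds of order $r^{-2\alpha}$ and $\sum_s\nu_s^2(r/(b_sm)+1/b_s)$ respectively). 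To repair your argument you should replace the per-server eigenvalue event by the aggregated one and rederive the contraction from it; the remainder of your proof then goes through as written.
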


\begin{proof}[Proof of Proposition \ref{fdp_prop_mean_up}]
    Denote $\tau_{s,t} = b_st$ for $t \in \{0\} \cup [T-1]$ and consider the following two events
    \begin{align*}
        \mathcal{E}'_1 = \Bigg\{&\Lambda_{\min}\Big\{\sum_{s=1}^S\sum_{i=1}^{b_s} \sum_{j=1}^m\frac{\nu_s}{b_sm} \Phi_r(X^{(s,\tau_{s,t}+i)}_j)\Phi_r^{\top}(X^{(s,\tau_{s,t}+i)}_j)\Big\} \geq 1/(2L)\quad \text{and} \\
        &\Lambda_{\max}\Big\{\sum_{s=1}^S\sum_{i=1}^{b_s} \sum_{j=1}^m\frac{\nu_s}{b_sm} \Phi_r(X^{(s,\tau_{s,t}+i)}_j)\Phi_r^{\top}(X^{(s,\tau_{s,t}+i)}_j)\Big\} \leq 2L, \forall t \in \{0\} \cup [T-1]\Bigg\},
    \end{align*}
    \begin{align*}
        \mathcal{E}'_2 = \Big\{\Pi^{\mathrm{entry}}_{R}\Big[&\frac{1}{m}\sum_{j=1}^m \Phi_r(X^{(s,\tau_{s,t}+i)}_j)\big\{\Phi^{\top}_r(X^{(s,\tau_{s,t}+i)}_j)a^{t}-Y^{(s,\tau_{s,t}+i)}_j \big\}\Big]\\
        &= \frac{1}{m}\sum_{j=1}^m \Phi_r(X^{(s,\tau_{s,t}+i)}_j)\big\{\Phi^{\top}_r(X^{(s,\tau_{s,t}+i)}_j)a^{t}-Y^{(s,\tau_{s,t}+i)}_j \big\}, \\
        & \hspace{-1cm} \forall i \in [b_s],\;  t \in \{0\} \cup [T-1], \; s\in [S]\Big\}.
    \end{align*}
    We control the probability of these events happening in Lemma \ref{lemma_fdp_thm_up}. The rest of the proof is conditioned on these two events.

    For the $t$th iteration, we can rewrite the noisy gradient descent as
    \begin{align*}
        &a^{t} - \sum_{s=1}^S\sum_{i=1}^{b_s}\sum_{j=1}^m \frac{\rho \nu_s}{b_sm} \Phi_r(X^{(s,\tau_{s,t}+i)}_j)\{\Phi^{\top}_r(X^{(s,\tau_{s,t}+i)}_j)a^{t}-Y^{(s,\tau_{s,t}+i)}_j\} - \sum_{s=1}^S \rho \nu_s w_{s,t}\\
        =\;& a^t - \sum_{s=1}^S\sum_{i=1}^{b_s}\sum_{j=1}^m \frac{\rho \nu_s}{b_sm} \Phi_r(X^{(s,\tau_{s,t}+i)}_j)\big[\Phi^{\top}_r(X^{(s,\tau_{s,t}+i)}_j)(a^{t}- a^*_r)\\
        &\hspace{6cm} - \{\mu^*(X^{(s,\tau_{s,t}+i)}_j)- \Phi^{\top}_r(X^{(s,\tau_{s,t}+i)}_j)a^*_r\}\\
        &\hspace{6cm} -U^{(s,\tau_{s,t}+i)}(X^{(s,\tau_{s,t}+i)}_j)-\xi_{s,\tau_{s,t}+i,j}\big]- \sum_{s=1}^S \rho\nu_s w_{s,t},
    \end{align*}
    where the equality follows from \eqref{fdp_mean_obs}. Due to the projection operator $\Pi^{*}_{\mathcal{A}}$ and the fact that $a_r^* \in \mathcal{A}$, Lemma~\ref{lemma_projection} leads to
    \begin{align} \notag
        \|a^{t+1} - a^*_r\|_2^2 \lesssim &\; \Big\|\Big\{I-\sum_{s=1}^S\sum_{i=1}^{b_s}\sum_{j=1}^m  \frac{\rho \nu_s}{b_sm}\Phi_r(X^{(s,\tau_{s,t}+i)}_j)\Phi^{\top}_r(X^{(s,\tau_{s,t}+i)}_j)\Big\}(a^t - a_r^*)\Big\|_2^2\\ \notag
        & + \Big\|\sum_{s=1}^S\sum_{i=1}^{b_s}\sum_{j=1}^m \frac{\rho \nu_s}{b_sm} \Phi_r(X^{(s,\tau_{s,t}+i)}_j)\Big\{\mu^*(X^{(\tau_{s,t}+i)}_j)- \Phi^{\top}_r(X^{(\tau_{s,t}+i)}_j)a^*_r\Big\}\Big\|_2^2\\ \notag
        & +\Big\|\sum_{s=1}^S\sum_{i=1}^{b_s}\sum_{j=1}^m \frac{\rho \nu_s}{b_sm} \Phi_r(X^{(s,\tau_{s,t}+i)}_j)U^{(\tau_{s,t}+i)}(X^{(\tau_{s,t}+i)}_j)\Big\|_2^2 \\ \notag
        &+\Big\|\sum_{s=1}^S\sum_{i=1}^{b_s}\sum_{j=1}^m  \frac{\rho \nu_s}{b_sm}\Phi_r(X^{(s,\tau_{s,t}+i)}_j)\xi_{\tau_{s,t}+i,j}\Big\|_2^2 +\Big\| \sum_{s=1}^S \rho\nu_sw_{s,t}\Big\|_2^2\\ \label{l_fdp_mean_upper_eq1}
        = &\; \|(I)\|^2_2+\|(II)\|^2_2+\|(III)\|^2_2+\|(IV)\|^2_2 +\rho^2\Big\|\sum_{s=1}^S \nu_s w_{s,t}\Big\|_2^2.
    \end{align}
    We will control the estimation error $\|a^{t+1} - a^*_r\|_2^2$ by controlling each term in \eqref{l_fdp_mean_upper_eq1} individually. 
    
    For $(I)$, note that in the event $\mathcal{E}'_1$, it holds that
    \begin{align*}
        &\Big\|I-\sum_{s=1}^S\sum_{i=1}^{b_s}\sum_{j=1}^m  \frac{\rho \nu_s}{b_sm}\Phi_r(X^{(s,\tau_{s,t}+i)}_j)\Phi^{\top}_r(X^{(s,\tau_{s,t}+i)}_j)\Big\|_{\op}\\
        \leq \;& \max \Big\{|1-\frac{\rho}{2L}|, |1- 2L\rho|\Big\} =1 - \frac{2}{1+4L^2},
    \end{align*}
    by the choice of $\rho = 4L/(1+4L^2)$. Therefore, we have that
    \begin{align}\label{l_fdp_mean_upper_eq2}
    	\|(I)\|^2_2 \leq \Big(1 - \frac{2}{1+4L^2}\Big)^2\|a^{t}-a_r^*\|_2^2.
    \end{align}
    Note that here we choose $\rho = 4L/(1+4L^2)$ for simplicity of the proof and we remark that any choices of $\rho \in (0,1/L)$ will work. To control $\|(II)\|^2_2, \|(III)\|^2_2$ and $\|(IV)\|^2_2$, we will use a similar argument as the one used to derive \eqref{thm_mean_upper_eq2}, \eqref{thm_mean_upper_eq3} and \eqref{thm_mean_upper_eq4} with the only difference to control the summation over $s\in [S]$. For $(II)$, we have that 
    \begin{align*}
            (II) \leq &\; \sum_{s=1}^S\sum_{i=1}^{b_s}\sum_{j=1}^m  \frac{\rho\nu_s}{b_sm} \Big[\Phi_r(X^{(s,\tau_{s,t}+i)}_j)\Big\{\mu^*(X^{(s,\tau_{s,t}+i)}_j)- \Phi^{\top}_r(X^{(s,\tau_{s,t}+i)}_j)a^*_r\Big\}\\
            &\hspace{3cm}- \mathbb{E}\big[\Phi_r(X^{(s,\tau_{s,t}+i)}_j)\big\{\mu^*(X^{(s,\tau_{s,t}+i)}_j)- \Phi^{\top}_r(X^{(s,\tau_{s,t}+i)}_j)a^*_r\big\}\big]\Big]\\
            & +\sum_{s=1}^S\sum_{i=1}^{b_s}\sum_{j=1}^m \frac{\rho\nu_s}{b_sm}\mathbb{E}\big[\Phi_r(X^{(s,\tau_{s,t}+i)}_j)\big\{\mu^*(X^{(s,\tau_{s,t}+i)}_j)- \Phi^{\top}_r(X^{(s,\tau_{s,t}+i)}_j)a^*_r\big\}\big]\Big]\\
            = &\; (II_1) + (II_2).
        \end{align*}
    To upper bound $\|(II_1)\|_2$, it holds that
    \begin{align*}
        \mathbb{E}(\|(II_1)\|^2_2) &= \sum_{\ell=1}^r \mathbb{E}\Bigg[\Big\{\sum_{s=1}^S\sum_{i=1}^{b_s}\sum_{j=1}^m\frac{\rho\nu_s}{b_sm}\phi_\ell(X^{(s,\tau_{s,t}+i)}_j)\big\{\mu^*(X^{(s,\tau_{s,t}+i)}_j)- \Phi^{\top}_r(X^{(s,\tau_{s,t}+i)}_j)a^*_r\big\}\\
        &\hspace{0.5cm}-\sum_{s=1}^S\sum_{i=1}^{b_s}\sum_{j=1}^m\frac{\rho\nu_s}{b_sm}\mathbb{E}\big[\phi_\ell(X^{(s,\tau_{s,t}+i)}_j)\big\{\mu^*(X^{(s,\tau_{s,t}+i)}_j)- \Phi^{\top}_r(X^{(s,\tau_{s,t}+i)}_j)a^*_r\big\}\big]\Big\}^2\Bigg]\\
        &= \sum_{\ell=1}^r\sum_{s=1}^S\frac{\rho^2\nu_s^2}{b_sm} \mathbb{E}\Bigg[\Big\{\phi_\ell(X^{(s,\tau_{s,t}+1)}_1)\Big\{\mu^*(X^{(s,\tau_{s,t}+1)}_1)- \Phi^{\top}_r(X^{(s,\tau_{s,t}+1)}_1)a^*_r\Big\} \\
        & \hspace{2cm} - \mathbb{E}\big[\phi_\ell(X^{(s,\tau_{s,t}+1)}_1)\big\{\mu^*(X^{(s,\tau_{s,t}+1)}_1)- \Phi^{\top}_r(X^{(s,\tau_{s,t}+1)}_1)a^*_r\big\}\big]\Big\}^2\Bigg]\\
        & \leq \sum_{\ell=1}^r\sum_{s=1}^S \frac{\rho^2\nu_s^2}{b_sm} \mathbb{E}\Bigg[\Big\{\phi_\ell(X^{(s,\tau_{s,t}+1)}_1)\Big\{\mu^*(X^{(s,\tau_{s,t}+1)}_1)- \Phi^{\top}_r(X^{(s,\tau_{s,t}+1)}_1)a^*_r\Big\}\Big\}^2\Bigg]\\
        & \lesssim \sum_{s=1}^S \frac{\rho^2 r\nu_s^2}{b_sm}\mathbb{E}\Big[\Big\{\mu^*(X^{(s,\tau_{s,t}+1)}_1)- \Phi^{\top}_r(X^{(s,\tau_{s,t}+1)}_1)a^*_r\Big\}^2\Big]\\
        & \lesssim \sum_{s=1}^S\frac{r^{1-2\alpha}\nu_s^2}{b_sm}\lesssim r^{-2\alpha},
    \end{align*}
    where across the proof in addition to the previous arguments used to derive \eqref{thm_mean_upper_eq5}, we further use the fact that we have independence across $S$ sources and the last inequality holds for all selections of $r$ such that $r \lesssim \{\sum_{s=1}^S \nu_s^2/(b_sm)\}^{-1}$. 
    
    To upper bound $\|(II)_2\|_2$, it holds that
    \begin{align*}
        \|(II)_2\|_2 &= \Big\|\sum_{s=1}^S\sum_{i=1}^{b_s}\sum_{j=1}^m \frac{\rho \nu_s}{b_sm} \mathbb{E}\big[\Phi_r(X^{(s,\tau_{s,t}+i)}_j)\big\{\mu^*(X^{(s,\tau_{s,t}+i)}_j)- \Phi^{\top}_r(X^{(s,\tau_{s,t}+i)}_j)a^*_r\Big\}\Big]\Big\|_2 \\
        & = \Big\|\sum_{s=1}^S \rho \nu_s\mathbb{E}\Big[\Phi_r(X^{(s,\tau_{s,t}+1)}_1)
        \Big\{\mu^*(X^{(s,\tau_{s,t}+1)}_1)- \Phi^{\top}_r(X^{(s,\tau_{s,t}+1)}_1)a^*_r\Big\}\Big]\Big\|_2\\
        &\leq\sum_{s=1}^S \rho \nu_s\Big\|\mathbb{E}\Big[\Phi_r(X^{(s,\tau_{s,t}+1)}_1)
        \Big\{\mu^*(X^{(s,\tau_{s,t}+1)}_1)- \Phi^{\top}_r(X^{(s,\tau_{s,t}+1)}_1)a^*_r\Big\}\Big]\Big\|_2\\
        & \lesssim \rho \|\mu^*- \Phi^{\top}_r a^*_r\|_{L^2} \lesssim r^{-\alpha},
    \end{align*}
    where in addition to the arguments used to derive \eqref{thm_mean_upper_eq6}, we also used the fact that $\sum_{s=1}^S \nu_s = 1$. Therefore, by Markov's inequality, we have that with probability at least $1-\eta_1$ that
    \begin{align}\label{l_fdp_mean_upper_eq3}
        \mathbb{P}(\|(II)\|_2^2 \lesssim r^{-2\alpha}/\eta_1) \geq 1-\eta_1
    \end{align}
    for any $\eta_1\in (0,1/2)$ as long as $r \lesssim \{\sum_{s=1}^S \nu_s^2/(b_sm)\}^{-1}$. To upper bound $\|(III)\|_2^2$, it holds that
    \begin{align*}
        \mathbb{E}(\|(III)\|_2^2) = \;&\sum_{\ell=1}^r \mathbb{E}\Big[\Big\{\sum_{s=1}^S\sum_{i=1}^{b_s}\sum_{j=1}^m \frac{\rho\nu_s}{b_sm}\phi_\ell(X^{(s,\tau_{s,t}+i)}_j)U^{(s,\tau_{s,t}+i)}(X^{(s,\tau_{s,t}+i)}_j)\Big\}^2\Big]\\
        =\;& \sum_{\ell=1}^r\sum_{s=1}^S \frac{\rho^2\nu_s^2}{b_sm^2}\mathbb{E}\Big[\Big\{ \sum_{j=1}^m\phi_\ell(X^{(s,\tau_{s,t}+1)}_j)U^{(s,\tau_{s,t}+1)}(X^{(s,\tau_{s,t}+1)}_j)\Big\}^2\Big]\\
        \lesssim \;&\sum_{s=1}^S\frac{\rho^2\nu_s^2}{b_sm}\sum_{\ell=1}^r\mathbb{E}\Big[\phi_\ell^2(X^{(s,\tau_{s,t}+1)}_1)\{U^{(s,\tau_{s,t}+1)}(X^{(s,\tau_{s,t}+1)}_1)\}^2\Big]\\
        &+ \sum_{s=1}^S\frac{\rho^2\nu_s^2}{b_s}\sum_{\ell=1}^r\mathbb{E}\Big[\phi_\ell(X^{(s,\tau_{s,t}+1)}_1)U^{(s,\tau_{s,t}+1)}(X^{(s,\tau_{s,t}+1)}_1)\\
        & \hspace{5cm}\phi_\ell(X^{(s,\tau_{s,t}+1)}_2)U^{(s,\tau_{s,t}+1)}(X^{(s,\tau_{s,t}+1)}_2)\Big]\\
        \lesssim \;& \sum_{s=1}^S\frac{r\nu_s^2}{b_sm}+ \sum_{s=1}^S\frac{\nu_s^2}{b_s},
    \end{align*}
    where the last inequality follows from Assumption \ref{a_model}(b)~and the fact that
    \begin{align*}
        \mathbb{E}\Big[\phi_\ell^2(X^{(s,\tau_{s,t}+1)}_1)\{U^{(s,\tau_{s,t}+1)}(X^{(s,\tau_{s,t}+1)}_1)\}^2\Big]
        &= \mathbb{E}_U\Big[\int_{0}^1 \big\{U^{(s,\tau_{s,t}+1)}(s)\big\}^2\phi_\ell^2(s)f_X(s) \mathrm{d}s\Big]\\
        & \leq 2L\mathbb{E}_U\Big[\int_{0}^1 U^2(s) \mathrm{d}s\Big]
        \leq 2LC_U,
    \end{align*}
    and
    \begin{align*}
       &\sum_{\ell=1}^r\mathbb{E}\Big[\phi_\ell(X^{(s,\tau_{s,t}+1)}_1)U^{(s,\tau_{s,t}+1)}(X^{(s,\tau_{s,t}+1)}_1)\phi_\ell(X^{(s,\tau_{s,t}+1)}_2)U^{(s,\tau_{s,t}+1)}(X^{(s,\tau_{s,t}+1)}_2)\Big]\\
       =\;&\sum_{\ell=1}^r \mathbb{E}\Big[\mathbb{E}\Big\{\phi_\ell(X^{(s,\tau_{s,t}+1)}_1)U^{(s,\tau_{s,t}+1)}(X^{(s,\tau_{s,t}+1)}_1)\Big|U\Big\}^2\Big] \\
       =\;& \frac{1}{b}\mathbb{E}\Big[\Big\|\mathbb{E}\Big\{\Phi_\ell(X^{(s,\tau_{s,t}+1)}_1)U^{(\tau_{s,t}+1)}(X^{(s,\tau_{s,t}+1)}_1)\Big|U\Big\}\Big\|_2^2\Big]\\
       \leq \;& \mathbb{E}\Big\{\|U^{(s,\tau_{s,t}+1)}f_X\|_{L^2}^2\Big\}] \leq L^2\mathbb{E}\Big\{\|U^{(s,\tau_{s,t}+1)}\|_{L^2}^2\Big\} \lesssim 1.
    \end{align*}
    Therefore, the Markov inequality implies that
    \begin{equation}\label{l_fdp_mean_upper_eq4}
        \mathbb{P}\Big\{\|(III)\|^2_2 \lesssim \Big(\sum_{s=1}^S\frac{r\nu_s^2}{b_sm}+ \sum_{s=1}^S\frac{\nu_s^2}{b_s}\Big)/\eta_2 \Big\} \geq 1 - \eta_2,
    \end{equation}
    for any $\eta_2 \in (0,16L^3/(4L^2+1)^2)$.
    
    Finally, to find an upper bound on $\|(IV)\|_2^2$, we have that
    \begin{align*}
        \mathbb{E}(\|(IV)\|_2^2)&= \sum_{\ell=1}^r \mathbb{E}\Big[\Big\{\sum_{i=1}^b\sum_{j=1}^m \sum_{s=1}^S \frac{\rho\nu_s}{b_sm} \phi_\ell(X^{(s,\tau_{s,t}+i)}_j)\xi_{s,\tau_{s,t}+i,j}\Big\}^2\Big] \\
        &= \sum_{\ell=1}^r \sum_{s=1}^S\frac{\rho^2\nu_s^2}{b_sm} \mathbb{E}[\phi^2_\ell(X^{(s,\tau_{s,t}+1)}_1)\xi^2_{s,\tau_{s,t}+1,1}]\\
        & = \sum_{\ell=1}^r \sum_{s=1}^S\frac{\rho^2\nu_s^2}{b_sm}\mathbb{E}_X[\phi^2_\ell(X^{(\tau_{s,t}+1)}_1)]\mathbb{E}_\xi[\xi^2_{\tau_{s,t}+1,1}]\\
        &\leq \sum_{s=1}^S\frac{r\nu_s^2 \rho^2 L C_{\xi}^2}{b_sm} \asymp \sum_{s=1}^S\frac{r\nu_s^2}{b_sm},
    \end{align*}
    where the third equality follows from the fact that $X^{(\tau_{s,t}+1)}_1$ and $\xi_{\tau_{s,t}+1,1}$ are independent and the fourth equality follows from Assumption \ref{a_model}(c). Therefore, Markov's inequality implies that 
    \begin{equation}\label{l_fdp_mean_upper_eq5}
        \mathbb{P}\Big\{\|(IV)\|_2^2 \lesssim \Big(\sum_{s=1}^S\frac{r\nu_s^2}{b_sm}\Big)/\eta_3\Big\}\geq 1-\eta_3,
    \end{equation}
    for any $\eta_3 \in (0,1/2)$. For the term $\|\sum_{s=1}^S \nu_s w_{s,t}\|_2^2$, firstly, we note that for any $\ell \in [r]$, the $\ell$th entry of $\sum_{s=1}^S \nu_s w_{s,t}$ follows a Gaussian distribution with mean zero and variance $\sum_{s=1}^S \nu_s^2\sigma^2_{s,\ell}$. Therefore, using standard properties of Gaussian random variables, we have that 
    \begin{align*}
        \Big\|\sum_{s=1}^S \nu_s w_{s,t}\Big\|_2^2\sim \chi^2_{\sum_{\ell=1}^r\sum_{s=1}^S \nu_s^2\sigma^2_{s,\ell}},
    \end{align*}
    which is a sub-Exponential distribution with the sub-Exponential parameter $\sum_{\ell=1}^r\sum_{s=1}^S \nu_s^2\sigma^2_{s,\ell}$ and mean
    \begin{align*}
        \mathbb{E}\Big(\Big\|\sum_{s=1}^S \nu_s w_{s,t}\Big\|_2^2\Big) = \sum_{\ell=1}^r\sum_{s=1}^S \nu_s^2\sigma_{s,\ell}^2.
    \end{align*}
    Therefore, following from standard properties of sub-Exponential random variables \citep[e.g.~Proposition 2.7.1 in][]{vershynin2018high}, we have that for any $\tau >0$,
    \begin{align*}
        \mathbb{P}\Big\{\Big\|\sum_{s=1}^S \nu_s w_{s,t}\Big\|_2^2 \geq \tau - \mathbb{E}\Big(\Big\|\sum_{s=1}^S \nu_s w_{s,t}\Big\|_2^2\Big)\Big\} \leq \exp\Big(-\frac{C_1\tau}{\sum_{\ell=1}^r\sum_{s=1}^S \nu_s^2\sigma^2_{s,\ell}}\Big).
    \end{align*}
    Combining with a union bound argument on $t \in [T-1]$, we have with probability at least $1-\eta_4$ that
    \begin{align}\notag
        \Big\|\sum_{s=1}^S \nu_s w_{s,t}\Big\|_2^2 &\leq \log(T/\eta_4)\sum_{\ell=1}^r\sum_{s=1}^S \nu_s^2\sigma^2_{s,\ell}\lesssim \log(T/\eta_4)\sum_{\ell=1}^r\sum_{s=1}^S\nu_s^2\log(1/\delta_s)R_\ell\sum_{k=1}^r R_k/(b_s^2\epsilon_s^2)\\ \notag
        & = \log(T/\eta_4)\log(1/\delta_s)\sum_{s=1}^S \nu_s^2\Big(\sum_{k=1}^r R_k\Big)^2/(b_s^2\epsilon_s^2)\\ \notag
        &\asymp \log(T/\eta_4)\log(1/\delta_s)\sum_{s=1}^S \nu_s^2\Big\{\sum_{\ell=1}^r \sqrt{m^{-1}\log^2(N/\eta)}+\ell^{-\alpha}\Big\}^2/(b_s^2\epsilon_s^2)\\ \notag
        &\lesssim \log(T/\eta_4)\log(1/\delta_s)\sum_{s=1}^S \nu_s^2\Big\{r^2m^{-1}\log^2(N/\eta)+1\Big\}/(b_s^2\epsilon_s^2)\\ \label{l_fdp_mean_upper_eq6}
        & = \log(T/\eta_4)\log(1/\delta_s)\sum_{s=1}^S \Big(\frac{\nu_s^2}{b_s^2\epsilon_s^2} + \frac{r^2\nu_s^2\log^2(N/\eta)}{b_s^2m\epsilon_s^2}\Big).
    \end{align}
    Therefore, substituting results in \eqref{l_fdp_mean_upper_eq2}, \eqref{l_fdp_mean_upper_eq3}, \eqref{l_fdp_mean_upper_eq4}, \eqref{l_fdp_mean_upper_eq5} and \eqref{l_fdp_mean_upper_eq6} into \eqref{l_fdp_mean_upper_eq1} and applying a union bound argument, it holds with probability at least $1-6\eta-\eta_1-\eta_2-\eta_3-\eta_4 = 1-10
    \eta$ that 
    \begin{align} \notag
        \|a^{t+1}-a_r^*\|_2^2 &\lesssim \Big(1 - \frac{2}{1+4L^2}\Big)^2\|a^t - a_r^*\|_2^2+ \frac{1}{\eta}\Big(\sum_{s=1}^S\frac{r\nu_s^2}{b_sm}+ \sum_{s=1}^S\frac{\nu_s^2}{b_s}+r^{-2\alpha}\Big) \\ \notag
        & \hspace{4cm}+ \log(T/\eta)\log(1/\delta_s)\sum_{s=1}^S \Big(\frac{\nu_s^2}{b_s^2\epsilon^2} + \frac{r^2\nu_s^2\log^2(N/\eta)}{b_s^2m\epsilon_s^2}\Big)\\ \notag
        & \leq \Big(1 - \frac{2}{1+4L^2}\Big)^{2t+2}\|a_r^*\|_2^2+ \frac{1}{\eta}\Big(\sum_{s=1}^S\frac{r\nu_s^2}{b_sm}+ \sum_{s=1}^S\frac{\nu_s^2}{b_s}+r^{-2\alpha}\Big) \\ \notag
        & \hspace{4cm}+ \log(T/\eta)\log(1/\delta_s)\sum_{s=1}^S \Big(\frac{\nu_s^2}{b_s^2\epsilon_s^2} + \frac{r^2\nu_s^2\log^2(N/\eta)}{b_s^2m\epsilon_s^2}\Big)\\ \notag
        &\leq \exp\Big(-\frac{2(2t+2)}{1+4L^2}\Big)\|a_r^*\|_2^2+ \frac{1}{\eta}\Big(\sum_{s=1}^S\frac{r\nu_s^2}{b_sm}+ \sum_{s=1}^S\frac{\nu_s^2}{b_s}+r^{-2\alpha}\Big) \\ \label{l_fdp_mean_upper_eq7}
        & \hspace{4cm} + \log(T/\eta)\log(1/\delta_s)\sum_{s=1}^S \Big(\frac{\nu_s^2}{b_s^2\epsilon_s^2} + \frac{r^2\nu_s^2\log^2(N/\eta)}{b_s^2m\epsilon_s^2}\Big),
    \end{align}
    where the second inequality follows from an iterative argument. The proposition then follows from the fact that
    \begin{align*}
        \|\widetilde{\mu} - \mu^*\|_{L^2}^2 \lesssim \|a^{T} - a^*_r\|_2^2 + \|\Phi_r^\top a^*_r - \mu^*\|_{L^2}^2 \lesssim \|a^{T} - a^*_r\|_2^2 + r^{-2\alpha},
    \end{align*}
    where the last inequality follows from the fact that $\mu^* \in \mathcal{W}(\alpha, C_\alpha)$.
    
\end{proof}

\subsection{Proof of Theorem \ref{fdp_thm_mean_low}} \label{appendix_section_thm_mean_low}
\begin{proof}[Proof of Theorem \ref{fdp_thm_mean_low}]
    The first part of Theorem \ref{fdp_thm_mean_low} is a consequence of Proposition \ref{prop_mean_fdp} by selecting the optimal value of $\{b_s^t\}_{t=1}^{T}$ for any $s\in [S]$ to maximize the value in the right-hand side of \eqref{prop_mean_fdp_eq1}. By considering the Lagrangian of two optimization problems
    \begin{align*}
        \mathcal{L}_1(\lambda) = -\sum_{t=1}^T \{b_s^t \wedge (b_s^t)^2\epsilon_s^2\} + \lambda\Big\{n_s-\sum_{t=1}^T b_s^t\Big\},
    \end{align*}
    and
    \begin{align*}
        \mathcal{L}_2(\lambda) = -\sum_{t=1}^T \{(b_s^t)^2m\epsilon_s^2 \wedge r_0b_s^tm\} + \lambda\Big\{n_s -\sum_{t=1}^T b_s^t\Big\},
    \end{align*}
    it holds that the value of $b_s^t$ should be shared across all iterations, i.e.~$(b_s^t)^* = n_s/T$ to maximize the lower bound. The second part is a direct calculation from the first part.
\end{proof}

\begin{proposition} \label{prop_mean_fdp}
    Denote $\mathcal{P}_X$ the class of sampling distributions satisfying Assumption \ref{a_sample} and $\mathcal{P}_Y$ the class of distributions for noisy observations satisfying Assumptions \ref{a_model}. Let  $r_0 >0$ be the number solving the equation $r^{2\alpha+2} = \sum_{s=1}^S\sum_{t=1}^T \{(b_s^t)^{2}m\epsilon_s^2 \wedge rb^t_sm\}$. Suppose that $\delta_s \sqrt{\log(1/\delta_s)}  \lesssim r_0^{-1}m^{-3}\epsilon_s^2$ for any $s \in [S]$, then it holds that 
        \begin{align}  \notag
            \underset{Q \in \mathcal{Q}^T_{\bm{\epsilon}, \bm{\delta}}}{\inf} \underset{\widetilde{\mu}}{\inf} &\underset{P_X \in \mathcal{P}_X, P_Y \in \mathcal{P}_Y}{\sup} \mathbb{E}_{P_X, P_Y, Q}\|\widetilde{\mu}- \mu^*\|_{L^2}^2 \\ \label{prop_mean_fdp_eq1}
            & \gtrsim \frac{1}{\sum_{s=1}^S\sum_{t=1}^T \{b_s^t \wedge (b_s^t)^2\epsilon_s^2\}} \vee \frac{r_0^2}{\sum_{s=1}^S \sum_{t=1}^T \{(b_s^t)^2m\epsilon_s^2 \wedge r_0b_s^tm\}} ,
        \end{align}
        where $\mathcal{Q}^T_{\bm{\epsilon}, \bm{\delta}}$ the collection of all mechanisms satisfying $(\bm{\epsilon},\bm{\delta},T)$-FDP defined in Definition~\ref{def_fdp}.
\end{proposition}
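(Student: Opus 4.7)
The plan is to apply the Van-Trees inequality separately to two carefully constructed sub-families of the model and combine the resulting lower bounds. For the first (univariate) term in \eqref{prop_mean_fdp_eq1}, I would restrict to the sub-family in which $\mu^*$ is an unknown scalar constant and the functional noise $U^{(s,i)}$ is a constant random variable, which reduces the problem to privately estimating a univariate Gaussian mean from $N=\sum_s n_s$ observations. For the second (sparse) term, I would restrict $\mu^*$ to the finite-dimensional Sobolev ellipsoid $\Theta(r_0,C_\alpha)$ in \eqref{mean_t_lower_eq6} and place an independent truncated uniform prior on each coordinate with radius of order $r_0^{-\alpha-1/2}$, so that the prior Fisher information scales as $r_0^{2\alpha+2}$, exactly as in the proof of Theorem~\ref{thm_mean_lower}.

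The core technical step is to bound the Fisher information of the collection of privatised transcripts $\{Z_s^t\}_{s=1,t=1}^{S,T}$ with respect to the parameter. In the interactive FDP regime, $\{Z^{(s)}\}_{s=1}^S$ are no longer mutually independent, so the one-line decomposition used in \citet{cai2024optimal} fails. However, because Algorithm~\ref{algorithm_fdp_mean}-type mechanisms consume a fresh batch $D_s^t$ of size $b_s^t$ in round $t$, conditioning on the history $M^{(t-1)}$ makes $\{Z_s^t\}_{s}$ conditionally independent and independent of all previous raw data. This yields the additive decomposition $I_{Z^{(1)},\ldots,Z^{(S)}} \le \sum_{s=1}^S \sum_{t=1}^T \mathbb{E}[I_{Z_s^t \mid M^{(t-1)}}]$, where each conditional Fisher information is bounded by the score-attack / CDP-to-Fisher inequality applied pointwise: for each realisation $m^{(t-1)}$ the mechanism $Q_s^t(\cdot\mid m^{(t-1)},\cdot)$ is itself $(\epsilon_s,\delta_s)$-CDP in its input batch, so the per-round contribution is of order $\min\{b_s^t \cdot V, (b_s^t)^2 \epsilon_s^2 V\}$, where $V$ is the per-sample score variance.

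Substituting into the Van-Trees inequality $\mathbb{E}\|\widetilde{\mu}-\mu^*\|^2 \gtrsim (\text{param dim})^2/(I_{\text{data}} + I_{\text{prior}})$ then produces both terms in \eqref{prop_mean_fdp_eq1}. For the univariate construction the per-sample score variance is $O(1)$, giving $1/\sum_{s,t}\{b_s^t \wedge (b_s^t)^2\epsilon_s^2\}$ (with the prior Fisher information being $O(1)$ and therefore negligible). For the sparse construction, each function contributes score information of order $m$ per coordinate since it is discretised at $m$ points, so the non-private per-round contribution is $r_0 b_s^t m$ and the private one is $(b_s^t)^2 m \epsilon_s^2$; balancing the prior Fisher $r_0^{2\alpha+2}$ against the per-round sum is precisely the defining equation for $r_0$ in the statement, yielding the second term $r_0^2/\sum_{s,t}\{(b_s^t)^2 m\epsilon_s^2 \wedge r_0 b_s^t m\}$. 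The smallness assumption $\delta_s\sqrt{\log(1/\delta_s)} \lesssim r_0^{-1}m^{-3}\epsilon_s^2$ is the FDP-aggregated analogue of the condition in Theorem~\ref{thm_mean_lower}, needed so that the residual $\delta$-term from truncating the score attack at a sub-Gaussian threshold (cf.\ \eqref{mean_t_lower_eq2}--\eqref{mean_t_lower_eq11}) remains negligible after summation over $(s,t)$.

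The main obstacle is the uniform conditional-Fisher bound in the interactive regime: I need to verify that the score-attack inequality from the CDP case of \citet{cai2023score} and Theorem~\ref{thm_mean_lower} transfers to $I_{Z_s^t \mid M^{(t-1)}=m^{(t-1)}}$ uniformly in $m^{(t-1)}$, despite the mechanism $Q_s^t$ being an arbitrary measurable function of $m^{(t-1)}$. The key observation making this tractable is that, at fixed $m^{(t-1)}$, the mechanism is an ordinary $(\epsilon_s,\delta_s)$-CDP algorithm on the fresh batch $D_s^t$, so the CDP Fisher bound applies pointwise; a secondary delicacy is the tail-truncation argument for the $\delta$-term, which now must be carried out inside the conditional expectation and then summed across all $SN$ pairs $(s,t)$, which is why the condition on $\delta_s$ enters multiplicatively in $r_0$ and $m$.
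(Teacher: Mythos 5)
Your overall route is essentially the paper's: two constructions (a constant-function reduction giving the univariate term, and a finite-dimensional Sobolev ellipsoid giving the sparse term), the multivariate Van-Trees inequality, a chain-rule decomposition of the transcript Fisher information into conditional terms $\sum_{s=1}^S\sum_{t=1}^T I_{Z_s^t\mid M^{(t-1)}}$ justified by the disjoint-batch structure of the $(\bm{\epsilon},\bm{\delta},T)$-FDP class, and two per-round bounds --- a privacy-based bound of order $(b_s^t)^2 m\epsilon_s^2$ from a score-attack-type swapping argument applied pointwise in the history, and a data-processing bound of order $r_0 b_s^t m$ --- balanced against a prior Fisher information of order $r_0^{2\alpha+2}$, with the condition on $\delta_s$ absorbing the truncation residual. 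This matches the paper's proof structure step for step.

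The one step that fails as written is the prior in the sparse construction. You take a truncated uniform prior on each coordinate and assert its Fisher information scales as $r_0^{2\alpha+2}$ ``exactly as in the proof of Theorem~\ref{thm_mean_lower}''. But Theorem~\ref{thm_mean_lower} is proved with the score-attack method, where a uniform prior is admissible because no prior Fisher information is ever computed; in the Van-Trees inequality the prior must have finite Fisher information, which requires an absolutely continuous density vanishing at the boundary of its support, and the uniform density does not qualify (its location score is not square-integrable). The paper sidesteps this by taking each coordinate's prior to be a rescaled $\cos^2(\pi t/2)$ density supported on $[-B,B]$ with $B^2\asymp r_0^{-(2\alpha+1)}$, for which $J(\pi)\asymp r_0 B^{-2}\asymp r_0^{2\alpha+2}$; substituting any such smooth compactly supported prior repairs your argument without changing anything else. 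Two smaller points: in the FDP setting the tail control of the score-attack term is sub-exponential (the inner product of scores has sub-exponential parameter of order $b_s^t m^4 r_0$), not sub-Gaussian as in \eqref{mean_t_lower_eq2}--\eqref{mean_t_lower_eq11}, and it is this scale, compared against $b_s^t\sqrt{m}\epsilon_s$, that produces the $m^{-3}$ in the condition on $\delta_s$; and the conditional-independence decomposition should be derived from Definition~\ref{def_fdp} itself (every mechanism in $\mathcal{Q}^T_{\bm{\epsilon},\bm{\delta}}$ acts on a fresh batch $D_s^t$ given $M^{(t-1)}$), not from properties of Algorithm~\ref{algorithm_fdp_mean}, since the infimum runs over all mechanisms in the class.
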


\begin{proof}[Proof of Proposition \ref{prop_mean_fdp}]
    Throughout the section, denote $L_t^{(s)} = \{L_t^{(s,i)}\}_{i=1}^{b_s^t} = \{(X^{(s,i)}_j, Y^{(s,i)}_j)\\ \}_{i=1,j=1}^{b_s^t,m}$, the $b_s^t$ pairs of realizations on the $s$th server in iteration $t$ and $Z_t^{(s)}$ the privatized transcript released from server $s$ to the central server in iteration $t$. For any $s\in [S]$, we further denote $L^{(s)} = \{L_t^{(s)}\}_{t=1}^T$ and $Z^{(s)} = \{Z_t^{(s)}\}_{t=1}^T$. We prove the minimax lower bound by considering two separate constructions. 
    
    \noindent \textbf{Case 1.} Similar to the case of CDP, we firstly reduce the problem to a finite-dimensional subspace $\Theta(r, C_\alpha)$ as defined in \eqref{mean_t_lower_eq6}. Following a similar argument as the one used in the proof of Theorem 4.1 in \citet{cai2024optimal} and applying the multivariate version of the Van-Trees inequality \citep[e.g.][]{gill1995applications}, we can bound the average $\ell^2$ risk of interest by
    \begin{align} \label{fdp_t_mean_low_eq1}
        \int \mathbb{E}\Big\{\sum_{\ell=1}^r (\widetilde{a}_\ell-a_\ell)^2\Big\}\pi(a)\;\mathrm{d}a \geq \frac{r^2}{\int \tr\{I_{Z^{(1)},\ldots,Z^{(S)}}(a)\}\pi(a)\;\mathrm{d}a + J(\pi)}, 
    \end{align}
    where $I_{Z^{(1)},\ldots,Z^{(S)}}(a)$ is the Fisher information associated with $\bm{Z} = (Z^{(1)},\ldots,Z^{(S)})$, $\pi(a)$ is a prior for the parameter $a$ and $J(\pi)$ is the Fisher information associated with the prior $\pi$.

    \noindent \textbf{Case 1 - Step 1: Construction of a class of distribution.}
    To construct the prior distribution of $a$, for any $\ell \in [r]$, denote $\pi_{\ell}$ the prior distribution of the $\ell$th entry of $a$ and we further assume that $\pi(a) = \prod_{\ell=1}^r \pi_\ell(a_\ell)$. We will take $\pi_\ell$ as a rescaled version of the density $t \mapsto \cos^2(\pi t/2)\mathbf{1}\{|t|\leq 1\}$ such that it is supported on $[-B, B]$ where $B^2 = \frac{C_1^2}{2\pi^{2\alpha}}\Big(\int_{1}^{r+1}t^{2\alpha}\mathrm{d}t\Big)^{-1}\asymp r^{-(2\alpha+1)}$. We obtain that the prior distribution is supported in $\Theta(r, C_\alpha)$ since
    \begin{align*}
        \sum_{\ell=1}^r \ell^{2\alpha}a_\ell^2 \leq \frac{C_1^2}{2\pi^{2\alpha}}\Big(\int_{1}^{r+1}t^{2\alpha}\mathrm{d}t\Big)^{-1}\sum_{\ell=1}^r \ell^{2\alpha} \lesssim \frac{C_1^2}{\pi^{2\alpha}}.
    \end{align*}
    
    To calculate the Fisher information $J(\pi)$, following the same argument as the one in the proof of Lemma 4.3 in \citet{cai2024optimal}, we have that 
    \begin{align} \label{fdp_t_mean_low_eq2}
        J(\pi) =\pi^2 r B^{-2} \asymp \pi^2r^{2\alpha+2}.
    \end{align}
    For the term $I_{Z^{(1)},\ldots, Z^{(S)}}(a)$, using the chain rule of Fisher information, it then holds that for any $a \in \Theta(r, C_{\alpha})$,
    \begin{align} \label{fdp_t_mean_low_eq9}
        I_{Z^{(1)},\ldots, Z^{(S)}}(a) = \sum_{s=1}^S \sum_{t=1}^T I_{Z_t^{(s)}|M^{(t-1)}}(a),
    \end{align}
    where $M^{(t-1)}$ is all the private information generated in the previous $(t-1)$ rounds, with $M^{0} = \emptyset$ and the equality follows from the fact that for any $t \in [T]$, the collection of random variables $\{Z^{s,t}\}_{s=1}^S$ are mutually independent conditioning on $M^{(t-1)}$.
   
    Therefore, substituting \eqref{fdp_t_mean_low_eq2} and \eqref{fdp_t_mean_low_eq9} into \eqref{fdp_t_mean_low_eq1}, we have that the minimax risk is bounded by
    \begin{align} \notag
        \sup_{a \in \Theta(r, C_\alpha)} \mathbb{E}\|\widetilde{a} - a\|_2^2 &\gtrsim \frac{r^2}{\sup_{a}\tr\{I_{Z^{(1)},\ldots,Z^{(S)}}(a)\} +r^{2\alpha+2}}\\
        \label{fdp_t_mean_low_eq3}
        & = \frac{r^2}{\sup_{a} \sum_{s=1}^S \sum_{t=1}^T \tr\{I_{Z_t^{(s)}|M^{(t-1)}}(a)\} + r^{2\alpha+2}},
    \end{align}
    where the first inequality follows from the fact that the supremum risk is lower bounded by Bayes risk. With the same construction and notation as the one used in \textbf{Step 1} in the proof of Theorem \ref{thm_mean_lower} (the extra $s,t$ in the notation denote the server and iteration), for any $i \in [b_s^t]$, define
    \begin{align*}
        S_a (L_t^{(s,i)}) = \Phi_{[m],s,i,t}\Sigma_{s,i,t}^{-1}(Y^{(s)}_{i\cdot,t}-\mu^{(s)}_{i\cdot,t}) \in \mathbb{R}^r,
    \end{align*}
    where for $i \in b_s^t$, $\Phi_{[m],s,i,t} = \{\Phi_r(X_1^{(s,i)}), \ldots, \Phi_r(X_m^{(s,i)})\} \in \mathbb{R}^{r\times m}$. Note that $S_a (L_t^{(s,i)})$ is the score function of $a$ based on the data $L_t^{(s,i)}$, hence we have that $\mathbb{E}\{S_a (L_t^{(s,i)})\} = 0$. Write the score function for local data $L_t^{(s)}$ on the $s$th server in iteration $t$ as $S_a(L_t^{(s)}) = \sum_{i=1}^{b_s^t} S_a (L_t^{(s,i)})$.
    Let 
    \begin{align*}
        C_a(Z_t^{(s)}|M^{(t-1)}) = \mathbb{E}\Big\{S_a(L_t^{(s)})\Big| Z_t^{(s)}, M^{(t-1)}\Big\}\mathbb{E}\Big\{S_a(L_t^{(s)})\Big| Z_t^{(s)}, M^{(t-1)}\Big\}^\top \in \mathbb{R}^{r\times r},
    \end{align*}
    write $C_{a,t}^{(s)} = \mathbb{E}\{S_a(L_t^{(s)})S_a(L_t^{(s)})^\top\}$ for the unconditional version covariance matrix of $S_a(L_t^{(s)})$ and let $C^{(s,i)}_{a,t} = \mathbb{E}\{S_a(L_t^{(s,i)})S_a(L_t^{(s,i)})^\top\}$ such that $C_{a,t}^{(s)} = \sum_{i=1}^{b^s_t} C^{(s,i)}_{a,t}$. With the above construction, by definition of conditional expectation, we then have that  
    \begin{align}\notag
        &\mathbb{E}\Big\{S_a(L_t^{(s)})\Big| Z_t^{(s)} = z^{(s)}_t, M^{(t-1)}\Big\}\\ \notag
        =\;& \int f(l_t^{(s)}|z_t^{(s)}, M^{(t-1)})\frac{\partial }{\partial a}\log f(l^{(s)}_t|M^{(t-1)}) \;\mathrm{d}l^{(s)}_t\\ \notag
        =\; & \int \frac{f(l^{(s)}_t, z^{(s)}_t|M^{(t-1)})}{f(z_t^{(s)}|M^{(t-1)})} \frac{\frac{\partial }{\partial a} f(l^{(s)}_t|M^{(t-1)})}{f(l_t^{(s)}|M^{(t-1)})}\;\mathrm{d}l^{(s)}_t \\ \notag
        = \;&\frac{1}{f(z_t^{(s)}|M^{(t-1)})} \frac{\partial}{\partial a} \int f(l_t^{(s)}|M^{(t-1)})f(z_t^{(s)}|l_t^{(s)},M^{(t-1)})\;\mathrm{d}l^{(s)}_t\\ \notag
        =\;& \frac{1}{f(z_t^{(s)}|M^{(t-1)})} \frac{\partial}{\partial a} \int f(z_t^{(s)}, l_t^{(s)}|M^{(t-1)})\;\mathrm{d}l^{(s)}_t \\ \label{fdp_t_mean_low_eq8}
        = \;& \frac{1}{f(z_t^{(s)}|M^{(t-1)})} \frac{\partial}{\partial a} f(z_t^{(s)} |M^{(t-1)}) =  \frac{\partial}{\partial a} \log f(z_t^{(s)}|M^{(t-1)}),
    \end{align}
    where the first equality follows from the fact that $L_t^{(s)}$ and $M^{(t-1)}$ are conditionally independent. Hence, we have that $I_{Z_t^{(s)}|M^{(t-1)}}(a) = \mathbb{E}[\mathbb{E}\{C_a(Z_t^{(s)}|M^{(t-1)})|M^{(t-1)}\}]$ and substituting into \eqref{fdp_t_mean_low_eq3}, we have that
    \begin{align}\label{fdp_t_mean_low_eq7}
         \sup_{a \in \Theta(r, C_\alpha)} \mathbb{E}\|\widetilde{a} - a\|_2^2 \geq \frac{r^2}{\sup_{a} \sum_{s=1}^S \sum_{t=1}^T\tr\big[\mathbb{E}[\mathbb{E}\{C_a(Z_t^{(s)}|M^{(t-1)})|M^{(t-1)}\}]\big] + r^{2\alpha+2}}.
    \end{align}
    \noindent \textbf{Case 1 - Step 2: Upper bound on} $\tr[\mathbb{E}[\mathbb{E}\{C_a(Z_t^{(s)}|M^{(t-1)})|M^{(t-1)}\}]]$.
    Next, we will find an upper bound on $\tr[\mathbb{E}[\mathbb{E}\{C_a(Z_t^{(s)}|M^{(t-1)})|M^{(t-1)}\}]]$. Denote 
    \begin{align*}
        G_t^{(s,i)} = \langle \mathbb{E}\big\{S_a(L_t^{(s)})\big| Z_t^{(s)}, M^{(t-1)}\big\}, S_a(L_t^{(s,i)})\rangle ,
    \end{align*}
    and 
    \begin{align*}
        \breve{G}_t^{(s,i)} = \langle \mathbb{E}\big\{S_a(L_t^{(s)})\big| Z_t^{(s)}, M^{(t-1)}\big\}, S_a(\breve{L}_t^{(s,i)})\rangle,
    \end{align*}
    where $\breve{L}_t^{(s,i)}$ is an independent copy of $L_t^{(s,i)}$. By Lemma \ref{lemma_orthogonal} and linearity of the inner product, we have that
    \begin{align} \notag
        &\tr\Big[\mathbb{E}\big[\mathbb{E}\{C_a(Z_t^{(s)}|M^{(t-1)})|M^{(t-1)}\}\big]\Big] = \mathbb{E}\Big[\mathbb{E}\Big\{\Big\|\mathbb{E}\Big\{S_a(L_t^{(s)})\Big| Z_t^{(s)}, M^{(t-1)}\Big\}\Big\|_2^2\Big|M^{(t-1)}\Big\}\Big]\\ \notag
        =\;& \mathbb{E}\Big[\mathbb{E}\Big\{\Big \langle \mathbb{E}\Big\{S_a(L_t^{(s)})\Big| Z_t^{(s)}, M^{(t-1)}\Big\}, \mathbb{E}\Big\{S_a(L_t^{(s)})\Big| Z_t^{(s)}, M^{(t-1)}\Big\}\Big\rangle \Big| M^{(t-1)}\Big\}\Big]\\ \notag
        =\;& \sum_{i=1}^{b_s^t} \mathbb{E}\Big[\mathbb{E}\Big\{\Big\langle S_a(L_t^{(s,i)}), \mathbb{E}\Big\{S_a(L_t^{(s)})\Big| Z_t^{(s)}, M^{(t-1)}\Big\}\Big\rangle \Big| M^{(t-1)}\Big\}\Big]\\ \notag
        =\;& \sum_{i=1}^{b_s^t} \mathbb{E}\Big\langle S_a(L_t^{(s,i)}), \mathbb{E}\Big\{S_a(L_t^{(s)})\Big| Z_t^{(s)}, M^{(t-1)}\Big\}\Big\rangle = \sum_{i=1}^{b_s^t}\mathbb{E}(G_t^{(s,i)})\\ \label{fdp_t_mean_low_eq4}
        \lesssim \; &  \sum_{i=1}^{b_s^t} \mathbb{E}(\breve{G}_t^{(s,i)})+ \epsilon_s \mathbb{E}|\breve{G}_t^{(s,i)}|+ W\delta_s + \int_{W}^\infty \mathbb{P}\{|G_t^{(s,i)}| \geq w\}\; \mathrm{d}w,
    \end{align}
    where the last inequality follows from a similar argument as the one used in the proof of Lemma 4.2 in \citet{cai2024optimal}.
    We will upper bound \eqref{fdp_t_mean_low_eq4} by upper bound the four terms inside it individually. Note that for the first term, we have that
    \begin{align*}
        &\mathbb{E}\Big[\big\langle \mathbb{E}\big\{S_a(L_t^{(s)})\big| Z_t^{(s)}, M^{(t-1)}\big\}, S_a(\breve{L}_t^{(s,i)})\big\rangle\Big]\\ 
        =\;& \mathbb{E}\Big[\mathbb{E}\big\{S_a(L_t^{(s)})\big| Z_t^{(s)}, M^{(t-1)}\big\}\Big]^\top \mathbb{E}\{S_a(\breve{L}_t^{(s,i)})\} = 0,
    \end{align*}
    where the second equality follows from the independence between $\breve{L}_t^{(s,i)}$ and $(Z_t^{(s)}, M^{(t-1)})$ and the last equality follows from the property of the score function. For the second term, firstly note that
    \begin{align*}
        \Lambda_{\max}(C_{a,t}^{(s,i)})  &= \Big\|\mathbb{E}_{X}\Big[\Phi_{[m],s,i,t }\Sigma_{s,i,t}^{-1} \mathbb{E}_{Y|X}\big\{(Y^{(s)}_{i\cdot,t}-\mu^{(s)}_{i\cdot,t})(Y^{(s)}_{i\cdot,t}-\mu^{(s)}_{i\cdot,t})^\top|X \big\}\Sigma_{s,i,t}^{-1}\Phi^{\top}_{[m],s,i,t}\Big]\Big\|_{\op}\\
        & = \Big\|\mathbb{E}_{X}\Big(\Phi_{[m],s,i,t}\Sigma_{s,i,t}^{-1}\Phi^{\top}_{[m],s,i,t}\Big)\Big\|_{\op} \lesssim \sigma_0^{-2}\Big\|\mathbb{E}_{X}\Big(\Phi_{[m],s,i,t}\Phi^{\top}_{[m],s,i,t}\Big)\Big\|_{\op} \lesssim m,
    \end{align*}
    where the last inequality follows from \eqref{mean_t_lower_eq7}. Hence, we have that
    \begin{align} \notag
        \mathbb{E}|\breve{G}_t^{(s,i)}| &\leq \sqrt{\mathbb{E}\{(\breve{G}_t^{(s,i)})^2\}} = \sqrt{\mathbb{E}\Big[\big\langle \mathbb{E}\big\{S_a(L_t^{(s)})\big| Z_t^{(s)}, M^{(t-1)}\big\}, S_a(\breve{L}_t^{(s,i)})\big\rangle^2\Big]}\\ \notag
        & = \sqrt{\mathbb{E}\Big[\mathbb{E}\big\{S_a(L_t^{(s)})\big| Z_t^{(s)}, M^{(t-1)}\big\}^\top \cov\{S_a(\breve{L}_t^{(s,i)})\}\mathbb{E}\big\{S_a(L_t^{(s)})\big| Z_t^{(s)}, M^{(t-1)}\big\}\Big]}\\ \notag
        & \leq \sqrt{\Lambda_{\max}(C_{a,t}^{(s,i)})} \sqrt{\mathbb{E}\Big[\mathbb{E}\Big\{\Big\|\mathbb{E}\big\{S_a(L_t^{(s)})\big| Z_t^{(s)}, M^{(t-1)}\big\}\Big\|_2^2\Big| M^{(t-1)}\Big\}\Big]}\\ \label{fdp_t_mean_low_eq10}
        & = \sqrt{\Lambda_{\max}(C_{a,t}^{(s,i)})} \sqrt{\tr\Big[\mathbb{E}\big[\mathbb{E}\{C_a(Z_t^{(s)}|M^{(t-1)})|M^{(t-1)}\}\big]\Big] }\\ \label{fdp_t_mean_low_eq12}
        &\leq \sqrt{m}\sqrt{\tr\Big[\mathbb{E}\big[\mathbb{E}\{C_a(Z_t^{(s)}|M^{(t-1)})|M^{(t-1)}\}\big]\Big]},
    \end{align}
    where the first inequality follows from the Cauchy--Schwarz inequality and the third inequality follows from standard properties of eigenvalues. To control the last term, by a similar argument as the one in the proof of Lemma B.9 in \citet{cai2024optimal}, we have for any $t \in \mathbb{R}$ that 
    \begin{align} \notag
        & \mathbb{E}\Big\{\exp\big(t|G_t^{(s,i)}|\big)\Big\} = \mathbb{E}\Big[\exp\Big\{t\big|\langle \mathbb{E}\big\{S_a(L_t^{(s)})\big| Z_t^{(s)}, M^{(t-1)}\big\}, S_a(L_t^{(s,i)})\rangle\big|\Big\}\Big]\\ \notag
        = \;& \int\int \exp\Big\{t\big|\langle \mathbb{E}\big\{S_a(L_t^{(s)})\big|Z_t^{(s)} =z, M^{(t-1)}= M\big\}, S_a(l)\rangle\big|\Big\} \\ \notag
        & \hspace{8cm}\;\mathrm{d}\mathbb{P}_{Z_t^{(s)},M^{(t-1)}|L_t^{(s,i)}}(z,M)\;\mathrm{d}\mathbb{P}_{L_t^{(s,i)}}(l)\\ \notag
        \leq \;& \int\int \mathbb{E}\Big[\exp\Big\{t\big|\langle S_a(L_t^{(s)}), S_a(l)\rangle\big|\Big\}\Big|Z_t^{(s)} =z, M^{(t-1)}= M\Big]\\ \notag
        & \hspace{8cm}\;\mathrm{d}\mathbb{P}_{Z_t^{(s)},M^{(t-1)}|L_t^{(s,i)}}(z,M)\; \mathrm{d}\mathbb{P}_{L_t^{(s,i)}}(l)\\ \label{fdp_t_mean_low_eq11}
        =\;&  \mathbb{E}\Big[\exp\Big\{t\big|\langle S_a(L_t^{(s)}), S_a(L_t^{(s,i)})\rangle\big|\Big\}\Big],
    \end{align}
    where the inequality follows from Jensen's inequality. Note that the above moment generating function is well defined as each entry inside $\Phi_{[m],s,i,t}$ is bounded and each entry in $Y^{(s)}_{i\cdot,t}-\mu^{(s)}_{i\cdot,t}$ is sub-Gaussian. Without loss of generality, take $i =1$ in $S_a(L_t^{(s,i)})$, then we have
    \begin{align*}
        \langle S_a(L_t^{(s)}),S_a(L_t^{(s,i)}) \rangle &= S_a(L_t^{(s)})^\top S_a(L_t^{(s,i)})\\
        &= \sum_{i=1}^{b_s^t}  (Y^{(s)}_{i\cdot,t}-\mu^{(s)}_{i\cdot,t})^\top \Sigma_{s,i,t}^{-1}\Phi^\top_{[m],s,i,t}\Phi_{[m],s,1,t}\Sigma_{s,1,t}^{-1}(Y^{(s)}_{1\cdot,t}-\mu^{(s)}_{1\cdot,t}).
    \end{align*}
    By a similar argument used to get \eqref{mean_t_lower_eq8}, we have that each entry inside $(Y^{(s)}_{i\cdot,t}-\mu^{(s)}_{i\cdot,t})^\top \Sigma_{s,i,t}^{-1}\\\Phi^\top_{[m],s,i,t}$ follows a sub-Gaussian distribution with parameter $C_2m^2$, hence using standard properties of sub-Gaussian random variables \citep[e.g.~Lemma 2.7.7 in][]{vershynin2018high} and triangle inequality of sub-Exponetial norms, we have that for any $i \in [n_s]$, $(Y^{(s)}_{i\cdot,t}-\mu^{(s)}_{i\cdot,t})^\top \Sigma_{s,i,t}^{-1}\\\Phi^\top_{[m],s,i,t}\Phi_{[m],s,1,t}\Sigma_{s,1,t}^{-1}(Y^{(s)}_{1\cdot,t}-\mu^{(s)}_{1\cdot,t})$ follows sub-Exponential distribution with parameter of $C_3rm^4$. By applying another triangle inequality, we have that $\langle S_a(L_t^{(s)}),S_a(L_t^{(s,i)}) \rangle$ follows a sub-Exponential distribution with parameter of order $C_3b_s^tm^4r$. Therefore, we have from sub-Exponential properties \citep[e.g.~Proposition 2.7.1][]{vershynin2018high} that 
    \begin{align*}
        &\mathbb{E}\Big\{\exp\big(t|\langle S_a(L_t^{(s)}),S_a(L_t^{(s,i)}) \rangle|\big)\Big\} \leq  \exp\{C_3b_s^tm^4r t \},\\
        & \hspace{6cm}\; \text{for any} \;t\; \text{such that}\; 0\leq t\leq (C_3b_s^tm^4r)^{-1}.
    \end{align*}
    Pick $t  = (C_3b_s^tm^4r)^{-1}/2$ and by Markov's inequality, we have for any $\tau >0$ that 
    \begin{align*}
        \mathbb{P}(|G_t^{(s,i)}|\geq \tau) &= \mathbb{P}\{\exp(t|G_t^{(s,i)}|)\geq \exp(t\tau)\}  \leq \exp(-t\tau)\mathbb{E}\{\exp(t|G_t^{(s,i)}|)\}\\
        & \lesssim \exp(1/2)\exp\Big(-\frac{\tau}{b_s^tm^4r}\Big),
    \end{align*}
    which means that pick $W \asymp b_s^tm^4r\log(1/\delta_s)$, we have that 
    \begin{align} \label{fdp_t_mean_low_eq13}
        \int_{W}^\infty \mathbb{P}\{|G_t^{(s,i)}| \geq w\}\; \mathrm{d}w  \lesssim \int_{W}^\infty \exp\Big(-\frac{w}{b_s^tm^4r}\Big) \; \mathrm{d}w = b_s^tm^4r \exp\Big(-\frac{W}{b_s^tm^4r}\Big) \asymp \delta_s b_s^tm^4r. 
    \end{align}
    Putting \eqref{fdp_t_mean_low_eq12} and \eqref{fdp_t_mean_low_eq13} into \eqref{fdp_t_mean_low_eq4}, we have that 
    \begin{align*}
        &\tr\Big[\mathbb{E}\big[\mathbb{E}\{C_a(Z_t^{(s)}|M^{(t-1)})|M^{(t-1)}\}\big]\Big]\\
        \lesssim \;& b_s^t\epsilon_s\sqrt{m}\sqrt{\tr\Big[\mathbb{E}\big[\mathbb{E}\{C_a(Z_t^{(s)}|M^{(t-1)})|M^{(t-1)}\}\big]\Big]} + (b_s^t)^2m^4r\delta_s\log(1/\delta_s)+ (b_s^t)^2\delta_s m^4r\\
        \lesssim \;& b_s^t\epsilon_s\sqrt{m}\sqrt{\tr\Big[\mathbb{E}\big[\mathbb{E}\{C_a(Z_t^{(s)}|M^{(t-1)})|M^{(t-1)}\}\big]\Big]} + (b_s^t)^2m^4r\delta_s\log(1/\delta_s).
    \end{align*}
    Next, we show that $\tr[\mathbb{E}[\mathbb{E}\{C_a(Z_t^{(s)}|M^{(t-1)})|M^{(t-1)}\}]] \lesssim (b_s^t)^2m\epsilon_s^2$. If this does not hold, then when $\delta_s$ is small enough such that $\delta_s\log(1/\delta_s) \lesssim \epsilon_s^2m^{-3}r^{-1}$, we have that 
    \begin{align}\label{fdp_t_mean_low_eq5}
        \sqrt{\tr\Big[\mathbb{E}\big[\mathbb{E}\{C_a(Z_t^{(s)}|M^{(t-1)})|M^{(t-1)}\}\big]\Big]} \lesssim b_s^t\epsilon_s\sqrt{m} + \frac{(b_s^t)^2m^4r}{b_s^t\sqrt{m}\epsilon_s}\delta_s\log(1/\delta_s) \lesssim b_s^t\epsilon_s\sqrt{m}.
    \end{align}
    Hence, we can conclude that $\tr[\mathbb{E}[\mathbb{E}\{C_a(Z_t^{(s)}|M^{(t-1)})|M^{(t-1)}\}]] \lesssim (b_s^t)^2m\epsilon_s^2$. 
    
    \noindent \textbf{Case 1 - Step 3: Another upper bound on} $\tr[\mathbb{E}[\mathbb{E}\{C_a(Z_t^{(s)}|M^{(t-1)})|M^{(t-1)}\}]]$. To get another upper bound on $\tr[\mathbb{E}[\mathbb{E}\{C_a(Z_t^{(s)}|M^{(t-1)})|M^{(t-1)}\}]]$, we will follow a similar argument used in the proof of Theorem 4.1 in \citet{cai2024optimal}. By standard matrix algebra, we have that 
    \begin{align*}
        \tr(C_{a,t}^{(s,i)}) = \sum_{\ell=1}^r \Lambda_\ell(C_{a,t}^{(s,i)}) \leq r\Lambda_{\max}(C_{a,t}^{(s,i)}) \lesssim rm,
    \end{align*}
    where for any $\ell \in [r]$, $\Lambda_\ell(C_{a,t}^{(s,i)})$ denote the $\ell$th eigenvalue of $C_{a,t}^{(s,i)}$. Using the property of conditional expectations we have that for random vectors $U, V$ and $W$, $\mathbb{E}[\cov\{\mathbb{E}(U|V, W)\}]\\ \preceq \mathbb{E}[\cov(U|W)]$ where $\preceq$ denotes the positive definite inequality. Taking $W = M^{(t-1)}$, $U =    S_a(L^{(s)}_t)$ and $V = Z^{(s)}$, this then implies that $\mathbb{E}[\mathbb{E}\{C_a(Z_t^{(s)}|M^{(t-1)})|M^{(t-1)}\}] \preceq C_{a,t}^{(s)}$ using the fact that $\mathbb{E}\{S_a(L^{(s)}_t)\} = 0$. Hence, we have that 
    \begin{align}\label{fdp_t_mean_low_eq6}
       \tr\Big[\mathbb{E}\big[\mathbb{E}\{C_a(Z_t^{(s)}|M^{(t-1)})|M^{(t-1)}\}\big]\Big] \leq \tr(C_{a,t}^{(s)}) = \sum_{i=1}^{b_s^t} \tr(C_{a,t}^{(s,i)}) \lesssim b_s^tmr.
    \end{align}
    Hence, substituting the results in \eqref{fdp_t_mean_low_eq5} and \eqref{fdp_t_mean_low_eq6} into \eqref{fdp_t_mean_low_eq7}, it holds that
    \begin{align*}
        \sup_{a \in \Theta(r, C_\alpha)} \mathbb{E}\|\widetilde{a} - a\|_2^2
        & \geq \frac{r^2}{\sum_{s=1}^S \sum_{t=1}^T \{(b_s^t)^2m\epsilon_s^2 \wedge rb_s^tm\} + r^{2\alpha+2}}.
    \end{align*}

     \noindent \textbf{Case 2.} To derive the term of $1/(\sum_{s=1}^S\sum_{t=1}^T b_s^t \wedge (b_s^t)^2\epsilon_s^2)$, we follow a similar and even simpler argument as above. By considering the random function $\mu^*+U$ as a constant random function, we are back to the problem of univariate mean estimation over independent samples, i.e.~we will consider the model
    \begin{equation*}
        Y^{(s,i)} = \mu^* + \xi^{(s,i)}, \quad \text{where}\quad \xi^{(s,i)}\sim N(0,\sigma_0^2).
    \end{equation*}
    Pick the prior distribution of $\mu$ as the truncated standard Gaussian distribution over $[-1,1]$ and construct $S_a (L_t^{(s,i)}) = \sigma_0^{-2}(Y_{i,t}-\mu^*_{i,t})$, we can get the desired result following a similar argument as the one that derives \eqref{fdp_t_mean_low_eq5} and \eqref{fdp_t_mean_low_eq6} as long as $\delta_s\log(1/\delta_s) \lesssim (b_s^t)^{1/2}\epsilon_s^2$ for any $s\in [S]$.

\end{proof}

\subsection{Auxiliary results}
\begin{lemma} \label{lemma_fdp_thm_up}
    Consider the events of interest in the proof of Theorem \ref{fdp_thm_mean_up}:
    \begin{align*}
        \mathcal{E}'_1 = \Bigg\{&\Lambda_{\min}\Big\{\sum_{s=1}^S\sum_{i=1}^{b_s} \sum_{j=1}^m\frac{\nu_s}{b_sm} \Phi_r(X^{(s,\tau_{s,t}+i)}_j)\Phi_r^{\top}(X^{(s,\tau_{s,t}+i)}_j)\Big\} \geq 1/(2L)\quad \text{and} \\
        &\Lambda_{\max}\Big\{\sum_{s=1}^S\sum_{i=1}^{b_s} \sum_{j=1}^m\frac{\nu_s}{b_sm} \Phi_r(X^{(s,\tau_{s,t}+i)}_j)\Phi_r^{\top}(X^{(s,\tau_{s,t}+i)}_j)\Big\} \leq 2L, \forall t \in \{0\} \cup [T-1]\Bigg\},
    \end{align*}
    \begin{align*}
        \mathcal{E}'_2 = \Big\{\Pi^{\mathrm{entry}}_{R}\Big[&\frac{1}{m}\sum_{j=1}^m \Phi_r(X^{(s,\tau_{s,t}+i)}_j)\big\{\Phi^{\top}_r(X^{(s,\tau_{s,t}+i)}_j)a^{t}-Y^{(s,\tau_{s,t}+i)}_j \big\}\Big]\\
        &= \frac{1}{m}\sum_{j=1}^m \Phi_r(X^{(s,\tau_{s,t}+i)}_j)\big\{\Phi^{\top}_r(X^{(s,\tau_{s,t}+i)}_j)a^{t}-Y^{(s,\tau_{s,t}+i)}_j \big\},\\
        & \hspace{5cm}\; \forall i \in [b_s],\;  t \in \{0\} \cup [T-1], \; s\in [S]\Big\}.
    \end{align*}
    Suppose that 
    \begin{align*}
        r\log^2(Tr/\eta) \lesssim \Big(\sum_{s=1}^S \frac{\nu_s^2}{b_sm}\Big)^{-1},  \quad r\log(Tr/\eta) \lesssim \Big(\sup_{s\in[S]} \frac{\nu_s}{b_sm}\Big)^{-1},
    \end{align*}
    we have that 
    \begin{equation*}
        \mathbb{P}(\mathcal{E}_1 \cap \mathcal{E}_2) \geq 1-6\eta,
    \end{equation*}
    for a small enough $\eta \in (0,1/6)$.
\end{lemma}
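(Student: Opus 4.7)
}
The plan is to mirror the proof of Lemma \ref{l_mean_upper_event}, lifted to the multi-server weighted setting. Because different servers hold independent raw data and each iteration uses a fresh mini-batch, the $S\cdot b_s\cdot m$ random vectors $\{\Phi_r(X^{(s,\tau_{s,t}+i)}_j)\}$ appearing in $\mathcal{E}'_1$ are jointly independent across $s,i,j$, so the ``weighted pooled'' design plays the role that the single-batch design played in the CDP proof; the only bookkeeping is to track how the server weights $\nu_s/(b_s m)$ enter the variance and the uniform bound for concentration.

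For $\mathcal{E}'_1$, I would first compute the population version:
\begin{align*}
\mathbb{E}\Bigl[\sum_{s=1}^S\sum_{i=1}^{b_s}\sum_{j=1}^{m}\frac{\nu_s}{b_s m}\Phi_r(X^{(s,\tau_{s,t}+i)}_j)\Phi_r^\top(X^{(s,\tau_{s,t}+i)}_j)\Bigr]=\Bigl(\sum_{s=1}^{S}\nu_s\Bigr)\Sigma=\Sigma,
\end{align*}
where $\Sigma=\mathbb{E}[\Phi_r(X)\Phi_r^\top(X)]$ and Assumption~\ref{a_sample} together with orthonormality of the Fourier basis give $1/L\le \Lambda_{\min}(\Sigma)\le\Lambda_{\max}(\Sigma)\le L$, exactly as in \eqref{l_mean_upper_event_eq2}--\eqref{l_mean_upper_event_eq3}. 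I would then apply the matrix Bernstein inequality to the centred independent rank-one summands, using the uniform bound $\|\nu_s \Phi_r(x)\Phi_r^\top(x)/(b_s m)\|_{\op}\lesssim r\max_s\nu_s/(b_s m)$ and the variance proxy $\sum_s \nu_s^2/(b_s m)\cdot\mathbb{E}\|\Phi_r\Phi_r^\top\|_{\op}^2\lesssim r\sum_s \nu_s^2/(b_s m)$. The two hypotheses in the Lemma are precisely what is needed to make the Bernstein deviation smaller than $1/(2L)$, after a union bound over $t\in\{0,\ldots,T-1\}$ and the $r$ eigen-directions, giving $\mathbb{P}(\mathcal{E}'_1)\ge 1-\eta$ via Weyl.

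For $\mathcal{E}'_2$, I would fix $s,t,i,\ell$ and decompose
\begin{align*}
\frac{1}{m}\sum_{j=1}^{m}\phi_\ell(X^{(s,\tau_{s,t}+i)}_j)\bigl\{\Phi_r^\top(X^{(s,\tau_{s,t}+i)}_j)a^t-Y^{(s,\tau_{s,t}+i)}_j\bigr\} = (I)+(II)+(III)+(IV)
\end{align*}
exactly as in Lemma~\ref{l_mean_upper_event}, with the four pieces driven by $\Phi_r^\top a^t$, $\mu^*$, the functional noise $U^{(s,i)}$, and the measurement errors $\xi_{s,i,j}$. Each expectation contributes an $O(\ell^{-\alpha})$ bias via the Sobolev smoothness of $f_X,\mu^*,U,\Phi_r^\top a^t\in\mathcal{W}(\alpha,C_\alpha)$ and Theorem~\ref{thm_product_sobolev}; each stochastic fluctuation contributes $O(\sqrt{\log(N/\eta)/m})$ by Hoeffding for bounded/sub-Gaussian summands, picking up an extra $\sqrt{\log(N/\eta)}$ for $(III)$ after first conditioning on the event that $\sup_{i,s}\|U^{(s,i)}\|_\infty\lesssim\sqrt{\log(N/\eta)}$. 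A union bound over $s\in[S]$, $i\in[b_s]$, $t\in\{0,\ldots,T-1\}$ and $\ell\in[r]$ (so $SNT r\lesssim N^3$ terms, absorbed into $\log(N/\eta)$) then shows that every entry is controlled by $R_\ell=C_R\{\sqrt{m^{-1}\log^2(N/\eta)}+\ell^{-\alpha}\}$ with probability $1-5\eta$, so $\mathbb{P}(\mathcal{E}'_2)\ge 1-5\eta$. Combining via another union bound yields the claim.

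The only place where genuine new work is needed over the CDP template is the matrix Bernstein step for $\mathcal{E}'_1$, where one must verify that the two conditions on $r$ correspond exactly to the variance-term ($\sum_s\nu_s^2/(b_sm)$) and the $\ell_\infty$-term ($\max_s\nu_s/(b_sm)$) of the Bernstein bound; this is the main obstacle in the sense that every other piece is a direct translation of the CDP argument with extra indices.
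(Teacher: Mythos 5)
Your proposal follows essentially the same route as the paper: matrix Bernstein applied to the centred summands $\frac{\nu_s}{b_s m}\Phi_r\Phi_r^\top$ with uniform bound $\sup_s r\nu_s/(b_sm)$ and variance proxy $r\sum_s\nu_s^2/(b_sm)$ (the two hypotheses matching exactly these terms), Weyl plus a union bound over $t$ for $\mathcal{E}'_1$, and a rerun of the four-term decomposition of Lemma \ref{l_mean_upper_event} with $n$ replaced by $N$ for $\mathcal{E}'_2$. The only quibble is your intermediate expression $\sum_s \nu_s^2/(b_sm)\cdot\mathbb{E}\|\Phi_r\Phi_r^\top\|_{\op}^2$, which as written would give an extra factor of $r$; the correct computation bounds $\mathbb{E}[T_{s,i,j}^2]\preceq \frac{2r\nu_s^2}{(b_sm)^2}\,\mathbb{E}[\Phi_r\Phi_r^\top]$ via $\|\Phi_r(x)\|_2^2\le 2r$, which yields the stated (and correct) proxy you use in the end.
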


\begin{proof}
    Note that $\mathcal{E}'_1$ and $\mathcal{E}'_2$ are the counter parts of $\mathcal{E}_1$ and $\mathcal{E}_2$ considered in Lemma~\ref{l_mean_upper_event}. The proofs are a generalized version of the proof of Lemma \ref{l_mean_upper_event} and most of the arguments in the proof of Lemma~\ref{l_mean_upper_event} could be adapted here.

     To control $\mathcal{E}'_1$, for any $i \in [b_s], j\in [m]$ and $s\in [S]$, denote 
    \begin{align*}
        T_{s,i,j} = \frac{\nu_s}{b_sm} \Phi_r(X^{(s,\tau_{s,t}+i)}_j)\Phi_r^{\top}(X^{(s,\tau_{s,t}+i)}_j).
    \end{align*}
    We firstly bound the largest and the smallest eigenvalues of $\mathbb{E}(T_{s,i,j})$. Note that under Assumption \ref{a_sample}, we have that 
    \begin{align*}
        \mathbb{E}(T_{s,i,j})& = \frac{\nu_s}{b_sm}\mathbb{E}\Big\{\Phi_r(X^{(s,\tau_{s,t}+i)}_j)\Phi_r^{\top}(X^{(s,\tau_{s,t}+i)}_j)\Big\}\\
        &= \frac{\nu_s}{b_sm}\mathbb{E}\Big\{\Phi_r(X^{(s,\tau_{s,t}+1)}_1)\Phi_r^{\top}(X^{(s,\tau_{s,t}+1)}_1)\Big\} =  \mathbb{E}(T_{s,1,1}).
    \end{align*}
    For any $s\in [S]$, since $\mathbb{E}(T_{s,1,1})$ is symmetric and positive semi-definite, we can upper bound its largest eigenvalue by
    \begin{align} \notag
        \Lambda_{\max}\{\mathbb{E}(T_{s,1,1})\} &= \sup_{u\in \mathbb{R}^r: \|u\|_2 =1} u^{\top}\mathbb{E}(T_{s,1,1})u = \frac{\nu_s}{b_sm}u^\top \mathbb{E}\Big\{\Phi_r(X^{(s,\tau_{s,t}+1)}_1)\Phi_r^{\top}(X^{(s,\tau_{s,t}+1)}_1)\Big\}u\\ \label{l_fdp_mean_upper_event_eq1}
        & \leq \frac{L\nu_s}{b_sm}\sup_{u\in \mathbb{R}^r: \|u\|_2 =1}\|u\|_2^2 = \frac{L\nu_s}{b_sm},
    \end{align}
    where the inequality follows from \eqref{l_mean_upper_event_eq2}. Similarly, it holds that
    \begin{align} \label{l_fdp_mean_upper_event_eq2}
        \Lambda_{\min}\{\mathbb{E}(T_{s,1,1})\} = \inf_{u\in \mathbb{R}^r: \|u\|_2 =1}u^{\top}\mathbb{E}(T_{s,1,1})u \geq \frac{\nu_s}{Lb_sm}, 
    \end{align}
    where the inequality follows from \eqref{l_mean_upper_event_eq3}. In the next step, we find an upper bound on the operator norm of $T_{s,i,j} - \mathbb{E}(T_{s,i,j})$. Observe that
    \begin{align*}
        &\sup_{s\in [S], i\in [b_s], j \in[m]}\|T_{s,i,j} - \mathbb{E}(T_{s,i,j})\|_{\op} \leq \sup_{s\in [S], i\in [b_s], j \in[m]}\|T_{s,i,j}\|_{\op} + \sup_{s\in [S]}\|\mathbb{E}(T_{s,1,1})\|_{\op}\\
        \leq \;& \sup_{s\in [S], i\in [b_s], j \in[m]} \sup_{u \in \mathbb{R}^r:\|u\|_2=1} u^\top T_{s,i,j}u + \frac{L\nu_s}{b_sm} \\
        =\;& \sup_{s\in [S], i\in [b_s], j \in[m]} \sup_{u \in \mathbb{R}^r: \|u\|_2=1} \frac{\nu_s}{b_sm} \Big\{\sum_{\ell=1}^r u_\ell\phi_{\ell}(X^{(s,i)}_j)\Big\}^2 + \frac{L\nu_s}{b_sm}\\
        \leq \; & \sup_{s\in [S], i\in [b_s], j \in[m]} \sup_{u \in \mathbb{R}^r:\|u\|_2=1} \frac{\nu_s}{b_sm}\Big(\sum_{\ell=1}^r u_{\ell}^2\Big)\Big\{\sum_{\ell=1}^r \phi^2_{\ell}(X^{(s,i)}_j)\Big\} + \frac{L\nu_s}{b_sm} \lesssim \sup_{s\in [S]} \frac{r\nu_s}{b_sm},
    \end{align*}
    where the third inequality follows from the Cauchy--Schwarz inequality and the last inequality follows from the boundedness property of Fourier basis. Next, we find an upper bound of the matrix variance statistic of the sum we are interested in. We have that
    \begin{align*}
        &\Big\|\sum_{s=1}^S\sum_{i=1}^{b_s} \sum_{j=1}^m\mathbb{E}\Big[\big\{T_{s,i,j} - \mathbb{E}(T_{s,i,j})\big\}\big\{T_{s,i,j} - \mathbb{E}(T_{s,i,j})\big\}^\top\Big]\Big\|_{\op}\\
        =\;& \Big\|\sum_{s=1}^S\sum_{i=1}^{b_s} \sum_{j=1}^m\mathbb{E}\Big[\big\{T_{s,i,j} - \mathbb{E}(T_{s,i,j})\big\}^\top\big\{T_{s,i,j} - \mathbb{E}(T_{s,i,j})\big\}\Big]\Big\|_{\op}\\
        = \;& \sup_{v \in \mathbb{R}^r: \|v\|_2=1} \sum_{s=1}^S\sum_{i=1}^{b_s} \sum_{j=1}^m v^\top\mathbb{E}\Big[\big\{T_{s,i,j} - \mathbb{E}(T_{s,i,j})\big\}\big\{T_{s,i,j} - \mathbb{E}(T_{s,i,j})\big\}^\top\Big]v\\
        \leq\;& \sup_{v \in \mathbb{R}^r: \|v\|_2=1} \left[\sum_{s=1}^S \left\{b_sm v^\top \mathbb{E}\{T_{s,1,1}T_{s,1,1}^\top\}v + b_smv^\top\mathbb{E}(T_{s,1,1})\mathbb{E}(T_{s,1,1})^\top v\right\}\right]\\
        = \;& \sup_{v \in \mathbb{R}^r: \|v\|_2=1} \bigg\{\sum_{s=1}^S \bigg[\frac{\nu_s^2}{b_sm}\mathbb{E}\Big\{v^\top\Phi_r(X^{(s,\tau_{s,t}+1)}_1)\Phi_r^{\top}(X^{(s,\tau_{s,t}+1)}_1)\Phi_r(X^{(s,\tau_{s,t}+1)}_1)\Phi_r^{\top}(X^{(s,\tau_{s,t}+1)}_1)v\Big\} \\
        & \hspace{1cm}+ b_sm\|v^\top\mathbb{E}(T_{s,1,1})\|_2^2\bigg\}\bigg]\\
        = \; & \sup_{v \in \mathbb{R}^r: \|v\|_2=1} \bigg[\sum_{s=1}^S \bigg\{\frac{\nu_s^2}{b_sm}\mathbb{E}\Big[v^\top\Phi_r(X^{(s,\tau_{s,t}+1)}_1)\Big\{\sum_{\ell=1}^r\phi^2_\ell(X^{(s,\tau_{s,t}+1)}_1)\Big\}\Phi_r^{\top}(X^{(s,\tau_{s,t}+1)}_1)v\Big] \\
        & \hspace{1cm} + b_sm\|v^\top\mathbb{E}(T_{s,1,1})\|_2^2\bigg\}\bigg]\\
        \leq \;& \sup_{v \in \mathbb{R}^r: \|v\|_2=1}\left\{ \sum_{s=1}^S \left[\frac{r\nu_s^2}{b_sm}\mathbb{E}\Big\{v^\top\Phi_r(X^{(s,\tau_{s,t}+1)}_1)\Phi_r^{\top}(X^{(s,\tau_{s,t}+1)}_1)v\Big\} + b_sm\|v\|_2^2\Lambda^2_{\max}\{\mathbb{E}(T_{s,1,1})\} \right]\right\} \\
        \lesssim \; &\sum_{s=1}^S\frac{r\nu_s^2}{b_sm},
    \end{align*}
    where the first inequality follows from the independence across $i\in[b_s]$ and $j \in [m]$, the second inequality follows from the boundedness property of Fourier basis and the last inequality follows from \eqref{l_mean_upper_event_eq2} and \eqref{l_fdp_mean_upper_event_eq1}. Hence the matrix Bernstein's inequality \citep[e.g.~Theorem 6.1.1 in][]{tropp2015introduction} implies that for any $\tau>0$,
    \begin{align*}
        &\mathbb{P}\Big[\Big\|\sum_{s=1}^S\sum_{i=1}^{b_s} \sum_{j=1}^m \big\{T_{s,i,j} - \mathbb{E}(T_{s,i,j})\big\}\Big\|_{\op} \geq \tau\Big] \\
        & \hspace{3cm}\leq 2r\exp\Big\{-\frac{-\tau^2/2}{\sum_{s=1}^S r\nu_s^2/(b_sm)+ \tau\sup_{s\in[S]}C_1r\nu_s/(3b_sm)}\Big\}.
    \end{align*}
    Note that by \eqref{l_fdp_mean_upper_event_eq1} and \eqref{l_fdp_mean_upper_event_eq2}, it holds that
    \begin{align*}
        \Lambda_{\max}\Big\{\sum_{s=1}^S\sum_{i=1}^{b_s} \sum_{j=1}^m \mathbb{E}(T_{s,i,j})\Big\} \leq \sum_{s=1}^S\sum_{i=1}^{b_s} \sum_{j=1}^m \frac{L\nu_s}{b_sm} = L,
    \end{align*}
    and 
    \begin{align*}
        \Lambda_{\min}\Big\{\sum_{s=1}^S\sum_{i=1}^{b_s} \sum_{j=1}^m \mathbb{E}(T_{s,i,j})\Big\} \geq \sum_{s=1}^S\sum_{i=1}^{b_s} \sum_{j=1}^m \frac{\nu_s}{Lb_sm} = \frac{1}{L}.
    \end{align*}
    Hence, applying a union bound argument on $t\in \{0\}\cup [T-1]$ and choosing 
    $$\tau = \log(Tr/\eta_1)\Big[\Big\{\sum_{s=1}^S \frac{r\nu_s^2}{b_sm}\Big\}^{1/2}+\sup_{s\in[S]} \frac{r\nu_s}{b_sm}\Big] \leq \frac{1}{2L},$$
    we have that $\mathbb{P}(\mathcal{E}'_1) \geq 1-\eta_1$ by further assuming 
    \begin{align}\label{l_fdp_mean_upper_event_eq3}
        r\log^2(Tr/\eta_1) \lesssim \Big(\sum_{s=1}^S \frac{\nu_s^2}{b_sm}\Big)^{-1}, \quad r\log(Tr/\eta_1) \lesssim \Big(\sup_{s\in[S]} \frac{\nu_s}{b_sm}\Big)^{-1}
    \end{align}
    and applying the Weyl's inequality. 
    
    For $\mathcal{E}'_2$, the same argument used to control $\mathcal{E}_2$ in Lemma \ref{l_mean_upper_event} still works. The only adjustment is to replace $n$ with $N$ in the argument to accommodate the effect of a union bound argument over $N$ random variables.
\end{proof}

\begin{remark} \label{remark_fdp_mean_r}
    Note that the optimal choice of $r$ in Theorem \ref{fdp_thm_mean_up} is the solution to the equation 
    \begin{equation}\label{l_fdp_mean_upper_event_eq4}
        r \asymp \Big\{\sum_{s=1}^S  (r^2n_s)\wedge (r^2n_s^2\epsilon_s^2) \wedge (rn_sm) \wedge (n_s^2m\epsilon_s^2)\Big\}^{1/(2\alpha+2)}.
    \end{equation}
    In this remark, all the calculations are up to a poly-logarithmic factor. Denote $s^* = \argmax_{s\in[S]} \nu_s/(b_sm)$. The first assumption is automatically satisfied by our choice of $r$ in equation \eqref{l_fdp_mean_upper_eq7}. For the second assumption, in the homogeneous setting, the assumption reduces to $r \lesssim Snm$, which trivially holds. In the heterogeneous setting, we discuss it in four different cases. Firstly, when when $rn_{s^*}m$ is the smallest term
    , with the choice of 
    \begin{align*}
        \nu_s = \frac{u_s}{\sum_{s=1}^S u_s} \quad \text{where} \quad u_s \asymp_{\log} (r^2n_s) \wedge (r^2n_s^2\epsilon_s^2) \wedge (rn_sm) \wedge (n_s^2m\epsilon_s^2),
    \end{align*}
    the second assumption in \eqref{l_fdp_mean_upper_event_eq3} is equivalent to say
    \begin{align*}
        r &\lesssim  \frac{\sum_{s=1}^S (r^2n_s) \wedge (r^2n_s^2\epsilon_s^2) \wedge (rn_sm) \wedge (n_s^2m\epsilon_s^2)}{r} \asymp \frac{r^{2\alpha+2}}{r}.
    \end{align*}
    and the assumption is satisfied with our choice of $r$ in \eqref{l_fdp_mean_upper_event_eq4} for any $\alpha >0$. In the second case, when $n_{s^*}^2m\epsilon_s^2$ is the smallest term, we have that \eqref{l_fdp_mean_upper_event_eq3} is equivalent to
    \begin{align*}
        r \lesssim \frac{\Big\{\sum_{s=1}^S (r^2n_s) \wedge (r^2n_s^2\epsilon_s^2) \wedge (rn_sm) \wedge (n_s^2m\epsilon_s^2)\Big\}b_{s^*}m}{n_{s^*}^2m\epsilon_s^2} \asymp \frac{r^{2\alpha+2}}{n_{s^*}\epsilon_s^2},
    \end{align*}
    which is equivalent to selecting $r$ such that $r \gtrsim (n_{s^*}\epsilon_s^2)^{1/(2\alpha+1)}$. In the third case, when $r^2n_{s^*}$ is the smallest term, the assumption is equivalent to say 
    \begin{align*}
        r \lesssim \frac{m\sum_{s=1}^S (r^2n_s) \wedge (r^2n_s^2\epsilon_s^2) \wedge (rn_sm) \wedge (n_s^2m\epsilon_s^2)\Big\}}{r^2}\asymp \frac{r^{2\alpha+2}m}{r^2},
    \end{align*}
    which is satisfied whenever $\alpha \geq 1/2$. Finally, when $r^2n_{s^*}^2\epsilon_s^2$ is the smallest term (the case when $r^2 \lesssim m$), we have that \eqref{l_fdp_mean_upper_event_eq3} is equivalent to
    \begin{align*}
        r \lesssim \frac{\Big\{\sum_{s=1}^S (r^2n_s) \wedge (r^2n_s^2\epsilon_s^2) \wedge (rn_sm) \wedge (n_s^2m\epsilon_s^2)\Big\}b_{s^*}m}{r^2n_{s^*}^2\epsilon_s^2} \asymp \frac{r^{2\alpha+2}m}{r^2n_{s^*}\epsilon_s^2}.
    \end{align*}
    If we select $r$ such that $r \gtrsim (n_{s^*}\epsilon_s^2)^{1/(2\alpha+1)}$, then we have that $r^{2\alpha-1}\gtrsim r^{-2}n_{s^*}\epsilon_s^2 \gtrsim n_{s^*}m^{-1}\epsilon_s^2$ and the assumption holds. Combine the above together, the overall assumption on $r$ is that $r \gtrsim (n_{s^*}\epsilon_s^2)^{1/(2\alpha+1)}$.
    \end{remark}

\section{Technical details for Section \ref{section_vcm_fdp}} \label{section_appendix_vcm_fdp}
\subsection{Proof of Theorem \ref{fdp_thm_vcm_up}} \label{section_appendix_vcm_fdp_up}
\begin{proof}[Proof of Theorem \ref{fdp_thm_vcm_up}]
    The first part of the theorem follows from a similar argument as the one used in the proof of the first part of Theorem \ref{thm_mean_upper}. For every server $s \in [S]$, we ensure in each of the $T$ iteration, the output satisfies $(\epsilon_s,\delta_s)$-CDP, thus we have proved that Algorithm \ref{algorithm_fdp_vcm} is $(\bm{\epsilon}, \bm{\delta},T)$-FDP. 
    
    The second claim is a consequence of Proposition \ref{fdp_prop_vcm_up}. Selecting 
    \begin{align*}
        \nu_s = \frac{u_s}{\sum_{s=1}^S u_s} \quad \mathrm{where}\quad u_s\asymp_{\log} (dr^2n_s) \wedge (drn_sm) \wedge (r^2n_s^2\epsilon_s^2) \wedge (n_s^2m\epsilon_s^2),
    \end{align*}
    and $T\asymp \log(\sum_{s=1}^S n_s)$,  it then holds that 
    \begin{align*}
        \|B^T - B_r^*\|_2^2 =_{\log} O_p\Bigg\{\frac{d^2r^2}{\sum_{s=1}^S(dr^2n_s) \wedge (drn_sm) \wedge (r^2n_s^2\epsilon_s^2) \wedge (n_s^2m\epsilon_s^2)}+ dr^{-2\alpha}\Bigg\}.
    \end{align*}
    Hence, we have that 
    \begin{align*}
        \|\widetilde{\bm{\beta}}- \bm{\beta}^*\|_{L^2}^2 &\lesssim \|B^T - B_r^*\|_2^2 + \sum_{k=0}^d\|\Phi_r^\top b_k^*- \beta_k^*\|_{L^2}^2 \lesssim \|B^T - B_r^*\|_2^2 + dr^{-2\alpha},
    \end{align*}
    where the last inequality follows from the fact that $\beta_k^* \in \mathcal{W}(\alpha, C_{\alpha})$ for any $k \in \{0\}\cup [d]$.

    The third claim follows from a direct calculation of the second claim.
\end{proof}

\begin{proposition} \label{fdp_prop_vcm_up}
    Initialising Algorithm \ref{algorithm_fdp_vcm} with $B^0 = 0$ and step size $\rho = 4C_{\lambda}L/(1+4C_{\lambda}^2L^2)$.
    Suppose that
    \begin{align*}
        r\log^2(Tr/\eta)\lesssim \Big(\sum_{s=1}^S \frac{T\nu_s^2d}{n_sm}\Big)^{-1}, \log^2(Tr/\eta)\sum_{s=1}^S \frac{T\nu_s^2d}{n_s} \lesssim 1 ,\; r\log(Tr/\eta)\lesssim \Big(\sup_{s\in [S] }\frac{T\nu_s d}{n_s}\Big)^{-1},
    \end{align*}
    then under the same condition as the one in Theorem \ref{fdp_thm_vcm_up}, it holds with probability at least $1-7\eta$ that
    \begin{align*}
        \|\widetilde{\bm{\beta}}- \bm{\beta}^*\|_{L^2}^2 \lesssim &d\exp\Big(-\frac{4T}{1+4C_{\lambda}^2L^2}\Big)+ \frac{1}{\eta}\Big(\sum_{s=1}^S \frac{Td\nu_s^2}{n_s}+ \sum_{s=1}^S\frac{Tdr\nu_s^2}{n_sm} +dr^{-2\alpha}\Big) \\
        & \hspace{2cm} + \log(T/\eta)\log(1/\delta_s)\sum_{s=1}^S\Big(\frac{T^2
        d^2\nu_s^2}{n_s^2\epsilon_s^2} + \frac{T^2d^2r^2\nu_s^2\log(N/\eta)}{n_s^2m\epsilon_s^2}\Big),
    \end{align*}
    for a small enough $\eta \in (0,1/7)$.
\end{proposition}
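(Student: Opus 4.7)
The plan is to mirror the structure of the proof of Proposition \ref{fdp_prop_mean_up}, replacing the univariate basis block $\Phi_r(X^{(s,i)}_j)$ with the $r(d+1)$-dimensional object $\widetilde{\Phi}_r(X^{(s,i)}_j)\bm{G}^{(s,i)}$ and tracking how the factor $d$ enters through Assumptions \ref{simple_vcm_a_model}\ref{simple_vcm_a_model_eigen}--\ref{simple_vcm_a_model}\ref{simple_vcm_a_model_bound}. Let $B^*_r \in \mathcal{B}$ be the best $r$-truncated approximant with coordinate blocks $b^*_{k,r}$, so that $\sum_{k=0}^d \|\Phi_r^\top b^*_{k,r} - \beta^*_k\|_{L^2}^2 \lesssim d r^{-2\alpha}$. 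The first step is to set up the analogues of the events $\mathcal{E}'_1$ and $\mathcal{E}'_2$ in Lemma \ref{lemma_fdp_thm_up}: a spectral event that the weighted empirical Gram matrix
\[
\sum_{s=1}^S \sum_{i=1}^{b_s}\sum_{j=1}^m \frac{\nu_s}{b_s m}\,\widetilde{\Phi}_r(X^{(s,i)}_j)\bm{G}^{(s,i)}(\bm{G}^{(s,i)})^\top \widetilde{\Phi}_r^\top(X^{(s,i)}_j)
\]
has smallest eigenvalue at least $1/(2 C_\lambda L)$ and largest eigenvalue at most $2 C_\lambda L$ for every iteration $t$; and a truncation event that the entry-wise clipping $\Pi^{\mathrm{entry}}_R$ inside Algorithm \ref{algorithm_fdp_vcm} is inactive on every summand. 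The spectral event is handled by matrix Bernstein applied to the rank-one blocks $\frac{\nu_s}{b_s m}\widetilde{\Phi}_r(X)\bm{G}\bm{G}^\top\widetilde{\Phi}_r^\top(X)$, whose expectation has eigenvalues of order $1$ thanks to Assumptions \ref{a_sample} and \ref{simple_vcm_a_model}\ref{simple_vcm_a_model_eigen}, whose operator norm is bounded by $\sup_s r d\nu_s/(b_s m)$ (Fourier boundedness and $\|\bm G\|_\infty \le C_g$), and whose matrix variance is $\sum_s r d\nu_s^2/(b_s m)$; this is exactly where the three conditions on $r,T,\nu_s$ in the statement are used via a union bound over $t\in[T]$. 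The truncation event follows by adapting Lemma \ref{l_mean_upper_event}: for each coordinate, a Hoeffding or sub-Gaussian tail argument on $m^{-1}\sum_j \phi_\ell(X^{(s,i)}_j)G^{(s,i)}_k\{\bm G^{(s,i)\top}\widetilde{\Phi}_r^\top B^t - Y^{(s,i)}_j\}$, combined with $\|\bm G^{(s,i)\top}\bm\beta^*\|_\infty \le C_b$ and the boundedness of Fourier basis, produces the desired $C_R\{\sqrt{m^{-1}\log(N/\eta)} + (h\bmod r)^{-\alpha}\}$ entry-wise bound.

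Conditioning on both events, the plan is then to expand the gradient update using Model \eqref{fdp_vcm_obs}. Writing $B^{t+1} - B^*_r$ with the non-expansiveness of $\Pi^*_{\mathcal{B}}$ (Lemma \ref{lemma_projection}, since $B^*_r \in \mathcal{B}$), the squared error splits into five pieces analogous to those in \eqref{l_fdp_mean_upper_eq1}:
\begin{align*}
\|B^{t+1}-B^*_r\|_2^2 \lesssim \|(I)\|_2^2 + \|(II)\|_2^2 + \|(III)\|_2^2 + \|(IV)\|_2^2 + \rho^2\Big\|\sum_{s=1}^S \nu_s w_{s,t}\Big\|_2^2,
\end{align*}
where $(I)$ is the contraction factor applied to $B^t-B^*_r$, $(II)$ is the approximation bias from using only $r$ basis in each coordinate, $(III)$ is a mean-zero term driven by $\xi_{s,ij}$, and $(IV)$ collects any cross terms from the design. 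On the spectral event, $\|(I)\|_2 \le (1-2/(1+4C_\lambda^2 L^2))\|B^t-B^*_r\|_2$ by the same reasoning that produced \eqref{l_fdp_mean_upper_eq2}, which with $\rho=4C_\lambda L/(1+4C_\lambda^2 L^2)$ delivers the geometric decay $\exp(-4T/(1+4C_\lambda^2 L^2))\|B^*_r\|_2^2$ after $T$ iterations (and $\|B^*_r\|_2^2 \lesssim d$).

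For the stochastic terms, the plan is to compute second moments and apply Markov's inequality. The bias term $(II)$ contributes $\sum_s \nu_s^2 \cdot drd^{-2\alpha}/(b_sm) + dr^{-2\alpha}$ by Cauchy--Schwarz, using $\|\bm G^{(s,i)\top}\bm\beta^* - \bm G^{(s,i)\top}\widetilde\Phi_r^\top B^*_r\|_\infty$ bounds and Theorem \ref{thm_product_sobolev}. The measurement-error term $(III)$ contributes $\sum_s dr\nu_s^2/(b_s m)$ by the independence of $\xi_{s,ij}$ and the per-coordinate variance computation $\mathbb{E}[\phi_\ell^2(X)G_k^2\xi^2]\lesssim 1$, along with a cross-server term $\sum_s d\nu_s^2/b_s$ arising from repeated use of the same $\bm G^{(s,i)}$ across $j$ (exactly analogous to the $\sum_s \nu_s^2/b_s$ term in the mean case). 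The Gaussian noise term contributes, via a $\chi^2$ tail and union bound over $t$, a quantity of order $\log(T/\eta)\log(1/\delta_s)\sum_{s}\{(\sum_h R_h)^2 \nu_s^2/(b_s^2 \epsilon_s^2)\}$, and the $\sum_h R_h \asymp d(1 + r\sqrt{\log(N/\eta)/m})$ yields the $d^2$ factors appearing in the statement. Unrolling the recursion and combining with $\|\widetilde{\bm\beta}-\bm\beta^*\|_{L^2}^2 \lesssim \|B^T-B^*_r\|_2^2 + dr^{-2\alpha}$ completes the proof, with $b_s = \lfloor n_s/T\rfloor$ converting $b_s$ back to $n_s/T$ in the final bound.

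The main obstacle I anticipate is the spectral event: the weighted Gram has the product structure $\widetilde\Phi_r \bm G\bm G^\top \widetilde\Phi_r^\top$, so the matrix Bernstein bound must be carried out in dimension $r(d+1)$ with the correct tracking of $d$ in both the operator-norm bound on each summand and the matrix variance, and the three conditions on $r$ in the statement precisely correspond to these two Bernstein moments plus the truncation requirement on $\|\sum_h R_h\|$ across coordinate blocks. A secondary subtlety is the $(d+1)$-dimensional vector $\bm G^{(s,i)}$ being shared across all $j \in [m]$ within a function, which (as in the functional mean proof) forces an extra $1/b_s$ variance contribution that produces the $d(Sn)^{-1}$ dense-regime rate; handling this cleanly requires conditioning on $\bm G$ before conditioning on $X$ in the variance decomposition of $(III)$.
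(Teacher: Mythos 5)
Your overall architecture is exactly the paper's: the same two events (spectral control of the weighted Gram matrix and inactivity of the entry-wise clipping, proved in Lemma \ref{fdp_lemma_vcm_up}), the same contraction step via $\Pi^*_{\mathcal{B}}$ and Lemma \ref{lemma_projection} with $\rho=4C_\lambda L/(1+4C_\lambda^2L^2)$, second-moment-plus-Markov bounds for the stochastic terms, a $\chi^2$ tail with a union bound over $t$ for the privacy noise (with $\sum_h R_h \asymp d\{1+r\sqrt{\log(N/\eta)/m}\}$ producing the $d^2$ factors), and the final assembly $\|\widetilde{\bm\beta}-\bm\beta^*\|_{L^2}^2\lesssim\|B^T-B^*_r\|_2^2+dr^{-2\alpha}$. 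The final bound you state is the right one.

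However, there is one recurring inaccuracy in how you handle the dependence across $j\in[m]$ induced by the shared design vector $\bm G^{(s,i)}$, and as written two steps would not go through verbatim. First, in the spectral event you propose matrix Bernstein applied to the rank-one blocks indexed by $(s,i,j)$ with operator norm $\sup_s rd\nu_s/(b_sm)$ and variance $\sum_s rd\nu_s^2/(b_sm)$; but those blocks are not independent across $j$ for a fixed $i$ (they share $\bm G^{(s,i)}$), so you must first aggregate over $j$ into $T_{s,i}=\frac{\nu_s}{b_sm}\sum_j\widetilde\Phi_r(X_j^{(s,i)})\bm G^{(s,i)}\bm G^{(s,i)\top}\widetilde\Phi_r^\top(X_j^{(s,i)})$ as in Lemma \ref{fdp_lemma_vcm_up}; this changes the per-summand operator bound to $\nu_s rd/b_s$ and adds a $\sum_s d\nu_s^2/b_s$ piece to the matrix variance, which is precisely why the second and third displayed conditions on $r$ take the form they do (all three conditions come from Bernstein, not from the truncation event). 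Second, you attribute the $\sum_s d\nu_s^2/b_s$ contribution to the measurement-error term $(III)$ "from repeated use of the same $\bm G^{(s,i)}$ across $j$"; in fact those cross terms vanish, since $\xi_{s,ij}$ are mean-zero, independent across $j$, and independent of $(\bm G, X)$, so $(III)$ only gives $\sum_s dr\nu_s^2/(b_sm)$. The genuine source of $\sum_s d\nu_s^2/b_s$ is the approximation-bias term $(II)$: conditionally on $\bm G^{(s,i)}$, the summand $\widetilde\Phi_r(X)\bm G\bm G^\top\{\bm\beta^*(X)-\widetilde\Phi_r^\top(X)B^*_r\}$ has a nonzero conditional mean, and the within-function cross terms (the paper's $(II)_2$ and $(II)_3$) yield $\sum_s \nu_s^2 d^2 r^{-2\alpha}/b_s\lesssim\sum_s d\nu_s^2/b_s$. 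Your plan to bound $(II)$ purely "by Cauchy--Schwarz" would miss this contribution, leaving the $\sum_s Td\nu_s^2/n_s$ term in the stated bound unaccounted for. Both issues are fixable by the conditioning-on-$\bm G$ device you yourself flag at the end, but applied in the Gram-matrix concentration and in $(II)$ rather than in $(III)$.
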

\begin{proof}[Proof of Proposition \ref{fdp_prop_vcm_up}]
    The proof of the first claim follows from a similar argument as the one used in the proof of the first claim of Theorem \ref{fdp_thm_mean_up}. The third claim follows directly from the second claim. It therefore suffices to show the second claim.

    Denote $\tau_{s,t} = b_st$ for $t \in \{0\}\cup [T-1]$ and consider the following two events:
    \begin{align*}
        \mathcal{E}_3 = \Bigg\{&\Lambda_{\min}\Big\{\sum_{s=1}^S\sum_{i=1}^{b_s}\sum_{j=1}^m\frac{\nu_s}{b_sm}\widetilde{\Phi}_r(X_{j}^{(s,\tau_{s,t}+i)})\bm{G}^{(s,\tau_{s,t}+i)}\bm{G}^{(s,\tau_{s,t}+i)\top}\widetilde{\Phi}_r^\top(X_{j}^{(s,\tau_{s,t}+i)})\Big\}\\
        & \hspace{9cm}\geq 1/(2LC_\lambda) \\
        &\text{and} \; \Lambda_{\max}\Big\{\sum_{s=1}^S\sum_{i=1}^{b_s}\sum_{j=1}^m\frac{\nu_s}{b_sm} \widetilde{\Phi}_r(X_{j}^{(s,\tau_{s,t}+i)})\bm{G}^{(s,\tau_{s,t}+i)}\bm{G}^{(s,\tau_{s,t}+i)\top}\widetilde{\Phi}_r^\top(X_{j}^{(s,\tau_{s,t}+i)})\Big\} \\
        & \hspace{9cm} \leq 2LC_\lambda, \; \forall t\in \{0\}\cup [T-1]\Bigg\},
    \end{align*}
    and
    \begin{align*}
        \mathcal{E}_4 = \Bigg\{&\Pi^{\mathrm{entry}}_{R}\Big[\frac{1}{m}\sum_{j=1}^m \widetilde{\Phi}_r(X_{j}^{(s,\tau_{s,t}+i)})\bm{G}^{(s,\tau_{s,t}+ i)}\Big\{\bm{G}^{(s,\tau_{s,t}+ i)\top}\widetilde{\Phi}_r^\top(X_{j}^{(s,\tau_{s,t}+i)})B^t-Y_{j}^{(s,\tau_{s,t}+i)} \Big\}\Big] \Big\}\\
        & = \frac{1}{m}\sum_{j=1}^m \widetilde{\Phi}_r(X_{j}^{(s,\tau_{s,t}+i)})\bm{G}^{(s,\tau_{s,t}+ i)}\Big\{\bm{G}^{(s,\tau_{s,t}+ i)\top}\widetilde{\Phi}_r^\top(X_{j}^{(s,\tau_{s,t}+i)})B^t-Y_{j}^{(s,\tau_{s,t}+i)} \Big\}, \\
        & \forall i \in [b_s],\;  t \in \{0\}\cup [T-1], \; s\in [S]\Bigg\}.
    \end{align*}
    We control the probability of these events happening in Lemma \ref{fdp_lemma_vcm_up}. The remainder of the proof is conditional on both of these events happening.

    For the $t$th iteration, we can rewrite the noisy gradient descent as
    \begin{align*}
        &B^{t} - \rho \Big[\sum_{s=1}^S\sum_{i=1}^{b_s}\sum_{j=1}^m\frac{\nu_s}{b_sm} \widetilde{\Phi}_r(X_{j}^{(s,\tau_{s,t}+i)})\bm{G}^{(s,\tau_{s,t}+ i)}\Big\{\bm{G}^{(s,\tau_{s,t}+i)\top}\widetilde{\Phi}_r^\top(X_{j}^{(s,\tau_{s,t}+i)})B^t\\
        & \hspace{3cm} -Y_{j}^{(s,\tau_{s,t}+i)} \Big\}
         - \rho \sum_{s=1}^S \nu_s w_{s,t}\\
        =\;&  B^{t} - \Big[\sum_{s=1}^S\sum_{i=1}^{b_s}\sum_{j=1}^m \frac{\rho\nu_s}{b_sm}\widetilde{\Phi}_r(X_{j}^{(s,\tau_{s,t}+i)})\bm{G}^{(s,\tau_{s,t}+ i)}\Big[\bm{G}^{(s,\tau_{s,t}+i)\top}\widetilde{\Phi}_r^\top(X_{j}^{(s,\tau_{s,t}+i)})(B^t-B^*_r)\\
        & \hspace{3.5cm}-\bm{G}^{(s,\tau_{s,t}+i)\top}\Big\{\bm{\beta}^*(X_{j}^{(s,\tau_{s,t}+i)})-\widetilde{\Phi}_r^\top(X_{j}^{(s,\tau_{s,t}+i)})B^*_r\Big\}-\xi_{s,\tau_{s,t}+i,j}\Big]\Big]\\
        & \hspace{0.5cm} - \rho \sum_{s=1}^S \nu_s w_{s,t},
    \end{align*}
    where the equality inequality follows from \eqref{fdp_vcm_obs}. By the definition of the projection $\Pi^*_{\mathcal{B}}$ defined in Remark \ref{remark_projection_vcm} and the fact that $B_r^* \in \mathcal{B}$, we have from Lemma \ref{lemma_projection} that
    \begin{align} \notag
         &\|B^{t+1} - B_r^*\|_2^2\\ \notag
         \leq\; & \Big\|\Big\{I-\sum_{s=1}^S\sum_{i=1}^{b_s}\sum_{j=1}^m\frac{\rho\nu_s}{b_sm}\widetilde{\Phi}_r(X_{j}^{(s,\tau_{s,t}+i)})\bm{G}^{(s,\tau_{s,t}+i)}\bm{G}^{(s,\tau_{s,t}+i)\top}\widetilde{\Phi}_r^\top(X_{j}^{(s,\tau_{s,t}+i)})\Big\}(B^t-B^*_r)\Big\|_2^2\\ \notag
         & + \Big\|\sum_{s=1}^S\sum_{i=1}^{b_s}\sum_{j=1}^m \frac{\rho\nu_s}{b_sm}\widetilde{\Phi}_r(X_{j}^{(s,\tau_{s,t}+i)})\bm{G}^{(s,\tau_{s,t}+ i)}\bm{G}^{(s,\tau_{s,t}+i)\top}\\ \notag
         & \hspace{6cm} \Big\{\bm{\beta}^*(X_{j}^{(s,\tau_{s,t}+i)})-\widetilde{\Phi}_r^\top(X_{j}^{(s,\tau_{s,t}+i)})B^*_r\Big\}\Big\|_2^2\\ \notag
         & + \Big\|\sum_{s=1}^S\sum_{i=1}^{b_s}\sum_{j=1}^m\frac{\rho\nu_s}{b_sm}\widetilde{\Phi}_r(X_{j}^{(s,\tau_{s,t}+i)})\bm{G}^{(s,\tau_{s,t}+ i)}\xi_{s,\tau_{s,t}+i,j}\Big\|_2^2 + \Big\|\sum_{s=1}^S\rho\nu_s w_{s,t}\Big\|_2^2\\ \label{t_fdp_vcm_upper_eq7}
         =\;&\|(I)\|_2^2+\|(II)\|_2^2+\|(III)\|_2^2+\rho^2\Big\|\sum_{s=1}^S\nu_s w_{s,t}\Big\|_2^2,
    \end{align}
    where the last inequality follows from the triangle inequality. We will control the estimation error $\|B^{t+1} - B_r^*\|_2^2$ by controlling each term on the right hand side individually. For $(I)$, note that in the event $\mathcal{E}_3$, it holds that
    \begin{align*}
        &\Big\|I-\sum_{s=1}^S\sum_{i=1}^{b_s}\sum_{j=1}^m\frac{\rho\nu_s}{b_sm}\widetilde{\Phi}_r(X_{j}^{(s,\tau_{s,t}+i)})\bm{G}^{(s,\tau_{s,t}+i)}(X_{j}^{(s,\tau_{s,t}+i)})\\
        & \hspace{5cm}\bm{G}^{(s,\tau_{s,t}+i)\top}(X_{j}^{(s,\tau_{s,t}+i)})\widetilde{\Phi}_r^\top(X_{j}^{(s,\tau_{s,t}+i)})\Big\|_{\op}\\
        \leq \; &\max \Big\{|1-\frac{\rho}{2LC_{\lambda}}|, |1- 2LC_{\lambda}\rho|\Big\} = 1 - \frac{2}{1+4C_{\lambda}^2L^2},
    \end{align*}
    by the choice of $\rho = 4LC_{\lambda}/(1+4C_{\lambda}^2L^2)$. Therefore, we have that 
    \begin{align} \label{t_fdp_vcm_upper_eq1}
        \|(I)\|_2^2 \leq \Big(1 - \frac{2}{1+4C_{\lambda}^2L^2}\Big)^2\|B^t-B^*_r\|_2^2.
    \end{align}
    Note that here we choose $\rho = 4LC_{\lambda}/(1+4C_{\lambda}^2L^2)$ for simplicity of the proof and we remark that any choices of $\rho \in (0,(C_\lambda L)^{-1})$ will work. To control the second term, we will control  $ \|(II) - \mathbb{E}\{(II)\}\|_2^2$ and $ \|\mathbb{E}\{(II)\}\|_2^2$ individually. Note that to control the first term, we have that 
    \begin{align} \notag
        &\mathbb{E}\{\|(II) - \mathbb{E}\{(II)\}\|_2^2\}\\ \notag
        \lesssim\; &  \sum_{s=1}^S \frac{\rho^2\nu_s^2}{b_sm}\sum_{k=0}^d\sum_{\ell=1}^r \mathbb{E}\Big[\phi_{\ell}^2(X_{1}^{(s,\tau_{s,t}+1)})G_{k}^{(s,\tau_{s,t}+1)^2} \Big\{\bm{G}^{(s,\tau_{s,t}+1)\top}\\ \notag
        & \hspace{7cm}\Big\{\bm{\beta}^*(X_{1}^{(s,\tau_{s,t}+1)})-\widetilde{\Phi}_r^\top(X_{1}^{(s,\tau_{s,t}+1)})B^*_r\Big\}\Big\}^2\Big]\\ \notag
        & + \sum_{s=1}^S\frac{\rho^2\nu_s^2}{b_s}\sum_{k=0}^d\sum_{\ell=1}^r \mathbb{E}\Big[\phi_{\ell}(X_{1}^{(s,\tau_{s,t}+1)})G_{k}^{(s,\tau_{s,t}+1)}\bm{G}^{(s,\tau_{s,t}+1)\top}\\ \notag
        & \hspace{7cm} \Big\{\bm{\beta}^*(X_{1}^{(s,\tau_{s,t}+1)})-\widetilde{\Phi}_r^\top(X_{1}^{(s,\tau_{s,t}+1)})B^*_r\Big\}\\ \notag
        &\hspace{4cm}\phi_{\ell}(X_{2}^{(s,\tau_{s,t}+1)})G_{k}^{(s,\tau_{s,t}+1)}\bm{G}^{(s,\tau_{s,t}+1)\top}\\ \notag
        & \hspace{7cm}\Big\{\bm{\beta}^*(X_{2}^{(s,\tau_{s,t}+1)})-\widetilde{\Phi}_r^\top(X_{2}^{(s,\tau_{s,t}+1)})B^*_r\Big\} \Big]\\ \notag
        & + \sum_{s=1}^S\frac{\rho^2\nu_s^2}{b_s}\sum_{k=0}^d\sum_{\ell=1}^r  \mathbb{E}\Big[\phi_{\ell}(X_{1}^{(s,\tau_{s,t}+1)})G_{k}^{(s,\tau_{s,t}+1)}\bm{G}^{(s,\tau_{s,t}+1)\top}\\\notag
        & \hspace{7cm} \Big\{\bm{\beta}^*(X_{1}^{(s,\tau_{s,t}+1)})-\widetilde{\Phi}_r^\top(X_{1}^{(s, \tau_{s,t}+1)})B^*_r\Big\}\Big]\\ \notag
        &\hspace{3.3cm} \mathbb{E}\Big[\phi_{\ell}(X_{2}^{(s, \tau_{s,t}+1)})G_{k}^{(s,\tau_{s,t}+1)}\bm{G}^{(s,\tau_{s,t}+1)\top}\\ \notag
        & \hspace{7cm}\Big \{\bm{\beta}^*(X_{2}^{(s,\tau_{s,t}+1)})-\widetilde{\Phi}_r^\top(X_{2}^{(s,\tau_{s,t}+1)})B^*_r\Big\} \Big]\\ \notag
        =\;& (II)_1 + (II)_2 + (II)_3,
    \end{align}
    where the inequality follows from the independence across $i \in [b_s]$. For the term $(II)_1$, by selecting $r$ such that $dr^{-2\alpha} < 1$,
    we then have that
    \begin{align} \notag
        (II)_1 &\lesssim \sum_{s=1}^S \frac{\rho^2\nu_s^2}{b_sm}\sum_{k=0}^d\sum_{\ell=1}^r  \mathbb{E}\Big[\Big\{\bm{G}^{(s,\tau_{s,t}+1)\top}\Big\{\bm{\beta}^*(X_{1}^{(s,\tau_{s,t}+1)})-\widetilde{\Phi}_r^\top(X_{1}^{(s,\tau_{s,t}+1)})B^*_r\Big\}\Big\}^2\Big]\\ \notag
        & = \sum_{s=1}^S \frac{\rho^2\nu_s^2}{b_sm}\sum_{k=0}^d\sum_{\ell=1}^r \int_{0}^1 \Big\{\bm{\beta}^*(s)-\widetilde{\Phi}_r^\top(s)B^*_r\Big\}^\top \\ \notag
        & \hspace{3cm} \mathbb{E}\Big\{\bm{G}^{(s,\tau_{s,t}+1)}(s)\bm{G}^{(s,\tau_{s,t}+1)\top}(s)\Big\}\Big\{\bm{\beta}^*(s)-\widetilde{\Phi}_r^\top(s)B^*_r\Big\}f_X(s) \;\mathrm{d}s\\ \label{t_fdp_vcm_upper_eq13}
        & \lesssim \sum_{s=1}^S \frac{\rho^2\nu_s^2dr}{b_sm}\int_{0}^1 \Big\{\bm{\beta}^*(s)-\widetilde{\Phi}_r^\top(s)B^*_r\Big\}^\top \Big\{\bm{\beta}^*(s)-\widetilde{\Phi}_r^\top(s)B^*_r\Big\} \;\mathrm{d}s \\ \notag
        &\lesssim \sum_{s=1}^S \frac{\rho^2\nu_s^2d^2r^{1-2\alpha}}{b_sm} \lesssim \sum_{s=1}^S \frac{\rho^2\nu_s^2dr}{b_sm},
    \end{align}
    where the first inequality follows from Assumption \ref{simple_vcm_a_model}(b)~and properties of Fourier basis, the second inequality follows from Assumptions \ref{a_sample} and \ref{simple_vcm_a_model}(a)~and the third inequality follows from Assumption \ref{simple_vcm_a_model}(c).

    For the term $(II)_2$, we have that 
    \begin{align}\notag
        (II)_2 &= \sum_{s=1}^S\frac{\rho^2\nu_s^2}{b_s}\sum_{k=0}^d\mathbb{E}\Big[\sum_{\ell=1}^r \mathbb{E}\Big[\phi_{\ell}(X_{1}^{(s,\tau_{s,t}+1)})G_{k}^{(s,\tau_{s,t}+1)}\bm{G}^{(s,\tau_{s,t}+1)\top}\\ \notag
        & \hspace{5.5cm}\Big\{\bm{\beta}^*(X_{1}^{(s,\tau_{s,t}+1)})-\widetilde{\Phi}_r^\top(X_{1}^{(s,\tau_{s,t}+1)})B^*_r\Big\}\Big|\bm{G}\Big]^2\Big]\\ \notag
        & = \sum_{s=1}^S\frac{\rho^2\nu_s^2}{b_s}\sum_{k=0}^d \mathbb{E}\Big[\Big\|\mathbb{E}\Big\{\Phi_r(X_{1}^{(s,\tau_{s,t}+1)})G_{k}^{(s,\tau_{s,t}+1)}\bm{G}^{(s,\tau_{s,t}+1)\top}\\ \notag
        & \hspace{5.5cm}\Big\{\bm{\beta}^*(X_{1}^{(s,\tau_{s,t}+1)})-\widetilde{\Phi}_r^\top(X_{1}^{(s,\tau_{s,t}+1)})B^*_r\Big\} \Big|\bm{G}\Big\}\Big\|_2^2\Big]\\ \notag
        & \leq \sum_{s=1}^S\frac{\rho^2\nu_s^2}{b_s}\sum_{k=0}^d \mathbb{E}\Big[\Big\|G_{k}^{(s,\tau_{s,t}+1)}\bm{G}^{(s,\tau_{s,t}+1)\top}\Big\{\bm{\beta}^*-\widetilde{\Phi}_r^\top B^*_r\Big\}f_X\Big\|_{L^2}^2\Big] \\ \notag
        & \lesssim  \sum_{s=1}^S\frac{\rho^2\nu_s^2d}{b_s}\mathbb{E}\Big[\Big\|\bm{G}^{(s,\tau_{s,t}+1)\top}\Big\{\bm{\beta}^*-\widetilde{\Phi}_r^\top B^*_r\Big\}\Big\|_{L^2}^2\Big]\\ \label{t_fdp_vcm_upper_eq8}
        & \lesssim  \sum_{s=1}^S\frac{\rho^2\nu_s^2d^2r^{-2\alpha}}{b_s} \lesssim \sum_{s=1}^S\frac{\nu_s^2d}{b_s},
    \end{align}
    where the first equality follows from the fact that given $\bm{G}$, the random variables 
    $$\Big\{\phi_{\ell}(X_{j}^{(s,\tau_{s,t}+1)})G_{k}^{(s,\tau_{s,t}+1)}\bm{G}^{(s,\tau_{s,t}+1)\top}\Big\{\bm{\beta}^*(X_{j}^{(s,\tau_{s,t}+1)})-\widetilde{\Phi}_r^\top(X_{j}^{(s,\tau_{s,t}+1)})B^*_r\Big\}\Big\}_{j\in [2]}$$
    are independent, the first inequality follows from the projection to the Fourier basis, the second inequality follows from Assumptions \ref{a_sample} and \ref{simple_vcm_a_model}(b)~and the third inequality follows from Assumption \ref{simple_vcm_a_model}(c).

    For the term $(II)_3$, we have that 
    \begin{align} \notag
        (II)_3 &=\sum_{s=1}^S\frac{\rho^2\nu_s^2}{b_s}\sum_{k=0}^d\sum_{\ell=1}^r  \mathbb{E}\Big[\mathbb{E}\Big[\phi_{\ell}(X_{1}^{(s,\tau_{s,t}+1)})G_{k}^{(s,\tau_{s,t}+1)}\bm{G}^{(s,\tau_{s,t}+1)\top}\\ \notag
        & \hspace{5.5cm}\Big\{\bm{\beta}^*(X_{1}^{(s,\tau_{s,t}+1)})-\widetilde{\Phi}_r^\top(X_{1}^{(s, \tau_{s,t}+1)})B^*_r\Big\}\Big|\bm{G}\Big]\Big]\\ \notag
        &\hspace{3.3cm} \mathbb{E}\Big[\mathbb{E}\Big[\phi_{\ell}(X_{2}^{(s, \tau_{s,t}+1)})G_{k}^{(s,\tau_{s,t}+1)}\bm{G}^{(s,\tau_{s,t}+1)\top}\\ \notag
        & \hspace{5.5cm}\Big\{\bm{\beta}^*(X_{2}^{(s,\tau_{s,t}+1)})-\widetilde{\Phi}_r^\top(X_{2}^{(s,\tau_{s,t}+1)})B^*_r\Big\} \Big|\bm{G}\Big]\Big]\\ \notag
        & =\sum_{s=1}^S\frac{\rho^2\nu_s^2}{b_s}\sum_{k=0}^d\sum_{\ell=1}^r  \mathbb{E}\Big[\mathbb{E}\Big[\phi_{\ell}(X_{1}^{(s,\tau_{s,t}+1)})G_{k}^{(s,\tau_{s,t}+1)}\bm{G}^{(s,\tau_{s,t}+1)\top}\\ \notag
        & \hspace{5.5cm} \Big\{\bm{\beta}^*(X_{1}^{(s,\tau_{s,t}+1)})-\widetilde{\Phi}_r^\top(X_{1}^{(s, \tau_{s,t}+1)})B^*_r\Big\}\Big|\bm{G}\Big]\Big]^2\\ \notag
        & \leq \sum_{s=1}^S\frac{\rho^2\nu_s^2}{b_s}\sum_{k=0}^d\sum_{\ell=1}^r  \mathbb{E}\Big[\mathbb{E}\Big[\phi_{\ell}(X_{1}^{(s,\tau_{s,t}+1)})G_{k}^{(s,\tau_{s,t}+1)}\bm{G}^{(s,\tau_{s,t}+1)\top}\\ \notag
        & \hspace{5.5cm}\Big\{\bm{\beta}^*(X_{1}^{(s,\tau_{s,t}+1)})-\widetilde{\Phi}_r^\top(X_{1}^{(s, \tau_{s,t}+1)})B^*_r\Big\}\Big|\bm{G}\Big]^2\Big]\\ \label{t_fdp_vcm_upper_eq10}
        & \lesssim \sum_{s=1}^S\frac{\nu_s^2d}{b_s},
    \end{align}
    where the first inequality follows from the Cauchy--Schwarz inequality and the last inequality follows from a similar argument as the one used to derive \eqref{t_fdp_vcm_upper_eq8}.

    To control the term $\|\mathbb{E}\{(II)\}\|_2^2$, since $\mathbb{E}\{(II)\} \in \mathbb{R}^{r(d+1)}$, by definition of operator norm for vectors, it is equivalent to control $\|\mathbb{E}\{(II)\}\|_{\op}^2$. By the definition of operator norm, we have that 
    \begin{align}\notag
        &\|\mathbb{E}\{(II)\}\|_{\op} \\ \notag
        =\;&  \sup_{\|v\|_2 =1}  \mathbb{E}\Big[\sum_{s=1}^S \rho\nu_s v^{\top}\widetilde{\Phi}_r(X_{1}^{(s,\tau_{s,t}+1)})\bm{G}^{(s,\tau_{s,t}+ 1)}(X_{1}^{(s,\tau_{s,t}+1)})\bm{G}^{(s,\tau_{s,t}+ 1)\top}\\ \notag
        & \hspace{7cm}\Big\{\bm{\beta}^*(X_{1}^{(s,\tau_{s,t}+1)})-\widetilde{\Phi}_r^\top(X_{1}^{(s,\tau_{s,t}+1)})B^*_r\Big\}\Big]\\ \notag
        =\;& \sum_{s=1}^S \rho\nu_s\sup_{\|v\|_2 =1} \mathbb{E}\Big[\Big\{v^{\top}\widetilde{\Phi}_r(X_{1}^{(s,\tau_{s,t}+1)})\bm{G}^{(s,\tau_{s,t}+ 1)}\Big\}\\ \notag
        & \hspace{5cm}\Big\{\bm{G}^{(s,\tau_{s,t}+ 1)\top}\Big\{\bm{\beta}^*(X_{1}^{(s,\tau_{s,t}+1)})-\widetilde{\Phi}_r^\top(X_{1}^{(s,\tau_{s,t}+1)})B^*_r\Big\}\Big\}\Big]\\  \notag
        \leq\; &\sum_{s=1}^S \rho\nu_s\sup_{\|v\|_2 =1} \mathbb{E}\Big[\Big\{v^{\top}\widetilde{\Phi}_r(X_{1}^{(s,\tau_{s,t}+1)})\bm{G}^{(\tau_{s,t}+ 1)}\Big\}^2\Big]^{1/2}\\ \label{t_fdp_vcm_upper_eq11}
        & \hspace{3cm}\mathbb{E}\Big[\Big\{\bm{G}^{(s,\tau_{s,t}+ 1)\top}\Big\{\bm{\beta}^*(X_{1}^{(s,\tau_{s,t}+1)})-\widetilde{\Phi}_r^\top(X_{1}^{(s,\tau_{s,t}+1)})B^*_r\Big\}\Big\}^2\Big]^{1/2}, 
    \end{align}
    where the inequality follows from the Cauchy-Scwarz inequality. Also, we have that 
    \begin{align} \notag
        &\mathbb{E}\Big[\Big\{v^{\top}\widetilde{\Phi}_r(X_{1}^{(s,\tau_{s,t}+1)})\bm{G}^{(s,\tau_{s,t}+1)}\Big\}^2\Big]\\ \notag
        =\;& \mathbb{E}\Big\{v^\top\widetilde{\Phi}_r(X_{1}^{(s,\tau_{s,t}+1)})\bm{G}^{(s,\tau_{s,t}+1)}\bm{G}^{(s,\tau_{s,t}+1)\top}\widetilde{\Phi}^\top_r(X_{1}^{(s,\tau_{s,t}+1)}v\Big\}\\ \notag
        = \; &  \int_{0}^1 v^\top \widetilde{\Phi}_r(s)\mathbb{E}\Big\{\bm{G}^{(s,\tau_{s,t}+1)}\bm{G}^{(s,\tau_{s,t}+1)\top}\Big\}\widetilde{\Phi}^\top_r(s)v f_X(s)\;\mathrm{d}s \lesssim \|v\|_2^2,
    \end{align} 
    where the last inequality follows from Assumptions \ref{a_sample} and \ref{simple_vcm_a_model}(a)~and
    \begin{align} \notag
        &\mathbb{E}\Big[\Big\{\bm{G}^{(s,\tau_{s,t}+1)\top}\Big\{\bm{\beta}^*(X_{1}^{(s,\tau_{s,t}+1)})-\widetilde{\Phi}_r^\top(X_{1}^{(s,\tau_{s,t}+1)})B^*_r\Big\}\Big\}^2\Big]\\ \notag
        \lesssim \; & \mathbb{E}\Big[\Big\{\bm{\beta}^*(X_{j}^{(s,\tau_{s,t}+1)})-\widetilde{\Phi}_r^\top(X_{j}^{(s,\tau_{s,t}+1)})B^*_r\Big\}^\top\bm{G}^{(s,\tau_{s,t}+1)}\\
        & \hspace{4cm}\bm{G}^{(s,\tau_{s,t}+1)\top}\Big\{\bm{\beta}^*(X_{j}^{(s,\tau_{s,t}+1)})-\widetilde{\Phi}_r^\top(X_{j}^{(s,\tau_{s,t}+1)})B^*_r\Big\}\Big]\\ \notag
        \lesssim \; & \sum_{k=0}^d\|\beta_{k} - \Phi_r b^*_k\|_{L^2}^2 \lesssim dr^{-2\alpha},
    \end{align}
    where the last inequality follows from Assumptions \ref{a_sample} and \ref{simple_vcm_a_model}(a). Plugging the above results into \eqref{t_fdp_vcm_upper_eq11}, we have that 
    \begin{align} \label{t_fdp_vcm_upper_eq12}
        \|\mathbb{E}\{(II)\}\|_{\op} \lesssim \sum_{s=1}^S \rho\nu_s \sqrt{dr^{-2\alpha}} \asymp \sqrt{dr^{-2\alpha}}.
    \end{align}
    
    Combine the results in \eqref{t_fdp_vcm_upper_eq13}, \eqref{t_fdp_vcm_upper_eq8}, \eqref{t_fdp_vcm_upper_eq10} and \eqref{t_fdp_vcm_upper_eq12}, we have that 
    \begin{align*}
        \|(II)\|_2^2 \lesssim  \sum_{s=1}^S \frac{d\nu_s^2}{b_s}+ \sum_{s=1}^S\frac{dr\nu_s^2}{b_sm} +dr^{-2\alpha}.
    \end{align*}
    Therefore, the Markov inequality implies that 
    \begin{align} \label{t_fdp_vcm_upper_eq4}
        \mathbb{P}\Big\{\|(II)\|_2^2 \lesssim \Big(\sum_{s=1}^S \frac{d\nu_s^2}{b_s}+ \sum_{s=1}^S\frac{dr\nu_s^2}{b_sm} +dr^{-2\alpha}\Big)/\eta_1\Big\} \geq 1-\eta_1,
    \end{align}
    for some small enough $\eta_1 \in (0,1/7)$. To find an upper bound on $\|(III)\|_2^2$, we have that
    \begin{align*}
        &\mathbb{E}(\|(III)\|_2^2) \\
        \lesssim\;& \sum_{s=1}^S\frac{\nu_s^2}{b_sm}\sum_{k=0}^d\sum_{\ell=1}^r \mathbb{E}\Big\{\phi^2_{\ell}(X_{1}^{(s,\tau_{s,t}+1)})G_k^{(s,\tau_{s,t}+1)^2}\xi^2_{s,\tau_{s,t}+1,1}\Big\}\\
        & + \sum_{s=1}^S\frac{\nu_s^2}{b_sm}\sum_{k=0}^d\sum_{\ell=1}^r \mathbb{E}\Big\{\phi_{\ell}(X_{1}^{(s,\tau_{s,t}+1)})G_k^{(s,\tau_{s,t}+1)}\xi_{s,\tau_{s,t}+1,1}\phi_{\ell}(X_{2}^{(s,\tau_{s,t}+1)})G_k^{(s,\tau_{s,t}+1)}\xi_{s,\tau_{s,t}+1,2}\Big\}\\
        \lesssim \;& \sum_{s=1}^S\frac{\nu_s^2}{b_sm}\sum_{k=0}^d\sum_{\ell=1}^r \mathbb{E}\Big(\xi^2_{s,\tau_{s,t}+1,1}\Big) \\
        &+ \sum_{s=1}^S\frac{\nu_s^2}{b_sm}\sum_{k=0}^d\sum_{\ell=1}^r \mathbb{E}\Big\{\phi_{\ell}(X_{1}^{(s,\tau_{s,t}+1)})G_k^{(s,\tau_{s,t}+1)}\phi_{\ell}(X_{2}^{(s,\tau_{s,t}+1)})G_k^{(s,\tau_{s,t}+1)}\Big\}\\
        & \hspace{9cm} \mathbb{E}(\xi_{s,\tau_{s,t}+1,1})\mathbb{E}(\xi_{s,\tau_{s,t}+1,2})\\
        \lesssim\;& \sum_{s=1}^S\frac{dr\nu_s^2}{b_sm}, 
    \end{align*}
    where the second inequality follows from Assumption \ref{simple_vcm_a_model}(b) and the last inequality follows from Assumption \ref{simple_vcm_a_model}(d). Therefore, the Markov inequality implies that
    \begin{align} \label{t_fdp_vcm_upper_eq5}
        \mathbb{P}\Big\{\|(III)\|_2^2 \lesssim \Big(\sum_{s=1}^S\frac{dr\nu_s^2}{b_sm}\Big)/\eta_2\Big\} \geq 1-\eta_2,
    \end{align}
    for any $\eta_2 \in (0,1/7)$. For the term $\|\sum_{s=1}^S \nu_s w_{s,t}\|_2^2$, firstly note that for any $h \in [r(d+1)]$, the $h$th entry of $\sum_{s=1}^S \nu_s w_{s,t}$ follows a Gaussian distribution with mean $0$ and variance $\sum_{s=1}^S \nu_s^2\sigma^2_{s,h}$. Therefore, using the standard property of Gaussian random variables, we have that
    \begin{align*}
        \Big\|\sum_{s=1}^S \nu_s w_{s,t}\Big\|_2^2 \sim \chi^2_{\sum_{h=1}^{r(d+1)}\sum_{s=1}^S \nu_s^2\sigma^2_{s,h}},
    \end{align*}
    which follows a sub-Exponential distribution with parameter $\sum_{h=1}^{r(d+1)}\sum_{s=1}^S \nu_s^2\sigma^2_{s,h}$ and mean
    \begin{align*}
        \mathbb{E}\Big(\Big\|\sum_{s=1}^S \nu_s w_{s,t}\Big\|_2^2\Big) = \sum_{h=1}^{r(d+1)}\sum_{s=1}^S \nu_s^2\sigma^2_{s,h}. 
    \end{align*}
    Hence standard properties of sub-Exponential random variables \citep[e.g.~Proposition 2.7.1 in][]{vershynin2018high} implies that for any $\tau >0$, 
    \begin{align*}
        \mathbb{P}\Big\{\Big\|\sum_{s=1}^S \nu_s w_{s,t}\Big\|_2^2 \geq \tau - \mathbb{E}\Big(\Big\|\sum_{s=1}^S \nu_s w_{s,t}\Big\|_2^2\Big)\Big\} \leq \exp\Big(-\frac{C_1\tau}{\sum_{h=1}^{r(d+1)}\sum_{s=1}^S \nu_s^2\sigma^2_{s,h}}\Big).
    \end{align*}
    Combining with a union bound argument on $t \in [T-1]$, we have with probability at least $1-\eta_3$ that 
    \begin{align} \notag
        \Big\|\sum_{s=1}^S \nu_s w_{s,t}\Big\|_2^2 &\lesssim \log(T/\eta_3)\sum_{h=1}^{r(d+1)}\sum_{s=1}^S \nu_s^2\sigma^2_{s,h} \\ \notag
        & \lesssim \log(T/\eta_3)\sum_{h=1}^{r(d+1)}\sum_{s=1}^S \nu_s^2\log(1/\delta_s)R_{h}\sum_{a=1}^{r(d+1)} R_{a}/(b_s^2\epsilon_s^2)\\ \notag
        &= \log(T/\eta_3)\log(1/\delta_s)\sum_{s=1}^S\nu_s^2\Big(\sum_{a=1}^{r(d+1)} R_{a}\Big)^2/(b_s^2\epsilon_s^2)\\ \notag
        & \asymp \log(T/\eta_3)\log(1/\delta_s)d^2\sum_{s=1}^S\nu_s^2\Big\{\sum_{\ell=1}^{r} \sqrt{m^{-1}\log(N/\eta)}+\ell^{-\alpha}\Big\}^2/(b_s^2\epsilon_s^2)\\ \notag
        & \lesssim \log(T/\eta_3)\log(1/\delta_s)d^2\sum_{s=1}^S\nu_s^2\Big\{r^2m^{-1}\log(N/\eta)+ 1\Big\}/(b_s^2\epsilon_s^2)\\ \label{t_fdp_vcm_upper_eq6}
        & = \log(T/\eta_3)\log(1/\delta_s)\sum_{s=1}^S\Big(\frac{d^2\nu_s^2}{b_s^2\epsilon_s^2} + \frac{d^2r^2\nu_s^2\log(N/\eta)}{b_s^2m\epsilon_s^2}\Big),
    \end{align}
    where the second equality follows from the construction of $\{R_a\}_{a=1}^{r(d+1)}$. Therefore, substituting results in \eqref{t_fdp_vcm_upper_eq1}, \eqref{t_fdp_vcm_upper_eq4}, \eqref{t_fdp_vcm_upper_eq5} and \eqref{t_fdp_vcm_upper_eq6} into \eqref{t_fdp_vcm_upper_eq7} and applying a union bound argument, it holds with probability at least $1-4\eta-\eta_1-\eta_2-\eta_3 = 1-7\eta$ that
    \begin{align} \notag
        &\|B^{t+1} - B_r^*\|_2^2 \\ \notag
        \lesssim \; &\Big(1 - \frac{2}{1+4C_{\lambda}^2L^2}\Big)^2\|B^t - B_r^*\|_2^2 +\frac{1}{\eta}\Big(\sum_{s=1}^S \frac{d\nu_s^2}{b_s}+ \sum_{s=1}^S\frac{dr\nu_s^2}{b_sm} +dr^{-2\alpha}\Big) \\ \notag
        & \hspace{4cm} + \log(T/\eta)\log(1/\delta_s)\sum_{s=1}^S\Big(\frac{d^2\nu_s^2}{b_s^2\epsilon_s^2} + \frac{d^2r^2\nu_s^2\log(N/\eta)}{b_s^2m\epsilon_s^2}\Big)\\ \notag
        \lesssim\; & \Big(1 - \frac{2}{1+4C_{\lambda}^2L^2}\Big)^{2t+2}\|B_r^*\|_2^2  +\frac{1}{\eta}\Big(\sum_{s=1}^S \frac{d\nu_s^2}{b_s}+ \sum_{s=1}^S\frac{dr\nu_s^2}{b_sm} +dr^{-2\alpha}\Big) \\ \notag
        & \hspace{4cm} + \log(T/\eta)\log(1/\delta_s)\sum_{s=1}^S\Big(\frac{d^2\nu_s^2}{b_s^2\epsilon_s^2} + \frac{d^2r^2\nu_s^2\log(N/\eta)}{b_s^2m\epsilon_s^2}\Big)\\ \notag
        \lesssim\;& \exp\Big(-\frac{2(2t+2)}{1+4C_{\lambda}^2L^2}\Big)\|B^*_r\|_2^2+\frac{1}{\eta}\Big(\sum_{s=1}^S \frac{d\nu_s^2}{b_s}+ \sum_{s=1}^S\frac{dr\nu_s^2}{b_sm} +dr^{-2\alpha}\Big) \\ \label{t_fdp_vcm_upper_eq9}
        & \hspace{4cm} + \log(T/\eta)\log(1/\delta_s)\sum_{s=1}^S\Big(\frac{d^2\nu_s^2}{b_s^2\epsilon_s^2} + \frac{d^2r^2\nu_s^2\log(N/\eta)}{b_s^2m\epsilon_s^2}\Big),
    \end{align}
    where the second inequality follows from an iterative argument. The Proposition then follows from the fact that
    \begin{align*}
        \|\widetilde{\bm{\beta}}- \bm{\beta}^*\|_{L^2}^2 &\lesssim \|B^T - B_r^*\|_2^2 + \sum_{k=0}^d\|\Phi_r^\top b_k^*- \beta_k^*\|_{L^2}^2 \lesssim \|B^T - B_r^*\|_2^2 + dr^{-2\alpha},
    \end{align*}
    where the last inequality follows from the fact that $\beta_k^* \in \mathcal{W}(\alpha, C_{\alpha})$ for any $k \in \{0\}\cup [d]$.
    
\end{proof}

\subsection{Proof of Theorem \ref{fdp_thm_vcm_low}} \label{section_appendix_vcm_fdp_low}
\begin{proof}[Proof of Theorem \ref{fdp_thm_vcm_low}]
    The first claim in Theorem \ref{fdp_thm_vcm_low} is a consequence of Proposition \ref{fdp_prop_vcm_low} by selecting the optimal value of $\{b_s^t\}_{t=1}^T$ for any $s\in [S]$ using a similar argument as the one used in the proof of Theorem \ref{fdp_thm_mean_low}. The second claim is a direct calculation from the first part.
\end{proof}

\begin{proposition} \label{fdp_prop_vcm_low}
    Denote $\mathcal{P}_X$ the class of sampling distributions satisfying Assumption \ref{a_sample} and $\mathcal{P}_Y$, $\mathcal{P}_G$ the classes of distributions for observations satisfying Assumption \ref{simple_vcm_a_model}. Let $r_0$ be the number solving the equation $dr^{2\alpha+2}= \sum_{s=1}^S\sum_{t=1}^T \{rdb^t_sm \wedge (b^t_s)^2m\epsilon_s^2\}$. Suppose that $\delta_s\log(1/\delta_s) \lesssim r_0^{-1}d^{-1}m^{-1}\epsilon_s^2$ for all $s\in [S]$, then it holds that
    \begin{align*}
        \underset{Q \in \mathcal{Q}^T_{\bm{\epsilon}, \bm{\delta}}}{\inf} \underset{\widetilde{\bm{\beta}}}{\inf} \underset{\substack{P_X \in \mathcal{P}_X, P_Y \in \mathcal{P}_Y\\P_G \in \mathcal{P}_G}}{\sup}& \mathbb{E}_{P_X, P_Y, P_G, Q}\|\widetilde{\bm{\beta}}- \bm{\beta}^*\|_{L^2}^2\\
        & \gtrsim \frac{d^2}{\sum_{s=1}^S\sum_{t=1}^T \{db^t_s\wedge (b^t_s)^2\epsilon_s^2\}} \vee \frac{r_0^2d^2}{\sum_{s=1}^S\sum_{t=1}^T \{r_0db^t_sm \wedge (b^t_s)^2m\epsilon_s^2\}}.
    \end{align*}
\end{proposition}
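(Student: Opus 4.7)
\textbf{Proof plan for Proposition~\ref{fdp_prop_vcm_low}.}
The plan is to mirror the two-case construction used in Proposition~\ref{prop_mean_fdp}, but carry through the bookkeeping in the $(d{+}1)$-dimensional coefficient setting. First I would reduce to a finite-dimensional subproblem by restricting attention to coefficient functions of the form $\beta_k=\Phi_r^{\top}b_k$ with $b_k\in\Theta(r,C_\alpha)$ (Case~1, contributing the $r_0$ term), and separately to constant coefficient functions $\beta_k\equiv b_k\in\mathbb{R}$ (Case~2, contributing the $d^2/\sum\{db_s^t\wedge(b_s^t)^2\epsilon_s^2\}$ term). Taking the maximum of the two bounds yields the claimed minimax lower bound, since the supremum risk is always lower-bounded by the Bayes risk on any subclass.

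For Case~1, I would use a product prior $\pi(B)=\prod_{k=0}^{d}\pi_k(b_k)$ on $\mathbb{R}^{r(d+1)}$, where each $\pi_k$ is the rescaled $\cos^2$-on-$[-B,B]$ density used in Proposition~\ref{prop_mean_fdp} with $B^2\asymp r^{-(2\alpha+1)}$; independence across $k$ means the prior is supported inside the Sobolev ellipsoid coordinate-wise and the prior Fisher information satisfies $J(\pi)\asymp(d+1)r^{2\alpha+2}\asymp dr^{2\alpha+2}$. The design is chosen with $\bm G^{(s,i)}$ i.i.d.\ satisfying Assumption~\ref{simple_vcm_a_model} and $\xi_{s,i,j}\sim N(0,\sigma_0^2)$. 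The multivariate Van-Trees inequality then gives
\begin{equation*}
\sup_{B}\mathbb{E}\|\widetilde B-B\|_2^2 \,\gtrsim\,\frac{\{r(d+1)\}^2}{\sup_B\sum_{s=1}^S\sum_{t=1}^T\tr\bigl[\mathbb{E}[\mathbb{E}\{C_B(Z_t^{(s)}|M^{(t-1)})|M^{(t-1)}\}]\bigr]+dr^{2\alpha+2}},
\end{equation*}
using the chain rule \eqref{fdp_t_mean_low_eq9} for the conditional Fisher information decomposition across servers and iterations, which is the key technical vehicle inherited from \cite{cai2024optimal}.

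The score here is $S_B(L_t^{(s,i)})=\sigma_0^{-2}\sum_{j=1}^m\widetilde\Phi_r(X_j^{(s,i)})\bm G^{(s,i)}\xi_{s,i,j}\in\mathbb{R}^{r(d+1)}$. Using the orthonormality of the Fourier basis, $\|\bm G\|_\infty\le C_g$ and Assumption~\ref{simple_vcm_a_model}\ref{simple_vcm_a_model_eigen}, I would bound $\tr(C_{B,t}^{(s,i)})\lesssim mrd$ (yielding the non-private trace bound $\sum_{s,t}b_s^t mrd$) and $\Lambda_{\max}(C_{B,t}^{(s,i)})\lesssim m$. Plugging the latter into the score-attack argument analogous to \textbf{Case 1 – Step 2} of Proposition~\ref{prop_mean_fdp} yields $\tr I_{Z_t^{(s)}|M^{(t-1)}}\lesssim (b_s^t)^2m\epsilon_s^2$, so the two together give $\sum_{s,t}\{(b_s^t)^2m\epsilon_s^2\wedge rdb_s^tm\}$, matching the first term. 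Balancing with the prior Fisher information $dr^{2\alpha+2}$ at $r=r_0$ produces the Case~1 rate.

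For Case~2, I would restrict to constant coefficient functions (setting $\beta_k\equiv b_k$), with truncated-Gaussian prior on $[-1,1]^{d+1}$ giving $J(\pi)\asymp d$; following the reduction in Case~2 of Proposition~\ref{prop_mean_fdp}, pass to the single-observation-per-user model $Y^{(s,i)}=\bm G^{(s,i)\top}b+\xi^{(s,i)}$, for which the score $\sigma_0^{-2}\bm G\,\xi$ has trace covariance $\asymp d$ and $\Lambda_{\max}\lesssim 1$; the score attack then yields $\tr I_{Z_t^{(s)}|M^{(t-1)}}\lesssim db_s^t\wedge (b_s^t)^2\epsilon_s^2$, and Van-Trees delivers the second term of the bound.

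\textbf{Main obstacle.} The delicate step is the sub-Exponential control of the moment generating function of $|\langle S_B(L_t^{(s)}),S_B(L_t^{(s,i)})\rangle|$ in the VCM setting: the extra $\bm G$ factors inside the score now inflate the sub-Gaussian parameter of each coordinate from $m^2$ (mean case) to $m^2$ times factors involving $d$ and $r$, and propagating these through \eqref{fdp_t_mean_low_eq11}–\eqref{fdp_t_mean_low_eq13} is what fixes the precise exponent in the $\delta_s$ condition. Getting this exponent to match the stated assumption $\delta_s\log(1/\delta_s)\lesssim r_0^{-1}d^{-1}m^{-1}\epsilon_s^2$ (so that the $\delta$-contribution is dominated when solving the quadratic inequality \eqref{fdp_t_mean_low_eq5} in its VCM analogue) is the main place where one must be careful; everything else is a transcription of the Proposition~\ref{prop_mean_fdp} argument with dimensions rescaled by the factor $d{+}1$.
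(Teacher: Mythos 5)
Your overall route is the same as the paper's: two separate constructions (constant coefficient functions for the dense term, $r$-dimensional Fourier truncations for the sparse term), the multivariate Van-Trees inequality with the conditional Fisher-information decomposition $\sum_{s,t}I_{Z_t^{(s)}\mid M^{(t-1)}}$, and the two trace bounds $\tr I\lesssim rd\,b_s^tm$ (resp.\ $d\,b_s^t$) and $\tr I\lesssim (b_s^t)^2m\epsilon_s^2$ (resp.\ $(b_s^t)^2\epsilon_s^2$) via the score-attack/MGF argument. The eigenvalue and trace computations you sketch ($\Lambda_{\max}(C_{B,t}^{(s,i)})\lesssim m$, $\tr(C_{B,t}^{(s,i)})\lesssim rdm$, and $\Lambda_{\max}\lesssim 1$, $\tr\lesssim d$ in the constant case) match the paper's.

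There is, however, one concrete gap in your prior construction. You put an independent rescaled $\cos^2$ prior on each $b_k$ supported on $[-B,B]$ with $B^2\asymp r^{-(2\alpha+1)}$ (and, in the dense case, a truncated Gaussian on $[-1,1]$ per coordinate), and you justify admissibility only ``coordinate-wise,'' i.e.\ $\beta_k\in\mathcal{W}(\alpha,C_\alpha)$ for each $k$. But Assumption \ref{simple_vcm_a_model}\ref{simple_vcm_a_model_beta} also imposes the joint constraints $\|\bm G^{(i)\top}\bm\beta^*\|_\infty\le C_b$ and $\bm G^{(i)\top}\bm\beta^*\in\mathcal{W}(\alpha,C_\alpha)$ almost surely. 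With $d+1$ coordinates each of magnitude up to $B$ (resp.\ up to $1$), the linear combination $\sum_{k=0}^d G_k\beta_k$ can have sup-norm and Sobolev norm of order $d$, so your prior is not supported in $\mathcal{P}_Y\times\mathcal{P}_G$ and the Bayes-risk argument no longer lower-bounds the stated minimax risk. The paper fixes this by drawing each coordinate from the base density and then shrinking it by the factor $1/d$ (both in the constant-function and in the Fourier-coefficient construction), verifying the joint constraints explicitly, and then invoking Lemma \ref{l_fisher_rescale} to argue that the prior Fisher information remains of order $d$ (dense case) and $dr^{2\alpha+2}$ (sparse case) despite the rescaling, so that the Van-Trees denominator — and hence the final rate — is unchanged. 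Without this step (or some substitute argument controlling $J(\pi)$ after making the prior admissible), your plan does not deliver the claimed bound; with it, the remaining work is indeed the sub-exponential MGF control you flag, which in the paper yields the parameters $rdb_s^tm^2$ (sparse) and $d\sqrt{b_s^t}$ (dense) and hence the stated condition on $\delta_s$.
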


\begin{proof}[Proof of Proposition \ref{fdp_prop_vcm_low}]
    Throughout the section, denote $L_t^{(s)} = \{L_t^{(s,i)}\}_{i=1}^{b_s^t} = \{(X^{(s,i)}_j, \bm{G}^{(s,i)}\\, Y^{(s,i)}_j)\}_{i=1,j=1}^{b_s^t,m}$ the $b_s^t$ collections of realizations on the $s$th server in iteration $t$ and $Z_t^{(s)}$ the privatized transcript released to the central server from server $s$ to the central server in iteration $t$. For any $s\in [S]$, we further denote $L^{(s)} = \{L_t^{(s)}\}_{t=1}^T$ and $Z^{(s)} = \{Z_t^{(s)}\}_{t=1}^T$. We prove the minimax lower bound by considering two separate constructions. 
    
    \noindent \textbf{Case 1.} Note that by considering the coefficient function $\bm{\beta}$ as unknown constant functions, we arrive at the case of $d$-dimensional linear regression with the model defined as
    \begin{align*}
        Y_t^{(s,i)} = \bm{G}_t^{(s,i)\top}\bm{\beta} + \xi_{s,t,i}, \quad \text{for any}\quad  s\in [S], t \in [T], i \in [b_s^t].
    \end{align*}
    
    \noindent \textbf{Case 1 - Step 1: Construction of a class of distribution.} We construct the class of distributions as follows. Assume the measurement error $\xi_{s,i}\sim N(0,\sigma^2)$. Also, assume that $\bm{G}$ has independent entries with each entry $G_k$ following a uniform distribution over the interval $[-1,1]$. Denote $N_{[-1,1]}$ the truncated standard Gaussian distribution over $[-1,1]$. To construct the prior distribution on $\bm{\beta}$, for any $k \in \{0\} \cup [d]$, let each entry, $\beta_k$, follow a scaled version of $N_{[-1,1]}$ i.e.~we firstly sample $\beta_0 \sim N_{[-1,1]}$ and let $\beta_k = \beta_0/d$. We further assume that entries of $\bm{\beta}$ are mutually independent. We will verify that the above construction satisfies Assumptions \ref{a_sample} and \ref{simple_vcm_a_model}.
    
    Firstly note that by construction, we have that
    \begin{align*}
        |G_k| \leq 1 \quad \text{and} \quad \mathbb{E}\{\bm{G}\bm{G}^\top\} = \frac{1}{3}I_{d+1} \quad \text{for any} \; k \in \{0\} \cup [d].
    \end{align*}

    Moreover, to show the construction satisfies Assumption \ref{simple_vcm_a_model}(c), note that since for any $k \in \{0\}\cup [d]$, $\beta_k$ is constant function, hence $\beta_k \in \mathcal{W}(\alpha, C_{\alpha})$ trivially, also, it holds that 
    \begin{align*}
        \|\bm{G}^\top\bm{\beta}^*\|_{\infty} = \sum_{k=0}^d G_k\beta_k \leq \sum_{k=0}^d \frac{1}{d} \leq 1.
    \end{align*}
    To show $G^\top\bm{\beta}^* \in \mathcal{W}(\alpha, C_{\alpha})$, note that
    \begin{align*}
        \sum_{\ell=1}^\infty \ell^{2\alpha}\Big\{\int_{0}^1 \bm{G}^\top\bm{\beta}^*\phi_{\ell}(s)\;\mathrm{d}s\Big\}^2 = 1^{2\alpha} \left\{\int_{0}^1 \bm{G}^\top\bm{\beta}^*\phi_{1}(s)\;\mathrm{d}s\right\}^2  \lesssim 1,
    \end{align*}
    where the first equality follows from the fact that for all $\ell >1$,
    \begin{align*}
        \langle \bm{G}^\top\bm{\beta}^*, \phi_\ell \rangle_{L^2} = \bm{G}^\top\bm{\beta}^*\int_{0}^1 \phi_{\ell}(s)\;\mathrm{d}s  = 0.
    \end{align*}
    Hence, we have shown that $\bm{G}^\top\bm{\beta}^* \in \mathcal{W}(\alpha, C_{\alpha})$ and we can conclude that the above construction is valid.
    
    Following a similar argument as the one used in the proof of Theorem \ref{fdp_thm_mean_low} and applying the multivariate version of the Van-Trees inequality \citep[e.g.][]{gill1995applications}, we can bound the average $\ell^2$ risk of interest by
    \begin{align} \label{fdp_t_vcm_low_eq1}
        \int \mathbb{E}\Big\{\sum_{k=0}^d (\widetilde{\beta}_k-\beta_k)^2\Big\}\pi(\bm{\beta})\;\mathrm{d}\bm{\beta} \geq \frac{(d+1)^2}{\int \tr\{I_{Z^{(1)},\ldots,Z^{(S)}}(\bm{\beta})\}\pi(\bm{\beta})\;\mathrm{d}\bm{\beta} + J(\pi)}, 
    \end{align}
    where $I_{Z^{(1)},\ldots,Z^{(S)}}(\bm{\beta})$ is the Fisher information associated with $\bm{Z} = (Z^{(1)},\ldots,Z^{(S)})$ and $\pi(\bm{\beta}) = \Pi_{k=0}^d \pi_k(\beta_k)$ is a prior for the parameter $\bm{\beta}$ and $J(\pi)$ is the Fisher information associated with the prior $\pi$. To calculate the Fisher information $J(\pi)$, using Lemma \ref{l_fisher_rescale} and a direct calculation of Fisher information of truncated Gaussian random variables \citep[e.g.~Theorem 11 in][]{mihoc2003fisher}, we have that 
    \begin{align} \label{fdp_t_vcm_low_eq2}
        J(\pi) = \sum_{k=0}^d \Big\{1+\frac{2\exp(-1/2)}{\Phi(1) - \Phi(-1)}\Big\} \asymp d,
    \end{align}
    where $\Phi(\cdot)$ is the CDF for standard Gaussian random variables. Therefore, substituting \eqref{fdp_t_vcm_low_eq2} into \eqref{fdp_t_vcm_low_eq1}, we have that the minimax risk is bounded by
    \begin{align} \notag
        \sup_{\substack{\bm{\beta} \in \mathbb{R}^{d+1}: \|\bm{\beta}\|_2^2 = d+1\\ \|\bm{G}^\top\bm{\beta}\|_{\infty} \lesssim 1,\bm{G}^\top\bm{\beta}\in \mathcal{W}(\alpha,C_{\alpha})}} \mathbb{E}\|\widetilde{\bm{\beta}}-\bm{\beta}\|_2^2 &\gtrsim \frac{(d+1)^2}{\sup_{\bm{\beta}}\tr\{I_{Z^{(1)},\ldots,Z^{(S)}}(\bm{\beta})\} + d}\\
        \label{fdp_t_vcm_low_eq3}
        & \gtrsim \frac{d^2}{\sup_{\bm{\beta}} \sum_{s=1}^S \sum_{t=1}^T \tr\{I_{Z_t^{(s)}|M^{(t-1)}}(\bm{\beta})\}+ d},
    \end{align}
    where the first inequality follows from the fact that the supremum risk is lower bounded by Bayes risk and the last equality follows from a similar argument as the one used in \eqref{fdp_t_mean_low_eq9}. For any $i \in [b_s^t]$, define
    \begin{align*}
        S_{\bm{\beta}}(L_t^{(s,i)}) = \sigma^{-2}\bm{G}_t^{(s,i)}\big(Y_t^{(s,i)}-\bm{G}_t^{(s,i)\top}\bm{\beta}\big).
    \end{align*}
    Note that $S_{\bm{\beta}} (L_t^{(s,i)})$ is the score function of $\bm{\beta}$ based on $L^{(s,i)}$, hence we have that $\mathbb{E}\{S_{\bm{\beta}} (L_t^{(s,i)})\\\} =~0$. Write the score function for local data $L_t^{(s)}$ on the $s$th server in iteration $t$ as $S_{\bm{\beta}}(L_t^{(s)}) = \sum_{i=1}^{b_s^t} S_{\bm{\beta}}(L_t^{(s,i)})$. Let 
    \begin{align*}
        C_{\bm{\beta}}(Z_t^{(s)}|M^{(t-1)}) = \mathbb{E}\Big\{S_{\bm{\beta}}(L_t^{(s)})\Big| Z_t^{(s)},  M^{(t-1)}\Big\}\mathbb{E}\Big\{S_{\bm{\beta}}(L_t^{(s)})\Big| Z_t^{(s)},  M^{(t-1)}\Big\}^\top \in \mathbb{R}^{(d+1)\times (d+1)},
    \end{align*}
    write $C^{(s)}_{\bm{\beta},t}  = \mathbb{E}\{S_{\bm{\beta}}(L_t^{(s)})S_{\bm{\beta}}(L_t^{(s)})^\top\}$ for the unconditional version covariance matrix of $S_{\bm{\beta}}(L_t^{(s)})$ and let $C^{(s,i)}_{\bm{\beta},t} = \mathbb{E}\{S_{\bm{\beta}}(L_t^{(s,i)})S_{\bm{\beta}}(L_t^{(s,i)})^\top\}$ such that $C^{(s)}_{\bm{\beta},t} = \sum_{i=1}^{b_s^t}C^{(s,i)}_{\bm{\beta},t}$. With the above notation and following a similar argument as the one in \eqref{fdp_t_mean_low_eq8}, we have that $I_{Z_t^{(s)}|M^{(t-1)}}(\bm{\beta}) = \mathbb{E}[\mathbb{E}\{C_{\bm{\beta}}(Z_t^{(s)}|M^{(t-1)})|M^{(t-1)}\}]$. Substituting into \eqref{fdp_t_vcm_low_eq3}, it then holds that
    \begin{align}\label{fdp_t_vcm_low_eq4} \notag
         \sup_{\substack{\bm{\beta} \in \mathbb{R}^{d+1}: \|\bm{\beta}\|_2^2 = d+1\\ \|\bm{G}^\top\bm{\beta}\|_{\infty} \lesssim 1,\bm{G}^\top\bm{\beta}\in \mathcal{W}(\alpha,C_{\alpha})}}&\mathbb{E}\|\widetilde{\bm{\beta}}-\bm{\beta}\|_2^2 \\
         &\gtrsim \frac{d^2}{\sup_{\bm{\beta}} \sum_{s=1}^S \sum_{t=1}^T\tr\big[\mathbb{E}[\mathbb{E}\{C_{\bm{\beta}}(Z_t^{(s)}|M^{(t-1)})|M^{(t-1)}\}]\big] + d}.
    \end{align}
     \noindent \textbf{Case 1 - Step 2: Upper bound on} $\tr\big[\mathbb{E}[\mathbb{E}\{C_{\bm{\beta}}(Z_t^{(s)}|M^{(t-1)})|M^{(t-1)}\}]\big]$. Next, we will find an upper bound on $\tr\big[\mathbb{E}[\mathbb{E}\{C_{\bm{\beta}}(Z_t^{(s)}|M^{(t-1)})|M^{(t-1)}\}]\big]$. Denote 
    \begin{align*}
        G_t^{(s,i)} = \langle \mathbb{E}\big\{S_{\bm{\beta}}(L_t^{(s)})\big| Z_t^{(s)},M^{(t-1)}\big\}, S_{\bm{\beta}}(L_t^{(s,i)})\rangle ,
    \end{align*}
    and
    \begin{align*}
        \breve{G}_t^{(s,i)} = \langle \mathbb{E}\big\{S_{\bm{\beta}}(L_t^{(s)})\big| Z_t^{(s)}, M^{(t-1)}\big\}, S_{\bm{\beta}}(\breve{L}_t^{(s,i)})\rangle.
    \end{align*}
    where $\breve{L}_t^{(s,i)}$ is an independent copy of $L_t^{(s,i)}$. Using a similar argument as the one leading to \eqref{fdp_t_mean_low_eq4}, it holds that
    \begin{align} \notag
        & \tr\big[\mathbb{E}[\mathbb{E}\{C_{\bm{\beta}}(Z_t^{(s)}|M^{(t-1)})|M^{(t-1)}\}]\big] = \sum_{i=1}^{b_s^t}\mathbb{E}(G_t^{(s,i)})\\ \label{fdp_t_vcm_low_eq5}
        \lesssim \;&  \sum_{i=1}^{b_s^t} \left\{\mathbb{E}(\breve{G}_t^{(s,i)})+ \epsilon_s \mathbb{E}|\breve{G}_t^{(s,i)}|+ W\delta_s + \int_{W}^\infty \mathbb{P}\{|G_t^{(s,i)}| \geq w\}\; \mathrm{d}w\right\}.
    \end{align}
    We will upper bound \eqref{fdp_t_vcm_low_eq5} by upper bounding the four terms inside it individually. Note that for the first term, we have that
    \begin{align*}
        \mathbb{E}\langle \mathbb{E}\big\{S_{\bm{\beta}}(L_t^{(s)})\big| Z_t^{(s)}, M^{(t-1)}\big\}, S_{\bm{\beta}}(\breve{L}_t^{(s,i)})\rangle = \mathbb{E}\Big[\mathbb{E}\big\{S_{\bm{\beta}}(L_t^{(s)})\big| Z_t^{(s)}, M^{(t-1)}\big\}\Big]^\top \mathbb{E}\{S_{\bm{\beta}}(\breve{L}_t^{(s,i)})\} = 0,
    \end{align*}
    where the second equality follows from the independence between $\breve{L}_t^{(s,i)}$ and $(Z_t^{(s)}, M^{(t-1)})$ and the last equality follows from the property of the score function. For the second term, firstly note that
    \begin{align*}
         \Lambda_{\max}(C_{\bm{\beta},t}^{(s,i)}) &=\sigma^{-4} \Big\| \mathbb{E}_{\bm{G}}\Big[\bm{G}_t^{(s,i)}\mathbb{E}_{Y|\bm{G}}\Big\{\big(Y_t^{(s,i)}-\bm{G}_t^{(s,i)\top}\bm{\beta}\big)^2\Big| \bm{G}\Big\}\bm{G}_t^{(s,i)\top}\Big]\Big\|_{\op}\\
         &= \sigma^{-2}\Big\| \mathbb{E}_{\bm{G}}\Big[\bm{G}_t^{(s,i)}\bm{G}_t^{(s,i)\top}\Big]\Big\|_{\op} \lesssim 1.
    \end{align*}
    Hence using a similar argument as the one leads to  \eqref{fdp_t_mean_low_eq10}, we have that
    \begin{align*}
        \mathbb{E}|\breve{G}^{(s,i)}| &\leq \sqrt{\tr\big[\mathbb{E}[\mathbb{E}\{C_{\bm{\beta}}(Z_t^{(s)}|M^{(t-1)})|M^{(t-1)}\}]\big]} \sqrt{\Lambda_{\max}(C_{\bm{\beta},t}^{(s,i)})}\\
        & \lesssim \sqrt{\tr\big[\mathbb{E}[\mathbb{E}\{C_{\bm{\beta}}(Z_t^{(s)}|M^{(t-1)})|M^{(t-1)}\}]\big]}.
    \end{align*}
    To control the last term, by a similar argument leading to \eqref{fdp_t_mean_low_eq11}, we have for any $t \in \mathbb{R}$ that 
    \begin{align*}
        \mathbb{E}\Big\{\exp\big(t|G_t^{(s,i)}|\big)\Big\} \leq \mathbb{E}\Big\{\exp\big(t|\langle S_{\bm{\beta}}(L_t^{(s)}),S_{\bm{\beta}}(L_t^{(s,i)}) \rangle|\big)\Big\}.
    \end{align*}
    Note that the moment generating function is well defined here from the fact that each entry of $\bm{G}_t^{(s,i)}$ is bounded and $\xi_{s,t,i}$ is sub-Gaussian. Without loss of generality, take $i =1$ for $S_{\bm{\beta}}(L_t^{(s,i)})$, then it holds
    \begin{align*}
        \langle S_{\bm{\beta}}(L_t^{(s)}),S_{\bm{\beta}}(L_t^{(s,1)}) \rangle &= S_{\bm{\beta}}(L_t^{(s)})^\top S_{\bm{\beta}}(L_t^{(s,1)})\\
        &\asymp \sum_{i=1}^{b_s^t} \big(Y_t^{(s,i)}-\bm{G}_t^{(s,i)\top}\bm{\beta})\big(Y_t^{(s,1)}-\bm{G}_t^{(s,1)\top}\bm{\beta}\big)\bm{G}_t^{(s,i)^\top}\bm{G}_t^{(s,1)}\\
        & = \sum_{i=1}^{b_s^t}\xi_{s,t,i}\xi_{s,t,1}\bm{G}_t^{(s,i)^\top}\bm{G}_t^{(s,1)}.
    \end{align*}
    With the above, by taking $t = C_1/(d\sqrt{b_s^t})$, we have that
    \begin{align*}
        &\mathbb{E}\Big\{\exp\big(t|\langle S_{\bm{\beta}}(L_t^{(s)}),S_{\bm{\beta}}(L_t^{(s,i)}) \rangle|\big)\Big\} \\
        \lesssim \;& \mathbb{E}\Big[\exp\Big\{t\big|\xi_{s,t,1}^2\bm{G}_t^{(s,1)^\top}\bm{G}_t^{(s,1)}\big|+ t|\xi_{s,t,1}|\Big|\sum_{i=2}^{b_s^t}\xi_{s,t,i}\bm{G}_t^{(s,i)^\top}\bm{G}_t^{(s,1)}\Big|\Big\}\Big]\\
        = \;& \mathbb{E}\Big[\exp\Big\{t\big|\xi_{s,t,1}^2\bm{G}_t^{(s,1)^\top}\bm{G}_t^{(s,1)}\big|\Big\}\mathbb{E}\Big[\exp\Big\{t|\xi_{s,t,1}|\Big|\sum_{i=2}^{b_s^t}\xi_{s,t,i}\bm{G}_t^{(s,i)^\top}\bm{G}_t^{(s,1)}\Big|\Big\}\Big| \xi_{s,t,1}, \{\bm{G}_t^{(s,i)}\}_{i=1}^{b_s^t}\Big]\Big]\\
        \lesssim \;& \mathbb{E}\Big[\exp\Big\{t\big|\xi_{s,t,1}^2\bm{G}_t^{(s,1)^\top}\bm{G}_t^{(s,1)}\big|\Big\}\exp\Big\{b_s^td^2t^2\xi_{s,t,1}^2\Big\}\Big] \\
        \lesssim \;& \mathbb{E}\Big[\exp\Big\{\xi_{s,t,1}^2\Big(td + t^2b^t_sd^2\Big)\Big\}\Big] \lesssim \exp\Big(td + t^2b_s^td^2\Big),
    \end{align*}
    where the equality follows from the tower property of conditional expectation, the first inequality follows from the fact that $\sum_{i=2}^{b^t_s}\xi_{s,t,i}\bm{G}_t^{(s,i)^\top}\bm{G}_t^{(s,1)}| \{\bm{G}_t^{(s,i)}\}_{i=1}^{b_s^t} \sim N(0, \sigma^2\sum_{i=2}^{b_s^t}(\bm{G}_t^{(s,i)^\top}\\ \bm{G}_t^{(s,1)})^2)$ and $\bm{G}_t^{(s,i)^\top}\bm{G}_t^{(s,1)} \lesssim~d$ for any $i \in [b_s^t]$ and the last inequality follows from the standard property of Gaussian random variable $\xi_{s,t,1}$ \citep[e.g.~Proposition 2.5.2 in][]{vershynin2018high} as with the choice of $t$ we have that $td + t^2b^t_sd^2 = C_1/\sqrt{b_s^t} + C_1^2 \lesssim 1$. it then holds that for any $\tau > 0$, 
    \begin{align*}
        \mathbb{P}(|G_t^{(s,i)}| \geq \tau) &\leq \mathbb{P}\{\exp(t|G_t^{(s,i)}|) \geq \exp(t\tau)\} \\ 
        &\leq \exp(-t\tau)\mathbb{E}\big\{\exp\big(t|G_t^{(s,i)}|\big)\big\} \lesssim \exp\Big(-\frac{\tau}{d\sqrt{b_s^t}}\Big),
    \end{align*}
    where the second inequality follows from Markov's inequality. This means that if we pick $T~\asymp~d\sqrt{b_s^t}\log(1/\delta_s)$, then it holds that
    \begin{align*}
        \int_{W}^\infty \mathbb{P}\{|G_t^{(s,i)}| \geq w\}\; \mathrm{d}t \lesssim \int_{W}^\infty\exp\Big(-\frac{w}{d\sqrt{b_s^t}}\Big)\; \mathrm{d}w \leq d\sqrt{b_s^t}\exp\{-\log(1/\delta_s)\} = d\sqrt{b_s^t}\delta_s.
    \end{align*}
    Putting everything together into \eqref{fdp_t_vcm_low_eq5}, we have that 
    \begin{align*}
        &\tr\big[\mathbb{E}[\mathbb{E}\{C_{\bm{\beta}}(Z_t^{(s)}|M^{(t-1)})|M^{(t-1)}\}]\big] \\
        \lesssim \;& b_s^t\epsilon_s \sqrt{\tr\big[\mathbb{E}[\mathbb{E}\{C_{\bm{\beta}}(Z_t^{(s)}|M^{(t-1)})|M^{(t-1)}\}]\big]} + (b_s^t)^{3/2}d\delta_s\log(1/\delta_s)+(b_s^t)^{3/2}d\delta_s\\
        \lesssim \;& b_s^t\epsilon_s \sqrt{\tr\big[\mathbb{E}[\mathbb{E}\{C_{\bm{\beta}}(Z_t^{(s)}|M^{(t-1)})|M^{(t-1)}\}]\big]} + (b_s^t)^{3/2}d\delta_s\log(1/\delta_s).
    \end{align*}
    Next, we show that $\tr\big[\mathbb{E}[\mathbb{E}\{C_{\bm{\beta}}(Z_t^{(s)}|M^{(t-1)})|M^{(t-1)}\}]\big] \lesssim (b_s^t)^2\epsilon_s^2$. Suppose this is not true, then when $\delta$ is small enough such that $\delta_s\log(1/\delta_s) \lesssim (b_s^t)^{1/2}\epsilon_s^2d^{-1}$, we have that
    \begin{align}\label{fdp_t_vcm_low_eq6}
        \sqrt{\tr\big[\mathbb{E}[\mathbb{E}\{C_{\bm{\beta}}(Z_t^{(s)}|M^{(t-1)})|M^{(t-1)}\}]\big]} \lesssim b^t_s\epsilon_s + \frac{(b^t_s)^{3/2}d\delta_s\log(1/\delta_s)}{b_s^t\epsilon_s} \lesssim b_s^t\epsilon_s.
    \end{align}
    Hence we can conclude that $\tr\big[\mathbb{E}[\mathbb{E}\{C_{\bm{\beta}}(Z_t^{(s)}|M^{(t-1)})|M^{(t-1)}\}]\big] \lesssim (b_s^t)^2\epsilon_s^2$. 
    
    \noindent \textbf{Case 1 - Step 3: Another upper bound on} $\tr\big[\mathbb{E}[\mathbb{E}\{C_{\bm{\beta}}(Z_t^{(s)}|M^{(t-1)})|M^{(t-1)}\}]\big]$. To get another upper bound on $\tr\big[\mathbb{E}[\mathbb{E}\{C_{\bm{\beta}}(Z_t^{(s)}|M^{(t-1)})|M^{(t-1)}\}]\big]$, we will follow a similar argument as the one leads to \eqref{fdp_t_mean_low_eq6}. By standard matrix algebra, we have that 
    \begin{align*}
        \tr(C_{\bm{\beta},t}^{(s,i)}) = \sum_{k=0}^d \Lambda_k(C_{\bm{\beta},t}^{(s,i)}) \leq (d+1)\Lambda_{\max}(C_{\bm{\beta},t}^{(s,i)}) \lesssim d,
    \end{align*}
    where for any $k \in \{0\} \cup [d]$, $\Lambda_k(C_{\bm{\beta},t}^{(s,i)})$ denote the $k$th eigenvalue of $C_{\bm{\beta},t}^{(s,i)}$. Hence, we have that 
    \begin{align}\label{fdp_t_vcm_low_eq7}
        \mathbb{E}[\mathbb{E}\{C_{\bm{\beta}}(Z_t^{(s)}|M^{(t-1)})|M^{(t-1)}\}] \leq \tr(C_{\bm{\beta},t}^{(s)}) = \sum_{i=1}^{b_s^t} \tr(C_{\bm{\beta},t}^{(s,i)}) \lesssim b_s^t d.
    \end{align}
    Substituting the result in \eqref{fdp_t_vcm_low_eq6} and \eqref{fdp_t_vcm_low_eq7} into \eqref{fdp_t_vcm_low_eq4}, we have that 
    \begin{align} \notag
        \sup_{\substack{\bm{\beta} \in \mathbb{R}^{d+1}: \|\bm{\beta}\|_2^2 = d+1\\ \|\bm{G}^\top\bm{\beta}\|_{\infty} \lesssim 1,\bm{G}^\top\bm{\beta}\in \mathcal{W}(\alpha,C_{\alpha})}} \mathbb{E}\|\widetilde{\bm{\beta}}-\bm{\beta}\|_2^2 &\gtrsim \frac{d^2}{ \sum_{s=1}^S \sum_{t=1}^T \{(b_s^t)^2\epsilon_s^2 \wedge b^t_sd\} + d}\\ \label{fdp_t_vcm_low_eq8}
        & \gtrsim  \frac{d^2}{\sum_{s=1}^S\sum_{t=1}^T \{b^t_sd \wedge (b_s^t)^2\epsilon_s^2\}},
    \end{align}
    where the last inequality follows from the fact that $\sum_{s=1}^S \sum_{t=1}^T \{(b_s^t)^2\epsilon_s^2 \wedge b^t_sd\} \gtrsim d $.

    \noindent \textbf{Case 2.} In the second case, when $\bm{\beta}$ are considered to be functional regression coefficients, following a similar argument as the one in the proof of Theorem \ref{thm_mean_lower}, we will reduce the problem to a finite-dimensional subspace $\Theta(r, C_\alpha)$ as defined in \eqref{mean_t_lower_eq6}. In this case, we have that for any $k \in \{0\} \cup [d]$, $b_k \in \Theta(r, C_\alpha)$. 
    
    \noindent \textbf{Case 2 - Step 1: Construction of class of distribution.} Denote $B = (b_0^\top, b_1^\top, \ldots, b_d^\top)^\top$, we construct the class of distributions as follows. With the same construction of $\xi$ and $\bm{G}$ in the first case, we further assume the sampling distribution of $X$ follows a uniform distribution over $[0,1]$. To construct the prior distribution on $B$, for any $k \in \{0\}\cup [d]$ and $\ell \in [r]$, denote $\pi_{k,\ell}$ the prior distribution for the $\ell$th entry of $b_k$ and we assume that $\pi(B) = \prod_{k=0}^d \prod_{\ell=1}^r \pi_{k,\ell}\{(b_{k})_\ell\}$. To construct the prior, denote $\pi^*$ as a rescaled version of the density $t \mapsto \cos^2(\pi t/2)\bm{1}\{|t|\leq 1\}$ such that it is supported on $[-Q, Q]$ where 
    $$Q^2 = \frac{C_2^2}{2\pi^{2\alpha}}\Big(\int_{1}^{r+1}t^{2\alpha}\mathrm{d}t\Big)^{-1}\asymp r^{-(2\alpha+1)}.$$
    For any $k \in \{0\}\cup [d]$ and $\ell \in [r]$, in order to get $b_{k,\ell}$, we will firstly sample $b^0_{k ,\ell} \sim \pi^*$ and then take $b_{k,\ell} = b^0_{k ,\ell}/d$.
    To show that the prior distribution is supported in the class we consider, note that for any $k \in \{0\}\cup [d]$ 
    \begin{align*}
        \sum_{\ell=1}^r \tau_\ell^2(b_{k})^2_\ell \leq \frac{Q^2}{d^2} \sum_{\ell=1}^r\tau_\ell^2\asymp \frac{Q^2}{d^2} \sum_{\ell=1}^r \ell^{2\alpha} \leq \frac{C_2^2}{2\pi^{2\alpha}}.
    \end{align*}
    To further show that Assumption \ref{simple_vcm_a_model}(c)~is satisfied note that we have
    \begin{align*}
        \|\bm{G}^\top\bm{\beta}^*\|_{\infty} &= \sup_{s\in [0,1]} \Big|\sum_{k=0}^d G_k \Phi_r^\top(s)b_k\Big| =\sup_{s\in [0,1]} \Big|\sum_{k=0}^d G_k \sum_{\ell=1}^r (b_k)_\ell\phi_{\ell}(s)\Big|\\
        &\lesssim \sum_{k=0}^d |G_k|\frac{r^{-\alpha+1/2}}{d}\lesssim r^{-\alpha+1/2} \lesssim 1,
    \end{align*}
    where the last inequality follows from the fact that $\alpha >1$. Also, to show $\bm{G}^\top\bm{\beta}^* \in \mathcal{W}(\alpha, C_{\alpha})$, we have that 
    \begin{align*}
        \sum_{\ell=1}^r \ell^{-2\alpha}\Big\{\int_{0}^1 \sum_{k=0}^d G_k \Phi_r^\top(s)b_k\phi_{\ell}(s)\; \mathrm{d}s\Big\}^2 &=\sum_{\ell=1}^r \ell^{-2\alpha}\Big\{\sum_{k=0}^d G_k (b_k)_\ell\Big\}^2 \lesssim \sum_{\ell=1}^r \ell^{-2\alpha}\Big\{\sum_{k=0}^d  \frac{Q}{d}\Big\}^2\\
        & \lesssim \sum_{\ell=1}^r \ell^{-2\alpha} Q^2 \leq \frac{C_2^2}{2\pi^{2\alpha}}.
    \end{align*}
    Hence, we have shown that the above construction is valid.
    
    Following a similar argument as the one used in \eqref{fdp_t_vcm_low_eq1} and using the same notation, by the multivariate Van-Trees \citep[e.g.][]{gill1995applications} inequality, we can bound the average $\ell^2$ risk of interest by
    \begin{align} \label{fdp_t_vcm_low_eq9}
        \int \mathbb{E}\Big[\sum_{k=0}^d \sum_{\ell=1}^r \Big\{(\widetilde{b}_{k})_{\ell}-(b_k)_\ell\Big\}^2\Big]\pi(\bm{\beta})\;\mathrm{d}\bm{\beta} \geq \frac{r^2(d+1)^2}{\int \tr\{I_{Z^{(1)},\ldots,Z^{(S)}}(B)\}\pi(B)\;\mathrm{d}B + J(\pi)}.
    \end{align}
    To calculate the Fisher information $J(\pi)$, by Lemma \ref{l_fisher_rescale} and  a similar argument as the one used in the proof Lemma 4.3 in \citet{cai2024optimal}, we have that
    \begin{align} \label{fdp_t_vcm_low_eq10}
        J(\pi) = \pi^2\sum_{k=0}^d r Q^{-2} \asymp \pi^2 dr^{2\alpha+2} \asymp dr^{2\alpha+2}.
    \end{align}
    Therefore, substituting \eqref{fdp_t_vcm_low_eq10} into \eqref{fdp_t_vcm_low_eq9}, we have that the minimax risk is bounded by
    \begin{align} \notag
        \underset{\substack{b_k \in \Theta(r,C_\alpha),  \|\bm{G}^\top\widetilde{\Phi}^\top B\|_{\infty} \lesssim 1\\ \bm{G}^\top\widetilde{\Phi}^\top B \in \mathcal{W}(\alpha, C_{\alpha}), \\\forall k \in \{0\} \cup [d]}}{\sup} \mathbb{E}\|\widetilde{B}-B\|_2^2 &\gtrsim \frac{r^2d^2}{\sup_{B}\tr\{I_{Z^{(1)},\ldots,Z^{(S)}}(B)\} +  dr^{2\alpha+2}}\\ \label{fdp_t_vcm_low_eq11}
        & = \frac{r^2d^2}{\sup_{B} \sum_{s=1}^S \sum_{t=1}^T \tr\{I_{Z_t^{(s)}|M^{(t-1)}}(B)\} + dr^{2\alpha+2}},
    \end{align}
    where the first inequality follows from the fact that the supremum risk is lower bounded by Bayes risk and the last equality follows from the same argument that leads to \eqref{fdp_t_vcm_low_eq3}.
    
    With the above construction, note that for any $i \in [b_s^t]$, conditioning on $\{X_{t,j}^{(s,i)}\}_{j=1}^{m}$ and $\bm{G}_t^{(s, i)}$, we have that $Y^{(s)}_{i\cdot,t}=(Y^{(s,i)}_{t,1},\ldots, Y^{(s,i)}_{t,m})^\top$ follows a multivariate Gaussian distribution with mean vector
    \begin{align*}
        \mu_{i\cdot,t}^{(s)} &= \Big(\bm{G}_t^{(s,i)\top}\widetilde{\Phi}_r(X^{(s,i)}_{t,1})B, \ldots, \bm{G}_t^{(s,i)\top}\widetilde{\Phi}_r(X^{(s,i)}_{t,m})B\Big)^\top\\
        & = \Big(\sum_{k=0}^d G^{(s,i)}_{t,k}\Phi_r^\top(X^{(i)}_{t,1})b_k, \ldots, \sum_{k=0}^d G^{(s,i)}_{t,k}\Phi_r^\top(X^{(s,i)}_{t,m})b_k\Big)^{\top},
    \end{align*}
    and covariance matrix $\sigma^2I_m$. For any $i \in [n_s]$, define 
    \begin{align*}
        S_{B}(L_t^{(s,i)}) = \sigma^{-2}\widetilde{\Phi}_{[m],s,i,t}(Y^{(s)}_{i\cdot,t}-\mu^{(s)}_{i\cdot,t}),
    \end{align*}
    where $\widetilde{\Phi}_{[m],s,i,t} = (\widetilde{\Phi}_r(X^{(s,i)}_{t,1})\bm{G}_t^{(s,i)},\ldots, \widetilde{\Phi}_r(X^{(s,i)}_{t,m})\bm{G}_t^{(s,i)}) \in \mathbb{R}^{r(d+1)\times m}$. Note that $S_{B} (L_t^{(s,i)})$ is the score function of $B$ based on $L_t^{(s,i)}$, hence we have that $\mathbb{E}\{S_{B} (L_t^{(s,i)})\} = 0$. Write the score function for local data $L_t^{(s)}$ on the $s$th server in the $t$th iteration as $S_{B}(L_t^{(s)}) = \sum_{i=1}^{b_s^t} S_{B}(L_t^{(s,i)})$. Let 
    \begin{align*}
        C_{B}(Z_t^{(s)}) = \mathbb{E}\Big\{S_{B}(L_t^{(s)})\Big| Z_t^{(s)}, M^{(t-1)}\Big\}\mathbb{E}\Big\{S_{B}(L_t^{(s)})\Big| Z_t^{(s)}, M^{(t-1)}\Big\}^\top \in \mathbb{R}^{r(d+1)\times r(d+1)},
    \end{align*}
    write $C^{(s)}_{B,t} = \mathbb{E}\{S_{B}(L_t^{(s)})S_{B}(L_t^{(s)})^\top\}$ for the unconditional version covariance matrix of $S_{B}(L_t^{(s)})$ and let $C^{(s,i)}_{B,t} = \mathbb{E}\{S_{B}(L_t^{(s,i)})S_{B}(L_t^{(s,i)})^\top\}$ such that $C^{(s)}_{B,t} = \sum_{i=1}^{b_s^t} C^{(s,i)}_{B,t}$. With the above notation and following a similar argument as the one in \eqref{fdp_t_mean_low_eq8}, we have that $I_{Z^{(s)}|M^{(t-1)}}(B) = \mathbb{E}[\mathbb{E}\{C_{B}(Z_t^{(s)}|M^{(t-1)})|M^{(t-1)}\}]$. Substituting into \eqref{fdp_t_vcm_low_eq11}, it holds that
    \begin{align}\label{fdp_t_vcm_low_eq12}
        \underset{\substack{b_k \in \Theta(r,C_\alpha),  \|\bm{G}^\top\widetilde{\Phi}^\top B\|_{\infty} \lesssim 1\\ \bm{G}^\top\widetilde{\Phi}^\top B \in \mathcal{W}(\alpha, C_{\alpha}), \\\forall k \in \{0\} \cup [d]}}{\sup}
        &\mathbb{E}\|\widetilde{B}-B\|_2^2 \\ \notag
        &\gtrsim \frac{r^2d^2}{\sup_{\bm{\beta}} \sum_{s=1}^S \sum_{t=1}^T \tr\big[\mathbb{E}[\mathbb{E}\{C_{B}(Z_t^{(s)}|M^{(t-1)})|M^{(t-1)}\}]\big] + dr^{2\alpha+2}}.
    \end{align}
    
    \noindent \textbf{Case 2 - Step 2: Upper bound on} $\tr\big[\mathbb{E}[\mathbb{E}\{C_{B}(Z_t^{(s)}|M^{(t-1)})|M^{(t-1)}\}]\big]$. Next, we will find upper bounds on $\tr\big[\mathbb{E}[\mathbb{E}\{C_{B}(Z_t^{(s)}|M^{(t-1)})|M^{(t-1)}\}]\big]$. Denote 
    \begin{align*}
        G_t^{(s,i)} = \langle \mathbb{E}\big\{S_{B}(L_t^{(s)})\big| Z_t^{(s)}, M^{(t-1)}\big\}, S_{B}(L_t^{(s,i)})\rangle,
    \end{align*}
    and
    \begin{align*}
        \breve{G}_t^{(s,i)} = \langle \mathbb{E}\big\{S_{B}(L_t^{(s)})\big| Z_t^{(s)}, M^{(t-1)}\big\}, S_{B}(\breve{L}_t^{(s,i)})\rangle,
    \end{align*}
    where $\breve{L}_t^{(s,i)}$ is an independent copy of $L_t^{(s,i)}$. Using a similar argument as the one leading to \eqref{fdp_t_vcm_low_eq5}, it holds that
    \begin{align} \notag
        &\tr\big[\mathbb{E}[\mathbb{E}\{C_{B}(Z_t^{(s)}|M^{(t-1)})|M^{(t-1)}\}]\big] =  \sum_{i=1}^{b^t_s}\mathbb{E}(G_t^{(s,i)})\\ \label{fdp_t_vcm_low_eq13}
        \lesssim\;& \sum_{i=1}^{b^t_s}\Big\{\mathbb{E}(\breve{G}_t^{(s,i)})+ \epsilon_s \mathbb{E}|\breve{G}_t^{(s,i)}|+ W\delta_s + \int_{W}^\infty \mathbb{P}\{|G_t^{(s,i)}| \geq w\}\; \mathrm{d}w\Big\}.
    \end{align}
    We will upper bound \eqref{fdp_t_vcm_low_eq13} by upper bounding the four terms inside it individually. Note that for the first term, we have that
    \begin{align*}
        \mathbb{E}\langle \mathbb{E}\big\{S_{B}(L_t^{(s)})\big| Z_t^{(s)}, M^{(t-1)}\big\}, S_{B}(\breve{L}_t^{(s,i)})\rangle = \mathbb{E}\Big[\mathbb{E}\big\{S_{B}(L_t^{(s)})\big| Z_t^{(s)}, M^{(t-1)}\big\}\Big]^\top \mathbb{E}\{S_{B}(\breve{L}_t^{(s,i)})\} = 0,
    \end{align*}
    where the second equality follows from the independence between $\breve{L}_t^{(s,i)}$ and $(Z_t^{(s)}, M^{(t-1)})$ and the last equality follows from the property of the score function. For the second term, firstly note that 
    \begin{align} \notag
        \Lambda_{\max}(C_{B,t}^{(s,i)}) &= \sigma^{-4}\Big\|\mathbb{E}_{X,G}\Big[\widetilde{\Phi}_{[m],s,i,t}\mathbb{E}_{Y|X,G}\Big\{\big(Y^{(s)}_{i\cdot,t}-\mu^{(s)}_{i\cdot,t}\big)^2\Big|X\Big\}\widetilde{\Phi}^\top_{[m],s,i,t}\Big]\Big\|_{\op}\\ \notag
        & \asymp \Big\|\mathbb{E}_{X,G}\Big\{\widetilde{\Phi}_{[m],s,i,t}\widetilde{\Phi}^\top_{[m],s,i,t}\Big\}\Big\|_{\op}\\ \notag
        &= \sup_{v \in \mathbb{R}^{r(d+1)}: \|v\|_2=1} v^{\top}\mathbb{E}_{X,G}\Big[\widetilde{\Phi}_{[m],s,i,t}\widetilde{\Phi}^{\top}_{[m],s,i,t}\Big]v\\ \notag
        & = \sup_{v \in \mathbb{R}^{r(d+1)}: \|v\|_2=1} \sum_{j=1}^m v^\top \mathbb{E}_{X,G}\Big\{\widetilde{\Phi}_r(X^{(s,i)}_{t,j})\bm{G}_t^{( s,i)}\bm{G}_t^{(s,i)\top}\widetilde{\Phi}_r^{\top}(X^{(s,i)}_{t,j})\Big\}v\\ \notag
        & =\sup_{v \in \mathbb{R}^{r(d+1)}: \|v\|_2=1} \sum_{j=1}^m \mathbb{E}_{X,G}\Big[\Big\{\sum_{k=0}^d\sum_{\ell=1}^r v_{\ell(k+1)}\phi_\ell(X^{(s,i)}_{t,j})G_{t,k}^{(s,i)}\Big\}^2\Big]\\ \notag
        & =\sup_{v \in \mathbb{R}^{r(d+1)}: \|v\|_2=1} \sum_{j=1}^m \int_{0}^1 J(s)\mathbb{E}\{\bm{G}_t^{(s,i)}\bm{G}_t^{(s,i)\top}\}J^{\top}(s)f_X(s)\mathrm{d}s\\ \notag
        & \lesssim \sup_{v \in \mathbb{R}^{r(d+1)}: \|v\|_2=1} m\int_{0}^1 J(s)J^{\top}(s)\mathrm{d}s\\ \label{fdp_t_vcm_low_eq17}
        &=\inf_{v \in \mathbb{R}^{r(d+1)}: \|v\|_2=1} m \sum_{k=0}^d \sum_{\ell=1}^r v_{\ell(k+1)}^2= m,
    \end{align}
    where $J^\top(s) = \{\sum_{\ell=1}^r v_{\ell}\phi_\ell(s), \ldots, \sum_{\ell=1}^r v_{\ell(d+1)}\phi_\ell(s)\}^\top \in \mathbb{R}^{d+1}$. Hence, by a similar argument as the one leading to \eqref{fdp_t_mean_low_eq10}, we have that
    \begin{align} \notag
        \mathbb{E}|\breve{G}^{(s,i)}| &\leq \sqrt{\tr\big[\mathbb{E}[\mathbb{E}\{C_{B}(Z_t^{(s)}|M^{(t-1)})|M^{(t-1)}\}]\big]} \sqrt{\Lambda_{\max}(C_{B,t}^{(s,i)})}\\ \label{fdp_t_vcm_low_eq18}
        & \lesssim \sqrt{m}\sqrt{\tr\big[\mathbb{E}[\mathbb{E}\{C_{B}(Z_t^{(s)}|M^{(t-1)})|M^{(t-1)}\}]\big]}.
    \end{align}
    To control the last term, by a similar argument as the one leading to \eqref{fdp_t_mean_low_eq11}, we have for any $t \in \mathbb{R}$ that 
    \begin{align*}
        \mathbb{E}\Big\{\exp\big(t|G_t^{(s,i)}|\big)\Big\} \leq \mathbb{E}\Big\{\exp\big(t|\langle S_{B}(L_t^{(s)}),S_{B}(L_t^{(s,i)}) \rangle|\big)\Big\}.
    \end{align*}
    The above moment generating function is well defined as each entry inside $\widetilde{\Phi}_{[m],s,i,t}$ is bounded and each entry in $Y^{(s)}_{i\cdot,t}-\mu^{(s)}_{i\cdot,t}$ is sub-Gaussian. Without loss of generality, take $i =1$ for $S_a(L_t^{(s,i)})$, then it holds
    \begin{align*}
        \langle S_{B}(L_t^{(s)}),S_{B}(L_t^{(s,1)}) \rangle &= S_{B}(L_t^{(s)})^\top S_{B}(L_t^{(s,1)})\\
        &\asymp \sum_{i=1}^{b^t_s}(Y^{(s)}_{i\cdot,t}-\mu^{(s)}_{i\cdot,t})^\top \widetilde{\Phi}^\top_{[m],s,i,t}\widetilde{\Phi}_{[m],s,1,t}(Y^{(s)}_{1\cdot,t}-\mu^{(s)}_{1\cdot,t}).
    \end{align*}
    Note that, for any $i \in [b^t_s]$, each entry inside $\widetilde{\Phi}_{[m],s,i,t}(Y^{(s)}_{i\cdot,t}-\mu^{(s)}_{i\cdot,t})$ follows a sub-Gaussian distribution with parameter $C_3m$, hence using standard properties of sub-Gaussian random variables \citep[e.g.~Lemma 2.7.7 in][]{vershynin2018high} and triangle inequality of sub-Exponetial norms, we have that for any $i \in [b^t_s]$, the term $(Y^{(s)}_{i\cdot,t}-\mu^{(s)}_{i\cdot,t})^\top \widetilde{\Phi}^\top_{[m],s,i,t}\widetilde{\Phi}_{[m],s,1,t}(Y^{(s)}_{1\cdot,t}-\mu^{(s)}_{1\cdot,t})$ follows sub-Exponential distribution with parameter of $C_4rdm^2$. By applying another triangle inequality, we have that $\langle S_B(L_t^{(s)}),S_B(L_t^{(s,i)}) \rangle$ follows a sub-Exponential distribution with parameter of order $C_4rdb_s^tm^2$. Therefore, we have from sub-Exponential properties \citep[e.g.~Proposition 2.7.1][]{vershynin2018high} that 
    \begin{align*}
        &\mathbb{E}\Big\{\exp\big(t|\langle S_B(L_t^{(s)}),S_B(L_t^{(s,i)}) \rangle|\big)\Big\} \leq  \exp\{C_4rdb_s^tm^2t\},\\
        & \hspace{6cm} \; \text{for any} \;t\; \text{such that}\; 0\leq t\leq (C_4rdb_s^tm^2)^{-1}.
    \end{align*}
    Pick $t  = (C_3rdb^t_sm^2)^{-1}/2$ and by Markov's inequality, we have for any $\tau >0$ that 
    \begin{align*}
        \mathbb{P}(|G_t^{(s,i)}|\geq \tau) &= \mathbb{P}\{\exp(t|G_t^{(s,i)}|)\geq \exp(t\tau)\}  \leq \exp(-t\tau)\mathbb{E}\{\exp(t|G_t^{(s,i)}|)\}\\
        & \lesssim \exp(1/2)\exp\Big(-\frac{\tau}{rdb^t_sm^2}\Big),
    \end{align*}
    which suggests that if we pick $T \asymp rdb^t_sm^2\log(1/\delta_s)$, then we have that 
    \begin{align} \notag 
        \int_{W}^\infty \mathbb{P}\{|G_t^{(s,i)}| \geq w\}\; \mathrm{d}w  & \lesssim  \int_{W}^\infty\exp\Big(-\frac{w}{rdb^t_sm^2}\Big) \; \mathrm{d}w \\ \label{fdp_t_vcm_low_eq19}
        &\lesssim rdb^t_sm^2\exp\{-\log(1/\delta_s)\} = rdb^t_sm^2\delta_s.
    \end{align}
    Substituting \eqref{fdp_t_vcm_low_eq18} and \eqref{fdp_t_vcm_low_eq19} together into \eqref{fdp_t_vcm_low_eq13}, we have that 
    \begin{align*}
        &\tr\big[\mathbb{E}[\mathbb{E}\{C_{B}(Z_t^{(s)}|M^{(t-1)})|M^{(t-1)}\}]\big] \\
        \lesssim\;& b^t_s\epsilon_s\sqrt{m\tr\big[\mathbb{E}[\mathbb{E}\{C_{B}(Z_t^{(s)}|M^{(t-1)})|M^{(t-1)}\}]\big]} + rd(b^t_s)^2m^2\delta_s\log(1/\delta_s) +rd(b^t_s)^2m^2\delta_s\\
        \lesssim \;& b^t_s\epsilon_s\sqrt{m\tr\big[\mathbb{E}[\mathbb{E}\{C_{B}(Z_t^{(s)}|M^{(t-1)})|M^{(t-1)}\}]\big]} + rd(b^t_s)^2m^2\delta_s\log(1/\delta_s).
    \end{align*}
    Next, we show that $\tr\big[\mathbb{E}[\mathbb{E}\{C_{B}(Z_t^{(s)}|M^{(t-1)})|M^{(t-1)}\}]\big] \lesssim (b^t_s)^2m\epsilon_s^2$. Suppose this is not true, then when $\delta_s$ is small enough such that $\delta_s\log(1/\delta_s)\lesssim r^{-1}d^{-1}m^{-1}\epsilon_s^2$, we have that
    \begin{align} \label{fdp_t_vcm_low_eq14}
        \sqrt{\tr\big[\mathbb{E}[\mathbb{E}\{C_{B}(Z_t^{(s)}|M^{(t-1)})|M^{(t-1)}\}]\big]} \lesssim b^t_s\epsilon_s\sqrt{m} + \frac{rd(b^t_s)^2m^2\delta_s\log(1/\delta_s)}{b^t_sm^{1/2}\epsilon_s} \lesssim b^t_s\epsilon_s\sqrt{m}.
    \end{align}
    Hence we can conclude that $\tr\big[\mathbb{E}[\mathbb{E}\{C_{B}(Z_t^{(s)}|M^{(t-1)})|M^{(t-1)}\}]\big] \lesssim (b_s^t)^2m\epsilon^2$. 
    
    \noindent \textbf{Case 2 - Step 3: Upper bound on} $\tr\big[\mathbb{E}[\mathbb{E}\{C_{B}(Z_t^{(s)}|M^{(t-1)})|M^{(t-1)}\}]\big]$. To get another upper bound on $\tr\big[\mathbb{E}[\mathbb{E}\{C_{B}(Z_t^{(s)}|M^{(t-1)})|M^{(t-1)}\}]\big]$, we will follow a similar argument as the one leading to \eqref{fdp_t_mean_low_eq6}. By standard matrix algebra, we have that 
    \begin{align*}
        \tr(C_{B,t}^{(s,i)}) = \sum_{\ell=1}^r\sum_{k=0}^d \Lambda_{\ell(k+1)}(C_{B,t}^{(s,i)}) \leq r(d+1)\Lambda_{\max}(C_{B,t}^{(s,i)}) \lesssim rdm,
    \end{align*}
    where for any $\ell \in [r]$ and $k \in \{0\} \cup [d]$, $\Lambda_{\ell(k+1)}(C_{B,t}^{(s,i)})$ denote the $\ell(k+1)$th eigenvalue of $C_{B,t}^{(s,i)}$. Hence, we have that 
    \begin{align}\label{fdp_t_vcm_low_eq15}
        \tr\big[\mathbb{E}[\mathbb{E}\{C_{B}(Z_t^{(s)}|M^{(t-1)})|M^{(t-1)}\}]\big] \leq \tr(C_{B,t}^{(s)}) = \sum_{i=1}^{b^t_s} \tr(C_{B,t}^{(s,i)}) \lesssim rdb^t_sm.
    \end{align}
    Substituting results in \eqref{fdp_t_vcm_low_eq14} and \eqref{fdp_t_vcm_low_eq15} into \eqref{fdp_t_vcm_low_eq12}, we have that
    \begin{align} \label{fdp_t_vcm_low_eq16}
       \underset{\substack{b_k \in \Theta(r,C_\alpha),  \|\bm{G}^\top\widetilde{\Phi}^\top B\|_{\infty} \lesssim 1\\ \bm{G}^\top\widetilde{\Phi}^\top B \in \mathcal{W}(\alpha, C_{\alpha}), \\\forall k \in \{0\} \cup [d]}}{\sup} \mathbb{E}\|\widetilde{B}-B\|_2^2 &\gtrsim \frac{r^2d^2}{ \sum_{s=1}^S \sum_{t=1}^T \{(b_s^t)^2m\epsilon_s^2 \wedge rdb^t_sm\} + dr^{2+2\alpha}}.
    \end{align}
    The Theorem holds by combining \eqref{fdp_t_vcm_low_eq8} and \eqref{fdp_t_vcm_low_eq16} together.
    \end{proof}

\subsection{Auxiliary results} \label{section_appendix_vcm_auxiliary}
\begin{lemma} \label{fdp_lemma_vcm_up}
    Consider the events of interest in the proof of Theorem \ref{fdp_thm_vcm_up}:
    \begin{align*}
        \mathcal{E}_3 = \Bigg\{&\Lambda_{\min}\Big\{\sum_{s=1}^S\sum_{i=1}^{b_s}\sum_{j=1}^m\frac{\nu_s}{b_sm}\widetilde{\Phi}_r(X_{j}^{(s,\tau_{s,t}+i)})\bm{G}^{(s,\tau_{s,t}+i)}\bm{G}^{(s,\tau_{s,t}+i)\top}\widetilde{\Phi}_r^\top(X_{j}^{(s,\tau_{s,t}+i)})\Big\}\\
        & \hspace{6cm}\geq 1/(2LC_\lambda) \\
        &\text{and} \; \Lambda_{\max}\Big\{\sum_{s=1}^S\sum_{i=1}^{b_s}\sum_{j=1}^m\frac{\nu_s}{b_sm} \widetilde{\Phi}_r(X_{j}^{(s,\tau_{s,t}+i)})\bm{G}^{(s,\tau_{s,t}+i)}\bm{G}^{(s,\tau_{s,t}+i)\top}\widetilde{\Phi}_r^\top(X_{j}^{(s,\tau_{s,t}+i)})\Big\}\\
        & \hspace{6cm}\leq 2LC_\lambda, \forall t\in \{0\}\cup [T-1]\Bigg\},
    \end{align*}
    and
    \begin{align*}
        \mathcal{E}_4 = \Bigg\{&\Pi^{\mathrm{entry}}_{R}\Big[\frac{1}{m}\sum_{j=1}^m \widetilde{\Phi}_r(X_{j}^{(s,\tau_{s,t}+i)})\bm{G}^{(s,\tau_{s,t}+ i)}\Big\{\bm{G}^{(s,\tau_{s,t}+ i)\top}\widetilde{\Phi}_r^\top(X_{j}^{(s,\tau_{s,t}+i)})B^t-Y_{j}^{(s,\tau_{s,t}+i)} \Big\}\Big] \Big\}\\
        & = \frac{1}{m}\sum_{j=1}^m \widetilde{\Phi}_r(X_{j}^{(s,\tau_{s,t}+i)})\bm{G}^{(s,\tau_{s,t}+ i)}\Big\{\bm{G}^{(s,\tau_{s,t}+ i)\top}\widetilde{\Phi}_r^\top(X_{j}^{(s,\tau_{s,t}+i)})B^t-Y_{j}^{(s,\tau_{s,t}+i)} \Big\}, \\
        & \forall i \in [b_s],\;  t \in \{0\}\cup [T-1], \; s\in [S]\Bigg\}.
    \end{align*}
    Suppose that
    \begin{align*}
        r\log^2(Tr/\eta)\lesssim \Big(\sum_{s=1}^S \frac{\nu_s^2d}{b_sm}\Big)^{-1}, \; \log^2(Tr/\eta)\sum_{s=1}^S \frac{\nu_s^2d}{b_s} \lesssim 1,\; r\log(Tr/\eta)\lesssim \Big(\sup_{s\in [S] }\frac{\nu_s d}{b_s}\Big)^{-1},
    \end{align*}
    then we have 
    \begin{align*}
        \mathbb{P}(\mathcal{E}_3 \cap \mathcal{E}_4) \geq 1-4\eta,
    \end{align*}
    for some small enough $\eta \in (0,1/4)$.
\end{lemma}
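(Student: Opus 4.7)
The plan is to adapt the proof of Lemma \ref{lemma_fdp_thm_up}, absorbing the additional layer of randomness introduced by $\bm{G}^{(s,\tau_{s,t}+i)}$ and the block-diagonal structure of $\widetilde{\Phi}_r$. For the event $\mathcal{E}_3$, I set
\[
    T_{s,i,j} = \frac{\nu_s}{b_s m}\,\widetilde{\Phi}_r(X_j^{(s,\tau_{s,t}+i)})\,\bm{G}^{(s,\tau_{s,t}+i)}\bm{G}^{(s,\tau_{s,t}+i)\top}\,\widetilde{\Phi}_r^{\top}(X_j^{(s,\tau_{s,t}+i)}),
\]
so that under Assumption \ref{a_sample} together with Assumption \ref{simple_vcm_a_model}\ref{simple_vcm_a_model_eigen}, one obtains $\mathbb{E}(T_{s,i,j}) = \mathbb{E}(T_{s,1,1})$ with eigenvalues sandwiched in $[\nu_s/(L C_\lambda b_s m),\, L C_\lambda\nu_s/(b_s m)]$, via the same Rayleigh-quotient argument used in \eqref{l_fdp_mean_upper_event_eq1}--\eqref{l_fdp_mean_upper_event_eq2} combined with the bounds $\tfrac{1}{C_\lambda} I \preceq \mathbb{E}(\bm{G}\bm{G}^\top) \preceq C_\lambda I$. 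Summing these expectations gives $[1/(LC_\lambda),\, LC_\lambda]$ for the cumulative expectation.

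Next, I bound $\|T_{s,i,j} - \mathbb{E}(T_{s,i,j})\|_{\op}$ and the matrix variance statistic. Using Assumption \ref{simple_vcm_a_model}\ref{simple_vcm_a_model_bound} and the boundedness of the Fourier basis, I get
\[
    \sup_{s,i,j} \|T_{s,i,j}\|_{\op} \lesssim \sup_{s \in [S]} \frac{\nu_s rd}{b_s m},
\]
since for unit-norm $v \in \mathbb{R}^{r(d+1)}$ the quadratic form $v^\top \widetilde{\Phi}_r \bm{G}\bm{G}^\top \widetilde{\Phi}_r^\top v$ is controlled by $\{\sum_{k=0}^d G_k\sum_{\ell=1}^r v_{\ell(k+1)}\phi_\ell\}^2 \lesssim rd\|v\|_2^2$ via Cauchy--Schwarz. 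A direct computation of $\mathbb{E}\bigl[\sum_{s,i,j}(T_{s,i,j}-\mathbb{E}T_{s,i,j})(T_{s,i,j}-\mathbb{E}T_{s,i,j})^\top\bigr]$ then yields an operator-norm bound of order $\sum_s d\nu_s^2/b_s + \sum_s rd\nu_s^2/(b_s m)$, where the two terms encode, respectively, the randomness of $\bm{G}$ and of $X$. Invoking matrix Bernstein \citep[e.g.~Theorem 6.1.1 in][]{tropp2015introduction} with a union bound on $t \in \{0\}\cup [T-1]$, Weyl's inequality, and the three assumptions on $r$ stated in the lemma gives $\mathbb{P}(\mathcal{E}_3) \geq 1 - \eta_1$.

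For $\mathcal{E}_4$, I decompose each entry of the inner average using \eqref{fdp_vcm_obs} as
\[
    \frac{1}{m}\sum_{j=1}^m \phi_\ell(X_j)G_k\bigl\{\bm{G}^\top\widetilde{\Phi}_r^\top B^t - \bm{G}^\top\bm{\beta}^*(X_j) - \xi_{s,\tau_{s,t}+i,j}\bigr\}.
\]
The first two terms are bounded almost surely thanks to Assumption \ref{simple_vcm_a_model}\ref{simple_vcm_a_model_beta} (via the projection $\Pi_{\mathcal{B}}^*$), Assumption \ref{simple_vcm_a_model}\ref{simple_vcm_a_model_bound}, and the uniform bound $\sqrt{2}$ on $\phi_\ell$, while the $\xi$ piece is sub-Gaussian under Assumption \ref{simple_vcm_a_model}\ref{simple_vcm_a_model_measurement_error}. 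Hoeffding's inequality for bounded variables and sub-Gaussian tails, combined with the expectation bound $\langle \phi_\ell, h f_X\rangle_{L^2} \lesssim \ell^{-\alpha}$ for any $h \in \mathcal{W}(\alpha,C_\alpha)$ produced in \eqref{l_mean_upper_event_eq1} (invoking Theorem \ref{thm_product_sobolev}), give that with probability at least $1-3\eta$,
\[
    \max_{s,t,i,\ell,k}\Bigl|\tfrac{1}{m}\textstyle\sum_j \phi_\ell(X_j^{(s,\tau_{s,t}+i)})G_k^{(s,\tau_{s,t}+i)}\{\bm{G}^\top\widetilde{\Phi}_r^\top B^t - Y_j^{(s,\tau_{s,t}+i)}\}\Bigr| \lesssim \sqrt{m^{-1}\log(N/\eta)} + \ell^{-\alpha},
\]
which is exactly $R_h$ (with $\ell = h - r(\lceil h/r\rceil-1)$) up to the constant $C_R$. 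Taking a union bound and combining gives $\mathbb{P}(\mathcal{E}_3 \cap \mathcal{E}_4)\geq 1-4\eta$.

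The main obstacle is the matrix-variance computation for $\mathcal{E}_3$, which requires tracking how the block-diagonal structure of $\widetilde{\Phi}_r$ interacts with $\bm{G}\bm{G}^\top$ and isolating the correct dimensional dependence ($rd$ in the operator norm, $d\nu_s^2/b_s + rd\nu_s^2/(b_s m)$ in the variance); the three assumptions on $r$ in the lemma statement are precisely what is needed to ensure the Bernstein deviation is at most $1/(2LC_\lambda)$, and verifying this is the delicate bookkeeping step.
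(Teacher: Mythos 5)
Your treatment of $\mathcal{E}_4$ is essentially the paper's argument: decompose via \eqref{fdp_vcm_obs} into the $B^t$ piece, the $\bm{\beta}^*$ piece and the noise piece, bound the first two almost surely through the projection $\Pi^*_{\mathcal{B}}$ and Assumption \ref{simple_vcm_a_model}\ref{simple_vcm_a_model_bound}, control the expectations by $\langle\phi_\ell, h f_X\rangle_{L^2}\lesssim \ell^{-\alpha}$ with Theorem \ref{thm_product_sobolev}, and apply Hoeffding plus a union bound. One point you gloss over (and the paper makes explicit) is that the summands over $j$ all contain the same random vector $\bm{G}^{(s,\tau_{s,t}+i)}$, so Hoeffding across $j$ is applied \emph{conditionally} on $\bm{G}$, with the conditional tail bound being uniform in $\bm{G}$ and then integrated out; this is easy to repair but should be stated.

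For $\mathcal{E}_3$, however, there is a genuine gap. You define the per-observation matrices $T_{s,i,j}$ and invoke matrix Bernstein for the sum over $(s,i,j)$, but these summands are \emph{not} mutually independent: for a fixed individual $i$, all $m$ terms $T_{s,i,1},\ldots,T_{s,i,m}$ share the identical rank-one factor $\bm{G}^{(s,\tau_{s,t}+i)}\bm{G}^{(s,\tau_{s,t}+i)\top}$, so Theorem 6.1.1 of \citet{tropp2015introduction} does not apply at this granularity. Your own variance bookkeeping betrays the problem: if the summands were independent, the variance proxy $\sum_{s,i,j}\mathbb{E}[(T_{s,i,j}-\mathbb{E}T_{s,i,j})(T_{s,i,j}-\mathbb{E}T_{s,i,j})^\top]$ would only produce the term $\sum_s rd\nu_s^2/(b_sm)$, whereas the term $\sum_s d\nu_s^2/b_s$ you (correctly) want arises precisely from the cross-covariances between different $j$'s sharing the same $\bm{G}$ — i.e.\ from the dependence you have assumed away. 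Conditioning on $\bm{G}$ does not rescue the argument either, since $\mathbb{E}[T_{s,i,j}\mid \bm{G}]$ involves the rank-one matrix $\bm{G}\bm{G}^\top$ and the conditional minimum eigenvalue degenerates, so the lower bound $1/(2LC_\lambda)$ cannot be obtained conditionally. The paper's proof avoids all of this by first aggregating over $j$ to form $T_{s,i}=\frac{\nu_s}{b_sm}\sum_{j=1}^m \widetilde{\Phi}_r(X_j^{(s,\tau_{s,t}+i)})\bm{G}^{(s,\tau_{s,t}+i)}\bm{G}^{(s,\tau_{s,t}+i)\top}\widetilde{\Phi}_r^\top(X_j^{(s,\tau_{s,t}+i)})$, which \emph{are} independent across $(s,i)$; the uniform bound then becomes $\|T_{s,i}-\mathbb{E}T_{s,i}\|_{\op}\lesssim \nu_s rd/b_s$ (matching the third assumption on $r$), and the variance of the grouped sum naturally yields both $\sum_s rd\nu_s^2/(b_sm)$ (diagonal $j_1=j_2$ terms) and $\sum_s d\nu_s^2/b_s$ (off-diagonal terms), after which matrix Bernstein, a union bound over $t$, and Weyl's inequality finish the argument. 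Your proposal needs this regrouping to be valid.
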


\begin{proof}
    To control $\mathcal{E}_3$, for any $s \in [S]$, $i \in [b_s]$ and $j\in [m]$, denote
    \begin{align*}
        T_{s,i} = \frac{\nu_s}{b_sm}\sum_{j=1}^m \widetilde{\Phi}_r(X_{j}^{(s,\tau_{s,t}+i)})\bm{G}^{(s,\tau_{s,t}+i)}\bm{G}^{(s,\tau_{s,t}+i)\top}\widetilde{\Phi}_r^\top(X_{j}^{(s,\tau_{s,t}+i)}),
    \end{align*}
    and note that $T_{s,i}$ are mutually independent across $s\in [S]$ and $i \in [b_s]$ under Assumption \ref{a_sample}. We firstly bound the largest and the smallest eigenvalues of $\mathbb{E}(T_{s,i})$. Note that under Assumption \ref{a_sample}, we have that
    \begin{align*}
        \mathbb{E}(T_{s,i}) &= \frac{\nu_s}{b_s}\mathbb{E}\Big[\frac{1}{m}\sum_{j=1}^m \widetilde{\Phi}_r(X_{j}^{(s,\tau_{s,t}+i)})\bm{G}^{(s,\tau_{s,t}+i)}\bm{G}^{(s,\tau_{s,t}+i)\top}\widetilde{\Phi}_r^\top(X_{j}^{(s,\tau_{s,t}+i)})\Big]\\
        & = \frac{\nu_s}{b_s}\mathbb{E}\Big\{\widetilde{\Phi}_r(X_{1}^{(s,\tau_{s,t}+1)})\bm{G}^{(s,\tau_{s,t}+1)}\bm{G}^{(s,\tau_{s,t}+1)\top}\widetilde{\Phi}_r^\top(X_{1}^{(s,\tau_{s,t}+1)})\Big\}.
    \end{align*}
    Using a similar argument as the one used to get \eqref{fdp_t_vcm_low_eq17}, we have that for any $s\in [S]$ and $i \in [b_s]$
    \begin{align} \label{l_fdp_vcm_upper_event_eq1}
        \Lambda_{\max}\{\mathbb{E}(T_{s,i})\} &= \frac{\nu_s}{b_s}\Lambda_{\max}\Big[\mathbb{E}\Big\{\widetilde{\Phi}_r(X_{1}^{(s,\tau_{s,t}+1)})\bm{G}^{(s,\tau_{s,t}+1)}\bm{G}^{(s,\tau_{s,t}+1)\top}\widetilde{\Phi}_r^\top(X_{1}^{(s,\tau_{s,t}+1)})\Big\}\Big]\\ \notag
        & = \frac{\nu_s}{b_s}  \underset{v \in \mathbb{R}^{r(d+1)}: \|v\|_2 =1}{\sup} v^{\top}\mathbb{E}\Big\{\widetilde{\Phi}_r(X_{1}^{(s,\tau_{s,t}+1)})\bm{G}^{(s,\tau_{s,t}+1)}\bm{G}^{(s,\tau_{s,t}+1)\top}\widetilde{\Phi}_r^\top(X_{1}^{(s,\tau_{s,t}+1)})\Big\}v\\ \notag
        & = \frac{\nu_s}{b_s} \underset{v \in \mathbb{R}^{r(d+1)}: \|v\|_2 =1}{\sup} \int_{0}^1 v^\top\widetilde{\Phi}_r(s)\mathbb{E}\Big\{\bm{G}^{(s,\tau_{s,t}+1)}\bm{G}^{(s,\tau_{s,t}+1)\top}\Big\}\widetilde{\Phi}_r^\top(s) f_X(x)v\; \mathrm{d}s\\ \notag
        &\leq \frac{\nu_sLC_{\lambda}}{b_s},
    \end{align}
    and similarly, we have that 
    \begin{align}\label{l_fdp_vcm_upper_event_eq2}
        \Lambda_{\min}\{\mathbb{E}(T_{s,i})\} = \frac{\nu_s}{b_s}\Lambda_{\min}\Big[\mathbb{E}\Big\{\widetilde{\Phi}_r(X_{1}^{(s,\tau_{s,t}+1)})\bm{G}^{(s,\tau_{s,t}+1)}\bm{G}^{(s,\tau_{s,t}+1)\top}\widetilde{\Phi}_r^\top(X_{1}^{(s,\tau_{s,t}+1)})\Big\}\Big] \geq \frac{\nu_s}{b_sLC_{\lambda}}.
    \end{align}
    In the next step, we find the operator norm of $T_{s,i} - \mathbb{E}(T_{s,i})$ for each $s\in [S]$ and $i \in [b_s]$. Observe that 
    \begin{align*}
        &\|T_{s,i} - \mathbb{E}(T_{s,i})\|_{\op} \leq \|T_{s,i}\|_{\op} + \|\mathbb{E}(T_{s,i})\|_{\op}\\
        \lesssim \; &   \sup_{u \in \mathbb{R}^{r(d+1)}:\|u\|_2=1} u^\top T_{s,i}u + \sup_{s\in [S]} \frac{\nu_s}{b_s}\\
        =\;&  \sup_{u \in \mathbb{R}^{r(d+1)}:\|u\|_2=1} \frac{\nu_s}{b_sm}\sum_{j=1}^m \Big\{G^{(s,\tau_{s,t}+i)}_k\sum_{\ell=1}^ru_{(k+1)\ell}\phi_{\ell}(X_{j}^{(s,\tau_{s,t}+i)})\Big\}^2 + \frac{\nu_s}{b_s}\\
        \leq \;& \sup_{u \in \mathbb{R}^{r(d+1)}:\|u\|_2=1} \frac{\nu_s}{b_sm}\sum_{j=1}^m\Big\{\sum_{k=0}^d G_k^{(s,\tau_{s,t} + i)^2}\Big\}\Big\{\sum_{k=0}^d\sum_{\ell=1}^r u_{(k+1)l}^2\Big\}\Big\{\sum_{\ell=1}^r \phi_\ell^2(X^{(s,\tau_{s,t} + i)}_j)\Big\}+ \frac{\nu_s}{b_s}\\
        \lesssim \; & \frac{\nu_s}{b_sm}mdr+ \frac{\nu_s}{b_s} \lesssim \frac{\nu_sdr}{b_s},
    \end{align*}
    where the second inequality follows from the Cauchy--Schwarz inequality and the third inequality follows from Assumption \ref{simple_vcm_a_model}(b) and the boundedness of Fourier basis. Therefore, we have that 
    \begin{equation*}
        \sup_{s\in [S], i \in [b_s]} \|T_{s,i} - \mathbb{E}(T_{s,i})\|_{\op} \lesssim \sup_{s\in [S]}\frac{\nu_sdr}{b_s}.
    \end{equation*}
    Next, we find an upper bound of the matrix variance
    statistic of the sum we are interested in. Denote $M_{1,s}(x) = \widetilde{\Phi}_r(x)\bm{G}^{(s,\tau_{s,t}+1)}$, then we have that 
    \begin{align*}
        &\Big\|\mathbb{E}\Big[\sum_{s=1}^S \sum_{i=1}^{b_s} \Big\{T_i - \mathbb{E}(T_{s,i})\Big\}\Big\{T_{s,i} - \mathbb{E}(T_{s,i})\Big\}^{\top}\Big]\Big\|_{\op}\\
        =\;&\Big\|\mathbb{E}\Big[\sum_{s=1}^S\sum_{i=1}^{b_s} \Big\{T_{s,i} - \mathbb{E}(T_{s,i})\Big\}^{\top}\Big\{T_{s,i} - \mathbb{E}(T_{s,i})\Big\}\Big]\Big\|_{\op}\\
        \lesssim \;& \sum_{s=1}^S \frac{\nu_s^2}{b_sm}\sup_{v\in \mathbb{R}^{r(d+1)}: \|v\|_2 =1} \Big|\mathbb{E}\Big\{v^\top M_{1,s}(X^{(s,\tau_{s,t} + 1)}_1)M^{\top}_{1,s}(X^{(s,\tau_{s,t} + 1)}_1)\\
        & \hspace{5cm} M_{1,s}(X^{(s,\tau_{s,t} + 1)}_1)M^{\top}_{1,s}(X^{(s,\tau_{s,t} + 1)}_1)v\Big\} - v^{\top} \mathbb{E}(T_{s,1})\mathbb{E}(T_{s,1})^\top v\Big|\\
        &+ \sum_{s=1}^S \frac{\nu_s^2}{b_s} \sup_{v\in \mathbb{R}^{r(d+1)}: \|v\|_2 =1}\Big|\mathbb{E}\Big\{v^\top M_{1,s}(X^{(s,\tau_{s,t} + 1)}_1)M^{\top}_{1,s}(X^{(s,\tau_{s,t} + 1)}_1)\\
        & \hspace{5cm} M_{1,s}(X^{(s,\tau_{s,t} + 1)}_2)M^{\top}_{1,s}(X^{(s,\tau_{s,t} + 1)}_2)v\Big\} - v^{\top} \mathbb{E}(T_{s,1})\mathbb{E}(T_{s,1})^\top v\Big|\\
        \lesssim \;& \sum_{s=1}^S \frac{\nu_s^2rd}{b_sm} + \sum_{s=1}^S \frac{\nu_s^2d}{b_s},
    \end{align*}
    where the last inequality follows from the following calculations:
    \begin{align*}
        &\Big|\mathbb{E}\Big\{v^\top M_1(X^{(s,\tau_{s,t} + 1)}_1)M^{\top}_1(X^{(s,\tau_{s,t} + 1)}_1)M_1(X^{(s,\tau_{s,t} + 1)}_1)M^{\top}_1(X^{(s,\tau_{s,t} + 1)}_1)v\Big\} \\
        & \hspace{10cm} - v^{\top} \mathbb{E}(T_{s,1})\mathbb{E}(T_{s,1})^\top v\Big| \\
        \leq \;& \Big|\mathbb{E}\Big[v^\top M_1(X^{(s,\tau_{s,t} + 1)}_1)\Big\{\sum_{k=0}^d\sum_{\ell=1}^r G_k^{(s,\tau_{s,t} + 1)^2}\phi_\ell^2(X^{(s,\tau_{s,t} + 1)}_1)\Big\}M^{\top}_1(X^{(s,\tau_{s,t} + 1)}_1)v\Big]\Big|\\
        &\hspace{10cm} + \|v^\top\mathbb{E}(T_{s,1})\|_2^2\\
        \lesssim \;& rdv^\top\mathbb{E}\Big[ M_1(X^{(s,\tau_{s,t} + 1)}_1)M^{\top}_1(X^{(s,\tau_{s,t} + 1)}_1)\Big]v + \frac{\nu_s}{b_s}\|v\|_2^2\\
        =\;& rd\int_{0}^1 v^\top \widetilde{\Phi}_r(s)\mathbb{E}\{\bm{G}^{(s,\tau_{s,t} + 1)}\bm{G}^{(s,\tau_{s,t} + 1)\top}\}\widetilde{\Phi}^\top_r(s)v f_X(s)\;\mathrm{d}s +\frac{\nu_s}{b_s}\|v\|_2^2\\
        \lesssim \;& rd\|v\|_2
    \end{align*}
    and
    \begin{align*}
        &\Big|\mathbb{E}\Big\{v^\top M_1(X^{(s,\tau_{s,t} + 1)}_1)M^{\top}_1(X^{(s,\tau_{s,t} + 1)}_1)M_1(X^{(s,\tau_{s,t} + 1)}_2)M^{\top}_1(X^{(s,\tau_{s,t} + 1)}_2)v\Big\} \\
        & \hspace{10cm} - v^{\top} \mathbb{E}(T_{s,1})\mathbb{E}(T_{s,1})^\top v\Big|\\
        \lesssim \;& \Big|\sum_{h=0}^d\sum_{s=1}^r\mathbb{E}\Big[G_h^{(s,\tau_{s,t} + 1)}\phi_{s}(X^{(s,\tau_{s,t} + 1)}_1)\Big\{\sum_{k=0}^d\sum_{\ell=1}^r v_{\ell(k+1)}G_k^{(s,\tau_{s,t} + 1)}\phi_{\ell}(X^{(s,\tau_{s,t} + 1)}_1)\Big\}\\
        &\hspace{2cm}G_h^{(s,\tau_{s,t} + 1)}\phi_{s}(X^{(s,\tau_{s,t} + 1)}_2)\Big\{\sum_{k=0}^d\sum_{\ell=1}^r v_{\ell(k+1)}G_k^{(s,\tau_{s,t} + 1)}\phi_{\ell}(X^{(s,\tau_{s,t} + 1)}_2)\Big\}\Big]\Big| + \frac{\nu_s}{b_s}\|v\|_2^2\\
        = \;& \Big|\sum_{h=0}^d \mathbb{E}\Big[\sum_{s=1}^r\mathbb{E}\Big\{\phi_{s}(X^{(s,\tau_{s,t} + 1)}_1)G_h^{(s,\tau_{s,t}+1)}v^\top M_1(X^{(s,\tau_{s,t} + 1)}_1)\Big|\bm{G}\Big\}^2\Big]\Big| + \frac{\nu_s}{b_s}\|v\|_2^2\\
        = \;&  \sum_{h=0}^d \mathbb{E}\Big[\Big\|\mathbb{E}\Big\{G_h^{(s,\tau_{s,t}+1)}\bm{G}^{(s,\tau_{s,t}+1)\top}\widetilde{\Phi}^\top_r(X^{(s,\tau_{s,t} + 1)}_1)v\Phi_r(X^{(\tau_{s,t} + 1)}_1)\Big|\bm{G}\Big\}\Big\|_2^2\Big]+ \frac{\nu_s}{b_s}\|v\|_2^2\\
        \leq \; &\sum_{h=0}^d  \mathbb{E}\Big\{\Big\|G_h^{(s,\tau_{s,t}+1)}\bm{G}^{(s,\tau_{s,t}+1)\top}\widetilde{\Phi}^\top_r vf_X\Big\|_{L^2}^2\Big\}+\frac{\nu_s}{b_s}\|v\|_2^2\\ \lesssim \;& \sum_{k=0}^d \mathbb{E}\Big\{\Big\|\bm{G}^{(s,\tau_{s,t}+1)\top}\widetilde{\Phi}^\top_r v\Big\|_{L^2}^2\Big\}+\frac{\nu_s}{b_s}\|v\|_2^2\\
        = \;& d \int_{0}^1 v^\top \widetilde{\Phi}_r(s)\mathbb{E}\{\bm{G}^{(s,\tau_{s,t} + 1)}\bm{G}^{(s,\tau_{s,t} + 1)\top}\}\widetilde{\Phi}^\top_r(s)v\;\mathrm{d}s + \frac{\nu_s}{b_s}\|v\|_2^2 \lesssim d\|v\|_2.
    \end{align*}

    Hence the matrix Bernstein inequality \citep[e.g.~Theorem 6.1.1 in][]{tropp2015introduction} implies that for any $\tau>0$,
    \begin{align*}
        \mathbb{P}\Big[\Big\|\sum_{s=1}^S\sum_{i=1}^{b_s}& \{T_{s,i} - \mathbb{E}(T_{s,i})\}\Big\|_{\op} \geq \tau\Big] \\
        &\leq 2r(d+1)\exp\Big\{\frac{-\tau^2/2}{\sum_{s=1}^S \nu_s^2rd/(b_sm) + \sum_{s=1}^S \nu_s^2d/(b_s)+ \tau \sup_{s\in [S]}\nu_sdr/(3b_s)}\Big\}.
    \end{align*}
    Note that by \eqref{l_fdp_vcm_upper_event_eq1} and \eqref{l_fdp_vcm_upper_event_eq2}, it holds that
    \begin{align*}
        \Lambda_{\max}\Big\{\sum_{s=1}^S\sum_{i=1}^{b_s}\mathbb{E}(T_{s,i})\Big\} \leq \sum_{s=1}^S\sum_{i=1}^{b_s} \frac{\nu_sLC_{\lambda}}{b_s} \leq LC_{\lambda},
    \end{align*}
    and
    \begin{align*}
        \Lambda_{\min}\Big\{\sum_{s=1}^S\sum_{i=1}^{b_s}\mathbb{E}(T_{s,i})\Big\} \leq \sum_{s=1}^S\sum_{i=1}^{b_s} \frac{\nu_s}{b_sLC_{\lambda}} \leq \frac{1}{LC_{\lambda}}.
    \end{align*}
    Hence, applying a union bound argument and the Weyl's inequality and pick
    \begin{align*}
        \tau = \log(Tr/\eta_1)\Big[\Big\{\sum_{s=1}^S \frac{\nu_s^2rd}{b_sm} + \sum_{s=1}^S \frac{\nu_s^2d}{b_s}\Big\}^{1/2} + \sup_{s\in [S] }\frac{\nu_srd}{b_s}\Big] \leq \frac{1}{2LC_{\lambda}},
    \end{align*}
    we have that $\mathbb{P}(\mathcal{E}_3) \geq 1-\eta_1$ by further assuming 
    \begin{align} \label{l_fdp_vcm_upper_event_eq3}
        r\log^2(Tr/\eta_1)\lesssim \Big(\sum_{s=1}^S \frac{\nu_s^2d}{b_sm}\Big)^{-1}, \; \log^2(Tr/\eta_1)\sum_{s=1}^S \frac{\nu_s^2d}{b_s} \lesssim 1, \; r\log(Tr/\eta_1)\lesssim \Big(\sup_{s\in [S] }\frac{\nu_s d}{b_s}\Big)^{-1}.
    \end{align}

    To control $\mathcal{E}_4$, we will follow a similar argument as the one used in the proof of Lemma \ref{l_mean_upper_event}. Under Model \eqref{simple_vcm_model_obs}, we could rewrite the summation of interests as
    \begin{align*}
        &\Big|\frac{1}{m}\sum_{j=1}^m \phi_\ell(X_{j}^{(s, \tau_{s,t}+i)})G_k^{(s,\tau_{s,t}+ i)}\Big\{\bm{G}^{(s,\tau_{s,t}+ i)\top}\widetilde{\Phi}_r^\top(X_{j}^{(s, \tau_{s,t}+i)})B^t-Y_{j}^{(\tau_{s,t}+i)}\Big\}\Big|\\
        \leq \; & \Big|\frac{1}{m}\sum_{j=1}^m \phi_\ell(X_{j}^{(s, \tau_{s,t}+i)})G_k^{(s,\tau_{s,t}+ i)}\bm{G}^{(s,\tau_{s,t}+ i)\top}\widetilde{\Phi}_r^\top(X_{j}^{(s, \tau_{s,t}+i)})B^t\Big|\\
        & +\Big|\frac{1}{m}\sum_{j=1}^m \phi_\ell(X_{j}^{(s, \tau_{s,t}+i)})G_k^{(s,\tau_{s,t}+ i)}\bm{G}^{(s,\tau_{s,t}+ i)\top} \bm{\beta}^*(X_{j}^{(s, \tau_{s,t}+i)})\Big|\\
        & +\Big|\frac{1}{m}\sum_{j=1}^m \phi_\ell(X_{j}^{(s,\tau_{s,t}+i)})G_k^{(s,\tau_{s,t}+ i)}\xi_{s,(\tau_{s,t}+ i),j}\Big|\\
        =\;& (I)+(II)+(III),
    \end{align*}
    and we will consider each of the terms individually. To remove the effect of the dependence resulting from the random vector $\bm{G}$, we construct the rest of the proof conditioning on $\bm{G}$ and all terms inside the summation are mutually independent across $j \in [m]$. 

    To control $(I)$, note that 
    \begin{align*}
        &\Big|\phi_\ell(X_{j}^{(s, \tau_{s,t}+i)})G_k^{(s,\tau_{s,t}+ i)}\bm{G}^{(s,\tau_{s,t}+ i)\top}\widetilde{\Phi}_r^\top(X_{j}^{(s, \tau_{s,t}+i)})B^t\Big| \lesssim \Big|\bm{G}^{(s,\tau_{s,t}+ i)\top}\widetilde{\Phi}_r^\top(X_{j}^{(s, \tau_{s,t}+i)})B^t\Big|\\
        = \;& \Big|\sum_{k=0}^d  G_k^{(s,\tau_{s,t}+ i)} \sum_{\ell=1}^r (b^t_k)_\ell \phi_{\ell}(X_{j}^{(s, \tau_{s,t}+i)})\Big|  \lesssim 1,
    \end{align*}
    where the first inequality follows from the boundedness property of Fourier basis and Assumption \ref{simple_vcm_a_model}(b) and the second inequality follows from the truncation $\Pi^*_{\mathcal{B}}$. Therefore, Hoeffding’s inequality for general bounded random variables \citep[e.g.~Theorem 2.2.6 in][]{vershynin2018high} implies that for any $\tau>0$,
    \begin{align*}
        \mathbb{P}\Big[\Big|&\frac{1}{m}\sum_{j=1}^m \phi_\ell(X_{j}^{(s, \tau_{s,t}+i)})G_k^{(s,\tau_{s,t}+ i)}\bm{G}^{(s,\tau_{s,t}+ i)\top}\widetilde{\Phi}_r^\top(X_{j}^{(s, \tau_{s,t}+i)})B^t\\
        &-m\mathbb{E}\Big\{\phi_\ell(X_{1}^{(s,\tau_{s,t}+i)})G_k^{(s,\tau_{s,t}+ i)}\bm{G}^{(s,\tau_{s,t}+ i)\top}\widetilde{\Phi}_r^\top(X_{1}^{(s,\tau_{s,t}+i)})B^t\Big|\bm{G}\Big\}\Big|\geq \tau\Big|\bm{G}\Big]\leq \exp\Big(\frac{C_2\tau^2}{m}\Big).
    \end{align*}
    Taking another expectation with respect to $\bm{G}$, we have that 
    \begin{align*}
        \mathbb{P}\Big[\Big|&\sum_{j=1}^m \phi_\ell(X_{j}^{(s, \tau_{s,t}+i)})G_k^{(s,\tau_{s,t}+ i)}\bm{G}^{(s,\tau_{s,t}+ i)\top}\widetilde{\Phi}_r^\top(X_{j}^{(s, \tau_{s,t}+i)})B^t\\
        &-m\mathbb{E}\Big\{\phi_\ell(X_{1}^{(s,\tau_{s,t}+i)})G_k^{(s,\tau_{s,t}+ i)}\bm{G}^{(s,\tau_{s,t}+ i)\top}\widetilde{\Phi}_r^\top(X_{1}^{(s,\tau_{s,t}+i)})B^t\Big|\bm{G}\Big\}\Big|\geq \tau\Big]\leq \exp\Big(\frac{C_2\tau^2}{m}\Big).
    \end{align*}
    To upper bound the expectation term, note that by definition, we have that
    \begin{align} \notag
        &\Big|\mathbb{E}\Big\{\phi_\ell(X_{j}^{(s, \tau_{s,t}+i)})G_k^{(s,\tau_{s,t}+ i)}\bm{G}^{(s,\tau_{s,t}+ i)\top}\widetilde{\Phi}_r^\top(X_{j}^{(s, \tau_{s,t}+i)})B^t\Big|\bm{G}\Big\}\Big|\\ \notag
        =\;& \Big| \mathbb{E}\Big[\phi_\ell(X_{j}^{(s, \tau_{s,t}+i)})\Big\{\sum_{h=0}^d G_k^{(s,\tau_{s,t}+ i)}G_h^{(s, \tau_{s,t}+ i)} \Phi_{\ell}^{\top}(X_{j}^{(s, \tau_{s,t}+i)})b^t_h\Big\}\Big|\bm{G}\Big]\Big|\\ \label{l_vcm_upper_event_eq2}
        = \; & \Big| G_k^{(s,\tau_{s,t}+ i)}\mathbb{E}\Big[\phi_\ell(X_{j}^{(s, \tau_{s,t}+i)})\sum_{h=0}^dG_h^{(s, \tau_{s,t}+ i)}\Phi_{r}^{\top}(X_{j}^{(s, \tau_{s,t}+i)})b^t_h\Big]\Big|\\ \notag
        \lesssim \;&  \Big|\int_{0}^1\phi_\ell(s) f_X(s)\Big\{\sum_{h=0}^d G_h^{(s, \tau_{s,t}+ i)}\Phi_{r}^{\top}(s)b^t_h\Big\}\; \mathrm{d}s\Big| \lesssim \ell^{-\alpha},
    \end{align}
    where the last inequality follows from a similar argument as the one that leads to \eqref{l_mean_upper_event_eq1} together with Assumption \ref{simple_vcm_a_model}(b) and the fact that $\sum_{h=0}^d G_h^{(s, \tau_{s,t}+ i)}\Phi_{r}^{\top}b^t_h \in \mathcal{W}(\alpha,C_\alpha)$ as it holds that
    \begin{align*}
        &\sum_{\ell=1}^\infty \ell^{2\alpha}\Big\{\int_{0}^1 \sum_{h=0}^d G_h^{(s, \tau_{s,t}+ i)}\Phi_{r}^{\top}b^t_h\phi_{\ell}(s)\; \mathrm{d}s\Big\}^2 = \sum_{\ell=1}^\infty \ell^{2\alpha}\Big\{\sum_{h=0}^d G_h^{(s, \tau_{s,t}+ i)}\int_{0}^1 \Phi_{r}^{\top}b^t_h\phi_{\ell}(s)\; \mathrm{d}s\Big\}^2\\
        =\;& \sum_{\ell=1}^r \ell^{2\alpha}\Big\{\sum_{h=0}^d G_h^{(s, \tau_{s,t}+ i)}(b_h^t)_\ell\Big\}^2  \lesssim 1,
    \end{align*}
    where the last inequality follows from the projection $\Pi^*_{\mathcal{B}}$. Applying a union bound argument, we have that with probability at least $1-\eta_2$ that
    \begin{align}\label{l_vcm_upper_event_eq3}
        \max_{t,i}\Big|\frac{1}{m}\sum_{j=1}^m \phi_\ell(X_{j}^{(s, \tau_{s,t}+i)})G_k^{(s,\tau_{s,t}+ i)}\bm{G}^{(s,\tau_{s,t}+ i)\top}\widetilde{\Phi}_r^\top(X_{j}^{(s, \tau_{s,t}+i)})B^t\Big|\lesssim\sqrt{\log(N/\eta_2)/m}+\ell^{-\alpha}.
    \end{align}
    
    For $(II)$, note that 
     \begin{align*}
        &\Big|\phi_\ell(X_{j}^{(s, \tau_{s,t}+i)})G_k^{(s,\tau_{s,t}+ i)}\bm{G}^{(s,\tau_{s,t}+ i)\top}\bm{\beta}^*(X_{j}^{(s, \tau_{s,t}+i)})\Big| \lesssim \Big|\bm{G}^{(s,\tau_{s,t}+ i)\top}\bm{\beta}^*(X_{j}^{(s, \tau_{s,t}+i)})\Big|\lesssim 1,
    \end{align*}
    where the last inequality follows from Assumption \ref{simple_vcm_a_model}(c). To upper bound the expectation term, note that by definition, we have that
    \begin{align*}
        &\Big|\mathbb{E}\Big\{\phi_\ell(X_{j}^{(s, \tau_{s,t}+i)})G_k^{(s,\tau_{s,t}+ i)}\bm{G}^{(s,\tau_{s,t}+ i)\top}\bm{\beta}^*(X_{j}^{(s, \tau_{s,t}+i)})\Big|\bm{G}\Big\}\Big| \\
        =& \;\Big| G_k^{(s,\tau_{s,t}+ i)}\int_{0}^1 \phi_\ell(s)\bm{G}^{(s,\tau_{s,t}+ i)\top}\bm{\beta}^*(X_{j}^{(s, \tau_{s,t}+i)})f_X(s)\;\mathrm{d}s \Big| \lesssim \ell^{-\alpha},
    \end{align*}
    where the last inequality follows from a similar argument as the one used in \eqref{l_mean_upper_event_eq1} and Assumption \ref{simple_vcm_a_model}(c). Thus, using a similar argument as the one leads to \eqref{l_vcm_upper_event_eq3}, it holds with probability at least $1-\eta_3$ that
    \begin{align}\label{l_vcm_upper_event_eq4}
        \max_{t,i}\Big|\frac{1}{m}\sum_{j=1}^m \phi_\ell(X_{j}^{(s, \tau_{s,t}+i)})G_k^{(s,\tau_{s,t}+ i)}\bm{G}^{(s,\tau_{s,t}+ i)\top} \bm{\beta}^*(X_{j}^{(s, \tau_{s,t}+i)})\Big|\lesssim \sqrt{\log(N/\eta_3)/m}+\ell^{-\alpha}.
    \end{align}

    To control $(III)$, under Assumptions \ref{simple_vcm_a_model}(b) and \ref{simple_vcm_a_model}(d), we have that the terms inside the summation are independent mean zero sub-Gaussian random variables with parameter $\sqrt{2}C_{g}C_{\xi}$. Hence Hoeffding inequality \citep[e.g.~Theorem 2.6.2 in][]{vershynin2018high} implies that
    \begin{align*}
        \mathbb{P}\Big\{\Big|\sum_{j=1}^m \phi_\ell(X_{j}^{(s, \tau_{s,t}+i)})G_k^{(s,\tau_{s,t}+ i)}\xi_{s,(\tau_{s,t}+ i),j}\Big| \geq \tau\Big\}\leq \exp\Big(\frac{C_3\tau^2}{m}\Big).
    \end{align*}
    Applying a union bound argument, we have with probability at least $1-\eta_4$ that 
    \begin{align}\label{l_vcm_upper_event_eq5}
        \max_{t,i}\Big|\frac{1}{m}\sum_{j=1}^m \phi_\ell(X_{j}^{(s, \tau_{s,t}+i)})G_k^{(s,\tau_{s,t}+ i)}\xi_{s,(\tau_{s,t}+ i),j}\Big| \lesssim \sqrt{\log(N/\eta_4)/m}.
    \end{align}

    Taking another union bound over entries, $\mathcal{E}_1$, \eqref{l_vcm_upper_event_eq3}, \eqref{l_vcm_upper_event_eq4} and \eqref{l_vcm_upper_event_eq5}, we have that 
    \begin{align*}
        \mathbb{P}(\mathcal{E}_1\cap\mathcal{E}_2)\geq 1-\eta.
    \end{align*}
    
\end{proof}

\begin{remark} \label{remark_vcm_fdp_r}
    The optimal choice of $r$, $r_{\mathrm{opt}}$, is the solution to the equation
    \begin{align*}
        dr^{2+2\alpha} = \sum_{s=1}^S(dr^2n_s) \wedge (drn_sm) \wedge (r^2n_s^2\epsilon^2) \wedge (n_s^2m\epsilon^2).
    \end{align*}
    In this remark, all the calculations are up to a poly-logarithmic factor. Denote $s^* = \argmax_{s\in[S]} \nu_sd/b_s$. The first and the second assumptions in \eqref{l_fdp_vcm_upper_event_eq3} are automatically satisfied by our choice of $r$ in equation \eqref{t_fdp_vcm_upper_eq9} to ensure the overall estimation error is less than one. For the third assumption, the assumption reduces to $rd \lesssim sn$ in a homogeneous setting, which trivially holds for our optimal choice of $r$. In the heterogeneous setting, we discuss the validity of this assumption in four different cases. When $dr^2n_{s^*}$ is the smallest term, we have that the third assumption in \eqref{l_fdp_vcm_upper_event_eq3} is equivalent to say
    \begin{align*}
        r \lesssim \frac{\sum_{s=1}^S(dr^2n_s) \wedge (drn_sm) \wedge (r^2n_s^2\epsilon^2) \wedge (n_s^2m\epsilon^2)}{d^2r^2} \asymp \frac{dr^{2+2\alpha}}{d^2r^2}.
    \end{align*}
    Therefore, the assumption holds by selecting $r$ such that $r \gtrsim d^{1/(2\alpha-1)}$. In the second case, when $drn_{s^*}m$ is the smallest term (the case when $r \gtrsim m$), we have that the third assumption in \eqref{l_fdp_vcm_upper_event_eq3} is equivalent to say
    \begin{align*}
        r \lesssim \frac{\sum_{s=1}^S(dr^2n_s) \wedge (drn_sm) \wedge (r^2n_s^2\epsilon^2) \wedge (n_s^2m\epsilon^2)}{d^2rm} \asymp \frac{dr^{2+2\alpha}}{d^2rm}.
    \end{align*}
    If we select $r \gtrsim d^{1/(2\alpha-1)}$, then we have that $r^{2\alpha} = r^{2\alpha-1}r \gtrsim dr \gtrsim dm$ and the assumption holds. In the third case, when $r^2n_{s^*}^2\epsilon^2$ is the smallest term, we have that
    \begin{align*}
        r \lesssim \frac{\sum_{s=1}^S(dr^2n_s) \wedge (drn_sm) \wedge (r^2n_s^2\epsilon^2) \wedge (n_s^2m\epsilon^2)}{dr^2n_{s^*}\epsilon^2} \asymp \frac{dr^{2+2\alpha}}{dr^2n_{s^*}\epsilon^2},
    \end{align*}
    which further requires selecting $r$ such that $r \gtrsim (n_{s^*}\epsilon^2)^{1/(2\alpha-1)}$. Finally, when $n_{s^*}^2m\epsilon^2$ is the smallest term (the case when $r \gtrsim m^2$), then we have that 
    \begin{align*}
        r \lesssim \frac{\sum_{s=1}^S(dr^2n_s) \wedge (drn_sm) \wedge (r^2n_s^2\epsilon^2) \wedge (n_s^2m\epsilon^2)}{dn_{s^*}m\epsilon^2} \asymp \frac{dr^{2+2\alpha}}{dn_{s^*}m\epsilon^2}.
    \end{align*}
    If we select $r$ such that $r \gtrsim (n_{s^*}\epsilon^2)^{1/(2\alpha-1)}$, then we have that $r^{2\alpha+1}\gtrsim n_{s^*}\epsilon^2r^2 \gtrsim n_{s^*}m\epsilon^2$ and the assumption holds. Combine the above together, the overall assumption on $r$ is that 
    \begin{align*}
        r \gtrsim d^{1/(2\alpha-1)} \vee (n_{s^*}\epsilon^2)^{1/(2\alpha-1)}.
    \end{align*}
\end{remark}

\section{Technical lemmas} \label{section_appendix_technical}
\begin{lemma}[\citealp{dwork2014algorithmic}, Post-processing property of DP] \label{l_postprocessing}
    Let $M$ be an $(\epsilon, \delta)$-CDP algorithm, and $g$ be an arbitrary, deterministic mapping that takes $M$ as an unput; then $g(M)$ is $(\epsilon, \delta)$-CDP.
\end{lemma}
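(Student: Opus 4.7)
The plan is to reduce the claim directly to \Cref{def_dp} via a preimage argument, which is the standard one-line proof of post-processing.

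First I will fix arbitrary neighboring datasets $D \sim D'$ and an arbitrary measurable set $A \in \sigma(g(\mathcal{Z}))$ in the output space of $g$. Let $B = g^{-1}(A) \subseteq \mathcal{Z}$; since $g$ is a deterministic measurable mapping, $B \in \sigma(\mathcal{Z})$, so $B$ is an admissible event for the privacy mechanism $M$. The key identity is then
\begin{equation*}
    \mathbb{P}\{g(M(D)) \in A\} = \mathbb{P}\{M(D) \in g^{-1}(A)\} = Q(Z \in B \mid D),
\end{equation*}
and analogously for $D'$, where $Q$ denotes the conditional distribution associated with $M$.

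Next I will apply the hypothesis that $M$ is $(\epsilon,\delta)$-CDP to the set $B$: by \Cref{def_dp},
\begin{equation*}
    Q(Z \in B \mid D) \leq e^{\epsilon} Q(Z \in B \mid D') + \delta.
\end{equation*}
Combining this with the preimage identity above yields
\begin{equation*}
    \mathbb{P}\{g(M(D)) \in A\} \leq e^{\epsilon}\, \mathbb{P}\{g(M(D')) \in A\} + \delta,
\end{equation*}
which is exactly the $(\epsilon,\delta)$-CDP condition for $g \circ M$. Since $D, D', A$ were arbitrary, this finishes the proof.

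There is essentially no obstacle here: the only subtlety worth flagging is measurability of $g^{-1}(A)$, which is guaranteed by the assumed (implicit) measurability of $g$ as a map between the relevant measurable spaces. I will note this in one sentence but not elaborate, since the statement restricts to deterministic $g$ (the randomized version would follow by conditioning on the auxiliary randomness and applying the deterministic case pointwise, but that extension is not needed here).
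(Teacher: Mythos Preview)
Your proof is correct and is exactly the standard preimage argument for post-processing. The paper does not actually supply its own proof of this lemma; it simply cites \citet{dwork2014algorithmic}, so there is nothing to compare against beyond noting that your argument is the textbook one.
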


\begin{lemma}[Lemma 2 in \citealp{quan2024optimal}] \label{l_mean_approximation}
    For $T \in [0,1]$, denote $\Phi(T) = \{\phi_1(T), \ldots,\\ \phi_r(T)\}^\top$ the collection of Fourier basis, then for any function $g \in \mathcal{L}^2([0,1])$, it holds that 
    $$\|\mathbb{E}_f\{\Phi(T)g(T)\}\|^2_2 \leq (\inf_{t\in [0,1]} |f(t)|)^{-1}\|f\|_{\infty}\|g\|^2_{L^2},$$ where $f$ is the sampling density.
\end{lemma}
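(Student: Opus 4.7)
The plan is to identify each coordinate of the vector $\mathbb{E}_f\{\Phi(T)g(T)\}$ with a Fourier coefficient of the product $gf\in L^2([0,1])$, apply Bessel's inequality to dominate the finite sum of squared coefficients by the full $L^2$-norm of $gf$, and then absorb the density factors using the pointwise bounds on $f$.

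First I would compute, for $T\sim f$ and each $\ell\in[r]$,
\[
\bigl(\mathbb{E}_f\{\Phi(T)g(T)\}\bigr)_\ell \;=\; \int_0^1 \phi_\ell(t)\, g(t)\, f(t)\,\mathrm{d}t \;=\; \langle \phi_\ell,\, gf\rangle_{L^2},
\]
so that the quantity of interest becomes the partial sum of squared Fourier coefficients of $gf$:
\[
\|\mathbb{E}_f\{\Phi(T)g(T)\}\|_2^2 \;=\; \sum_{\ell=1}^r \langle \phi_\ell,\, gf\rangle_{L^2}^2.
\]
Since $g\in L^2([0,1])$ and $f$ is bounded, $gf\in L^2([0,1])$, so Bessel's inequality applied to the orthonormal Fourier basis $\{\phi_\ell\}_{\ell\in\mathbb{N}_+}$ from \eqref{eq_fourier_basis} yields
\[
\sum_{\ell=1}^r \langle \phi_\ell,\, gf\rangle_{L^2}^2 \;\leq\; \|gf\|_{L^2}^2 \;=\; \int_0^1 g^2(t) f^2(t)\,\mathrm{d}t.
\]

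The final step is a pointwise bound on $f^2(t)$ using the two-sided control on the sampling density. Pulling $\|f\|_\infty$ out once via $f^2(t)\leq \|f\|_\infty\, f(t)$, and then using the lower bound $\inf_{s\in[0,1]}|f(s)|>0$ to convert the remaining weighted integral $\int_0^1 g^2(t)f(t)\,\mathrm{d}t$ back into an unweighted $\|g\|_{L^2}^2$, one arrives at the bound
\[
\int_0^1 g^2(t) f^2(t)\,\mathrm{d}t \;\leq\; \bigl(\inf_{s\in[0,1]}|f(s)|\bigr)^{-1}\|f\|_\infty\, \|g\|_{L^2}^2,
\]
completing the argument. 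The only care required is verifying that $gf\in L^2([0,1])$ so that Bessel's inequality legitimately applies, which is immediate from $g\in L^2$ and the boundedness of $f$; consequently no real obstacle arises.
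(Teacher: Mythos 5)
The paper itself offers no proof of this lemma; it is imported verbatim as Lemma~2 of the cited reference, so only your argument can be assessed on its own terms. Your first three steps are sound: each coordinate of $\mathbb{E}_f\{\Phi(T)g(T)\}$ is indeed $\langle \phi_\ell, gf\rangle_{L^2}$, and Bessel's inequality for the orthonormal Fourier system gives $\|\mathbb{E}_f\{\Phi(T)g(T)\}\|_2^2 \le \|gf\|_{L^2}^2 = \int_0^1 g^2(t)f^2(t)\,\mathrm{d}t$. The gap is in the final step. After writing $f^2(t)\le \|f\|_\infty f(t)$, you claim the lower bound on $f$ converts $\int_0^1 g^2 f$ into $(\inf_{t}|f(t)|)^{-1}\|g\|_{L^2}^2$. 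But the lower bound works in the opposite direction: it gives $\int_0^1 g^2 f \ge (\inf_t f)\int_0^1 g^2$, whereas the upper bound you need, $\int_0^1 g^2 f \le (\inf_t f)^{-1}\int_0^1 g^2$, would require $f(t)\le (\inf_t f)^{-1}$ pointwise, i.e.\ $\|f\|_\infty\cdot\inf_t f\le 1$, which is not guaranteed for a density (nor by Assumption~\ref{a_sample}, which only gives $1/L\le f\le L$). What your chain actually establishes is $\|\mathbb{E}_f\{\Phi(T)g(T)\}\|_2^2\le \|f\|_\infty^2\|g\|_{L^2}^2$, a correct but different bound, in general incomparable with the stated constant $(\inf_t |f(t)|)^{-1}\|f\|_\infty$.

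Moreover, the Bessel route cannot be repaired to produce the stated constant from $gf\in L^2$ alone: as $r\to\infty$ the left-hand side converges to $\|gf\|_{L^2}^2$ by Parseval, and for a density taking two values with $\|f\|_\infty\cdot\inf_t f>1$ (for instance $f\approx 0.99$ on a set of measure $0.98$ and $f\approx 1.49$ on its complement, smoothed slightly) with $g$ the indicator of the region where $f$ is largest, one gets $\|gf\|_{L^2}^2\approx \|f\|_\infty^2\|g\|_{L^2}^2 > (\inf_t f)^{-1}\|f\|_\infty\|g\|_{L^2}^2$. So whatever argument the cited reference uses must rely on something beyond square-integrability of $gf$, or its statement/constant differs slightly from the version quoted here. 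Practically, under Assumption~\ref{a_sample} both $\|f\|_\infty^2$ and $(\inf_t f)^{-1}\|f\|_\infty$ are at most $L^2$, and every invocation of the lemma in this paper (e.g.\ in the proof of Theorem~\ref{thm_mean_upper}) only uses the bound $L^2\|g\|_{L^2}^2$, so your argument with the corrected constant would suffice for the paper's purposes; as written, however, the last step does not establish the inequality in the statement.
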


\begin{theorem}[Theorem 7.4 in \citealp{behzadan2021multiplication}] \label{thm_product_sobolev}
    For any $\alpha > 1/2$, if $u \in \mathcal{W}(\alpha, C_{\alpha})$ and $v \in \mathcal{W}(\alpha, C_{\alpha})$, then we have that $uv \in \mathcal{W}(\alpha, C_\alpha)$. 
\end{theorem}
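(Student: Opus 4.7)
My plan is to work entirely at the level of Fourier coefficients and reduce the statement to a weighted convolution inequality. Membership $f \in \mathcal{W}(\alpha, C_\alpha)$ is equivalent (up to absolute constants coming from $\tau_\ell \asymp \ell$) to the summability $\sum_{\ell \geq 1} \ell^{2\alpha}|\hat f_\ell|^2 < \infty$, where $\hat f_\ell = \langle f,\phi_\ell\rangle_{L^2}$. The goal therefore becomes to bound $\sum_\ell \ell^{2\alpha}|\widehat{uv}_\ell|^2$ by a constant multiple of the product of the analogous sums for $u$ and $v$; this will place $uv$ in the Sobolev class with a possibly larger, but still absolute, radius constant.

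The first step is to express $\widehat{uv}_\ell$ as a discrete convolution of $\{\hat u_k\}$ and $\{\hat v_k\}$. Using the explicit cosine/sine basis of \eqref{eq_fourier_basis} together with the product-to-sum identities $2\cos(a)\cos(b)=\cos(a-b)+\cos(a+b)$, $2\sin(a)\sin(b)=\cos(a-b)-\cos(a+b)$, and $2\sin(a)\cos(b)=\sin(a-b)+\sin(a+b)$, each coefficient $\widehat{uv}_\ell$ becomes a finite linear combination of sums of the form $\sum_k \hat u_k \hat v_{\ell\pm k}$. The second step is the elementary Peetre-type inequality $\langle\ell\rangle^\alpha \leq C_\alpha\big(\langle k\rangle^\alpha + \langle \ell-k\rangle^\alpha\big)$, with $\langle\cdot\rangle=(1+|\cdot|^2)^{1/2}$, which lets me distribute the weight onto one factor at a time inside the convolution.

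The third and key step is Young's convolution inequality $\|a*b\|_{\ell^2}\leq \|a\|_{\ell^2}\|b\|_{\ell^1}$. Choosing $a_k=\langle k\rangle^\alpha|\hat u_k|$, which is $\ell^2$ by hypothesis, and $b_k=|\hat v_k|$, the inequality
\[
\sum_\ell \ell^{2\alpha}|\widehat{uv}_\ell|^2 \;\lesssim\; \Big(\sum_k \langle k\rangle^{2\alpha}|\hat u_k|^2\Big)\Big(\sum_k |\hat v_k|\Big)^2 \;+\; (u \leftrightarrow v)
\]
follows at once. This is where the hypothesis $\alpha>1/2$ enters decisively: by Cauchy–Schwarz,
\[
\sum_k |\hat v_k| \;\leq\; \Big(\sum_k \langle k\rangle^{2\alpha}|\hat v_k|^2\Big)^{1/2}\Big(\sum_k \langle k\rangle^{-2\alpha}\Big)^{1/2},
\]
and the last factor is finite iff $2\alpha>1$, yielding $\|v\|_{\ell^1(\hat v)}\lesssim \|v\|_{\mathcal{W}(\alpha,C_\alpha)}$. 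Combining the two symmetric contributions gives the product estimate.

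The main obstacle I anticipate is bookkeeping rather than analysis: because the real Fourier basis splits into sine and cosine pieces of possibly different parity, the convolution identity $\widehat{uv}_\ell=\sum_k \hat u_k\hat v_{\ell-k}$ only holds after a careful reindexing and case split using the three product-to-sum identities above. This is tedious but mechanical and the final estimate is unaffected by the reindexing constants. A cleaner alternative I would pursue if the bookkeeping becomes unwieldy is to identify $\mathcal{W}(\alpha,C_\alpha)$ with the periodic Sobolev space $H^\alpha(\mathbb{T})$ via complex Fourier coefficients, where the convolution identity is exact, and invoke the standard fact that $H^\alpha(\mathbb{T})$ is a Banach algebra when $\alpha>1/2$; the resulting bound transfers back to the real basis with only a universal change of constant.
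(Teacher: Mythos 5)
Your proposal is correct, but it is worth noting that the paper does not prove this statement at all: it is imported wholesale as Theorem 7.4 of \citet{behzadan2021multiplication}, a general multiplication theorem for Sobolev spaces on domains whose proof machinery (general $W^{s,p}$ products) is far heavier than what is needed here. What you give instead is the classical self-contained argument that $H^\alpha(\mathbb{T})$ is a Banach algebra for $\alpha>1/2$: identify $\mathcal{W}(\alpha,C_\alpha)$ with a ball in periodic $H^\alpha$ via the Fourier coefficients (exact by construction, since the class is defined through the basis \eqref{eq_fourier_basis}), write $\widehat{uv}$ as a discrete convolution (cleanest in the complex exponential basis, with only bookkeeping changes for the sine/cosine form), split the weight with the Peetre-type inequality $\langle\ell\rangle^{\alpha}\lesssim\langle k\rangle^{\alpha}+\langle\ell-k\rangle^{\alpha}$, apply Young's inequality $\|a*b\|_{\ell^2}\le\|a\|_{\ell^2}\|b\|_{\ell^1}$, and control $\|\hat v\|_{\ell^1}$ by Cauchy--Schwarz, which is exactly where $\alpha>1/2$ enters through $\sum_k\langle k\rangle^{-2\alpha}<\infty$. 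All of these steps are sound, and for the one-dimensional periodic setting used in this paper your route is both more elementary and fully sufficient. The only caveat — which you correctly flag — is that this argument (and indeed any argument) yields $uv\in\mathcal{W}(\alpha,C'_\alpha)$ for an enlarged absolute constant $C'_\alpha$ depending on $C_\alpha$ and $\alpha$, not literally the same radius $C_\alpha$ as the statement's wording suggests; this is harmless for the paper, since the lemma is only invoked to get bounds of the form $\langle\phi_\ell, fg\rangle_{L^2}\lesssim\ell^{-\alpha}$ up to constants, but it is worth stating explicitly if you write the proof out in full.
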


\begin{lemma} \label{l_fisher_rescale}
    Let $X$ be a random variable with density $f_\theta$, where $\theta \in \mathbb{R}$ is the model parameter. Denote $I_X$ the Fisher information of $\theta$ associated with $X$. Let $Y=X/d$ for any $d >0$, then we have that $I_X = I_Y$, where $I_Y$ is the Fisher information of $\theta$ associated with the scaled random variable $Y$.
\end{lemma}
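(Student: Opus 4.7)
The plan is to apply a change-of-variables and observe that the Jacobian factor is free of $\theta$, so it disappears from the score function. Concretely, since $Y = X/d$ with $d>0$ a deterministic constant, the density of $Y$ is $g_\theta(y) = d\, f_\theta(dy)$. Taking logarithms gives $\log g_\theta(y) = \log d + \log f_\theta(dy)$, and differentiating with respect to $\theta$ yields
\begin{align*}
\frac{\partial}{\partial\theta}\log g_\theta(y) \;=\; \frac{\partial}{\partial\theta}\log f_\theta(dy),
\end{align*}
because the $\log d$ term is constant in $\theta$. This is the key observation: the rescaling leaves the score function unchanged up to the argument substitution $x \mapsto dy$.

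Next, by definition of Fisher information and the distributional identity $dY \stackrel{d}{=} X$,
\begin{align*}
I_Y(\theta) \;=\; \mathbb{E}_Y\!\left[\left(\frac{\partial}{\partial\theta}\log g_\theta(Y)\right)^{2}\right] \;=\; \mathbb{E}_Y\!\left[\left(\frac{\partial}{\partial\theta}\log f_\theta(dY)\right)^{2}\right] \;=\; \mathbb{E}_X\!\left[\left(\frac{\partial}{\partial\theta}\log f_\theta(X)\right)^{2}\right] \;=\; I_X(\theta),
\end{align*}
where the penultimate equality either follows from noting $dY$ has density $f_\theta$ or by carrying out the substitution $x = dy$ directly in the integral (the Jacobian $d$ cancels against the factor $d$ in $g_\theta$). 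Standard regularity conditions ensuring interchange of differentiation and integration are assumed to hold for $f_\theta$, as is standard when defining Fisher information.

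There is essentially no obstacle: the lemma is a one-line consequence of the invariance of Fisher information under smooth bijective reparameterizations of the data whose Jacobian is $\theta$-free. The only care needed is to state the density of $Y$ correctly via the change of variables formula and to verify that the additive constant $\log d$ drops out under $\partial_\theta$. The result is applied in the proof of Proposition~\ref{fdp_prop_vcm_low} to reduce the computation of the prior Fisher information $J(\pi)$ for the rescaled coefficients $\beta_k = \beta_0/d$ (or $b_{k,\ell} = b^0_{k,\ell}/d$) to the Fisher information of the unscaled truncated-Gaussian / cosine-squared prior, which is the content invoked in deriving \eqref{fdp_t_vcm_low_eq2} and \eqref{fdp_t_vcm_low_eq10}.
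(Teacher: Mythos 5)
Your proposal is correct and follows essentially the same route as the paper: write $g_\theta(y)=d\,f_\theta(dy)$, note the $\theta$-free Jacobian term $\log d$ vanishes under $\partial_\theta$, and transfer the expectation back to $X$ via the substitution $z=dy$ (equivalently, $dY\stackrel{d}{=}X$). The only cosmetic difference is that the paper writes the Fisher information with the variance form $\mathbb{E}[(\text{score})^2]-(\mathbb{E}[\text{score}])^2$ and handles both terms, while you use the squared-score form directly; both are standard and the argument is the same.
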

\begin{proof}
    Denote $g_{\theta}$ the density of the scaled random variable $Y$. Using the standard property of density functions, we have that $g_{\theta}(y) = df_\theta(dy)$ and $\log g_{\theta}(y) = \log(d)+ \log f_\theta(dy)$. Taking partial derivatives with respect to $\theta$ on both sides of equation, we then have that
    \begin{align}\label{l_fisher_rescale_eq1}
        \frac{\partial \log g_{\theta} (y)}{\partial \theta} = \frac{\partial \log f_{\theta}(dy)}{\partial \theta}.
    \end{align}
    By definition of Fisher information, we have that 
    \begin{align*}
        I_Y = \mathbb{E}_Y\Big\{\Big(\frac{\partial \log g_{\theta} (y)}{\partial \theta}\Big)^2\Big\} -\mathbb{E}_Y\Big\{\frac{\partial \log g_{\theta} (y)}{\partial \theta}\Big\}^2.
    \end{align*}
    Note that for the first term, we have that 
    \begin{align*}
        \mathbb{E}_Y\Big\{\Big(\frac{\partial \log g_{\theta} (y)}{\partial \theta}\Big)^2\Big\} &= \int \Big(\frac{\partial\log  g_{\theta} (y)}{\partial \theta}\Big)^2 g_{\theta}(y)\;\mathrm{d}y = \int \Big(\frac{\partial \log f_{\theta} (dy)}{\partial \theta}\Big)^2 df_{\theta}(dy)\;\mathrm{d}y\\
        & = \int \Big(\frac{\partial \log f_{\theta} (z)}{\partial \theta}\Big)^2 f_{\theta}(z)\;\mathrm{d}z = \mathbb{E}_X\Big\{\Big(\frac{\partial \log f_{\theta} (f)}{\partial \theta}\Big)^2\Big\},
    \end{align*}
    where the second equality follows from \eqref{l_fisher_rescale_eq1} and the third equality follows from the change of variable $z=dy$. Similarly, we can also show that 
    \begin{align*}
        \mathbb{E}_Y\Big\{\frac{\partial \log g_{\theta} (y)}{\partial \theta}\Big\} = \mathbb{E}_X\Big\{\frac{\partial \log f_{\theta} (x)}{\partial \theta}\Big\}.
    \end{align*}
    Hence we conclude the proof.    
\end{proof}

\begin{lemma} \label{lemma_projection}
    For any $r \in \mathbb{N}_{+}$ and $v \in \mathbb{R}^r$, suppose $\mathcal{A} \subset \mathbb{R}^r$ is a closed, compact and convex space and denote $\Pi^*(v)$ the projection of $v$ to the space $\mathcal{A}$ with the shortest distance (in $\ell_2$ sense) . Then for any $c \in \mathbb{R}^r$ and $c^* \in \mathcal{A}$, we have that
    \begin{align*}
        \|\Pi^*(c) - c^*\|_2^2 \lesssim \|c - c^*\|_2^2.
    \end{align*}
\end{lemma}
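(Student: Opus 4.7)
The plan is to invoke the well-known non-expansive property of Euclidean projection onto a closed convex set; no heavy machinery is needed, and in fact the constant in the conclusion can be taken to be $1$. Since $\mathcal{A}$ is closed, compact, and convex, the projection $\Pi^*$ is single-valued and satisfies the first-order (obtuse-angle) variational characterization: $w = \Pi^*(v)$ if and only if $w \in \mathcal{A}$ and $\langle v - w,\, y - w\rangle \leq 0$ for every $y \in \mathcal{A}$. I would state this as the starting point, citing it as a standard fact from convex analysis.

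Next I would apply this characterization with $v = c$ and $y = c^* \in \mathcal{A}$ to obtain the inequality
\begin{equation*}
\langle c - \Pi^*(c),\, c^* - \Pi^*(c)\rangle \leq 0.
\end{equation*}
Then I would expand
\begin{align*}
\|\Pi^*(c) - c^*\|_2^2
&= \langle \Pi^*(c) - c^*,\, \Pi^*(c) - c^*\rangle \\
&= \langle \Pi^*(c) - c,\, \Pi^*(c) - c^*\rangle + \langle c - c^*,\, \Pi^*(c) - c^*\rangle,
\end{align*}
observe that the first inner product is nonpositive by the variational inequality above, and apply the Cauchy--Schwarz inequality to the second term. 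This yields $\|\Pi^*(c) - c^*\|_2^2 \leq \|c - c^*\|_2 \cdot \|\Pi^*(c) - c^*\|_2$, from which $\|\Pi^*(c) - c^*\|_2 \leq \|c - c^*\|_2$ follows (the case $\Pi^*(c) = c^*$ being trivial), which is stronger than the stated $\lesssim$.

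There is no real obstacle here: the argument is a textbook three-line manipulation, and the only thing to be careful about is invoking the correct variational characterization of the projection onto the specific convex sets $\mathcal{A}$ and $\mathcal{B}$ used in Algorithms \ref{algorithm_mean}, \ref{algorithm_fdp_mean}, \ref{algorithm_fdp_vcm}, and \ref{algorithm_vcm}. In each case these sets are intersections of a Sobolev-type ellipsoid with finitely many closed half-spaces/ellipsoids (see Remarks \ref{remark_projection_mean} and \ref{remark_projection_vcm}), so they are closed, bounded, and convex in $\mathbb{R}^r$ or $\mathbb{R}^{r(d+1)}$, which is precisely the setting assumed in the lemma. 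Hence the proof is complete and applies verbatim to every invocation of $\Pi^*_{\mathcal{A}}$ or $\Pi^*_{\mathcal{B}}$ in the main text.
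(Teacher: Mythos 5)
Your proof is correct, and it actually establishes a sharper conclusion than the lemma asks for, but it proceeds by a genuinely different route from the paper. You invoke the variational (obtuse-angle) characterization of projection onto a closed convex set, $\langle c - \Pi^*(c),\, y - \Pi^*(c)\rangle \le 0$ for all $y \in \mathcal{A}$, and combine it with Cauchy--Schwarz to obtain the non-expansiveness bound $\|\Pi^*(c) - c^*\|_2 \le \|c - c^*\|_2$, i.e.\ constant $1$. The paper instead uses only the defining minimization property of the projection, namely $\|c - \Pi^*(c)\|_2^2 \le \|c - c^*\|_2^2$ since $c^* \in \mathcal{A}$, together with the elementary scalar inequality $(a-b)^2 \ge \tfrac{3}{4}a^2 - 3b^2$, which yields $\|\Pi^*(c) - c^*\|_2^2 \le \tfrac{16}{3}\|c - c^*\|_2^2$. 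The trade-off is that your argument genuinely requires convexity of $\mathcal{A}$ (which the lemma does assume, and which holds for the sets $\mathcal{A}$ and $\mathcal{B}$ used in the algorithms, as you correctly note), whereas the paper's cruder argument never uses convexity and would remain valid for projection onto an arbitrary closed set, at the cost of a worse constant; since the lemma is only stated up to $\lesssim$, either constant suffices for every downstream use. One minor remark: in your setting compactness is also superfluous, as closedness of $\mathcal{A}\subset\mathbb{R}^r$ already guarantees existence of the projection, and convexity guarantees uniqueness and the variational inequality you cite.
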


\begin{proof}
    By definition of $\Pi^*$, we have that $\|c -\Pi^*(c)\|_{2}^2 \leq \|c-c^*\|_2^2$. Therefore, we have that 
    \begin{align*}
        \|c - c^*\|_2^2 &\geq \frac{3}{4}\|\Pi^*(c) - c^*\|_2^2 - 3\|c - \Pi^*(c)\|_2^2 \geq \frac{3}{4}\|\Pi^*(c) - c^*\|_2^2 - 3\|c - c^*\|_2^2,
    \end{align*}
    where the first inequality follows from the fact that $(a-b)^2 \geq 3a^2/4 - 3b^2$ for all $a,b \in \mathbb{R}$. Hence by rearranging, we have that $\|c - c^*\|_2^2 \geq 3\|\Pi^*(c)- c^*\|_2^2/16$.
\end{proof}

\begin{lemma}\label{lemma_orthogonal}
    For random variables $W,V,U$, it holds that 
    \begin{align*}
        \mathbb{E}[\mathbb{E}\{W\mathbb{E}(W|U,V)|V\}] = \mathbb{E}[\mathbb{E}\{\mathbb{E}(W|U,V)\mathbb{E}(W|U,V)|V\}].
    \end{align*}
\end{lemma}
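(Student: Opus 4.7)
}
The plan is to show that both sides of the claimed identity are equal to $\mathbb{E}[\mathbb{E}(W\mid U,V)^2]$, by two applications of the tower property together with the fact that $\mathbb{E}(W\mid U,V)$ is $\sigma(U,V)$-measurable. Let me abbreviate $Z := \mathbb{E}(W\mid U,V)$, which is a (Borel) function of $(U,V)$ and in particular measurable with respect to $\sigma(U,V)$.

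For the left-hand side, I would first apply the tower property on the outer two expectations, using $\sigma(V) \subseteq \sigma(U,V)$, to write $\mathbb{E}[\mathbb{E}\{WZ\mid V\}] = \mathbb{E}[WZ]$. Then I would condition once more on $(U,V)$ and use $\sigma(U,V)$-measurability of $Z$ to pull it out of the inner conditional expectation:
\begin{equation*}
\mathbb{E}[WZ] = \mathbb{E}[\mathbb{E}(WZ\mid U,V)] = \mathbb{E}[Z \cdot \mathbb{E}(W\mid U,V)] = \mathbb{E}[Z^2].
\end{equation*}

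For the right-hand side, the inner expression is $\mathbb{E}\{Z^2\mid V\}$, where now both factors inside are $\sigma(U,V)$-measurable functions; applying the tower property directly yields $\mathbb{E}[\mathbb{E}\{Z^2\mid V\}] = \mathbb{E}[Z^2]$. Matching the two displays concludes the proof.

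The argument is essentially a one-line consequence of the tower property and measurability, so there is no real technical obstacle. The only point worth flagging is that one must assume $W \in L^1$ (for the conditional expectations to exist in the usual sense) and $W \in L^2$ (so that $\mathbb{E}[Z^2] < \infty$, which is implicitly needed for the right-hand side to be well-defined as a finite quantity); within the context in which this lemma is invoked in the paper, $W$ will be a bounded or otherwise well-behaved score-type quantity, so these integrability conditions hold automatically.
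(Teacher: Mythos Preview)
Your proof is correct and essentially the same as the paper's. The paper decomposes $W = \{W - \mathbb{E}(W\mid U,V)\} + \mathbb{E}(W\mid U,V)$ and invokes the $L^2$ orthogonality of the residual $W - \mathbb{E}(W\mid U,V)$ to $\mathbb{E}(W\mid U,V)$ to kill the cross term; your argument simply unpacks that orthogonality via the tower property and measurability of $Z = \mathbb{E}(W\mid U,V)$, arriving at the common value $\mathbb{E}[Z^2]$ for both sides.
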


\begin{proof}
    For the left-hand side, it holds that 
    \begin{align*}
        \mathbb{E}[\mathbb{E}\{W\mathbb{E}(W|U,V)|V\}] &=  \mathbb{E}\big[\mathbb{E}\big\{\{W -\mathbb{E}(W|U,V)+\mathbb{E}(W|U,V)\}\mathbb{E}(W|U,V)|V\big\}\big]\\
        & = \mathbb{E}\big[\mathbb{E}\big\{\{W -\mathbb{E}(W|U,V)\}\mathbb{E}(W|U,V)|V\big\}\big] \\
        & \hspace{4cm} + \mathbb{E}\big[\mathbb{E}\big\{\mathbb{E}(W|U,V)\mathbb{E}(W|U,V)|V\big\}\big]\\
        & = \mathbb{E}\big[\mathbb{E}\big\{\mathbb{E}(W|U,V)\mathbb{E}(W|U,V)|V\big\}\big],
    \end{align*}
    where the last equality follows from orthogonality between $W - \mathbb{E}(W|U,V)$ and $\mathbb{E}(W|U,V)$.
\end{proof}

\end{document}